\def\w{\widetilde}
\def\wh{\widehat}
\def\s{\smallsmile}
\def\f{\smallfrown}
\numberwithin{figure}{section}
\newcommand{\kk}{\mathbf{k}}
\newcommand{\ZZ}{\mathcal {Z}}
\newcommand{\RR}{\mathcal {R}}
\newcommand{\Ss}{\mathcal {S}}
\newcommand{\NN}{\mathcal {N}}
\newcommand{\VV}{\mathcal {V}}
\newcommand{\Aa}{\mathcal {A}}
\newcommand{\BB}{\mathcal {B}}
\newcommand{\CC}{\mathcal {C}}
\newcommand{\HH}{\mathcal {H}}
\newtheorem{thm}{Theorem}[section]
\newtheorem{prop}[thm]{Proposition}
\newtheorem{lem}[thm]{Lemma}
\newtheorem{cor}[thm]{Corollary}
\newtheorem*{K-S}{Krull-Schmidt Theorem}
\theoremstyle{definition}
\newtheorem{Def}[thm]{Definition}
\newtheorem{conj}[thm]{Conjecture}
\newtheorem{exmp}[thm]{Example}
\newtheorem{cons}[thm]{Construction}
\theoremstyle{remark}
\newtheorem{rem}[thm]{Remark}
\newtheorem{prob}[thm]{Problem}
\newtheorem*{conv}{\indent \rm Convention}
\newtheorem*{que}{Question}
\newtheorem{Que}[]{Question}
\numberwithin{equation}{section}
\begin{document}
\title[On the cohomology of moment-angle complexes...]{On the cohomology of moment-angle complexes associated to Gorenstein* complexes}
\author[F.~Fan \& X.~Wang]{Feifei Fan and Xiangjun Wang}
\thanks{The authors are supported by the National Natural Science Foundation of China (NSFC no. 11261062, 11471167)}
\address{Feifei Fan, School of Mathematical Sciences and LPMC, Nankai University, Tianjin 300071, P.~R.~China}
\email{fanfeifei@mail.nankai.edu.cn}
\address{Xiangjun Wang, School of Mathematical Sciences and LPMC, Nankai University, Tianjin 300071, P.~R.~China}
\email{xjwang@nankai.edu.cn}
\keywords{moment-angle complex; Goreistein* complex; indecomposoble ring; prime manifold; cohomological rigidity}
\subjclass[2010]{Primary 13F55, 52B05, 52B10, 55U10; Secondary 05A19, 05E40, 57R19}
\maketitle
\begin{abstract}
The main goal of this article is to study the cohomology rings and their applications of moment-angle complexes associated to Gorenstein* complexes, especially, the applications in combinatorial commutative algebra and combinatorics. First, we give a topological characterization of Gorenstein* complexes in terms of Alexander duality (as an application we give a topological proof of Stanley's Theorem). Next we give some cohomological transformation formulae of $\mathcal {Z}_{K}$, which are induced by some combinatorial operations on the Gorenstein* complex $K$, such as the connected sum operation and stellar subdivisions. We also prove that $\mathcal {Z}_{K}$ is a prime manifold whenever $K$ is a flag $2$-sphere by proving the indecomposability of their cohomology rings. Then we use these results to give the unique decomposition of the cohomology rings of moment-angle manifolds associated to simplicial $2$-spheres, and explain how to use it to detect the cohomological rigidity problem of these moment-angle manifolds.
\end{abstract}
\section{Introduction and main results}
In 1990's Davis and Januszkiewicz \cite{DJ91} introduced quasi-toric manifolds over a simple polytope $\mathcal {P}$, a topological generalization of projective toric varieties which were being studied intensively by algebraic geometers. They observed that every quasi-toric manifold is the quotient of the
manifold $\mathcal {Z}_{\mathcal {P}}$ constructed from the same polytope $\mathcal {P}$ (now called \emph{moment-angle manifold}) by the free action of a real torus. Buchstaber and Panov \cite{BP00} generalized this construction to any simplicial complex $K$,
and named it the \emph{moment-angle complex associated to $K$}. It has been actively
studied in toric topology and has many connections with symplectic and algebraic geometry, and combinatorics.

Throughout this paper, we use the common notation $\kk$ for the ground ring, which is always assumed
to be the ring $\mathbb{Z}$ of integers or a field, and whenever there is no confusion, we use the tensor product notation $\otimes$ for $\otimes_\kk$. Given a simplicial complex $K$, there is an associated ring
known as the Stanley-Reisner ring of $K$, denoted $\kk(K)$.
The ring $\kk(K)$ is a quotient of a finitely generated polynomial ring $\kk[v_1,\dots,v_m]$ with generators
$v_i$ for each vertex of $K$. Hochster \cite{H75}, in purely algebraic work,
calculated the Tor-modules $\mathrm{Tor}_{\kk[v_1,\dots,v_m]}(\kk(K),\kk)$ in terms of the full subcomplexes of $K$.
Buchstaber-Panov \cite{BP00} proved
that the cohomology algebra of $\mathcal {Z}_{K}$ is isomorphic to $\mathrm{Tor}_{\kk[v_1,\dots,v_m]}(\kk(K),\kk)$. Using the cellular cochain algebra model for $\mathcal {Z}_{K}$,
Baskakov \cite{B02} gave an explicit formula for the cup product in $H^*(\mathcal {Z}_{K},\kk)$, in terms
of pairings between full subcomplexes (However, there is no proof in this very short paper. A complete proof of this is given by Bosio-Meersseman \cite{BM06}). Other works on the cohomology ring of $\mathcal {Z}_{K}$ can be found in
\cite{BBCG1,BBCG2,BM06,DS07,M03,M06,P08,WZ13}.

This paper is a study of the ring structure of $H^*(\ZZ_K)$ and its applications to detecting the topology of $\ZZ_K$. More precisely, the paper is organized as follows.

\S\ref{sec:1} is a review of notations and previous results on moment-angle complexes. In \S\ref{sec:2}, we introduce the Gorenstein* complexes and some of their algebraic and topological properties. We prove that
(Theorem \ref{thm:dual}): a simplicial complex $K$ is a generalized homology sphere if and only if it satisfies Alexander duality between full subcomplexes. Then by using the Poincar\'e duality of Gorenstein rings and the theory of moment-angle complexes, we give a topological proof for Stanley's Theorem (Theorem \ref{thm:3}).
In \S\ref{sec:3},  we give a formula to calculate the cohomology ring of the moment angle complex
associated to the connected sum of two Gorenstein* complexes. The main result of this section is:
\begin{thm}[Theorem \ref{thm:4}]\label{thm:a}
Let $K_1$ and $K_2$ be two $(n-1)$-dimensional ($n\geq 2$) Grenstein* complex over $\kk$ with $m_1$ and $m_2$ vertices respectively,
and let $K=K_1\#K_2$. Then the reduced cohomology ring of $\mathcal {Z}_{K}$ is given by the isomorphism
\[\w H^*(\mathcal {Z}_{K};\kk)\cong \mathcal {R}(K_1,K_2;\kk)/\mathcal {I}(K_1,K_2;\kk),\]
where
\[
\mathcal {R}(K_1,K_2;\kk)=G^{m_2-n}(\w {H}^*(\mathcal {Z}_{K_1};\kk))\times G^{m_1-n}(\w {H}^*(\mathcal {Z}_{K_2};\kk))\times\w {H}^*(M;\kk),
\]
(see Construction \ref{cons:1} for the operation $G$)
\[M=\overset{m_1+m_2-2n}{\underset{i=2}{\sharp}}\lambda(i)(S^{i+1}\times S^{m_1+m_2-i-1}),\]
\[\lambda(i)=\tbinom{m_1+m_2-2n}{i}-\tbinom{m_1-n}{i}-\tbinom{m_2-n}{i},\]
$\mathcal {I}(K_1,K_2;\kk)$ is an ideal of $\mathcal {R}(K_1,K_2;\kk)$ generated by
\[([Z_1],0,0)-(0,0,[M])\, \text{ and }\, (0,[Z_2],0)-(0,0,[M]),\] where $[Z_1]$ (resp. $[Z_2]$, $[M]$) is the top dimensional generator of
 $G^{m_2-n}(\w {H}^*(\mathcal {Z}_{K_1};\kk))$ (resp. $G^{m_1-n}(\w {H}^*(\mathcal {Z}_{K_2};\kk))$, $\w {H}^*(M;\kk)$).
\end{thm}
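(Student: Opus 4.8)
The plan is to read $\w H^*(\ZZ_K;\kk)$ off the Hochster--Baskakov description of the cohomology of a moment-angle complex and then to re-organize the resulting sum over full subcomplexes into the three displayed summands, keeping track of the cup product throughout. Fix notation: put $V=V(K)=V(K_1)\cup V(K_2)$, let $W=V(K_1)\cap V(K_2)$ be the vertex set of the common facet of the connected sum, and write $\sigma_i$ ($i=1,2$) for the corresponding facet of $K_i$; thus $|W|=n$, $|V|=m:=m_1+m_2-n$, $W\notin K$, and every proper subset of $W$ is a face of $K$. For $J\subseteq V$ write $J^{(i)}=J\cap V(K_i)$. By Theorem~\ref{thm:dual} the Gorenstein* complexes $K_1,K_2$ are $\kk$-homology $(n-1)$-spheres, hence so is $K=K_1\#K_2$, so $\ZZ_K$ satisfies Poincar\'e duality with fundamental class the generator of the $J=V$ term $\w H^{\,m+n-|V|-1}(K_V;\kk)=\w H^{\,n-1}(K;\kk)\cong\kk$ in the decomposition
\[
\w H^{\,p}(\ZZ_K;\kk)\;\cong\;\bigoplus_{\varnothing\ne J\subseteq V}\w H^{\,p-|J|-1}(K_J;\kk),
\]
the product (Baskakov~\cite{B02}, Bosio--Meersseman~\cite{BM06}) being zero on $\w H^*(K_J)\otimes\w H^*(K_{J'})$ when $J\cap J'\ne\varnothing$ and otherwise induced, via the join isomorphism, by the restriction $\w H^*(K_J*K_{J'})\to\w H^*(K_{J\sqcup J'})$ along the canonical inclusion $K_{J\sqcup J'}\hookrightarrow K_J*K_{J'}$; I will also use the Alexander-duality isomorphism $\w H^{\,r}(L_S;\kk)\cong\w H_{\,n-2-r}(L_{V(L)\setminus S};\kk)$, valid (Theorem~\ref{thm:dual}) for $L=K$ and for $L=K_1,K_2$.

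The first step is local: to compute $\w H^*(K_J;\kk)$ for every $J$. Since no face of $K$ meets $V(K_1)\setminus W$ and $V(K_2)\setminus W$ at once, every face of $K_J$ lies in $J^{(1)}$ or in $J^{(2)}$, whence $K_J=(K_1\setminus\sigma_1)_{J^{(1)}}\cup(K_2\setminus\sigma_2)_{J^{(2)}}$, the two pieces being glued along the full subcomplex on $J\cap W$, which is a simplex if $J\cap W\subsetneq W$ and equals $\partial\Delta^{n-1}$ if $W\subseteq J$. Running Mayer--Vietoris on this decomposition, and computing each $\w H^*\big((K_i)_S\setminus\sigma_i\big)$ from the long exact sequence of the pair $\big((K_i)_S,\,(K_i)_S\setminus\sigma_i\big)$ --- whose relative cohomology is $\kk$ concentrated in degree $n-1$, and whose connecting behaviour is governed by $\w H^{\,n-1}\big((K_i)_S\big)$, which by Alexander duality vanishes unless $S=V(K_i)$, the exceptional case producing the $\kk$-acyclic complex $K_i\setminus\sigma_i$ --- one obtains, for every $J$, an explicit description of $\w H^*(K_J;\kk)$: it is assembled in a natural way from $\w H^*\big((K_1\setminus\sigma_1)_{J^{(1)}};\kk\big)$, from $\w H^*\big((K_2\setminus\sigma_2)_{J^{(2)}};\kk\big)$ and from a one-dimensional ``correction'' summand --- a copy of $\kk$ in degree $0$ when $J\cap W=\varnothing$ and $J$ meets both $V(K_i)\setminus W$, and a copy of $\kk$ in degree $n-2$ (the class of the gluing sphere $\partial\Delta^{n-1}$) when $W\subseteq J$ while $J\not\supseteq V(K_i)$ for both $i$ --- the three summands degenerating to the single fundamental class when $J=V$.

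Next, substitute this into the Hochster sum and collect the three types of summand. The $\w H^*\big((K_1\setminus\sigma_1)_{J^{(1)}};\kk\big)$-parts, summed over all $J$ --- in which the vertices of $J\cap(V(K_2)\setminus W)$ enter only as additional isolated vertices of the subcomplex being restricted --- reassemble, by the computation of the cohomology of the moment-angle complex of a complex with ghost vertices, into $G^{\,m_2-n}\big(\w H^*(\ZZ_{K_1};\kk)\big)$ (Construction~\ref{cons:1}); symmetrically the $K_2$-parts give $G^{\,m_1-n}\big(\w H^*(\ZZ_{K_2};\kk)\big)$. The correction summands with $J\subseteq V\setminus W$ are indexed by the subsets of $V\setminus W$ meeting both sides --- of which there are $\lambda(i)=\tbinom{m_1+m_2-2n}{i}-\tbinom{m_1-n}{i}-\tbinom{m_2-n}{i}$ with $|J|=i$ --- and contribute classes in degree $i+1$, while the correction summand of the Alexander-dual complement $V\setminus J\supseteq W$ contributes a class in the complementary degree $m_1+m_2-i-1$; the Baskakov product of such a pair is the fundamental class and the square of either vanishes. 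Hence, as a ring, the correction summands constitute $\w H^*(M;\kk)$ for $M$ the stated connected sum of products $S^{i+1}\times S^{m_1+m_2-i-1}$. The three blocks share only the fundamental class, which occurs as $[Z_1]$, $[Z_2]$ and $[M]$ in the three factors of $\RR(K_1,K_2;\kk)$; this is exactly the overlap removed in passing to the quotient.

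Finally, the ring structure. By Baskakov's formula a product of a class from $K_J$ with one from $K_{J'}$ vanishes unless $J\cap J'=\varnothing$, in which case it is a join class from $K_J*K_{J'}$ restricted to $K_{J\sqcup J'}$. Within a single block this reproduces the product of the corresponding factor (for the two $G$-factors this is part of Construction~\ref{cons:1}; for $M$ it gives the sphere-product relations above); across two different blocks the relevant join class is built from a ``$K_1$-side'' factor and a ``$K_2$-side'' (or correction) factor, hence is supported on faces with vertices on both sides of the connected sum, which do not survive in $K_{J\sqcup J'}\subseteq K$, so such products vanish apart from the a priori possibility of landing in the fundamental class, which one rules out directly. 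Therefore $\w H^*(\ZZ_K;\kk)$ is $\RR(K_1,K_2;\kk)$ modulo precisely the relations identifying $[Z_1]$, $[Z_2]$ and $[M]$, i.e. modulo the ideal $\mathcal{I}(K_1,K_2;\kk)$, as asserted. The hard part will be the local computation of the second step together with the ensuing bookkeeping --- in particular, for $J$ with $W\subseteq J$, controlling how the two copies of $\w H^*(\partial\Delta^{n-1})$ and the deletions of the facets $\sigma_i$ interact, and then matching the resulting degrees and dimensions both with the coefficients $\lambda(i)$ and with the precise form of the operator $G$; once the supports of all generators are identified, the product computation is a routine if lengthy application of Baskakov's formula.
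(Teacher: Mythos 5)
Your approach to the additive computation is a genuine variant of the paper's. The paper works with the auxiliary complex $K' = K_1 \cup_\sigma K_2$ (gluing along $\sigma$ \emph{without} deleting it), computes $\w H^*(\ZZ_{K'})$ as a multiplicative direct product $A_1 \times A_2 \times A$ via Propositions~\ref{prop:1}--\ref{prop:3}, and then analyzes the restriction maps $i_I^*:\w H^j(K'_I)\to \w H^j(K_I)$ case by case using the relative long exact sequence and excision. You instead propose to decompose each $K_J$ directly as $(K_1\setminus\sigma_1)_{J^{(1)}}\cup (K_2\setminus\sigma_2)_{J^{(2)}}$ and run Mayer--Vietoris plus the pair sequences. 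These are equivalent in content; the paper's route isolates the ``join with a simplex on ghost vertices'' mechanism once and for all in Propositions~\ref{prop:2}--\ref{prop:3}, which makes the tensor-with-exterior-algebra pattern (and hence the appearance of $G^{m_2-n}$, $G^{m_1-n}$) transparent, while your route would have to rediscover that pattern summand by summand. Still, this part of the proposal could be carried out.

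The real gap is in the ring structure. You assert that cross-block products vanish ``because the join class is supported on faces with vertices on both sides of the connected sum'' apart from a fundamental-class possibility that one ``rules out directly.'' Neither half of that sentence is correct as stated. The correction classes $\beta_I \in \w H^{n-2}(K_I)$ with $\sigma\subseteq I$ have no canonical cochain representative supported off one side; taking the obvious representative on $\partial\sigma$, the product $\beta_I\sqcup u$ with $u$ a cochain from the $K_1$- (or $K_2$-) block need not be zero, since $\partial\sigma$ lies in both $K_1$ and $K_2$ and the juxtaposition $\tau\sqcup\rho$ with $\rho\subseteq\partial\sigma$, $\tau$ on the $K_1$ side can perfectly well be a face of $K$. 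The paper does not ``rule out'' such products; it \emph{changes basis}. Using that $\w{\mathcal H}^*(K_1)$ and $\w{\mathcal H}^*(K_2)$ are Poincar\'e algebras (Theorem~\ref{thm:AG}, applied to the Gorenstein* complexes $K_1,K_2$), it produces elements $a_1\in A_1^{n-1}$, $a_2\in A_2^{n-1}$ with $\phi_{a_1}=\phi_{\beta_I}|_{A_1^1}$ and $\phi_{a_2}=\phi_{\beta_I}|_{A_2^1}$, and then replaces $\beta_I$ by $\beta_I - i^*(a_1)-i^*(a_2)$. Only after this correction does $B * i^*(A_1\times A_2)=0$, and only then does the nondegeneracy of the pairing force $\beta_I*i^*(\alpha_{[m]\setminus I})$ to be the fundamental class. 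This is the essential content of the theorem -- it is precisely where the Gorenstein* hypothesis on $K_1$, $K_2$ (and not just on $K$) is used -- and your sketch skips it. Relatedly, when $\kk=\mathbb{Z}$ one cannot invoke Avramov--Golod directly; the paper gets around this by noting that $\w{\mathcal H}^{n-1}$ and $\w{\mathcal H}^{1}$ are torsion-free and arguing via Poincar\'e duality of the manifolds $\ZZ_{K_i}$ after factoring out torsion. Your proposal does not address the integral case at all. Without the generator correction and the integral argument, what you have proves the additive isomorphism and identifies the products within each block, but not the claimed ring isomorphism with $\RR(K_1,K_2;\kk)/\mathcal{I}(K_1,K_2;\kk)$.
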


Theorem \ref{thm:a} leads to a conjecture on the topology of $\mathcal {Z}_{K_1\#K_2}$:
\begin{conj}\label{conj:a}
If $K_1,\,K_2$ are generalized homology spheres, then $\mathcal {Z}_{K_1\#K_2}$ is homeomorphic to
\[\mathcal {G}^{m_2-n}(\mathcal {Z}_{K_1})\#\mathcal {G}^{m_1-n}(\mathcal {Z}_{K_2})
\#\overset{m_1+m_2-2n}{\underset{i=2}{\sharp}}\lambda(i)(S^{i+1}\times S^{m_1+m_2-i-1}),\]
where $\mathcal {G}$ is an operation on manifolds (See Definition \ref{def:2}) first defined and studied by Gonz\'alez Acu\~na.
\end{conj}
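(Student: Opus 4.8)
The plan to attack Conjecture \ref{conj:a} is to promote the algebraic isomorphism of Theorem \ref{thm:a} to a homeomorphism by realizing the combinatorial operation $K_1\mapsto K_1\#K_2$ geometrically, building it up inductively from elementary moves. Two preliminary remarks frame the argument. First, $\mathcal{Z}_K$ is always $2$-connected, so as soon as $\dim\mathcal{Z}_{K_1\#K_2}=m_1+m_2\geq 6$ one is in the range where closed manifolds are governed by surgery-theoretic invariants; but since the conjecture predicts the exact homeomorphism type, with the precise multiplicities $\lambda(i)$, the natural route is an explicit construction rather than an abstract classification. Second, the cohomology-ring side is essentially already done: one should check that the right-hand side has cohomology ring $\mathcal{R}(K_1,K_2;\kk)/\mathcal{I}(K_1,K_2;\kk)$, which reduces to the standard description of the cohomology ring of a connected sum of closed manifolds together with a computation of $H^*(\mathcal{G}^{k}(M);\kk)$ in terms of $H^*(M;\kk)$ — the point being that the ring operation $G$ of Construction \ref{cons:1} is exactly the cohomological shadow of the geometric operation $\mathcal{G}$ of Definition \ref{def:2}. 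Together with Theorem \ref{thm:a} this already yields a cohomology isomorphism, which is the necessary consistency check and fixes the normalizations.

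The core of the proof is the base case $K_2=\partial\Delta^{n}$ (so $m_2=n+1$), where $K_1\#\partial\Delta^{n}$ is the stellar subdivision $\mathrm{sd}_\sigma K_1$ of $K_1$ at a facet $\sigma$. Using $\mathcal{G}^{k}(S^{N})\cong S^{N+k}$, the middle factor $\mathcal{G}^{m_1-n}(\mathcal{Z}_{\partial\Delta^{n}})=\mathcal{G}^{m_1-n}(S^{2n+1})$ is a sphere and disappears under connected sum, so the conjecture reduces here to $\mathcal{Z}_{\mathrm{sd}_\sigma K_1}\cong\mathcal{G}(\mathcal{Z}_{K_1})\#N_1$, where $N_1$ is the explicit connected sum of products of two spheres recorded by the $\lambda(i)$. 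I would prove this by a \emph{local surgery} argument: a stellar subdivision at $\sigma$ changes $K_1$ only inside the closed star of $\sigma$, so $\mathcal{Z}_{\mathrm{sd}_\sigma K_1}$ is obtained from $\mathcal{Z}_{K_1}$ by excising the ``fat-torus'' chart $(D^2)^{\sigma}\times(S^1)^{[m_1]\setminus\sigma}\cong D^{2n}\times(S^1)^{m_1-n}$ dual to $\sigma$ and regluing the corresponding chart of the subdivided complex along the boundary; the task is to recognise this replacement as the model surgery defining $\mathcal{G}$ followed by a sequence of trivial handle attachments, and then to count the handles. One then iterates: for any stacked sphere $L$ (an iterated facet-subdivision of $\partial\Delta^{n}$), and granted that $\mathcal{G}$ interacts compatibly with connected sum (which should follow from its definition), $\mathcal{Z}_{K_1\#L}\cong\mathcal{G}^{m_L-n}(\mathcal{Z}_{K_1})\#(\text{sphere products})$, matching the conjecture in that case.

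For general $K_1,K_2$ one must handle the single essential gluing head-on: $K_1\#K_2$ is formed by deleting facets $\sigma\in K_1$, $\tau\in K_2$ and identifying $\partial\sigma$ with $\partial\tau$, so $\mathcal{Z}_{K_1\#K_2}$ is built by removing from $\mathcal{Z}_{K_1}$ and $\mathcal{Z}_{K_2}$ the open fat-torus charts dual to $\sigma$ and $\tau$ and gluing the resulting manifolds-with-boundary along the $(S^1)$-coordinates indexed by the $n$ identified vertices. Up to the $\mathcal{G}$-operations on the two pieces this should be a ``fibered connected sum'' of $\mathcal{Z}_{K_1}$ and $\mathcal{Z}_{K_2}$ along the submanifolds $D^{2n}\times(S^1)^{m_i-n}$; the new summands $S^{i+1}\times S^{m_1+m_2-i-1}$ arise from identifying the two boundary pieces across their common subtorus, and the crux is to verify that each appears with multiplicity exactly $\lambda(i)$. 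The binomial identity defining $\lambda(i)$ — equivalently the Betti numbers implicit in Theorem \ref{thm:a} — is the bookkeeping that says how many handles of each index to expect, against which the geometric count can be checked.

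The main obstacle is exactly this passage from a cohomological to a geometric decomposition: (a) making the surgery description of a stellar subdivision sharp enough to be identified \emph{on the nose} with $\mathcal{G}$ followed by trivial handle attachments, tracking framings and not merely homotopy classes; (b) proving that the general gluing introduces no exotic clutching and exactly the predicted $\lambda(i)$ handles, so the result is genuinely a connected sum rather than merely homotopy equivalent to one; and (c) dealing with low-dimensional cases and with torsion coefficients, where surgery theory gives no leverage and the homeomorphism must be exhibited directly. This gap is precisely why the statement is posed as a conjecture: Theorem \ref{thm:a} provides its cohomological shadow, while the geometric realization is exactly the kind of rigidity question flagged in the introduction.
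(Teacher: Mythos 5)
Conjecture \ref{conj:a} is exactly that — a conjecture — and the paper offers no proof of it (it restates the same assertion as Conjecture \ref{conj:1}). The paper's supporting evidence consists of Theorem \ref{thm:4} together with Proposition \ref{prop:5}, which yield the cohomology-ring isomorphism recorded in Corollary \ref{cor:3}, plus the remark following Corollary \ref{cor:3} that the special case $K_2=\partial\Delta^n$ is known to hold as a homeomorphism (citing \cite{CFW16}); that special case is the Gitler--L\'opez de~Medrano conjecture. Your proposal correctly identifies all of this: you observe that the cohomological side is already settled by Theorem \ref{thm:a} and that the content of the conjecture is to geometrize it; you single out the base case $K_2=\partial\Delta^n$, where $K_1\#\partial\Delta^n=\mathrm{S}_\sigma K_1$ for a facet $\sigma$, as the known starting point; and you propose iterating along stacked spheres and then attacking the general gluing by a fibered-connected-sum surgery argument on the fat-torus charts $(D^2)^\sigma\times(S^1)^{[m_i]\setminus\sigma}$.

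Since the paper contains no proof, there is nothing to compare your argument against; what must be assessed is whether your sketch closes the gap, and it does not claim to. You are explicit that the passage from the cohomological decomposition of Theorem \ref{thm:4} to a geometric connected-sum decomposition is the open step, and the obstacles you list — pinning down the surgery description of a stellar subdivision up to homeomorphism rather than homotopy, tracking framings and ruling out exotic clutching when gluing the two fat-torus complements, verifying the handle multiplicities $\lambda(i)$ geometrically rather than by Betti-number bookkeeping, and the absence of surgery-theoretic leverage in low dimensions and over torsion — are precisely the reasons the authors leave the statement as a conjecture. In short, your outline is a faithful reconstruction of the heuristic the paper itself gestures at, and it is honest about where the proof is missing; but it is a strategy, not a proof, and should not be presented as settling the conjecture.
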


In \S \ref{sec:4}, we study the cohomology ring of the moment angle complex
associated to the stellar subdivision $\mathrm{S}_\sigma K$ of a given simplicial sphere $K$. Although in general it is difficult to describe the relation between
$H^*(\mathcal {Z}_{\mathrm{S}_\sigma K})$ and $H^*(\ZZ_K)$, there is a simple formula to calculate $H^*(\mathcal {Z}_{\mathrm{S}_\sigma K})$ when the simplex $\sigma$ satisfies some local conditions.
That is:
\begin{thm}[Theorem \ref{thm:5}]\label{thm:b}
Let $K$ be a Corenstein* complex of dimension $n-1$ with $m$ vertices,
$\sigma\in K$ be a simplex of dimension $q$. Let $\mathcal {V}$ be the vertex set of $\mathrm{link}_K\sigma$, $s=|\mathcal {V}|+|\sigma|$. If for any $I\subset \mathcal {V}$, the inclusion map
$\varphi_I:(\mathrm{link}_K\sigma)_I\to K_I$ is nullhomotopic, and if one of the following additional conditions are satisfied:
\begin{enumerate}[(a)]
\item $\kk$ is a field.
\item $\kk=\mathbb{Z}$ and $H^*(\mathcal {Z}_{\mathrm{link}_K\sigma})$ is torsion free.
\end{enumerate}
Then the cohomology ring of $\mathcal {Z}_{\mathrm{S}_\sigma K}$ is given by the isomorphism
\[\w H^*(\mathcal {Z}_{\mathrm{S}_\sigma K};\kk)\cong\w H^*(\mathcal {G}(\mathcal {Z}_K)\#\, Y;\kk),\]
where
\begin{align*}
&Y={\underset{i+j\geq1}{\sharp}}f_i\cdot\tbinom{m-s}{j}(S^{i+j+2}\times S^{m+n-i-j-1}),\\
&f_i=\mathrm{rank}\,H^i(\mathcal {Z}_{\mathrm{link}_K\sigma}).
\end{align*}
\end{thm}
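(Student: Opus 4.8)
The plan is to realize $\ZZ_{\mathrm{S}_\sigma K}$ explicitly as a surgery modification of $\ZZ_K\times S^1$ and then run Mayer--Vietoris, the hypothesis forcing the connecting maps to vanish. Write $V$ for the $m$-vertex set of $K$, let $w$ be the new vertex of $\mathrm{S}_\sigma K$ and $L=\mathrm{link}_K\sigma$, and let $\mathrm{del}_K\sigma$ be the subcomplex of faces not containing $\sigma$. A direct check gives $\mathrm{S}_\sigma K=\mathrm{del}_K\sigma\cup_{\partial\sigma*L}\bigl(\{w\}*\partial\sigma*L\bigr)$ on $V\sqcup\{w\}$, and the standard union formula for polyhedral products together with $\ZZ_{A*B}=\ZZ_A\times\ZZ_B$ and $\ZZ_{\partial\Delta^q}=S^{2q+1}$ turns this into
\[
\ZZ_{\mathrm{S}_\sigma K}\;=\;\bigl(\ZZ_{\mathrm{del}_K\sigma}\times S^1\bigr)\ \cup_{\,S^{2q+1}\times Q\times S^1}\ \bigl(D^2\times S^{2q+1}\times Q\bigr),\qquad Q:=\ZZ_L\times(S^1)^{m-s},
\]
where $q=\dim\sigma$. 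The analogous decomposition of $\ZZ_K$ using $K=\mathrm{del}_K\sigma\cup_{\partial\sigma*L}\overline{\mathrm{star}}_K\sigma$ identifies $\ZZ_{\mathrm{del}_K\sigma}$ with $\ZZ_K\setminus\operatorname{int}\mathcal T$, where $\mathcal T\cong D^{2q+2}\times Q$ is a tubular neighbourhood, with trivial normal bundle, of the submanifold $Q=\{z\in\ZZ_K:z_i=0\ \forall i\in\sigma\}\cong\ZZ_L\times(S^1)^{m-s}$. Thus $\ZZ_{\mathrm{S}_\sigma K}$ is obtained from $\ZZ_K\times S^1$ by exactly the Gonz\'alez Acu\~na move defining $\mathcal G$, but excising $\mathcal T$ in place of a ball; with $Q$ replaced by a point this is precisely $\mathcal G(\ZZ_K)$. (Here we use that $K,L$ are Gorenstein$^{*}$, so $\ZZ_K,\ZZ_L$ are closed orientable manifolds and $\mathrm{del}_K\sigma$ is a ball, and that $\mathrm{S}_\sigma K$ is again Gorenstein$^{*}$.)

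The crucial point is that the inclusion $Q\hookrightarrow\ZZ_K$ is homotopic to the map $\ZZ_L\times(S^1)^{m-s}\hookrightarrow\ZZ_K$ induced by $L\subseteq K$ (slide each vanishing $\sigma$-coordinate to a fixed point of $S^1$), so by functoriality of the Hochster--Baskakov decomposition the restriction $H^*(\ZZ_K)\to H^*(Q)$ is the direct sum, over $I\subseteq V$, of the maps $\varphi_{I\cap\mathcal V}^{*}\colon \w H^*(K_{I\cap\mathcal V})\to\w H^*(L_{I\cap\mathcal V})$ precomposed with restriction, all of which vanish by hypothesis; hence $H^*(\ZZ_K)\to\w H^*(Q)$ is zero. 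Feeding this into the Thom--Gysin sequence of $\mathcal T\hookrightarrow\ZZ_K$ and the Mayer--Vietoris sequence of the surgery decomposition, every wrong-way (Gysin/connecting) map vanishes, being Poincar\'e dual to $H_*(Q)\to H_*(\ZZ_K)$, which is dual to the above; so all the long exact sequences degenerate to short exact sequences. Under condition (a) or (b) these split and the relevant Künneth formulae (for $S^{2q+1}\times Q$, $Q\times S^1$, $\partial\mathcal T\times S^1$) have no Tor-correction, so one reads off the additive structure: $\w H^*(\ZZ_{\mathrm{S}_\sigma K})$ differs from $\w H^*(\mathcal G(\ZZ_K))$ precisely by, for each $i,j$ with $i+j\ge1$, $f_i\binom{m-s}{j}$ copies of $\w H^*(S^{i+j+2}\times S^{m+n-i-j-1})$ --- one class of degree $i+j+2$ for each basis element of $H^i(\ZZ_L)\otimes\Lambda^j(\mathbb{Z}^{m-s})$ (these come from the ``new-vertex'' summands $\w H^{*}(\Sigma L_{I'})$ in the Hochster decomposition of $\ZZ_{\mathrm{S}_\sigma K}$), paired with a Poincar\'e-dual class of complementary degree. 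The connected-sum formula then produces $\w H^*(\mathcal G(\ZZ_K)\#Y)$ with $Y$ exactly as stated.

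The remaining, and main, task is the ring structure. In $\w H^*(\mathcal G(\ZZ_K)\#Y)$ the product is the connected-sum product: on $\w H^*(\mathcal G(\ZZ_K))$ it is the $G$-product already described in \S\ref{sec:3}, on $\w H^*(Y)$ it is the product of the truncated exterior algebras of the factors $S^a\times S^b$, and all cross products between these summands vanish apart from the single common top class. One must verify the matching statements in $\w H^*(\ZZ_{\mathrm{S}_\sigma K})$: (i) the image of $\w H^*(\ZZ_K)$ multiplies among itself by a shift of the $G$-product; (ii) the product of any ``new'' class with the image of $\w H^*(\ZZ_K)$ vanishes --- the new classes are detected on the boundary $\partial\mathcal T=S^{2q+1}\times Q$ of the surgery region, on which $\w H^*(\ZZ_K)$ restricts to zero by the hypothesis; (iii) products among the new classes are governed by the ring $H^*(\partial\mathcal T)=H^*(S^{2q+1})\otimes H^*(\ZZ_L)\otimes\Lambda[x_1,\dots,x_{m-s}]$ together with the $D^2$-caps of the Mayer--Vietoris decomposition, and realize exactly the exterior relations of the $S^a\times S^b$ factors with every other product pushed into the top degree. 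Pinning down this last point --- that no unexpected product survives, and that the only products landing in the top degree $m+n+1$ are the Poincar\'e-dual pairings --- is where Poincar\'e duality of $\ZZ_{\mathrm{S}_\sigma K}$ and of $\ZZ_L$ (both closed orientable, since $K,\mathrm{S}_\sigma K,L$ are Gorenstein$^{*}$) is used. I expect this simultaneous product bookkeeping to be the real obstacle; the additive computation above is comparatively mechanical.
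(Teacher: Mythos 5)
Your geometric picture is correct and genuinely different from the paper's argument. The paper never leaves the world of Hochster algebras: it interposes two auxiliary complexes $L=\Delta^{m-1}\cup_{\mathrm{star}_K\sigma}\mathrm{cone}(\mathrm{star}_K\sigma)$ and $K'=K\cup_{\mathrm{star}_K\sigma}\mathrm{cone}(\mathrm{star}_K\sigma)$ (Propositions \ref{prop:10}--\ref{prop:7}), then studies the inclusion $\mathrm{S}_\sigma K\hookrightarrow K'$ one full subcomplex $I$ at a time, sorting into four cases and running Mayer--Vietoris and Alexander duality \emph{inside the simplicial category}, never writing down the surgery on $\ZZ_K\times S^1$. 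Your decomposition of $\ZZ_{\mathrm{S}_\sigma K}$ as $(\ZZ_{\mathrm{del}_K\sigma}^V\times S^1)\cup(D^2\times S^{2q+1}\times Q)$ is the moment-angle-complex realization of what the paper encodes combinatorially, and the observation that the nullhomotopy hypothesis kills $H^*(\ZZ_K)\to\widetilde H^*(Q)$ is exactly the functorial translation of the paper's vanishing of the various $\varphi_I^*$ (and of $h_I^*$). So the additive computation, if carried out, would recover the paper's additive result by a more geometric route. What the paper buys by staying inside the Hochster framework is precise control over which classes live in which bidegree $\widetilde H^{j}((\mathrm{S}_\sigma K)_I)$; you would have to reconstruct this from the Gysin/Mayer--Vietoris filtration, and in particular the splitting of the short exact sequences in case $\kk=\mathbb Z$ requires the paper's Lemma \ref{lem:1} together with the double application of Alexander duality in case (2) of the proof --- that's a nontrivial step you only implicitly appeal to.

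The genuine gap, which you yourself flag, is the ring structure. Your outline (i)--(iii) correctly identifies what has to be proved, but the hard half of it --- that the ``new'' classes $B$ coming from the surgery region can be rechosen so that $A_1 B=0$, i.e.\ so that the image of $\widetilde H^*(\ZZ_K)$ annihilates $B$ --- is precisely the content of the paper's Proposition \ref{prop:A4}, and over $\mathbb Z$ this is not a formal consequence of nonsingularity of the rational pairing: one must also control products $a*b$ of finite order, and the paper does this by reducing mod $m$ and invoking the nonsingular $\mathbb Z_m$-pairing of Proposition \ref{prop:A3} (which itself rests on the nonstandard universal-coefficient argument of Theorem \ref{thm:A1}). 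Likewise, showing that a product $a_2*b$ with $a_2\in A_2$, $b\in B$ survives \emph{only} in total degree $n$ (so that the $S^a\times S^b$ summands close up correctly) uses both the integral Poincar\'e duality of $\ZZ_{\mathrm{S}_\sigma K}$ and a second pass with $\mathbb Z_m$ coefficients, and the hypothesis that $H^*(\ZZ_{\mathrm{link}_K\sigma})$ is torsion free enters exactly here (so that $A_2$ and $B$ are torsion free and the pairing argument closes). Your phrase ``I expect this simultaneous product bookkeeping to be the real obstacle'' is accurate: this is where the theorem actually lives, and without Proposition \ref{prop:A4} or an equivalent substitute the proposal does not yet prove the ring isomorphism.
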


In \S \ref{sec:5}, we study the moment-angle manifolds associated to flag $2$-spheres, and get an important algebraic property of $H^*(\mathcal {Z}_K)$ when $K$ is a flag $2$-sphere, which implies
a topological property of $\mathcal {Z}_K$. They are shown by the following

\begin{thm}[Theorem \ref{thm:7}]
Let $K$ be a flag $2$-sphere. Then $\w {H}^*(\mathcal {Z}_{K})/([\mathcal {Z}_{K}])$ is a graded indecomposable ring, where $[\mathcal {Z}_{K}]$ is the top class of $\w {H}^*(\mathcal {Z}_{K})$.
\end{thm}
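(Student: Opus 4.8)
The plan is to convert the Hochster--Baskakov description of $H^*(\ZZ_K)$ into a statement about one graded-symmetric trilinear form, using that proper full subcomplexes of a flag $2$-sphere are clique complexes --- hence, up to homotopy, graphs --- and then to deduce indecomposability of that form from the combinatorics of $K$.

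First I would record the algebraic skeleton. As $K$ is a $2$-sphere on $m$ vertices, $\ZZ_K$ is a closed orientable $(m+3)$-manifold and $[\ZZ_K]$ generates $\w H^{m+3}(\ZZ_K)\cong\w H^2(K)$. For $J\subsetneq[m]$ the full subcomplex $K_J$ lies in $K$ minus an open vertex star, hence in a triangulated disc, so $\w H^i(K_J)=0$ for $i\ge2$ (and is $\mathbb Z$-free). Put $R_0=\bigoplus_{J\subsetneq[m]}\w H^0(K_J)$ and $R_1=\bigoplus_{J\subsetneq[m]}\w H^1(K_J)$, with their internal degree shifts. Hochster's formula gives $\w H^*(\ZZ_K)=R_0\oplus R_1\oplus\kk[\ZZ_K]$, and in the Baskakov ring the only nonzero products are a graded-symmetric map $\mu\colon R_0\otimes R_0\to R_1$ and a pairing $\beta\colon R_0\otimes R_1\to\kk[\ZZ_K]$, because a product $\w H^{\ge1}(K_I)\otimes\w H^{\ge1}(K_J)$ lands in $\w H^{\ge3}$ and $\w H^{\ge2}(K_{I\cup J})$ vanishes unless $I\cup J=[m]$. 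Poincar\'e duality of $\ZZ_K$ (equivalently, Alexander duality between full subcomplexes, Theorem~\ref{thm:dual}) makes $\beta$ a perfect pairing and turns associativity into total graded symmetry of $T(a,b,c):=\beta(a,\mu(b,c))$ on $R_0$ --- the triple cup product into the fundamental class. Hence $R:=\w H^*(\ZZ_K)/([\ZZ_K])=R_0\oplus R_1$, with $R_1\cong R_0^{\vee}$ a square-zero ideal annihilated by all of $R$, is entirely encoded by $T$.

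The key reduction is then purely formal: $R$ is indecomposable if and only if $T$ is \emph{non-degenerate} (every $0\ne a\in R_0$ lies in some nonzero triple product) and \emph{indecomposable} (there is no nontrivial splitting $R_0=V\oplus W$ of graded $\kk$-modules all of whose mixed triple products vanish). Indeed, if $T$ is degenerate then some $0\ne a\in R_0$ annihilates $R$, and $\kk a$ splits off as a square-zero summand; if $T$ is non-degenerate then $R_1=\mathrm{Ann}_R(R)$ is intrinsic, so any ring decomposition $R=A\oplus B$ first splits $R_1$ and then, after choosing a complement of $R_1$ of the form $V\oplus W$ with $V\subseteq A$ and $W\subseteq B$, splits $R_0$ into $\mu$-orthogonal --- hence $T$-orthogonal --- pieces, both nonzero by graded nilpotence, directness of the induced decomposition of $R_1$ being automatic from $R_1\cong R_0^{\vee}$. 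So everything reduces to proving these two properties of $T$ when $K$ is a flag $2$-sphere.

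Non-degeneracy is where flagness and $3$-connectivity of $K$ enter: if $0\ne a\in\w H^0(K_J)$ with $G[J]$ disconnected, then $|J|\le m-3$, so there is room outside $J$; running a short induced cycle through two different components of $G[J]$ and computing the Baskakov product on cochains produces a small disconnected set $I$ disjoint from $J$ with $a\cdot b\ne0$ for some $b\in\w H^0(K_I)$ inside $\w H^1(K_{J\cup I})$ with $J\cup I\subsetneq[m]$, which one then pairs up to $[\ZZ_K]$ by a second explicit cycle. The genuinely hard step --- the main obstacle --- is the indecomposability of $T$. The trouble is that the ``linking'' of classes really runs through triple products in all degrees: on the icosahedron, for instance, the non-edge joining two antipodal vertices has zero product with \emph{every} degree-$3$ class and is tied to the rest of $R$ only through higher classes in $R_0$, so one cannot reduce to a connectedness statement in degree $3$, and a ``product graph'' on a chosen basis of $R_0$ is not invariant under change of basis. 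I would attack it by induction on $m$, the octahedron (the unique flag $2$-sphere with $m=6$, where $\ZZ_K=(S^3)^3$ and $T$ is the $3\times3$ ``cross product'' form, visibly indecomposable) being the base; one uses that every larger flag $2$-sphere admits a flag-preserving edge contraction and proves a transfer lemma describing how $R_0$, $R_1$ and $T$ change under such a contraction and showing indecomposability is inherited. Note that because this move keeps the sphere flag it is \emph{not} an instance of the stellar-subdivision formula of Theorem~\ref{thm:5}, so the transfer lemma must be proved directly by comparing the systems of full subcomplexes of $K$ and of the contracted sphere --- and failing a clean such reduction, the fallback is a global argument showing that a $T$-orthogonal splitting of $R_0$ would force an incompatible partition of the vertex set of $K$ through the $3$-block partitions $[m]=I\sqcup J\sqcup L$ that support $T$.
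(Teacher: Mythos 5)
Your reduction of the problem to a single graded-symmetric trilinear form $T(a,b,c)=\beta(a,\mu(b,c))$ on $R_0=\bigoplus_{J\subsetneq[m]}\widetilde H^0(K_J)$ is a reasonable reformulation, and your ``non-degeneracy'' condition (that $\mu$ has no kernel in the sense that $T$ pairs nondegenerately against $R_0$, equivalently $\mu$ is onto $R_1$) is essentially the paper's Lemma~\ref{prop:8} that the Hochster ring is generated in degree~$1$; your sketch of that step, via short induced cycles separating components of $G[J]$, is close in spirit to the paper's use of polygon full subcomplexes through missing faces (Lemma~\ref{lem:2}, Corollary~\ref{cor:2}). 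However, the part you yourself flag as ``the main obstacle'' --- indecomposability of $T$ --- is not proved. You offer two alternatives: an induction by flag-preserving edge contraction with an unproved ``transfer lemma'' describing how $(R_0,R_1,T)$ changes (and an unverified claim that every flag $2$-sphere other than the octahedron admits such a contraction), and a ``fallback'' global argument about incompatible $3$-block partitions that is never spelled out. Neither is carried out, so the heart of the theorem is missing from the proposal.

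Two further remarks. First, your assertion that ``one cannot reduce to a connectedness statement in degree $3$'' --- motivated by antipodal non-edges of the icosahedron having zero degree-$3$ cup products --- is too pessimistic, and in fact the paper's proof does route everything through degree-$3$ objects: it proves (Lemma~\ref{lem:6}) that when $K$ is not a suspension, the set $MF(K)$ of two-element missing faces is connected under the relation of sharing a vertex, and it links this graph to the algebra by showing (Lemmas~\ref{lem:3}, \ref{lem:4}, \ref{lem:5}) that a putative ring splitting would force the degree-$3$ generators $\lambda_\sigma$, $\sigma\in MF(K)$, to propagate consistently along polygon subcomplexes $K_I$ with $\sigma\in MF(K_I)$; the higher-degree classes enter only through the Poincar\'e algebras $\mathcal H^*(K_I)$ of those polygons, not through a global graph on a basis of $R_0$. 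The suspension case $K=S^0*(\text{$n$-gon})$, which falls outside Lemma~\ref{lem:6}, is handled separately by the explicit product description $\mathcal Z_K\approx S^3\times(\cdots)$; your proposal does not treat it. Second, your formal reduction glosses over the fact that $R_0$ and $R_1$ are not the homogeneous pieces of $R$ in the ring grading (both contain classes of the same total degree), so passing from a graded ring decomposition $R=A\times B$ to a $T$-orthogonal decomposition of $R_0$ requires an argument that one may realign the chosen complement of $R_1$ inside $A$ and $B$; this can be done, but it is an extra step one should not omit.
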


\begin{thm}[Theorem \ref{thm:8}]
Let $K$ be a flag $2$-sphere. Then $\mathcal {Z}_{K}$ is a prime manifold.
\end{thm}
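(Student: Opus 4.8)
The plan is to deduce the statement from the ring-theoretic result of Theorem~\ref{thm:7}, combined with the standard behaviour of cohomology under connected sum and with the generalized Poincar\'e conjecture. Recall first (see \S\ref{sec:1}) that, $K$ being a triangulation of $S^2$ with $m=f_0(K)$ vertices, $\ZZ_K$ is a closed, orientable, simply connected (in fact $2$-connected) manifold of dimension $d:=m+3$; and since $K$ is flag it is not a simplex, so $m\geq 6$ and hence $d\geq 9\geq 5$.

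Suppose, seeking a contradiction, that $\ZZ_K$ is not prime, i.e. $\ZZ_K\cong M_1\#M_2$ with neither $M_i$ homeomorphic to $S^d$. As $\ZZ_K$ is simply connected, van Kampen's theorem forces $\pi_1(M_1)=\pi_1(M_2)=1$, so each $M_i$ is a closed, simply connected, hence orientable, $d$-manifold. Collapsing the connecting $(d-1)$-sphere of the connected sum gives a map $\ZZ_K\to M_1\vee M_2$ inducing a ring epimorphism $\w H^*(M_1)\oplus\w H^*(M_2)=\w H^*(M_1\vee M_2)\twoheadrightarrow\w H^*(\ZZ_K)$ which is an isomorphism in degrees $<d$ and the fold map $\kk\oplus\kk\to\kk$ in degree $d$, so its kernel is the ideal generated by $[M_1]-[M_2]$ (top classes). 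Since the cross products $\w H^*(M_1)\cdot\w H^*(M_2)$ vanish in $\w H^*(M_1\vee M_2)$, dividing out the respective top classes yields a graded ring isomorphism
\[
\w H^*(\ZZ_K)/([\ZZ_K])\;\cong\;\w H^*(M_1)/([M_1])\times\w H^*(M_2)/([M_2]),
\]
the right-hand side carrying the direct-product ring structure. (We work with $\kk=\mathbb{Z}$ here.)

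By Theorem~\ref{thm:7} the left-hand ring is indecomposable, and it is nonzero ($K$ being flag and not a simplex has a missing edge, so by Hochster's formula $\w H^*(\ZZ_K)$ is already nontrivial in some degree $<d$); in particular $\ZZ_K$ is not an integral homology sphere. Hence exactly one of the two factors on the right vanishes, say $\w H^*(M_2)/([M_2])=0$. Then $H^k(M_2;\mathbb{Z})=0$ for $0<k<d$, while $H^d(M_2;\mathbb{Z})\cong\mathbb{Z}$ since $M_2$ is closed, connected and orientable; Poincar\'e duality then shows the reduced homology of $M_2$ is concentrated in degree $d$, so $M_2$ is an integral homology $d$-sphere. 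Being also simply connected, $M_2$ is a homotopy $d$-sphere, and as $d\geq 5$ the generalized Poincar\'e conjecture gives $M_2\cong S^d$ --- contradicting the choice of the $M_i$. Therefore $\ZZ_K$ is prime.

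Essentially all the content lies in Theorem~\ref{thm:7}; the remaining steps are routine, the only points deserving care being: that the collapse map really induces the displayed ring isomorphism once the fundamental classes are quotiented out; that the vanishing of the factor $\w H^*(M_2)/([M_2])$ must be promoted --- via orientability and Poincar\'e duality --- from ``$M_2$ has the cohomology of $S^d$ below the top degree'' to ``$M_2$ is a homology sphere''; and that $\ZZ_K$ is orientable and simply connected (standard for moment-angle manifolds over simplicial spheres) so that both the connected sum and the appeal to the Poincar\'e conjecture are legitimate. None of these is a genuine obstacle once Theorem~\ref{thm:7} is in hand.
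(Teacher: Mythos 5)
Your proof is correct and follows exactly the same route as the paper's: use van Kampen to show the summands are simply connected, quotient out the fundamental class to get a product decomposition $\w H^*(\ZZ_K)/([\ZZ_K])\cong \w H^*(M_1)/([M_1])\times\w H^*(M_2)/([M_2])$, invoke Theorem~\ref{thm:7} to force one factor to vanish, and finish with Poincar\'e duality and the generalized Poincar\'e conjecture. The paper merely leaves the routine verifications (the collapse-map ring isomorphism, the promotion of a cohomology sphere to a homology sphere, the dimension bound $d\geq5$) implicit, while you spell them out.
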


In \S \ref{sec:6}, we further discuss the cohomology ring $H^*(\mathcal {Z}_{K})$ when $K$ is a simplicial $2$-sphere. By using the results in \S \ref{sec:3} and \S \ref{sec:5}, we get the following
\begin{thm}[Theorem \ref{thm:9}]\label{thm:c}
Let $K$ and $K'$ be simplicial $2$-spheres. If
\[K=K_1\#K_2\#,\dots,\#K_n,\quad K'=K'_1\#K'_2\#,\dots,\#K'_{n'}\]
such that each $K_i$ and $K_i'$ are irreducible, and if $H^*(\mathcal {Z}_{K};\kk)\cong H^*(\mathcal {Z}_{K'};\kk)$ (as graded rings), then
$n=n'$ and there is a permutation $j\curvearrowright j'$ such that
\[H^*(\mathcal {Z}_{K_j};\kk)/([\mathcal {Z}_{K_j}])\cong H^*(\mathcal {Z}_{K'_{j'}};\kk)/([\mathcal {Z}_{K'_{j'}}]),\ 1\leq j\leq n.\]
\end{thm}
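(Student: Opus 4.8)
The plan is to run a Krull–Schmidt type argument on the graded ring $\w H^*(\ZZ_K;\kk)$. First I would set up the algebraic framework: since $K$ is a simplicial $2$-sphere with, say, $m$ vertices, $\ZZ_K$ is a closed simply connected manifold of dimension $m+3$, so $\w H^*(\ZZ_K;\kk)$ is a finite-dimensional graded-commutative $\kk$-algebra satisfying Poincar\'e duality with fundamental class $[\ZZ_K]$ in top degree. Passing to the quotient $A(K):=\w H^*(\ZZ_K;\kk)/([\ZZ_K])$ kills exactly the socle line, so the point of Theorem \ref{thm:7}, applied summand by summand, is that $A(K_i)$ is \emph{indecomposable} as a graded ring for each irreducible $2$-sphere factor $K_i$. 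The first key step is therefore to show that the connected-sum decomposition $K=K_1\#\cdots\#K_n$ induces a ring decomposition
\[
A(K)\ \cong\ A(K_1)\times\cdots\times A(K_n),
\]
by iterating Theorem \ref{thm:a}: in Theorem \ref{thm:a} one has $n$ there equal to $3$ and $m_i$ the vertex numbers, so $G^{m_2-3}$ etc.\ are applied, and one must check that after quotienting by the top class the extra handle summands $\w H^*(M;\kk)$ and the identifications in the ideal $\mathcal I(K_1,K_2;\kk)$ all collapse, leaving precisely the product of the two quotient rings. (For a $2$-sphere the operation $G$ and the handles contribute only to the part of $\w H^*$ that is absorbed into $[\ZZ_K]$ after the quotient; this is the computation that makes the whole statement work and should be isolated as a lemma.)

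The second key step is the uniqueness half. Given $H^*(\ZZ_K;\kk)\cong H^*(\ZZ_{K'};\kk)$ as graded rings, any such isomorphism must carry $[\ZZ_K]$ to $\pm[\ZZ_{K'}]$ — because the top class is intrinsically characterized, e.g.\ as the unique (up to scalar) nonzero element of the socle in top degree, or via Poincar\'e duality — hence it descends to a graded ring isomorphism $A(K)\cong A(K')$. Now I would invoke the graded Krull–Schmidt theorem (stated in the excerpt): a finite-dimensional graded algebra decomposes as a finite product of indecomposable graded algebras, uniquely up to order and isomorphism of the factors. Combining this with Step 1, both $A(K)=\prod_i A(K_i)$ and $A(K')=\prod_{j}A(K'_{j})$ are such decompositions into indecomposables (indecomposability of each factor being Theorem \ref{thm:7} together with the flag reduction below), so $n=n'$ and there is a permutation matching $A(K_j)\cong A(K'_{j'})$, which is exactly the conclusion.

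The remaining issue is that Theorem \ref{thm:7} is stated only for \emph{flag} $2$-spheres, whereas the irreducible factors $K_i$ here need only be irreducible simplicial $2$-spheres. The fix is the standard fact that every simplicial $2$-sphere is obtained from a flag $2$-sphere by a finite sequence of stellar subdivisions (equivalently, the non-flag ones are "filled in" by adding empty triangles, each of which is a stellar subdivision at a vertex), together with Theorem \ref{thm:b}: a stellar subdivision at a vertex $\sigma$ of a $2$-sphere has $\mathrm{link}_K\sigma$ a circle with some number $k$ of vertices, the inclusions $(\mathrm{link}_K\sigma)_I\hookrightarrow K_I$ are nullhomotopic, and $H^*(\ZZ_{\mathrm{link}_K\sigma})$ is torsion free, so Theorem \ref{thm:b} expresses $\w H^*(\ZZ_{\mathrm S_\sigma K})$ as that of $\mathcal G(\ZZ_K)\# Y$ with $Y$ a connected sum of products of spheres; after quotienting by the top class this again only changes $A(K)$ by a free graded-commutative algebra on even generators with trivial products, which I claim is still indecomposable once glued to the indecomposable $A(K)$ via $\mathcal G$. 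I expect this last point — controlling how the handle/$\mathcal G$ contributions behave under the quotient and verifying that indecomposability is preserved, so that Theorem \ref{thm:7} really does apply to every irreducible simplicial $2$-sphere — to be the main obstacle; the Krull–Schmidt bookkeeping afterwards is then formal.
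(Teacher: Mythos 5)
Your central claim in Step~1, that
\[
\w H^*(\ZZ_K;\kk)/([\ZZ_K])\ \cong\ A(K_1)\times\cdots\times A(K_n)\quad\text{with } A(K_i)=\w H^*(\ZZ_{K_i};\kk)/([\ZZ_{K_i}]),
\]
is false, and this is the real content of the theorem that your sketch elides.  Iterating Theorem~\ref{thm:4} does \emph{not} produce the product of the $A(K_i)$.  It produces
\[
\w H^*(\ZZ_K;\kk)/([\ZZ_K])\ \cong\ M_1\times\cdots\times M_k\times\kk^l,
\]
where $M_i\cong G^{n_i}\bigl(\w H^*(\ZZ_{K_i};\kk)\bigr)/\bigl([\ZZ_{K_i}]\otimes v_{[n_i]}\bigr)\cong\bigl(\w H^*(\ZZ_{K_i};\kk)/([\ZZ_{K_i}])\bigr)\otimes\Lambda_\kk[n_i]$ with $n_i=m-m_i$ and $K_i$ flag, and the $\kk^l$ carries the trivial multiplication (it absorbs the sphere-handle summands $\w H^*(M;\kk)$ and the $\partial\Delta^3$ factors).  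The gyrations do not ``collapse'' after you quotient by the top class: they tensor each $A(K_i)$ with a nontrivial exterior algebra on $n_i$ degree-one generators, and $n_i$ depends on the vertex count of the \emph{whole} complex $K$, which is not a priori determined.  So, even after Krull--Schmidt gives you a matching $M_i\cong M'_{i'}$, you cannot simply strip off the $\Lambda_\kk[n_i]$ unless you first recover each $m_i$ (hence $n_i$) from the ring.  The paper does this by computing $\dim_\kk M_i^3=\binom{m_i}{2}-(3m_i-6)$ and using monotonicity to recover $m_i$, then recovers $m$ from the top degree and the number of $\partial\Delta^3$ factors from $m-\sum_i m_i$.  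Your proposal has no replacement for this bookkeeping, and the conclusion simply does not follow from Krull--Schmidt alone.

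The second genuine misstep is the detour through stellar subdivisions to reduce from irreducible to flag.  For simplicial $2$-spheres, flag is \emph{equivalent} to essentially irreducible (i.e.\ irreducible and not $\partial\Delta^3$); this is recorded in \S6 and is exactly why Theorem~\ref{thm:7} applies to every irreducible factor except $\partial\Delta^3$, with the $\partial\Delta^3$ factors contributing trivial $\kk$ summands.  Invoking Theorem~\ref{thm:b} here is not only unnecessary, it would not help: a stellar subdivision does not turn a non-flag $2$-sphere into a flag one in a way you can feed back into this argument, and the hypothesis of Theorem~\ref{thm:b} ($\dim\sigma<n-1$, i.e.\ $\dim\sigma\le 1$ for $2$-spheres, plus the nullhomotopy condition) rules out exactly the subdivisions that would be relevant.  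Finally, you would also need the indecomposability of $M_i$, which is \emph{not} automatic from indecomposability of $A(K_i)$; the paper proves this separately as Lemma~\ref{lem:7} by observing $M_i\cong A(K_i)\otimes\Lambda_\kk[n_i]$ and rerunning the argument of Theorem~\ref{thm:7}.
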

This theorem together with Conjecture \ref{conj:a} provides a heuristic method for solving the cohomological rigidity problem of moment-angle manifolds associated to simplicial $2$-spheres.

\section{Preliminaries}\label{sec:1}
\subsection{simplicial complexes and face rings}
Let $\mathcal {S}$ be a finite set. Given a subset $\sigma\subseteq \mathcal {S}$, we denote its cardinality by $|\sigma|$.
\begin{Def}
An \emph{abstract simplicial complex} on a set $\mathcal {S}$ is a collection $K=\{\sigma\}$ of subsets of $\mathcal {S}$
such that for each $\sigma\in K$ all subsets of $\sigma$ (including $\varnothing$) also belong to $K$. An subset $\sigma\in K$ is called a \emph{simplex} of $K$.
A maximal simplex is also called a \emph{facet}.

One-element simplices are called \emph{vertices} of $K$. If $K$ contains all one-element subsets of $\mathcal {S}$, then we say that $K$ is a simplicial complex on the vertex set $\mathcal {S}$.

It is sometimes convenient to consider simplicial complexes $K$ whose vertex sets
are proper subsets of $\mathcal {S}$. In this case we refer to a one-element subset of $\mathcal {S}$ which is
not a vertex of $K$ as a \emph{ghost vertex}.

The \emph{dimension} of a simplex $\sigma\in K$ is $\mathrm{dim}\sigma=|\sigma|-1$. The dimension
of an abstract simplicial complex is the maximal dimension of its simplices. A
simplicial complex $K$ is \emph{pure} if all its facets have the same dimension.

\end{Def}

In most construction of this paper it is safe to fix an ordering in $\mathcal {S}$ and identify $\mathcal {S}$ with the index set $[m]=\{1,\dots,m\}$. We denote by $\Delta^{m}$ the abstract simplicial complex $2^{[m+1]}$ consisting of all subsets of $[m+1]$,
and denote by $\partial\Delta^{m}$ the simplicial complex $2^{[m+1]}\setminus[m+1]$ (the boundary of $\Delta^{m}$).

\begin{Def}
Let $K_1$ and $K_2$ be simplicial complexes on sets $\mathcal {S}_1$ and $\mathcal {S}_2$ respectively. The \emph{join} of $K_1$ and $K_2$ is the simplicial complex
\[K_1*K_2=\{\sigma\subseteq \mathcal {S}_1\sqcup \mathcal {S}_2: \sigma=\sigma_1\cup\sigma_2,\, \sigma_1\in K_1, \sigma_2\in K_2\}.\]
\end{Def}

\begin{Def}\label{def:1}
Let $K_1$, $K_2$ be two pure $n$-dimensional simplicial complexes on sets $\mathcal {S}_1$, $\mathcal {S}_2$ respectively.
Suppose we are given two facets $\sigma_1\in K_1,\ \sigma_2\in K_2$.
Fix an identification of $\sigma_1$ and $\sigma_2$ (by identifying their vertices),
and denote by $\mathcal {S}_1\cup_\sigma\mathcal {S}_2$ the union of $\mathcal {S}_1$ and $\mathcal {S}_2$ with
$\sigma_1$ and $\sigma_2$ identified.  Then the simplicial complex $(K_1\cup K_2)\setminus\sigma$ is called a \emph{connected sum} of $K_1$ and $K_2$. It depends on the way of choosing the two simplices and identifying their vertices. Let $\mathcal {C}(K_1\#K_2)$ denote the set of
connected sums of $K_1$ and $K_2$.
\end{Def}

\begin{Def}
Let $K$ be an abstract simplicial complex on $[m]$.
\begin{enumerate}[(1)]
\item Given a subset $I\subseteq \mathcal [m]$, define $K_I\subseteq K$ to be the \emph{full sub-complex} of $K$ consisting of all simplices of $K$
which have all of their vertices in $I$, that is
\[K_I=\{\sigma\cap I\mid \sigma\in K\}.\]

\item For a simplex $\sigma$ of $K$, the \emph{link} and the \emph{star} of $\sigma$ are the simplicial subcomplexes
\begin{align*}
\mathrm{link}_K\sigma&=\{\tau\in K:\sigma\cup\tau\in K,\,\sigma\cap\tau=\varnothing\};\\
\mathrm{star}_K\sigma&=\{\tau\in K:\sigma\cup\tau\in K\}.
\end{align*}

\item The simplicial complex $\Delta^0*K$ (the join of $K$ and a point) is called the \emph{cone} over $K$.
\end{enumerate}
\end{Def}

\begin{Def}
A simplicial complex $K$ is called a \emph{flag complex} if each of its missing faces has two vertices.
\end{Def}

\begin{Def}
Let $K$ be a simplicial complex with vertex set $[m]$.
A \emph{missing face} of $K$ is a sequence $(i_1,\dots,i_k)\subseteq[m]$ such that $(i_1,\dots,i_k)\not\in K$,
but every proper subsequence of $(i_1,\dots,i_k)$ is a simplex of $K$. Denote by $MF(K)$ the set of all missing faces of $K$.
\end{Def}

We use the common notation $\kk$ to denote a field or the ring $\mathbb{Z}$ of integers. Let $\kk[m]=\kk[v_1,\dots,v_2]$ denote the graded polynomial algebra on $m$ variables,
deg$(v_i)=2$.
\begin{Def}
The \emph{face ring} (also known as the \emph{Stanley-Reisner ring}) of a simplicial complex $K$ on a set [m] is the quotient ring
\[\kk(K)=\kk[m]/\mathcal {I}_K\]
where $\mathcal {I}_K$ is the ideal generated by those square free monomials $v_{i_1}\cdots v_{i_s}$
for which $(i_1,\dots,i_s)$ is not a simplex in $K$. We refer to $\mathcal {I}_K$ as the Stanley-Reisner ideal of $K$.
\end{Def}

\subsection{moment-angle complexes}
We start with a generalization of the notion of moment-angle complex.
\begin{Def}
Let $(\underline{X},\underline{A})$ denote a collection of based CW pairs $\{(X_i,A_i,x_i)\}_{i=1}^m$. Given
a simplicial complex $K$ on $[m]$ (may have ghost vertices), the \emph{polyhedral product} determined by $(\underline{X},\underline{A})$ and $K$ is defined  to be:
\[\mathcal {Z}_K^{[m]}(\underline{X},\underline{A})=\bigcup_{\sigma\in K}D(\sigma),\]
where \[D(\sigma)=\prod_{i=1}^m Y_i\quad \text{and}\quad Y_i=\begin{cases}X_i\quad \text{if}\quad i\in\sigma,\\
A_i\quad \text{if}\quad i\not\in\sigma.\end{cases}\]
If $[m]$ is just the vertex set of $K$, the notation $\mathcal {Z}_K^{[m]}(\underline{X},\underline{A})$ is simplified to $\mathcal {Z}_{K}(\underline{X},\underline{A}))$.

In the case when $(\underline{X},\underline{A})=(D^2, S^1)$, we obtain the usual
\emph{moment-angle complex}, $\mathcal {Z}_K^{[m]}=\mathcal {Z}_K^{[m]}(D^2,S^1)$, and in the case $(\underline{X},\underline{A})=(D^1, S^0)$, $\RR\ZZ_K^{[m]}=\ZZ_K^{[m]}(D^1,S^0)$ is called the \emph{real moment-angle complex}.
\end{Def}

Now suppose that $K$ is an $(n-1)$-dimentional simplicial sphere (a triangulation of a sphere) with $m$ vertices. Then, as shown by Buchstaber and Panov \cite{BP00},
the moment-angle complex $\mathcal {Z}_{K}$ is a manifold of dimension $n+m$. Furthermore, Cai \cite{C14} gave a necessary and sufficient condition for $\mathcal {Z}_{K}$ to be a topological manifold.

\begin{Def}\label{def:A1}
A space $X$ is a \emph{homology $n$-manifold} if it has the same
local homology groups as $\mathbb{R}^n$, i.e., if for each $x\in X$,
\[H_i(X,X-x)=
\begin{cases}
\mathbb{Z}\quad&\text{ if } i=n,\\
0\quad&\text{ otherwise. }
\end{cases}
\]
$X$ is \emph{orientable} if there is a function $x\mapsto\mu_x$ assigning to each $x\in X$ a choice of
generator $\mu_x$ of $H_n(X,X-x)$ satisfies that for each $x\in X$ there is an open neighborhood $U$ and a class $\alpha\in H_n(X,X-U)$ such that the map induced by the inclusion $(X,X-U)\to(X,X-x)$ maps $\alpha$ to $\mu_x$ for each $x\in U$.

A space $X$ is a \emph{generalized homology $n$-sphere} (for short, a ``GHS'') if it is a homology $n$-manifold with the same homology as $S^n$ (`generalized' because a homology sphere is usually assumed to be a manifold).
\end{Def}
Definitions \ref{def:A1} can be extended in an obvious fashion to define a homology manifold or GHS over $\kk$. For a simplicial complex $K$,

\begin{thm}{\cite[Corollary 2.10]{C14}}\label{thm:mfd}
Let $K$ be a simplicial complex with $m$ vertices. Then the moment-angle complex $\ZZ_K$ is a topological $(n+m)$-manifold if and only if $|K|$ is a generalized homology $(n-1)$-sphere.
\end{thm}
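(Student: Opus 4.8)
The plan is to compute the local structure of $\ZZ_K$ at every point, recognize each local model as an open cone on an iterated suspension of a link of $K$, and then apply Edwards' double suspension theorem. Write $\mathrm{c}(X)=X\times[0,1)/(X\times\{0\})$ for the open cone on $X$ and $\Sigma$ for unreduced suspension, so that $\Sigma X=S^0*X$. The three ingredients are: (i) a hands-on description of neighbourhoods in the polyhedral product; (ii) the elementary homeomorphisms $\mathrm{c}(\Delta^k)\cong[0,1)^{k+1}$ and $\mathrm{c}(A*B)\cong\mathrm{c}(A)\times\mathrm{c}(B)$; and (iii) Edwards' theorem, used as a black box.

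\emph{Local models.} Let $z=(z_1,\dots,z_m)\in\ZZ_K\subseteq(D^2)^m$ and set $\tau=\tau(z):=\{i:|z_i|<1\}$. Since $z\in\bigcup_{\sigma\in K}D(\sigma)$ we have $\tau\in K$, and conversely every simplex of $K$ equals $\tau(z)$ for a suitable $z$. Taking a product chart of $(D^2)^m$ about $z$ — an open $2$-disc factor for each $i$ with $|z_i|<1$, and a factor $\mathbb R\times[0,1)$ (angle, inward displacement) for each $i$ with $|z_i|=1$ — the condition $\{i:|w_i|<1\}\in K$ that cuts out $\ZZ_K$ becomes, for $w$ near $z$, the condition $\{i\notin\tau:w_i\ \text{is displaced inward}\}\in\mathrm{link}_K\tau$. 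Hence a neighbourhood of $z$ in $\ZZ_K$ is homeomorphic to
\[
\mathbb R^{2|\tau|}\times\mathbb R^{\,m-|\tau|}\times C_L,\qquad L=\mathrm{link}_K\tau,\quad C_L:=\bigl\{(t_i)\in[0,1)^{[m]\setminus\tau}:\{i:t_i>0\}\in L\bigr\}.
\]
Writing $C_L$ as the colimit over the face poset of $L$ of the cubes $[0,1)^\rho$, and matching this — via $\mathrm{c}(\Delta^k)\cong[0,1)^{k+1}$ — with the analogous colimit presentation of $\mathrm{c}(|L|)$, one gets $C_L\cong\mathrm{c}(|L|)$, the origin going to the cone point. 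Finally $\mathrm{c}(A*B)\cong\mathrm{c}(A)\times\mathrm{c}(B)$ with $A=S^0$ gives $\mathbb R\times\mathrm{c}(X)\cong\mathrm{c}(\Sigma X)$, hence by iteration a neighbourhood of $z$ is homeomorphic to $\mathrm{c}\bigl(\Sigma^{\,m+|\tau|}|\mathrm{link}_K\tau|\bigr)$, with $z$ corresponding to the cone point.

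\emph{From local models to links.} For compact $Y$ one has $\mathrm{c}(Y)\cong\mathbb R^p$ if and only if $Y\cong S^{p-1}$. Since every neighbourhood as above is an open cone whose cone point is the given point of $\ZZ_K$, and since every $\tau\in K$ occurs, $\ZZ_K$ is a topological $(m+n)$-manifold if and only if
\[
\Sigma^{\,m+|\tau|}\,|\mathrm{link}_K\tau|\ \cong\ S^{\,m+n-1}\qquad\text{for every simplex }\tau\in K.
\]
If this holds, the suspension isomorphism in homology forces $\widetilde H_*(|\mathrm{link}_K\tau|)\cong\widetilde H_*(S^{\,n-1-|\tau|})$ for all $\tau$ (taking $\tau$ a facet forces, in particular, purity in dimension $n-1$); by the standard recursive characterization of homology manifolds through the homology of links of simplices, this is precisely the statement that $|K|$ is a homology $(n-1)$-manifold with the homology of $S^{n-1}$, i.e.\ a GHS. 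Conversely, if $|K|$ is a GHS of dimension $n-1$ then every $|\mathrm{link}_K\tau|$ is a homology $(n-1-|\tau|)$-sphere, and since $m+|\tau|\ge 2$ in all nondegenerate cases, Edwards' double suspension theorem gives the displayed homeomorphism; all local models are then Euclidean, so $\ZZ_K$ is a topological $(m+n)$-manifold.

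\emph{Main obstacle.} The crux is the local computation: peeling the $2$-disc and angular coordinates cleanly out of the polyhedral product and identifying the surviving inward-displacement coordinates with $\mathrm{c}(|\mathrm{link}_K\tau|)$, together with a careful treatment of the degenerate cases (notably $\tau$ a facet, where $\mathrm{link}_K\tau=\{\varnothing\}$, $C_L$ is a point and the neighbourhood is $\mathbb R^{m+n}$, and the cases of very small $m$). Once the local model is in hand everything is formal: the suspension isomorphism and the link-recursion for homology manifolds are elementary, and the only deep input, Edwards' double suspension theorem, is quoted off the shelf.
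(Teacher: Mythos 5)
The paper does not prove this theorem itself; it is quoted from Cai~\cite{C14}, so there is no internal proof to compare against. Your argument follows what I believe is the standard route (and Cai's): identify the local model of $\ZZ_K$ at a point $z$ with $\tau=\tau(z)$ as $\mathrm{c}\bigl(\Sigma^{m+|\tau|}\,|\mathrm{link}_K\tau|\bigr)$, then invoke Edwards' double suspension theorem. The one real blemish is the unqualified claim that for compact $Y$, $\mathrm{c}(Y)\cong\mathbb R^p$ if and only if $Y\cong S^{p-1}$. This is false in general: take $Y=\Sigma H$ for $H$ a nontrivial homology $3$-sphere. Then $\Sigma^2 H\cong S^5$ by Edwards, so a suspension point of $\Sigma^2 H$ has a Euclidean neighbourhood, i.e.\ $\mathrm{c}(\Sigma H)\cong\mathbb R^5$; yet $\Sigma H\not\cong S^4$, since $\Sigma H$ is not even a manifold (its suspension points have link $H$, which is not simply connected). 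This touches only your ``only if'' step, where you pass from ``$\ZZ_K$ is a manifold'' to ``$\Sigma^{m+|\tau|}|\mathrm{link}_K\tau|\cong S^{m+n-1}$''. Two easy repairs: (i) observe that here $Y=\Sigma^{m+|\tau|}|\mathrm{link}_K\tau|$ is a double suspension once $m+|\tau|\geq 2$, and for double suspensions the cone characterisation does hold --- every vertex link of a double suspension is again a suspension, hence simply connected, so Edwards' recognition criterion plus the generalized Poincar\'e conjecture apply; or, simpler, (ii) bypass the homeomorphism entirely and use only local homology: $H_*\bigl(\mathrm{c}(Y),\mathrm{c}(Y)\setminus\{\text{cone pt}\}\bigr)\cong\widetilde H_{*-1}(Y)$, which together with the suspension isomorphism already forces $\widetilde H_*(|\mathrm{link}_K\tau|)\cong\widetilde H_*(S^{\,n-1-|\tau|})$, exactly what the ``only if'' direction needs. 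With that caveat, and the degenerate small-$m$ cases handled as you indicate, the proof is sound.
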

In the above case, $\ZZ_K$ is referred to as a \emph{moment-angle manifold}.
In particular, if $K$ is a polytopal sphere (a triangulated sphere isomorphic to the boundary complex of a simplicial polytope),
or more generally a starshaped sphere (see Definition \ref{def:b}), then $\mathcal {Z}_{K}$ admits a smooth structure \cite{PU12}.

\begin{Def}\label{def:b}
A simplicial sphere $K$ of dimension $n$ is said to be \emph{starshaped} if there is a geometric realization $|K|$ of $K$ in $\mathbb{R}^n$ and a point $p\in \mathbb{R}^n$
with the property that each ray emanating from $p$ meets $|K|$ in exactly one point.
\end{Def}

\subsection{Cohomology rings of moment-angle complexes}
In this paper we mainly use the following results to calculate the cohomology ring of $\mathcal {Z}_{K}$,
which is proved by Buchstaber and Panov \cite[Theorems 7.6]{BP02} for the case over a field, \cite{BBP04} for the general case. Another proof of Theorem \ref{thm:1} for the case over $\mathbb{Z}$ was given by Franz \cite{M06}.

\begin{thm}[Buchstaber-Panov, {\cite[Theorem 4.7]{P08}}]\label{thm:1}
Let $K$ be a abstract simplicial complex on the set $[m]$. Then the cohomology
algebra of the moment-angle complex $\mathcal {Z}_{K}^{[m]}$ is given by the isomorphisms
\begin{align*}
H^*(\mathcal {Z}_{K}^{[m]}; \kk)\cong \mathrm{Tor}^{*,\,*}_{\kk[m]}(\kk(K),\kk)\cong\bigoplus_{I\subseteq [m]} \w {H}^*(K_I;\kk)
\end{align*}
where
\[H^p(\mathcal {Z}_{K}^{[m]}; \kk)\cong \bigoplus_{\substack{J\in[m]\\-i+2|J|=p}}\mathrm{Tor}^{-i,\,2J}_{\kk[m]}(\kk(K),\kk)\]
and
\[\mathrm{Tor}^{-i,\,2J}_{\kk[m]}(\kk(K),\kk)\cong \w {H}^{|J|-i-1}(K_J;\kk).\]
We assume $\w H^{-1}(\varnothing;\kk)=\kk$ above.
\end{thm}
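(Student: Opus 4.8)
The plan is to set up the standard cellular cochain model of $\ZZ_K^{[m]}$, to identify its cohomology with $\mathrm{Tor}_{\kk[m]}(\kk(K),\kk)$ through the Koszul resolution, and then to split this Tor-algebra multidegree by multidegree into the reduced cohomologies of the full subcomplexes. Give $D^2$ the CW structure with one cell in each of the dimensions $0,1,2$ and $(D^2)^m$ the product structure; a cell of $(D^2)^m$ is then labelled by a pair of disjoint subsets $(\omega,\tau)\subseteq[m]$ (the coordinates carrying the $1$-cell, resp.\ the $2$-cell), with dual cochain $u_\omega v_\tau$ of topological degree $|\omega|+2|\tau|$. Since $\ZZ_K^{[m]}=\bigcup_{\sigma\in K}D(\sigma)$ and such a cell lies in $D(\sigma)$ exactly when its $2$-cell support is a subset of $\sigma$, the space $\ZZ_K^{[m]}$ is a cellular subcomplex whose cellular cochain complex is the bigraded algebra
\[
R^*(K)=\Lambda[u_1,\dots,u_m]\otimes\kk(K)\big/\bigl(v_i^{2}=u_iv_i=0:\ i\in[m]\bigr),\qquad \mathrm{bideg}\,u_i=(-1,2e_i),\ \ \mathrm{bideg}\,v_i=(0,2e_i),
\]
with cellular differential $du_i=v_i$, $dv_i=0$ (here $\tau$ runs only over simplices of $K$, the remaining $v_\tau$ being killed in $\kk(K)$). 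Its cellular cup product is induced by a cellular diagonal approximation of $(D^2)^m$ chosen to restrict to $\ZZ_K^{[m]}$; on monomials it sends $u_\omega v_\tau\cdot u_{\omega'}v_{\tau'}$ to $\pm u_{\omega\cup\omega'}v_{\tau\cup\tau'}$ when $\omega,\omega',\tau,\tau'$ are pairwise disjoint and to $0$ otherwise.

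The sequence $v_1,\dots,v_m$ is regular in $\kk[m]$, so the Koszul complex $\Lambda[u_1,\dots,u_m]\otimes\kk[m]$ with $du_i=v_i$ is a free dg resolution of $\kk$ over $\kk[m]$, whence $\mathrm{Tor}_{\kk[m]}(\kk(K),\kk)$ is computed by $\kk(K)\otimes_{\kk[m]}(\Lambda[u]\otimes\kk[m])=\Lambda[u]\otimes\kk(K)$. The quotient dga map $\Lambda[u]\otimes\kk(K)\to R^*(K)$ killing the $u_iv_i$ and $v_i^{2}$ is an isomorphism in every squarefree $\mathbb Z^m$-multidegree, while in the non-squarefree multidegrees the source is acyclic by the standard Koszul computation (the one underlying Hochster's formula); hence it is a quasi-isomorphism, and $H^*(\ZZ_K^{[m]};\kk)\cong H^*(R^*(K))\cong\mathrm{Tor}_{\kk[m]}(\kk(K),\kk)$ as bigraded algebras. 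Reading off the topological grading $p=-i+2|J|$ from the Koszul bidegrees then gives the stated decomposition of $H^p$ into the summands $\mathrm{Tor}^{-i,2J}_{\kk[m]}(\kk(K),\kk)$.

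For the combinatorial identification, note that $v_i^{2}=0$ in $R^*(K)$, so its $\mathbb Z^m$-multigrading is supported on squarefree degrees and $R^*(K)=\bigoplus_{J\subseteq[m]}R^*(K)_J$, where $R^*(K)_J$ is spanned by the monomials $u_{J\setminus\tau}v_\tau$ with $\tau\in K$, $\tau\subseteq J$, and $d$ preserves each summand. Sending $u_{J\setminus\tau}v_\tau$ to the oriented simplex $\tau$ of $K_J$ (and $u_J$ to the empty face) identifies $(R^*(K)_J,d)$, up to an overall degree shift, with the augmented simplicial cochain complex of $K_J$: the term of $d(u_{J\setminus\tau}v_\tau)$ indexed by $i\in J\setminus\tau$ is $\pm u_{J\setminus(\tau\cup i)}v_{\tau\cup i}$, which is a simplicial coboundary term when $\tau\cup i\in K$ and vanishes in $R^*(K)$ precisely when $\tau\cup i\notin K$. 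Since the monomial $u_{J\setminus\tau}v_\tau$ sits in homological degree $-(|J|-|\tau|)$ and represents a simplex of dimension $|\tau|-1$, this yields $\mathrm{Tor}^{-i,2J}_{\kk[m]}(\kk(K),\kk)\cong\w H^{\,|J|-i-1}(K_J;\kk)$, with the unit $1=u_\varnothing v_\varnothing$ accounting for the convention $\w H^{-1}(\varnothing;\kk)=\kk$; summing over $J$ and $i$ gives $\mathrm{Tor}_{\kk[m]}(\kk(K),\kk)\cong\bigoplus_{I\subseteq[m]}\w H^*(K_I;\kk)$, and transporting Baskakov's $*$-product through this identification is a routine simplicial-cochain computation.

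The additive statements and the splitting are essentially bookkeeping once this model is in place; the point needing genuine care is the \emph{multiplicative} assertion. One must verify that a cellular diagonal on $(D^2)^m$ can be chosen compatibly with the subcomplex $\ZZ_K^{[m]}$ and so that it induces exactly the product on $R^*(K)$ described above, and that the quasi-isomorphism $\Lambda[u]\otimes\kk(K)\to R^*(K)$ of the second step is multiplicative, so that the ring structure obtained is the one on $\mathrm{Tor}$ coming from the dga structure of the Koszul resolution. An alternative route is the Eilenberg--Moore spectral sequence of the map $\ZZ_K^{[m]}(\mathbb{C}P^\infty,\ast)\to(\mathbb{C}P^\infty)^m$ (of which $\ZZ_K^{[m]}$ is the homotopy fibre), whose $E_2$-term is $\mathrm{Tor}_{\kk[m]}(\kk(K),\kk)$; this trades the above for proving collapse at $E_2$ (obtainable, for instance, by comparison with the cellular model) together with multiplicativity of the edge homomorphism.
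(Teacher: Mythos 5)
The paper states this theorem as a cited result (Buchstaber--Panov, \cite[Theorem~4.7]{P08}) rather than proving it, so there is no in-paper proof to compare against; I therefore assess your proposal on its own. Your plan is a correct reproduction of the standard Buchstaber--Panov argument: the cellular cochain model $R^*(K)=\Lambda[u_1,\dots,u_m]\otimes\kk(K)/(v_i^2,u_iv_i)$ built from the three-cell structure on $D^2$; the Koszul resolution identifying $H^*(R^*(K))$ with $\mathrm{Tor}_{\kk[m]}(\kk(K),\kk)$ via the quasi-isomorphism $\Lambda[u]\otimes\kk(K)\to R^*(K)$ (isomorphism in squarefree $\mathbb{Z}^m$-multidegrees, source acyclic otherwise); and the multigraded splitting in which the degree-$J$ summand of $R^*(K)$ is, up to shift, the augmented cochain complex of $K_J$, yielding $\mathrm{Tor}^{-i,2J}\cong\w H^{|J|-i-1}(K_J;\kk)$. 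You also correctly isolate the one step that requires genuine care, namely the multiplicative assertion: one must produce a cellular diagonal approximation on $(D^2)^m$ that restricts to the subcomplex $\ZZ_K^{[m]}$ and induces the stated product on $R^*(K)$, and check that the quasi-isomorphism is a map of dg algebras so that the resulting ring structure matches the $\mathrm{Tor}$-algebra product. The Eilenberg--Moore alternative you mention is the other standard route, and this multiplicative subtlety is exactly why the integral version of the theorem required the separate treatments in \cite{BBP04} and \cite{M06} that the paper cites alongside \cite{BP02}. As a proof plan the proposal is sound and complete at the level of detail expected.
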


\begin{Def}
Since $\mathrm{Tor}^{*,\,*}_{\kk[m]}(\kk(K),\kk)$ is a graded algebra, the isomorphisms of Theorem \ref{thm:1} turn the direct sum
$\bigoplus_{I\subseteq [m]} \w {H}^*(K_I;\kk)$
into a graded $\kk$-algebra, called the \emph{Hochster algebra} of $K$ on [m] and denoted $\mathcal {H}^*_{[m]}(K;\kk)$
(if $[m]$ is the vertex set of $K$, simplify it to $\mathcal {H}^*(K;\kk)$),
where $\mathcal {H}^i_{[m]}(K;\kk)=\bigoplus_{J\subseteq [m]}\w H^{i-1}(K_J;\kk)$. Let
\[\w {\mathcal {H}}^*_{[m]}(K)=\bigoplus_{J\subseteq [m],\, J\neq\varnothing}\w H^*(K_J),\]
called the \emph{reduced Hochster algebra} of $K$.

Given two elements $u,v\in\mathcal {H}^*_{[m]}(K)$, denote by $u*v$ the product of $u$ and $v$ in $\mathcal {H}^*_{[m]}(K)$, called the \emph{star product}.
\end{Def}

So $\mathcal {H}^*_{[m]}(K;\kk)$ is an augmented $\kk$-algebra with $\w{\mathcal {H}}^*_{[m]}(K;\kk)$ as its augmentation ideal.
Theorem \ref{thm:1} gives that
\[H^*(\mathcal {Z}_{K}^{[m]}; \kk)\cong \mathcal {H}^*_{[m]}(K;\kk),\]
and
\[\w H^*(\mathcal {Z}_{K}^{[m]};\kk)\cong \w{\mathcal {H}}^*_{[m]}(K;\kk).\]

Let $\w C^q(K;\kk)$ denote the $q$th reduced simplicial cochain group of $K$ with coefficients in $\kk$. For a oriented (ordered) simplex $\sigma=(i_1,\dots,i_p)$ of $K$, denote still by $\sigma\in\w C^{p-1}(K;\kk)$ the basis cochain corresponding to $\sigma$; it takes value $1$ on $\sigma$ and vanishes on all other simplices.
\begin{Def}\label{def:cup}
Define a $\kk$-bilinear \emph{union product} in the simplicial cochains of full subcomplexes of $K$
\begin{align*}
\sqcup :\w C^{p-1}(K_I;\kk)\otimes \w C^{q-1}(K_J;\kk)&\to \w C^{p+q-1}(K_{I\cup J};\kk), \quad p,q\geq 0\\
\sigma\otimes\tau&\mapsto\sigma\sqcup\tau
\end{align*}
by setting $\sigma\sqcup \tau$ to be the juxtaposition of $\sigma$ and $\tau$ if $I\cap J=\varnothing$ and $\sigma\cup\tau$ is a simplex of $K_{I\cup J}$; zero otherwise.

Similarly, the \emph{excision product} in the simplicial chains and cochains of full subcomplexes is defined by
\begin{align*}
\sqcap :\w C_{p+q-1}(K_I;\kk)\otimes \w C^{p-1}(K_J;\kk)&\to \w C_{q-1}(K_{I\setminus J};\kk), \quad p,q\geq 0\\
\sigma\otimes\tau&\mapsto\sigma\sqcap\tau
\end{align*}
Here $\sigma\sqcap\tau=\varepsilon_{\sigma,\tau}(\sigma\setminus\tau)$ if $J\subset I$, $\tau\subset\sigma$ and $\sigma\setminus\tau\subset I\setminus J$; zero otherwise, and $\varepsilon_{\sigma,\tau}$ is the sign of the permutation sending $\tau\sqcup(\sigma\setminus\tau)$ to $\sigma$.
\end{Def}
It is easily verified that the union product of cochains
induces a union product of cohomology classes in the Hochster algebra of $K$:
\[\sqcup :\HH^{p}(K;\kk)\otimes \HH^{q}(K;\kk)\to \HH^{p+q}(K;\kk), \quad p,q\geq 0.\]
Similarly, there is an induced excision product in homology and cohomology of the full subcomplexes of $K$. Union and excision product are related by the formula \[\psi(c\sqcap\phi)=(\phi\sqcup\psi)(c)\] for $c\in\w C_{p+q-1}(K_I)$, $\phi\in\w C^{p-1}(K_J)$ and $\psi\in\w C^{q-1}(K_{I\setminus J})$.
Intuitively, the union product (resp. excision produc) is an analogue of cup product (resp. cap product) in cohomology (resp. homology and cohomology) of a space.
Actually, the union and excision product for $K$ do respectively correspond (up to a sign) to the cup and cap product for $\ZZ_k$.  In \S\ref{sec:2} we will futher show that like the Poincar\'e duality of manifolds is given by the cap product map, the Alexander duality of Gorenstein* complexes is given by the excision product map.

Baskakov in \cite{B02}
asserted that the star product and the union product are identical. Howerver there is a sign difference between these two products. This was pointed out by Bosio and Meersseman \cite[Theorem 10.1]{BM06} (Buchstaber and Panov also indicated this defect \cite{BP15}).

\begin{thm}[{\cite[Proposition 3.2.10]{BP15}}]\label{thm:e}
Let $K$ be a simplicial complex on $[m]$.  Given cohomology classes
$[u]\in \w H^{p-1}(K_I)$ and $[v]\in \w H^{q-1}(K_J)$ with $I,J\in[m]$, then
\[
[u]*[v]=(-1)^{|I|q+\theta (I,J)}[u]\sqcup [v],
\]
where $\theta (I,J)$ is defined by $\theta (I,J)=\sum_{i\in I}\theta (i,J)$, and $\theta (i,J)$ is the number of elements $j\in J$,
such that $j<i$.
\end{thm}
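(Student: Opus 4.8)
The plan is to put both products on the same cochain‑level footing: realize $\HH^*(K)$ as the cohomology of a multigraded differential graded algebra, identify each multigraded summand with the simplicial cochains of the corresponding full subcomplex, and then read off both the star product and the union product as operations on these cochains. Their ratio will then be an explicit sign coming from the Koszul sign rule together with the sign twist that is built into the Hochster identification.

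Concretely, I would work with the DGA $R^*(K)=\Lambda[u_1,\dots,u_m]\otimes\kk(K)$, bigraded with $u_i$ in homological degree $-1$ and internal degree $2\{i\}$, $v_i$ in homological degree $0$ and internal degree $2\{i\}$, and differential the $\kk(K)$‑linear derivation with $du_i=v_i$; its cohomology is $\mathrm{Tor}_{\kk[m]}(\kk(K),\kk)\cong H^*(\ZZ_K)\cong\HH^*(K)$, and the star product is by construction the one induced by the multiplication of $R^*(K)$. In internal degree $2I$ with $I\subseteq[m]$ squarefree, the component $R^*(K)_{2I}$ has $\kk$‑basis the monomials $v_Au_\omega$ with $A\sqcup\omega=I$ and $A\in K$ (the $\omega$ written increasingly), and the first step is the standard, sign‑sensitive chain isomorphism $\Phi_I\colon R^*(K)_{2I}\xrightarrow{\ \sim\ }\w C^{\,*-1}(K_I;\kk)$ onto the augmented simplicial cochains of $K_I$, sending $v_Au_\omega$ to $\varepsilon(A,\omega)\,[A]$, so into cochain degree $|A|-1$ as in Hochster's theorem. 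The twist $\varepsilon(A,\omega)=\pm1$ is forced by the requirement that $\Phi_I$ intertwine the Koszul differential $d(v_Au_\omega)=\sum_{j\in\omega}(-1)^{\mathrm{pos}(j,\omega)-1}v_{A\cup j}u_{\omega\setminus j}$ with the simplicial coboundary $\delta[A]=\sum_{j\notin A}(-1)^{\mathrm{pos}(j,A\cup j)-1}[A\cup j]$, and pinning it down explicitly is the combinatorial heart of the matter.

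Next I would compute the product of two basis monomials (by bilinearity this suffices). For $A\sqcup\omega=I$ and $A'\sqcup\omega'=J$ with $I\cap J=\varnothing$, the Koszul sign rule gives
\[(v_Au_\omega)\cdot(v_{A'}u_{\omega'})=(-1)^{\mathrm{sh}(\omega,\omega')}\,v_{A\cup A'}\,u_{\omega\cup\omega'},\]
which vanishes unless $A\cup A'\in K$, where $\mathrm{sh}(\omega,\omega')$ is the shuffle sign reordering $u_\omega u_{\omega'}$ increasingly. Transporting through $\Phi$ and comparing with Definition \ref{def:cup}, where $[A]\sqcup[A']$ is the juxtaposition of two increasingly ordered simplices and hence equals the canonical basis cochain $[A\cup A']$ times a further shuffle sign $\mathrm{sh}(A,A')$ (again only when $A\cup A'\in K_{I\cup J}$), one finds that $[u]*[v]$ and $[u]\sqcup[v]$ agree up to the sign
\[\varepsilon(A\cup A',\omega\cup\omega')+\mathrm{sh}(\omega,\omega')-\varepsilon(A,\omega)-\varepsilon(A',\omega')-\mathrm{sh}(A,A')\pmod 2.\]
Expanding each $\varepsilon$ and each shuffle sign as a count of inversions among the four sets $A,\omega,A',\omega'$ and using $|A|=p$, $|A'|=q$ together with $I\cap J=\varnothing$, this alternating sum collapses modulo $2$ to $|I|q+\theta(I,J)$, which is the assertion. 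If $I\cap J\neq\varnothing$ then $[u]*[v]$ lies in a non‑squarefree internal degree and vanishes by Hochster's theorem, while $[u]\sqcup[v]=0$ by definition, so there is nothing to prove.

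The main obstacle, as is typical for statements of this kind, is the sign bookkeeping, in two linked places. First one must determine $\varepsilon(A,\omega)$ exactly, which forces a precise choice of the chain‑level conventions underlying Theorem \ref{thm:1} (including the homological‑versus‑cohomological normalization) and the ordered‑simplex convention of Definition \ref{def:cup} — the whole point of the statement being that two equally natural such conventions, Baskakov's and Buchstaber--Panov's, differ by $(-1)^{|I|q+\theta(I,J)}$, so nothing may be left implicit. Second one must carry out the elementary but delicate combinatorial identity reducing the displayed alternating sum to $|I|q+\theta(I,J)$, repeatedly using $\mathrm{inv}(X,Y)+\mathrm{inv}(Y,X)=|X|\,|Y|$ to absorb the quadratic terms. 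A further subtlety: the star product is literally the $\mathrm{Tor}$‑product, which matches the cup product on $H^*(\ZZ_K)$ only after the Eilenberg--Zilber signs are fixed; viewing $R^*(K)$ in squarefree internal degrees as a summand of the cellular cochain DGA of $\ZZ_K\subseteq\prod_{i=1}^m D^2$ removes that ambiguity and is the cleanest way to organize the computation.
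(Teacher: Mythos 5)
Your approach is a genuinely different route from the paper's, so let me compare the two. The paper does \emph{not} re-derive the cochain-level sign formula at all: it explicitly takes the formula $\sigma*\tau=(-1)^{\zeta}\,\sigma\uplus\tau$ with $\zeta=\theta(\sigma,I)+\theta(\tau,J)+\theta(\sigma\cup\tau,I\cup J)+\theta(I\setminus\sigma,J\setminus\tau)$ as given from \cite[Proposition 3.2.10]{BP15} (equivalently \cite[Theorem 10.1]{BM06}), and the entire content of its proof is the short combinatorial reduction of $\zeta$ (together with the shuffle sign $\theta(\sigma,\tau)$ from $\sigma\uplus\tau=(-1)^{\theta(\sigma,\tau)}\sigma\sqcup\tau$) to $|I|q+\theta(I,J)$ modulo $2$, using $\theta(\sigma\cup\tau,I\cup J)=\theta(\sigma,I)+\theta(\sigma,J)+\theta(\tau,I)+\theta(\tau,J)$, $\theta(I\setminus\sigma,J\setminus\tau)=\theta(I\setminus\sigma,J)-\theta(I\setminus\sigma,\tau)$, and finally $\theta(\tau,I)+\theta(I,\tau)\equiv|I|q$. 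You, by contrast, propose to re-derive the sign from scratch through the Koszul DGA $R^*(K)=\Lambda[u_1,\dots,u_m]\otimes\kk(K)$, pin down the twist $\varepsilon(A,\omega)$ in the Hochster chain isomorphism, multiply basis monomials and compare with the union product. That is exactly the program carried out in \cite{BM06} to obtain the $\zeta$-formula, so your route, if completed, would be more self-contained (it would not depend on the prior references) but essentially re-proves \cite[Theorem 10.1]{BM06} before reaching the same final combinatorial simplification.

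The caveat is that your proposal is an outline, not a proof: you correctly identify the two delicate points (determining $\varepsilon(A,\omega)$ from the differential intertwining, and collapsing the alternating inversion sum to $|I|q+\theta(I,J)$), but both are left as assertions (``pinning it down explicitly is the combinatorial heart of the matter'', ``one finds\dots collapses modulo $2$''). In particular the claim that the net sign depends only on $I,J$ and not on the representatives $A,\omega,A',\omega'$ is precisely what must be verified and is nontrivial; the paper's proof does establish this, since each step of its reduction eliminates the $\sigma,\tau$-dependence explicitly. Your remark about Eilenberg--Zilber signs is a reasonable caution for identifying $H^*(\ZZ_K)$ with the Koszul cohomology, but it is tangential here: the star product is \emph{defined} as the Tor-algebra product transported through Theorem \ref{thm:1}, so for the present statement one only needs the Koszul DGA, not a comparison with cellular cochains of $\ZZ_K$. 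In short: the plan is sound and more foundational than what the paper does, but as written it defers exactly the computations that constitute the proof.
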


In fact the formula given in \cite[Proposition 3.2.10]{BP15} (also \cite[Theorem 10.1]{BM06}) is not exactly the same as in Theorem \ref{thm:e}. They showed that if
$\sigma=(l_1,\dots,l_p)\in \w C^{p-1}(K_I)$ and $\tau=(m_1,\dots,m_q)\in \w C^{q-1}(K_J)$ are two cochain simplices with
$l_1<\dots<l_p,\, m_1<\dots<m_q$, then
\[
\sigma*\tau=
\begin{cases}
(-1)^\zeta\,\sigma\uplus\tau&\ \text{ if } I\cap J=\varnothing,\\
0&\  \text{ otherwise.}
\end{cases}\]
where \[\zeta=\theta(\sigma, I)+\theta(\tau, J)+\theta(\sigma\cup \tau, I\cup J)+\theta(I\setminus\sigma,J\setminus\tau),\]
and $\sigma\uplus\tau$ is the cochain simplex of $\w C^{p+q-1}(K_I*K_J)$ corresponding to $\sigma\sqcup\tau$ written in increasing order. Since $\zeta$
depends on both $I,J$ and $\sigma,\tau$, it is not convenient to use it to describe the relation between union product and star product. Now we prove that $\zeta$ is equal to the sign given in Theorem \ref{thm:e}.

\begin{proof}[Proof of Theorem \ref{thm:e}]
First note that \[\theta(\sigma\cup\tau, I\cup J)=\theta(\sigma,I)+\theta(\sigma,J)+\theta(\tau,I)+\theta(\tau,J),\]
and
\[\theta(I\setminus\sigma,J\setminus\tau)=\theta(I\setminus\sigma,J)-\theta(I\setminus\sigma,\tau).\]
Then
\[\zeta\equiv \theta(I,J)+\theta(\tau,I)+\theta(I\setminus\sigma,\tau)\quad \text{mod } 2\]

On the other hand, it is easy to see that
\[\sigma\uplus\tau=(-1)^{\theta(\sigma,\tau)}\sigma\sqcup\tau.\]
So if we can prove $\theta(\tau,I)+\theta(I,\tau)\equiv |I|q$ mod $2$, then the formula holds. Note that
\[\theta(I,\tau)=\sum_{v\in I}(|\tau|-g(v,\tau)),\]
where $g(v,\tau)$ is the cardinality of the set $\{u\in\tau\mid u>v\}$. Let $f(u,v)=1$ if $u>v$ and zero otherwise.
Then we have \[\theta(\tau,I)=\sum_{u\in\tau, v\in I}f(u,v)=\sum_{v\in I}g(v,\tau)\]
Combine the equations above and the fact that $|\tau|=q$, we get the desired equation.
\end{proof}

\subsection{functorial properties of moment-angle complexes}
Let $K$ be a simplicial complex on $[m]$, then $\mathcal{Z}_K^{[m]}$ can be seen as a subspace of the \emph{unit polydisk} $(D^2)^m$ of $\mathbb{C}^m$, where
\[(D^2)^m=\{(z_1,\dots,z_m)\in\mathbb{C}^m: |z_i|\leq 1,\ i=1,\dots,m\}\]

A set map $\varphi:[l]\to[m]$ induces a map between polydisks
\[\psi:\prod_1^l(D^2)^l\to\prod_1^m(D^2)^m,\quad (x_1,\dots,x_l)\to(y_1,\dots,y_m),\]
where
\[y_j=\prod_{i\in \varphi^{(-1)}(j)}x_i\quad \text{ for } j=1,\dots,m,\]
and set $y_j=1$ if $\varphi^{(-1)}(j)=\varnothing$.

\begin{prop}[{\cite[Lemma 4.2]{P08}}]\label{prop:0}
Let $\varphi: K_1\to K_2$ be a simplicial map between simplicial complexes $K_1$ and $K_2$ on $[m_1]$ and $[m_2]$ respectively. Then there is a equivariant map
$\varphi_\mathcal {Z}:\mathcal{Z}_{K_1}^{[m_1]}\to \mathcal{Z}_{K_2}^{[m_2]}$, which is the restriction of $\psi$ (defined as above) to $\mathcal{Z}_{K_1}^{[m_1]}$.
\end{prop}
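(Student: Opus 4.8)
The plan is to verify directly, on the level of points of the unit polydisk, that $\psi$ carries $\ZZ_{K_1}^{[m_1]}$ into $\ZZ_{K_2}^{[m_2]}$, and then to name the torus homomorphism with respect to which the restriction is equivariant.

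First I would record the pointwise description of a moment-angle complex. For $x=(x_1,\dots,x_m)\in(D^2)^m$ put $\mathrm{supp}(x)=\{i\in[m]:|x_i|<1\}$. Then $x\in D(\sigma)$ if and only if $\mathrm{supp}(x)\subseteq\sigma$, so, since $\ZZ_K^{[m]}=\bigcup_{\sigma\in K}D(\sigma)$ and $K$ is closed under passing to subsets, $x\in\ZZ_K^{[m]}$ if and only if $\mathrm{supp}(x)\in K$. Next I would compute the support of $\psi(x)$. Because $|x_i|\le 1$ for all $i$, the coordinate $y_j=\prod_{i\in\varphi^{-1}(j)}x_i$ of $\psi(x)$ (with $y_j=1$ when $\varphi^{-1}(j)=\varnothing$) satisfies $|y_j|\le 1$, and $|y_j|<1$ exactly when some factor has $|x_i|<1$, i.e. when $\varphi^{-1}(j)\cap\mathrm{supp}(x)\ne\varnothing$. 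Hence
\[
\mathrm{supp}(\psi(x))=\{\varphi(i):i\in\mathrm{supp}(x)\}=\varphi\bigl(\mathrm{supp}(x)\bigr).
\]
If $x\in\ZZ_{K_1}^{[m_1]}$ then $\mathrm{supp}(x)\in K_1$, and since $\varphi$ is a simplicial map $\varphi(\mathrm{supp}(x))\in K_2$; by the displayed identity $\mathrm{supp}(\psi(x))\in K_2$, so $\psi(x)\in\ZZ_{K_2}^{[m_2]}$. As $\psi$ is continuous (each of its coordinates is a monomial in the $x_i$), its restriction $\varphi_{\ZZ}:\ZZ_{K_1}^{[m_1]}\to\ZZ_{K_2}^{[m_2]}$ is the desired map. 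The degenerate indices $j$ with $\varphi^{-1}(j)=\varnothing$, in particular the ghost vertices of $K_2$, cause no trouble: for such $j$ one has $|y_j|=1$, so $j$ lies in no support.

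For equivariance, let $T^m=(S^1)^m$ act coordinatewise on $(D^2)^m$ and define $\rho:T^{m_1}\to T^{m_2}$ by the same product formula $\rho(t)_j=\prod_{i\in\varphi^{-1}(j)}t_i$ (and $1$ if the preimage is empty); this makes sense because $S^1$ is a group. Then, in each coordinate, $\psi(t\cdot x)_j=\prod_{i\in\varphi^{-1}(j)}t_ix_i=\bigl(\prod_{i\in\varphi^{-1}(j)}t_i\bigr)\bigl(\prod_{i\in\varphi^{-1}(j)}x_i\bigr)=\rho(t)_j\,\psi(x)_j$, so $\psi(t\cdot x)=\rho(t)\cdot\psi(x)$, and this passes to the subspaces; hence $\varphi_{\ZZ}$ is equivariant with respect to $\rho$.

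I do not expect a genuine obstacle in this argument. The only points requiring care are the pointwise reformulation of $\ZZ_K^{[m]}$ in terms of supports, the bookkeeping of empty preimages and ghost vertices, and making explicit the homomorphism $\rho$ along which "equivariant" is to be understood.
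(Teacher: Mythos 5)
Your proof is correct, and since the paper cites this proposition from Panov's survey \cite[Lemma 4.2]{P08} without reproducing an argument, there is no in-text proof to compare against. Your verification via supports---that $x\in\mathcal Z_K^{[m]}$ iff $\mathrm{supp}(x)\in K$, that $\mathrm{supp}(\psi(x))=\varphi(\mathrm{supp}(x))$, and that equivariance holds along the homomorphism $\rho(t)_j=\prod_{i\in\varphi^{-1}(j)}t_i$---is exactly the standard argument used in the cited reference, with the edge cases (empty preimages, ghost vertices) handled correctly.
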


So there is a covariant functor $\mathcal {Z}: K\mapsto \mathcal {Z}_K^{[m]}$ from the category of finite simplicial
complexes and simplicial maps to the category of spaces with torus actions
and equivariant maps (the \emph{moment-angle complex functor}).

Meanwhile, (the notation is as in Proposition \ref{prop:0}) define a homomorphism $f:\kk[m_2]\to \kk[m_1]$ by
\[f(u_j):=\sum_{i\in \varphi^{(-1)}(j)}v_i.\]

\begin{prop}[{\cite[Proposition 2.2]{P08}}]
$f$ induces a homomorphism $\kk(K_2)\to \kk(K_1)$, and then induces a homomorphism
\[\varphi^*_{\mathrm{Tor}}:\mathrm{Tor}^{*,\,*}_{\kk[m_2]}(\kk(K_2),\kk)\to \mathrm{Tor}^{*,\,*}_{\kk[m_1]}(\kk(K_1),\kk),\]
where $\varphi^*_{\mathrm{Tor}}$ is induced by the change of rings.
\end{prop}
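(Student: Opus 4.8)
The plan is to treat the statement as two separate assertions: first that the ring map $f$ descends to a ring map $\bar f\colon\kk(K_2)\to\kk(K_1)$, and then that $f$ together with $\bar f$ yields $\varphi^*_{\mathrm{Tor}}$ through the usual change-of-rings procedure in homological algebra.

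For the first assertion I would simply verify that $f(\mathcal I_{K_2})\subseteq\mathcal I_{K_1}$. Since $\mathcal I_{K_2}$ is generated by the square-free monomials $u_{j_1}\cdots u_{j_s}$ with $(j_1,\dots,j_s)\notin K_2$, it suffices to check $f$ on these. If $\varphi^{-1}(j_k)=\varnothing$ for some $k$ then $f(u_{j_k})=0$ and there is nothing to prove; otherwise expand
\[
f(u_{j_1}\cdots u_{j_s})=\sum_{(i_1,\dots,i_s)\in\varphi^{-1}(j_1)\times\cdots\times\varphi^{-1}(j_s)}v_{i_1}\cdots v_{i_s}.
\]
The decisive observation is that, $\varphi$ being a well-defined function, every monomial in this sum is \emph{square-free}: if $i_k=i_l$ then $j_k=\varphi(i_k)=\varphi(i_l)=j_l$, forcing $k=l$. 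Thus $|\{i_1,\dots,i_s\}|=s$ and $\varphi(\{i_1,\dots,i_s\})=\{j_1,\dots,j_s\}$; were $\{i_1,\dots,i_s\}$ a simplex of $K_1$, its image would be a simplex of $K_2$ since $\varphi$ is simplicial, contradicting $(j_1,\dots,j_s)\notin K_2$. Hence $\{i_1,\dots,i_s\}\notin K_1$, each $v_{i_1}\cdots v_{i_s}\in\mathcal I_{K_1}$, and so $f(\mathcal I_{K_2})\subseteq\mathcal I_{K_1}$. Therefore $f$ induces a graded ring homomorphism $\bar f\colon\kk(K_2)\to\kk(K_1)$, of internal degree $0$.

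For the second assertion I would run the standard change-of-rings argument, the one input needed being that $f$ respects augmentations: writing $\varepsilon$ for the augmentation of a polynomial algebra that kills all variables, $f(u_j)=\sum_i v_i\in\ker\varepsilon$, so $\varepsilon\circ f=\varepsilon$ and $f(\ker\varepsilon)\subseteq\ker\varepsilon$; in particular the $\kk[m_2]$-module structure on $\kk$ obtained by restriction along $f$ from the $\kk[m_1]$-module $\kk$ is the usual one. Now choose internally graded free resolutions $P_\bullet\to\kk(K_2)$ over $\kk[m_2]$ and $Q_\bullet\to\kk(K_1)$ over $\kk[m_1]$. Restricting scalars along $f$, the complex $Q_\bullet$ becomes a $\kk[m_2]$-module resolution of $\kk(K_1)$ (no longer free, but that does not matter), and the $\kk[m_2]$-linear map $\bar f$ then lifts, by the comparison theorem, to a $\kk[m_2]$-chain map $\tilde f_\bullet\colon P_\bullet\to Q_\bullet$, unique up to $\kk[m_2]$-homotopy. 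Applying $-\otimes_{\kk[m_2]}\kk$ and then composing with the canonical chain map $Q_\bullet\otimes_{\kk[m_2]}\kk\twoheadrightarrow Q_\bullet\otimes_{\kk[m_1]}\kk$ — which exists exactly because $f(\ker\varepsilon)\subseteq\ker\varepsilon$ — I obtain a chain map $P_\bullet\otimes_{\kk[m_2]}\kk\to Q_\bullet\otimes_{\kk[m_1]}\kk$, and passing to homology gives
\[
\varphi^*_{\mathrm{Tor}}\colon\mathrm{Tor}^{*,*}_{\kk[m_2]}\bigl(\kk(K_2),\kk\bigr)\longrightarrow\mathrm{Tor}^{*,*}_{\kk[m_1]}\bigl(\kk(K_1),\kk\bigr).
\]
Since $\tilde f_\bullet$ is unique up to homotopy and both $-\otimes\kk$ and the canonical map send homotopies to homotopies, $\varphi^*_{\mathrm{Tor}}$ is independent of the chosen resolutions and of the lift, hence well defined; it preserves the homological and the internal bidegree, and — as one checks on the Koszul-complex model for $\mathrm{Tor}$, where it is realized by a differential graded algebra homomorphism lifting $f$ and $\bar f$ — it is in fact a homomorphism of bigraded $\kk$-algebras.

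I do not expect a genuine obstacle here: the first part is an elementary monomial expansion whose only delicate ingredient is that $\varphi$, being single-valued, forces the expanded monomials to be square-free, and the second part is textbook. The one point that really needs attention — and the reason flat base change is not available — is that $\kk[m_1]\otimes_{\kk[m_2]}\kk$ is in general not $\kk$ (it fails to be $\kk$ precisely when some vertex of $K_2$ lies outside the image of $\varphi$), so the passage from $-\otimes_{\kk[m_2]}\kk$ to $-\otimes_{\kk[m_1]}\kk$ must go through the canonical surjection above rather than an isomorphism. As a sanity check, under the isomorphisms of Theorem \ref{thm:1} this algebraic $\varphi^*_{\mathrm{Tor}}$ coincides with the map $\varphi_{\mathcal Z}^{*}$ in cohomology induced by the equivariant map $\varphi_{\mathcal Z}$ of Proposition \ref{prop:0}.
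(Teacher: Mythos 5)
Your proof is correct. The paper states this proposition as a citation to \cite[Proposition 2.2]{P08} and offers no proof of its own, so there is nothing in the paper to compare against; your argument — checking $f(\mathcal I_{K_2})\subseteq\mathcal I_{K_1}$ by the square-free monomial expansion (with the key point that single-valuedness of $\varphi$ forces the expanded monomials to be square-free) and then running the standard change-of-rings construction via a comparison-theorem lift followed by the canonical surjection $Q_\bullet\otimes_{\kk[m_2]}\kk\twoheadrightarrow Q_\bullet\otimes_{\kk[m_1]}\kk$ — is the natural and standard one.
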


Thus we have a contravariant functor
\[\textrm{Tor-alg}:K\mapsto \mathrm{Tor}^{*,\,*}_{\kk[m]}(\kk(K),\kk).\]
from simplicial complexes to bigraded $k$-algebras.

On the other hand, Baskakov in \cite{B02} defined a homomorphism
\[\varphi^*_\mathcal {H}: \mathcal {H}^{*}_{[m_2]}(K_2)\to \mathcal {H}^{*}_{[m_1]}(K_1),\]
which is generated by the homomorphisms
\[\varphi^*_{I,J}:\w H^*((K_2)_J)\to \w H^*((K_1)_I)\ \text{ for all } I\subset  [m_1],\ J\subset [m_2],\]
where $\varphi^*_{I,J}=\varphi^*\mid(K_1)_I:\w H^*((K_2)_J)\to\w H^*((K_1)_I)$ if $\varphi(I)=J$ and $|I|=|J|$,  and $\varphi^*_{I,J}=0$
in all other cases. In particular, if $K_1$ is a subcomplex of $K_2$, then $\varphi^*_\mathcal {H}=\bigoplus_{I\subset [m_1]}\varphi^*_I$, where
$\varphi^*_I=\varphi^*\mid(K_1)_I:\w H^*((K_2)_I)\to \w H^*((K_1)_I)$. So we have another contravariant functor:
\[\mathcal {H}:K\mapsto \mathcal {H}^*_{[m]}(K).\]
\begin{conv}
We use the simplified notation $\varphi^*$ to denote $\varphi^*_\mathcal {H}$ whenever it creates no confusion.
\end{conv}

The relation between these functors is given by the following
\begin{thm}[{\cite[Theorem 4.7, Theorem 5.1]{P08}}]\label{thm:0}
Let $H^*:X\to H^*(X;\kk)$ be the cohomology functor. Then there are natural equivalences induced by the isomorphisms in Theorem \ref{thm:1}, that is
\[H^*\circ \mathcal {Z}= \mathrm{Tor\text{-}alg}= \mathcal {H}.\]
\end{thm}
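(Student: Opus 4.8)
Since Theorem \ref{thm:1} already supplies the three $\kk$-algebra isomorphisms for each fixed $K$, the only extra content of Theorem \ref{thm:0} is \emph{naturality} in $K$. The plan is to realize all the isomorphisms of Theorem \ref{thm:1} through a single explicit chain of quasi-isomorphisms of differential graded algebras that is visibly functorial, and then to chase simplicial maps through it.

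\textbf{A cellular model.} First I would give $D^2$ the CW structure with one $0$-cell, one $1$-cell and one $2$-cell, so that $S^1$ is its $1$-skeleton and $\mathcal{Z}_K^{[m]}=\bigcup_{\sigma\in K}D(\sigma)$ is a CW subcomplex of $(D^2)^m$. A direct count of cells identifies the cellular cochain DGA of $\mathcal{Z}_K^{[m]}$ with
\[
R^*(K)=\Lambda[u_1,\dots,u_m]\otimes\kk(K)/(v_i^2,\,u_iv_i:i\in[m]),
\]
where $\deg u_i=1$, $\deg v_i=2$, $du_i=v_i$ and $dv_i=0$; this is a Koszul-type complex with $H(R^*(K))\cong\mathrm{Tor}^{*,*}_{\kk[m]}(\kk(K),\kk)$. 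Splitting $R^*(K)$ into its $\mathbb{Z}^m$-multidegrees, the summand of (necessarily squarefree) multidegree $2J$ is spanned by the monomials $u_Sv_T$ with $S\sqcup T=J$ and $T\in K$, and sending $u_Sv_T$ to the simplicial cochain dual to the simplex $T$ of $K_J$ identifies this summand with the shifted reduced cochain complex $\widetilde C^{\,|J|-|S|-1}(K_J;\kk)$, carrying the differential $du_i=v_i$ to the simplicial coboundary of $K_J$. Thus $H(R^*(K))$ realizes all the isomorphisms of Theorem \ref{thm:1} at once, multiplicative structure included: the cellular cup product, the $\mathrm{Tor}$-algebra product and the star product on $\mathcal{H}^*_{[m]}(K)$ are all induced by the product of $R^*(K)$.

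\textbf{Chasing a simplicial map.} For a simplicial map $\varphi\colon K_1\to K_2$, the polydisk map $\psi$ of Proposition \ref{prop:0} is given coordinatewise by $y_j=\prod_{i\in\varphi^{-1}(j)}x_i$; inspecting the complex-multiplication map $D^2\times\dots\times D^2\to D^2$ on open cells (a product of boundary points is a boundary point, a product involving an interior point is an interior point) shows $\psi$ is cellular, hence so is its restriction $\varphi_{\mathcal{Z}}$. Tracing cells, the induced map on cellular cochains is exactly the DGA homomorphism $R^*(K_2)\to R^*(K_1)$ with $u_j\mapsto\sum_{i\in\varphi^{-1}(j)}u_i$ and $v_j\mapsto\sum_{i\in\varphi^{-1}(j)}v_i$, i.e. the map of Koszul complexes induced by the ring map $f$ of \S\ref{sec:1}; on cohomology this gives $\varphi_{\mathcal{Z}}^*=\varphi^*_{\mathrm{Tor}}$ compatibly with products, which is the natural equivalence $H^*\circ\mathcal{Z}=\mathrm{Tor\text{-}alg}$. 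For $\mathrm{Tor\text{-}alg}=\mathcal{H}$ I would expand the same DGA map on multidegree summands: $u_Sv_T$ of multidegree $2J$ maps to $\sum\pm u_{\alpha(S)}v_{\beta(T)}$ over lifts $\alpha,\beta$ of $\mathrm{id}_S,\mathrm{id}_T$ along $\varphi$, and a term survives only when $\alpha\sqcup\beta$ is injective with image $I$ and $(\alpha\sqcup\beta)^{-1}(T)$ is a face of $K_1$; this forces $\varphi(I)=J$ and $|I|=|J|$ (so the component into multidegree $2I$ vanishes unless $|I|=|J|$), and the surviving term is exactly the relabelling of $T^*$ along $\varphi|_I$, i.e. $\varphi^*|_{(K_1)_I}$ applied to it. That is precisely Baskakov's $\varphi^*_{\mathcal{H}}$, so the Hochster isomorphism is natural too.

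\textbf{The main obstacle.} None of this is deep, but the delicate point is the sign bookkeeping: one must fix orderings/orientations so that the identification of the multidegree-$2J$ part of $R^*(K)$ with $\widetilde C^{\,*-1}(K_J;\kk)$ is a genuine chain map, and so that the ``project to squarefree multidegree, then relabel'' operation reproduces $\varphi^*$ exactly rather than up to a permutation sign --- the same sort of computation as in the proof of Theorem \ref{thm:e} above. A secondary, routine point is checking that the cellular cup product on $\mathcal{Z}_K^{[m]}$ coincides with the product of $R^*(K)$ with the correct signs, so that all three products in the statement, and not merely the underlying graded groups, are matched.
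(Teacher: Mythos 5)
The paper does not prove Theorem~\ref{thm:0}; it simply cites Panov's survey \cite[Theorems 4.7, 5.1]{P08}, so there is no in-text argument to compare against. Your reconstruction — the cellular cochain DGA model $R^*(K)$, its multigraded splitting into shifted reduced cochain complexes of the $K_J$, and the cell-level functoriality of the polydisk map $\psi$, followed by matching the induced Koszul-complex map with Baskakov's $\varphi^*_{\mathcal{H}}$ and with $\varphi^*_{\mathrm{Tor}}$ — is precisely the standard route taken in the cited reference, and your flagged caveat about sign conventions in the identification with simplicial cochains is the correct subtlety to worry about (cf.\ Theorem~\ref{thm:e} in this paper, which exists exactly because Baskakov's original sign was off).
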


\section{Gorenstein* complexes and Alexander duality}\label{sec:2}
In commutative algebra, \emph{Gorenstein rings} are an important class of rings because of their nice properties such as finite injective dimension and self-duality. In the language of homological algebra, a Gorenstein ring is a \emph{Cohen-Macaulay ring of type one} (cf. \cite{S96,BH98}).
If the face ring (over a field $\kk$) of a simplicial complex $K$ is Gorenstein and $K$ is not a cone, then it is called \emph{Gorenstein* over $\kk$}.
The Gorenstein property has several topological and algebraic
interpretations (some of which we list below) and is very important for both topological
and combinatorial applications of the face rings.

The Koszul homology of a Gorenstein* ring behaves like the cohomology algebra of a manifold: it satisfies \emph{Poincar\'e duality}.
This fundamental result was proven by Avramov and Golod \cite{AG71} for general Gorenstein rings. Here we state the graded version of their theorem in
the case of face rings.

\begin{Def}
A finite dimensional graded $\kk$-algebra $A=\bigoplus_{i=0}^d A^i$ ($\kk$ is a field) will be called a \emph{Poincar\'e algebra} if $\mathrm{dim}_{\kk}\,A^d=1$, and the $\kk$-homomorphisms
\begin{align*}
A^i&\to\mathrm{Hom}_\kk(A^{d-i}, A^d),\\
a&\mapsto\phi_a, \ \ \ \text{where}\  \phi_a(b)=ab
\end{align*}
are isomorphisms for $0\leqslant i\leqslant d$.
\end{Def}

\begin{thm}[{\cite[Avramov-Golod]{AG71}}, {\cite[Theorem 3.4.4]{BP15}}]\label{thm:AG}
For a simplicial complex $K$, the face ring $\kk(K)$ over a field $\kk$ is a Gorenstein ring
if and only if the algebra $T^*=\bigoplus_{i=0}^{d}T^i$, where $T^i=\mathrm{Tor}_{\kk[m]}^{-i}(\kk(K),\kk)$,
is a Poincar\'e algebra.
\end{thm}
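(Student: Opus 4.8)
The plan is to derive the theorem from the self-duality of the Koszul complex, after reducing to an Artinian quotient; this is in essence Avramov--Golod's argument. Write $R=\kk[m]$, $S=\kk(K)=R/\mathcal{I}_K$, $\mathfrak{m}=(v_1,\dots,v_m)$, and let $\Lambda=\Lambda_\kk[u_1,\dots,u_m]$ with each $u_i$ in homological degree $1$. Since $(R\otimes\Lambda,\ du_i=v_i)$ is the minimal free resolution of $\kk$ over $R$, the DG $\kk$-algebra $\mathrm{K}^S:=S\otimes_R(R\otimes\Lambda)=S\otimes_\kk\Lambda$ computes $T^i=\mathrm{Tor}^{-i}_R(S,\kk)=H_i(\mathrm{K}^S)$, and the algebra structure it induces on $T^*$ is the Tor-algebra product in the statement; Auslander--Buchsbaum gives $T^i=0$ for $i>\mathrm{pd}_RS=m-\operatorname{depth}S$ and $T^0=\kk$, so $d=\mathrm{pd}_RS$.

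First I would reduce to the case $\operatorname{depth}S=0$. Enlarging $\kk$ (which changes neither side, as $\mathrm{Tor}$ commutes with flat base change and the Poincar\'e-algebra property is preserved and reflected) I may assume $\kk$ infinite and choose a regular sequence $\theta_1,\dots,\theta_t\in\mathfrak{m}$ of linear forms with $t=\operatorname{depth}S$; completing it to a linear coordinate system I take $\theta_j=v_{m-t+j}$. A standard computation — each $\theta_j$ is a non-zero-divisor on the successive quotient, each $\theta_j$ acts as $0$ on $\mathrm{Tor}^R(-,\kk)$, and $R$ splits off the polynomial variable $v_{m-t+j}$ — yields a graded-algebra isomorphism $T^*\cong\mathrm{Tor}^{R'}_*(B,\kk)=H_*(\mathrm{K}^B)$, where $R'=\kk[v_1,\dots,v_{m-t}]$ is regular of dimension $d$ and $B=S/(\theta_1,\dots,\theta_t)S$ has $\operatorname{depth}B=0$. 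Because passing to a quotient by a regular sequence both preserves and reflects the Gorenstein property, it remains to prove: $B$ is Gorenstein — equivalently, $B$ is Artinian with $\dim_\kk\operatorname{soc}B=1$ — if and only if $H_*(\mathrm{K}^B)$ is a Poincar\'e algebra with top degree $d$.

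The structural input is that $\mathrm{K}^B=B\otimes_\kk\Lambda[u_1,\dots,u_d]$ is self-dual: fixing the top exterior form identifies $\Lambda^j\cong(\Lambda^{d-j})^\vee$ and lifts to an isomorphism of complexes of $B$-modules $\mathrm{K}^B\cong\operatorname{Hom}_B(\mathrm{K}^B,B)$ up to a shift. In top homological degree this reads $T^d=H_d(\mathrm{K}^B)=\ker\!\bigl(B\otimes\Lambda^d\to B\otimes\Lambda^{d-1}\bigr)=(0:_B\mathfrak{m}_B)=\operatorname{soc}(B)$ (there are no boundaries in that degree), so the condition $\dim_\kk T^d=1$ is precisely the defining socle condition for Gorenstein-ness of an Artinian ring; and for cycles $x\in\mathrm{K}^B_i$, $y\in\mathrm{K}^B_{d-i}$ the product $xy$ is automatically a cycle in $\mathrm{K}^B_d$ that vanishes when $[x]$ or $[y]$ does, so the candidate Poincar\'e pairing $T^i\otimes T^{d-i}\to T^d$ is nothing but the ring multiplication. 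For the forward direction, if $B$ is Artinian Gorenstein then $\operatorname{Hom}_B(-,B)\cong\operatorname{Hom}_\kk(-,\kk)$ on finite-length modules (Matlis/Gorenstein self-duality), so composing with the Koszul self-duality exhibits $\mathrm{K}^B$, up to a shift, as $\kk$-linearly self-dual as a complex, and the pairing above becomes the evaluation pairing, hence perfect on homology; thus $T^*$ is a Poincar\'e algebra. For the converse, if $T^*$ is a Poincar\'e algebra then $\dim_\kk T^d=1$ gives $\dim_\kk\operatorname{soc}B=1$ and top Betti number one, while non-degeneracy of the multiplication is used to force $B$ to be Cohen--Macaulay, hence Artinian since $\operatorname{depth}B=0$; so $B$ is Gorenstein.

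The genuinely hard steps are the two places where the multiplicative structure, not just the numerics, is decisive: upgrading the numerical self-duality to a perfect pairing of the graded algebra $T^*$ in the Gorenstein direction, and extracting Cohen--Macaulayness from a Poincar\'e-duality structure on the Koszul homology in the converse. That Betti numbers alone are insufficient is already visible for $B=\kk[x,y]/(x^2,xy)$: it is non-Gorenstein yet has palindromic Betti numbers $1,2,1$, and the obstruction is exactly that $T^1\cdot T^1=0$ while $T^2\neq0$. For these two points I would ultimately appeal to \cite{AG71} (see also \cite[Theorem 3.4.4]{BP15}).
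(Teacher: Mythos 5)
The paper does not prove Theorem~\ref{thm:AG}; it states it as a citation to Avramov--Golod \cite{AG71} and \cite[Theorem 3.4.4]{BP15}, and uses it later (e.g.\ in the topological proof of Theorem~\ref{thm:3}) as a black box. So there is no in-paper proof to compare against.

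Your sketch is a faithful and correct outline of the standard Avramov--Golod argument. The reduction to depth zero via a linear system of parameters (after a harmless flat field extension, under which both ``Gorenstein'' and ``Poincar\'e algebra'' are preserved and reflected) is sound, and the algebra isomorphism $T^*\cong\mathrm{Tor}^{R'}_*(B,\kk)$ follows exactly as you indicate because a free resolution of $S$ over $R$ reduces, modulo a regular sequence, to a free resolution of $B$ over $R'$. The identification of $T^d$ with $\operatorname{soc}(B)$ by inspecting the top of the Koszul complex is correct, as is the use of Koszul self-duality composed with Matlis duality to produce the perfect pairing in the forward direction. You also correctly isolate the two genuinely nontrivial points: that the duality must be seen to agree with the multiplication on $T^*$, and that in the converse direction nondegeneracy of the pairing is what forces $B$ to be Cohen--Macaulay, hence Artinian (since $\operatorname{depth}B=0$). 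The example $\kk[x,y]/(x^2,xy)$, with Betti numbers $1,2,1$ but $T^1\cdot T^1=0$, is precisely the right illustration that palindromic Betti numbers alone do not suffice. As you acknowledge, these two steps are deferred to \cite{AG71}, so the proposal is an accurate guide to the proof rather than a self-contained one, which is appropriate since the paper itself relies on the cited source.
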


A theorem due to Stanley gives a purely topological characterisation of Gorenstien* complexes.

\begin{thm}[{\cite[Theorem 5.1]{S96}}]\label{thm:3}
A simplicial complex $K$ is Gorenstein* over a field $\kk$ if and only if for any simplex $\sigma\in K$ (including $\sigma=\varnothing$)
\[\w H_i(\mathrm{link}_K\sigma;\kk)=
\begin{cases}
\kk&\quad\text{if }\  i=\mathrm{dim(link}_K\sigma);\\
0&\quad\text{otherwise,}
\end{cases}
\]
or equivalently, $|K|$ is a GHS over $\kk$.
\end{thm}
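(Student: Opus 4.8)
The plan is to obtain Stanley's criterion by passing to the moment-angle complex and combining the Buchstaber--Panov isomorphism (Theorem~\ref{thm:1}), the Avramov--Golod description of the Gorenstein property (Theorem~\ref{thm:AG}), and Theorem~\ref{thm:dual}. First I would dispose of the degenerate case: if $K$ is a cone then it is not Gorenstein* over $\kk$ by definition, and at the same time $\mathrm{link}_K\varnothing=K$ is contractible, hence does not have the reduced homology of a sphere; so both conditions in the statement fail simultaneously. Thus we may assume $K$ is a non-cone on its vertex set $[m]$, in which case ``$K$ is Gorenstein* over $\kk$'' means precisely that $\kk(K)$ is a Gorenstein ring. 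Moreover the displayed link condition is equivalent to ``$|K|$ is a GHS over $\kk$'' by the classical local-homology criterion for triangulated homology manifolds: for $x$ in the relative interior of a simplex $\sigma\in K$ one has $H_i(|K|,|K|\setminus x;\kk)\cong\w H_{i-\dim\sigma-1}(\mathrm{link}_K\sigma;\kk)$, and over a field the $\kk$-orientability demanded in Definition~\ref{def:A1} comes for free from the nonvanishing of the top homology. So it remains to prove, for $K$ a non-cone of dimension $n-1$, that $\kk(K)$ is Gorenstein if and only if $|K|$ is a GHS over $\kk$.

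The ``if'' direction is short: if $|K|$ is a GHS over $\kk$ then by Theorem~\ref{thm:mfd} the moment-angle complex $\ZZ_K$ is a closed orientable topological $(m+n)$-manifold, so $H^*(\ZZ_K;\kk)$ satisfies Poincar\'e duality; transporting this along the isomorphism of Theorem~\ref{thm:1} makes $\mathrm{Tor}_{\kk[m]}(\kk(K),\kk)$ a Poincar\'e algebra, whence $\kk(K)$ is Gorenstein by Theorem~\ref{thm:AG}.

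For the ``only if'' direction, suppose $\kk(K)$ is Gorenstein. By Theorems~\ref{thm:AG} and~\ref{thm:1}, $H^*(\ZZ_K;\kk)\cong\HH^*_{[m]}(K;\kk)$ is a Poincar\'e algebra. I would unpack this via the Hochster decomposition $H^P(\ZZ_K;\kk)\cong\bigoplus_{I\subseteq[m]}\w H^{P-|I|-1}(K_I;\kk)$ together with the fact, recorded in \S\ref{sec:1} through Theorem~\ref{thm:e}, that the cup product on $H^*(\ZZ_K)$ corresponds up to sign to the union product on $\HH^*(K)$. Since the union product $\w H^a(K_I)\otimes\w H^b(K_J)\to\w H^{a+b+1}(K_{I\cup J})$ vanishes unless $I\cap J=\varnothing$, the Poincar\'e pairing on $H^*(\ZZ_K)$ decomposes as a direct sum, over $I\subseteq[m]$ and $a\in\mathbb{Z}$, of pairings
\[
\w H^a(K_I;\kk)\otimes\w H^{n-2-a}(K_{[m]\setminus I};\kk)\longrightarrow\w H^{n-1}(K_{[m]};\kk),
\]
with no interaction between distinct subsets $I$ and with the one-dimensional socle of the Poincar\'e algebra lying in the full-complex summand $\w H^{n-1}(K_{[m]};\kk)\cong\kk$. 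Perfectness of all these pairings is, by the universal coefficient theorem over the field $\kk$ --- or, more transparently, after rephrasing through the excision product, which corresponds to cap product with a fundamental class of $\w H_{n-1}(K_{[m]};\kk)$ --- precisely the statement that $\w H_a(K_I;\kk)\cong\w H^{n-2-a}(K_{[m]\setminus I};\kk)$ for all $I$ and $a$; that is, $K$ satisfies Alexander duality between full subcomplexes. Theorem~\ref{thm:dual} then identifies this with ``$|K|$ is a GHS over $\kk$'', which closes the chain.

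I expect the genuine obstacle to lie in the bookkeeping of the ``only if'' direction. First, one must verify that the Avramov--Golod Poincar\'e-algebra condition, stated for the single homological grading on $\mathrm{Tor}$, refines correctly to bigraded Poincar\'e duality and hence to Poincar\'e duality of $H^*(\ZZ_K)$ in the topological grading; this rests on the multiplication being homogeneous for the internal ($2J$) grading and on $\mathrm{Tor}^{*,2I}\cdot\mathrm{Tor}^{*,2J}=0$ whenever $I\cap J\neq\varnothing$. Second, the sign of Theorem~\ref{thm:e} must be carried through the identification of cup product with union product; being a unit it is irrelevant for perfectness, but it must be tracked. Third, and most delicate, is the structural point that among Gorenstein complexes ``non-cone'' is exactly the condition forcing the socle of the Poincar\'e algebra into the full-complex summand $K_{[m]}$, i.e.\ forcing $\w H^{n-1}(K;\kk)\cong\kk$ rather than the socle sitting in $\w H^{*}(K_J)$ for some proper $J\subsetneq[m]$; proving this needs either the standard fact that a Gorenstein complex whose top Tor is supported on a proper full subcomplex must be a cone, or a direct argument that the corresponding ``extra'' vertices are cone points of $K$.
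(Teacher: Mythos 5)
Your proposal follows essentially the paper's own route: both directions go through Theorems~\ref{thm:AG}, \ref{thm:1} and \ref{thm:dual}, with the Hochster decomposition and the cup/union/excision-product dictionary doing the bookkeeping. In the ``if'' direction you invoke $\ZZ_K$ being a closed $\kk$-orientable homology manifold (the $\kk$-version of Theorem~\ref{thm:mfd}) to import Poincar\'e duality directly, whereas the paper uses the Alexander-duality isomorphisms from Theorem~\ref{thm:dual} to locate the top degree $T^{m-n}$ and to argue nondegeneracy of the pairing via the excision product; these are two packagings of the same input, and your version is if anything a little cleaner since the manifold dimension pins down the top of $T^*$ without a separate argument.

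The one step you flag but do not carry out --- that non-cone-ness forces the socle of the Poincar\'e algebra into the full-subcomplex summand $\w H^{\dim K}(K_{[m]};\kk)$ --- is precisely where the paper spends its effort in the ``only if'' direction. The paper's argument: if the one-dimensional socle $T^d$ were $\w H^{|I|-d-1}(K_I)$ for some proper $I\subsetneq[m]$, then nondegeneracy of $T^i\times T^{d-i}\to T^d$ together with the vanishing of the union product when index sets overlap forces $\w H^*(K_J)=0$ for every $J\not\subset I$; hence every missing face of $K$ lies in $I$, so $K=\Delta^{m-|I|-1}*K_I$ is a cone, contradicting the Gorenstein* hypothesis. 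Supplying this closes the only real gap in your sketch, after which one reads off $n=m-d$ and concludes via Theorem~\ref{thm:dual} (using $\w H^*\cong\w H_*$ over the field $\kk$) that $|K|$ is a GHS of dimension $m-d-1$, exactly as in the paper.
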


The proof of Stanley's theorem is based on the calculation of the \emph{local cohomology} of face rings, which was first given by Hochster for general simplicial complexes. Here we will give a more topological proof of Theorem \ref{thm:3} by using the connection between Poincar\'e duality of moment-angle manifolds and Alexander duality of GHS's.

Notice that a $\kk$-orientable homology manifold over $\kk$ clearly satisfies Poincar\'e duality with coefficients in $\kk$.
Since the proof of Alexander duality is a standard combination of Poincar\'e duality and excision, together with the homology information of $S^n$, then it can be naturally generalized to GHS's. Moreover, the Alexander duality has a more explicit description for the triangulation of a GHS:

\begin{thm}[Alexander duality]\label{thm:dual}
Let $K$ be a simplicial complex with vertex set $[m]$. Then $|K|$ is a GHS over $\kk$ of dimension $n$ if and only if
\[\w H^i(K_I;\kk)\cong \w H_{n-i-1}(K_{[m]\setminus I};\kk)\quad\text{for all } I\subset[m],\ -1\leq i\leq n.\]
Moreover, if the above isomorphisms hold, then they are induced by the excision product map:
\[D:\w H^i(K_I;\kk)\xrightarrow{[K]\sqcap}\w H_{n-i-1}(K_{[m]\setminus I};\kk),\]
where $[K]\in\w H_n(K;\kk)$ is a fundamental class for $K$.
\end{thm}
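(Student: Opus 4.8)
The plan is to reduce the statement to the classical Alexander duality theorem for subspaces of $S^n$ by realizing the relevant full subcomplexes and their complements as deformation retracts of open/closed complementary pieces of $|K|$. First I would establish the "only if" direction. Assume $|K|$ is a GHS over $\kk$ of dimension $n$; then $|K|$ is an orientable homology $n$-manifold satisfying Poincar\'e--Lefschetz duality over $\kk$, so there is a fundamental class $[K]\in\w H_n(K;\kk)$. Fix $I\subseteq[m]$ and let $J=[m]\setminus I$. The key geometric input is that $|K_J|$ is a deformation retract of $|K|\setminus|K_I|$: indeed, every simplex $\sigma$ of $K$ decomposes uniquely as $\sigma=(\sigma\cap I)\cup(\sigma\cap J)$, and the linear homotopy pushing points away from the $I$-faces toward the $J$-faces is well defined on $|K|\setminus|K_I|$ and retracts it onto $|K_J|$ (this is the standard fact underlying Hochster-type formulas; one can also phrase it via the order complex / barycentric picture). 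Granting this, $\w H_{n-i-1}(K_J;\kk)\cong \w H_{n-i-1}(|K|\setminus|K_I|;\kk)$, and applying Poincar\'e--Lefschetz duality to the manifold pair $(|K|,|K|\setminus|K_I|)$ together with excision and the long exact sequence of the pair gives $\w H_{n-i-1}(|K|\setminus|K_I|;\kk)\cong \w H^{i}(|K|,|K|\setminus|K_I|;\kk)\cong \w H^{i}(K_I;\kk)$, where the last step again uses the retraction and excision (here one uses a regular-neighborhood or mapping-cylinder argument so that $(|K|,|K|\setminus|K_I|)$ excises to a neighborhood pair of $|K_I|$). Composing these isomorphisms yields the claimed duality.

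Next I would identify this abstract composite with the excision product map $D=[K]\sqcap(-)$. This is where one must be careful: the isomorphism above is, up to sign, exactly cap product with the fundamental class $[K]$ in the manifold sense, and the point is that under the chain-level identifications of Definition \ref{def:cup} the simplicial cap product of a chain of $|K|$ with a cochain of $K_I$ lands in the chains of $K_J$ and agrees with the topological $\sqcap$. I would verify this on the cochain level: for $c\in\w C_{p+q-1}(K)$, $\phi\in\w C^{p-1}(K_I)$, the element $c\sqcap\phi=\varepsilon_{\cdot}(c\setminus\phi)$ has all its faces disjoint from $I$ by the sign-and-support conditions in Definition \ref{def:cup}, hence is a chain of $K_J$; and the relation $\psi(c\sqcap\phi)=(\phi\sqcup\psi)(c)$ stated in the excerpt, specialized to $c$ a representing cycle for $[K]$, is precisely the statement that $D$ is adjoint to the union-product pairing $\w H^i(K_I)\otimes\w H^{n-i-1}(K_J)\to\w H^n(K)\cong\kk$. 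Since that pairing is (by the first paragraph) a perfect pairing when $|K|$ is a GHS, $D$ is an isomorphism; this also cleanly handles the boundary cases $i=-1$ (where $K_\varnothing=\varnothing$, $\w H^{-1}=\kk$) and $i=n$.

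Finally, for the "if" direction I would argue by contraposition using Stanley's criterion (Theorem \ref{thm:3}) together with Theorem \ref{thm:1}. Assume the displayed isomorphisms $\w H^i(K_I;\kk)\cong\w H_{n-i-1}(K_{[m]\setminus I};\kk)$ hold for all $I$ and all $-1\le i\le n$. Taking $I=[m]$ gives $\w H^i(K;\kk)\cong\w H_{n-i-1}(\varnothing;\kk)$, which is $\kk$ for $i=n$ and $0$ otherwise, so $|K|$ has the homology of $S^n$. To get that $|K|$ is a homology manifold, one needs the link condition; I would deduce it by restricting the hypothesis to full subcomplexes: for a simplex $\sigma\in K$ with vertex set $W$, apply the assumed duality inside each $K_I$ with $W\subseteq I$, combined with the cone/suspension behavior of links of full subcomplexes, to force $\w H_*(\mathrm{link}_K\sigma;\kk)$ to be that of a sphere of dimension $\dim\mathrm{link}_K\sigma$. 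Alternatively, and more in the spirit of the excerpt, one can feed the hypothesis into Theorem \ref{thm:1} to conclude that $\w{\mathcal H}^*_{[m]}(K;\kk)\cong\w H^*(\ZZ_K;\kk)$ satisfies Poincar\'e duality as a graded algebra, invoke Theorem \ref{thm:AG} to get that $\kk(K)$ is Gorenstein, check $K$ is not a cone (a cone would make some $K_I$ contractible with a nontrivial dual, contradicting the hypothesis applied to the cone point), and then apply Theorem \ref{thm:3} to conclude $|K|$ is a GHS. I expect the main obstacle to be the first paragraph: making the deformation-retraction and excision arguments rigorous with $\mathbb{Z}$ (not field) coefficients and over a homology manifold rather than a manifold — in particular, pinning down the correct neighborhood pair so that Poincar\'e--Lefschetz duality applies and so that the identification with the combinatorial $\sqcap$ carries the right sign.
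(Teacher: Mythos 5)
Your ``only if'' direction follows essentially the same route as the paper: classical Alexander duality for a GHS plus the homotopy equivalence $|K|\setminus|K_I|\simeq|K_{[m]\setminus I}|$. That part is fine.

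The identification of the abstract composite with the excision-product map $D=[K]\sqcap(-)$, however, is stated rather than proved. You correctly note that the adjunction $\psi(c\sqcap\phi)=(\phi\sqcup\psi)(c)$ shows $D$ is adjoint to the union-product pairing, but this only makes $D$ an isomorphism if you already know that pairing is perfect---and the first paragraph only gives an \emph{abstract} isomorphism $\w H^i(K_I)\cong\w H_{n-i-1}(K_J)$, not that the Poincar\'e--Lefschetz isomorphism \emph{is} the simplicial cap/excision product. You flag this as ``the main obstacle'' but do not close it. The paper sidesteps the issue entirely by going through $\ZZ_K$: once $|K|$ is a GHS over $\kk$, $\ZZ_K$ is a homology $(m+n+1)$-manifold over $\kk$ by (the obvious extension of) Theorem~\ref{thm:mfd}, Poincar\'e duality holds there, and the cellular cap product on $\ZZ_K$ is known to correspond to $\sqcap$ on the Hochster side with $[\ZZ_K]\leftrightarrow[K]$. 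This is the natural level at which that identification has already been established, and it is the cleaner route.

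The ``if'' direction is the real gap. Your first alternative---``apply the assumed duality inside each $K_I$''---does not make sense as stated: the hypothesis is a duality \emph{for $K$} between $K_I$ and $K_{[m]\setminus I}$, not a self-duality of each $K_I$; nothing lets you run Alexander duality internally in a full subcomplex. What the paper actually does is two nested inductions. It first shows, by induction on $|\sigma|$ using a Mayer--Vietoris sequence built from the decomposition
\[\mathrm{link}_{K_{\wh I}}\sigma'=\mathrm{link}_{K_{\wh I'}}{\sigma'}\cup_{\mathrm{link}_{K_{\wh I}}\sigma}\bigl(\{v_1\}*\mathrm{link}_{K_{\wh I}}\sigma\bigr),\]
that $\w H_*(\mathrm{link}_{K_{\wh I}}\sigma)=0$ whenever $\sigma\cup I\in K$ with $I\neq\varnothing$ disjoint from $\sigma$ (the base case $\sigma=\varnothing$ is exactly the hypothesis). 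It then uses this vanishing as input to a second induction (again Mayer--Vietoris on $\mathrm{link}_K\sigma'=\mathrm{link}_{K'}\sigma'\cup_{\mathrm{link}_K\sigma}(\{v_1\}*\mathrm{link}_K\sigma)$ with $K'=K_{[m]\setminus\{v_1\}}$) to conclude $\w H_*(\mathrm{link}_K\sigma)\cong\w H_*(S^{n-|\sigma|})$, and finally checks purity by looking at links of facets. None of this is reconstructible from your sketch. Your second alternative---routing through Theorems~\ref{thm:1}, \ref{thm:AG}, \ref{thm:3}---is circular within this paper (Theorem~\ref{thm:3} is proved \emph{from} Theorem~\ref{thm:dual}) and, independently, would require knowing the union-product pairing is perfect, which again does not follow from the purely group-theoretic hypothesis of the ``if'' direction.
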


\begin{proof}
The `only if' part of the  theorem follows from the Alexander duality and the fact that the homotopy equivalence $|K|-|K_I|\simeq|K_{[m]\setminus I}|$ holds for any simplicial complex $K$ on $[m]$ and any $I\subset[m]$.

For the `if' part, first we show by induction on the vertex number of $\sigma$ that if $\sigma\cup I\in K$ for some $I\neq\varnothing$ with $I\cap\sigma=\varnothing$ then $\w H_*(\mathrm{link}_{K_{\wh I}}\sigma)=0$, where $\wh I=[m]\setminus I$.  The case that $\sigma=\varnothing$ is true since $\w H_*(\mathrm{link}_{K_{\wh I}}\varnothing)=\w H_*(K_{\wh I})=\w H^*(K_{I})=0$ by duality in the theorem.
For the induction step, suppose $\sigma=(v_1,\dots,v_k),\sigma'=(v_2,\dots,v_k)$ and $I'=I\cup\{v_1\}$, then it is easy to verify that
\[\mathrm{link}_{K_{\wh I}}\sigma'=\mathrm{link}_{K_{\wh I'}}{\sigma'}\cup_{\mathrm{link}_{K_{\wh I}}\sigma}(\{v_1\}*\mathrm{link}_{K_{\wh I}}\sigma).\]
Consider the Mayer-Vietoris sequence
\begin{align*}
\cdots\xrightarrow{\partial}\w H_i(\mathrm{link}_{K_{\wh I}}\sigma)\to\w H_i(\mathrm{link}_{K_{\wh I'}}\sigma')\oplus\w H_i(\{v_1\}*\mathrm{link}_{K_{\wh I}}\sigma)&\to\w H_i(\mathrm{link}_{K_{\wh I}}\sigma')\xrightarrow{\partial}\\
&\w H_{i-1}(\mathrm{link}_{K_{\wh I}}\sigma)\to\cdots\
\end{align*}
Since $\w H_*(\{v_1\}*\mathrm{link}_{K'}\sigma)=0$, and $\w H_*(\mathrm{link}_{K_{\wh I'}}\sigma')=H_*(\mathrm{link}_{K_{\wh I}}\sigma')=0$ ($\sigma'\cup I', \sigma'\cup I\in K$) by induction, then $\w H_*(\mathrm{link}_{K'}\sigma)=0$, finishing the induction step.
Next, we show that $\w H_*(\mathrm{link}_K\sigma;\kk)\cong\w H_*(S^{n-|\sigma|};\kk)$ for each $\sigma\in K$ by induction on the vertex number of $\sigma\in K$. The case $\sigma=\varnothing$ is clearly true. For the induction step, suppose $\sigma=(v_1,\dots,v_k),\sigma'=(v_2,\dots,v_k)$ and $K'=K_{[m]\setminus\{v_1\}}$ then as above
\[\mathrm{link}_K\sigma'=\mathrm{link}_{K'}{\sigma'}\cup_{\mathrm{link}_K\sigma}(\{v_1\}*\mathrm{link}_K\sigma).\]
Now consider the Mayer-Vietoris sequence
\begin{align*}
\cdots\xrightarrow{\partial}\w H_i(\mathrm{link}_K\sigma)\to\w H_i(\mathrm{link}_{K'}\sigma')\oplus\w H_i(\{v_1\}*\mathrm{link}_K\sigma)&\to\w H_i(\mathrm{link}_K\sigma')\xrightarrow{\partial}\\
&\w H_{i-1}(\mathrm{link}_K\sigma)\to\cdots\
\end{align*}
By induction, $\w H_*(\mathrm{link}_K\sigma')=\w H_{n-|\sigma'|}(\mathrm{link}_K\sigma')=\kk$. $\w H_*(\{v_1\}*\mathrm{link}_{K}\sigma)=0$ is clear. The preceding argument shows that $\w H_*(\mathrm{link}_{K'}\sigma')=0$. Thus the exactness of the Mayer-Vietoris sequence implies that $\w H_*(\mathrm{link}_K\sigma)\cong\w H_*(S^{n-|\sigma|})$, finishing the induction step. It remains to show that $K$ is pure of dimension $n$ (this implies $\mathrm{dim\,link}_K\sigma=n-|\sigma|$). To see this, let $\sigma$ be a facet, then $\mathrm{link}_K\sigma=\varnothing$, so $\w H_*(\varnothing)=\w H_*(S^{n-|\sigma|})$ gives the desired result dim\,$\sigma=n$.

To show that the Alexander duality is induced by the excision product, we first note that when the Alexander duality holds, $|K|$ is a GHS over $\kk$.
Hence from Theorem \ref{thm:mfd}
(it can be extended in an obvious fashion to the case that $|K|$ is a GHS over $\kk$), we have that $\ZZ_K$ is a homology $(m+n+1)$-manifold over $\kk$. Let $[\ZZ_K]$ be a fundamental class of $\ZZ_K$ over $\kk$. Then by Poincar\'e duality, there are isomorphisms
\[H^i(\ZZ_K;\kk)\xrightarrow{[\ZZ_K]\f}H_{m+n+1-i}(\ZZ_K;\kk),\quad 0\leq i\leq m+n+1.\]
Since the cap product for $\ZZ_k$ corresponds (up to sign) to the excision product for $K$, and the top homology class $[\ZZ_k]$ corresponds to the top homology class $[K]$, this is equivalent to saying  that the maps
\[D:\w H^i(K_I;\kk)\xrightarrow{[K]\sqcap}\w H_{n-i-1}(K_{[m]\setminus I};\kk),\quad I\subset[m],\ -1\leq i\leq n.\]
are isomorphisms. The proof is finished.
\end{proof}
Actually Theorem \ref{thm:dual} gives another equivalent condition  for $K$ to be Gorenstein*. Meanwhile we can  make use of it together with Theorem \ref{thm:AG} to give a topological proof for Theorem \ref{thm:3}:

\begin{proof}[Proof of Theorem \ref{thm:3}]
We may assume the vertex number of $K$ is $m$ and dim\,$K=n-1$.
For the `if' part, note that $\mathrm{dim}_\kk\,T^{m-n}\geq1$ by Theorem \ref{thm:1}. We assert that $\mathrm{dim}_\kk\,T^{m-n}=1$ and $T^i=0$ for $i>m-n$. Otherwise, there exists $I\subsetneq[m]$ and $i\geq m-n$ such that $\w H^{|I|-i-1}(K_I)\neq0$. Hence from Alexander duality of $K$, $\w H_{n+i-|I|-1}(K_{[m]\setminus I})\neq0$. Since $i\geq m-n$, we have $n+i-|I|-1\geq m-|I|-1$.
However a simplicial complex with $m-|I|$ vertices is either a simplex or of dimension less than $m-|I|-1$. Therefore, $\w H_{n+i-|I|-1}(K_{[m]\setminus I})=0$, a contradiction. On the other hand, Theorem \ref{thm:dual} says that the Alexander dual of $K$ is induced by the excision product map $[K]\sqcap$. It is equivalent to saying that the bilinear form $T^i\times T^{m-n-i}\to T^{m-n}$ is nondegenerate because of the relation between union and excision product. Thus $T^*=\bigoplus_{i=0}^{m-n}T^i$ is a Poincar\'e algebra. Theorem \ref{thm:AG} gives the desired result.

To prove the `only if' part, first note that $T^{d}=\kk$ for some $d>0$ by Theorem \ref{thm:AG}. We assert that $T^{d}=\w H^{m-d-1}(K)$.
If this is not true, then $T^{d}=\w H^{|I|-d-1}(K_{I})$ for some $I\neq[m]$ by Theorem \ref{thm:1}.
This implies that $\w H^*(K_J)=0$ for any $J\not\subset I$ since the bilinear form $T^i\times T^{d-i}\to T^{d}$ is nondegenerate.
It turns out that any missing face of $K$ belongs to $I$, and it is equivalent to saying that $K=\Delta^{m-|I|-1}*K_I$, which is contradict to the assumption that $K$ is not a cone. Now, by the Poincar\'e duality of $T^*$ and the isomorphism $T^*\cong \HH^*(K;\kk)$, we immediately have that $\w H^{i}(K_{I};\kk)\cong\w H^{m-d-i-2}(K_{[m]\setminus I};\kk)$ for $-1\leq i\leq m-d-1,\,I\subset[m]$.
On the other hand, $\w H^*(K_{[m]\setminus I};\kk)\cong\w H_*(K_{[m]\setminus I};\kk)$ for the assumption that $\kk$ is a field. Thus it follows from Theorem \ref{thm:dual} that $|K|$ is a GHS over $\kk$ of dimension $m-d-1$ (so $n=d-m-1$), finishing the proof.
\end{proof}

For convention, we simply call a simplicial complex to be \emph{Gorenstein*} if the formula in Theorem \ref{thm:3} holds for $\mathbb{Z}$ coefficient. The following two results are easily deduced from the topological characterisation of Gorenstien* complexes.

\begin{prop}[{\cite[Theorem 6.1]{N05}}]\label{thm:6}
If a simplicial complex $K$ is Gorenstein* over $\kk$, then $K$ is pure, and for any simplex $\sigma\in K$, $\mathrm{link}_K\sigma$ is also
Gorenstein* over $\kk$.
\end{prop}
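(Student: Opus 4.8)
The plan is to deduce both assertions directly from the topological characterisation of Gorenstein* complexes given in Theorem \ref{thm:3}, namely that $K$ is Gorenstein* over $\kk$ if and only if $\w H_i(\mathrm{link}_K\tau;\kk)$ is $\kk$ in degree $\dim\mathrm{link}_K\tau$ and $0$ otherwise, for every simplex $\tau\in K$ (including $\tau=\varnothing$). Purity will come first: I would show that all facets of $K$ have the same dimension. If $\tau$ is a facet of $K$, then $\mathrm{link}_K\tau=\{\varnothing\}$, so the condition $\w H_{-1}(\mathrm{link}_K\tau;\kk)\cong\w H_{-1}(\varnothing;\kk)=\kk$ (with the convention that the empty complex, i.e. the $(-1)$-sphere, has reduced homology $\kk$ in degree $-1$) forces $\dim\mathrm{link}_K\tau=-1$. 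But applying the Gorenstein* condition to $\tau=\varnothing$ gives $\w H_i(K;\kk)=\kk$ exactly in degree $n-1=\dim K$, so in particular $\dim K$ is well-defined; and for any facet $\tau$ the local sphere condition at $\varnothing$ together with induction on simplices (exactly the Mayer–Vietoris argument used in the proof of Theorem \ref{thm:dual}, or simply iterating the link relation) pins down $\dim\tau=\dim K=n-1$. Thus every facet has dimension $n-1$ and $K$ is pure.

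The second assertion, that $\mathrm{link}_K\sigma$ is again Gorenstein* over $\kk$ for every $\sigma\in K$, follows from the identity
\[
\mathrm{link}_{\mathrm{link}_K\sigma}\tau=\mathrm{link}_K(\sigma\cup\tau)
\]
valid for every simplex $\tau\in\mathrm{link}_K\sigma$ (so that $\sigma\cap\tau=\varnothing$ and $\sigma\cup\tau\in K$). Given this, I would verify the Gorenstein* criterion of Theorem \ref{thm:3} for $L:=\mathrm{link}_K\sigma$: for each $\tau\in L$,
\[
\w H_i(\mathrm{link}_L\tau;\kk)=\w H_i(\mathrm{link}_K(\sigma\cup\tau);\kk),
\]
which by the Gorenstein* property of $K$ is $\kk$ in degree $\dim\mathrm{link}_K(\sigma\cup\tau)=(n-1)-|\sigma\cup\tau|=(n-1-|\sigma|)-|\tau|=\dim\mathrm{link}_L\tau$ and $0$ otherwise, using purity (already established) to identify the dimension. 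This is precisely the Gorenstein* condition for $L$, provided we also note that $L$ is not a cone — but this is automatic from Theorem \ref{thm:3}, since the link condition at $\varnothing$ (applied now to $L$) already forces $\w H_*(L;\kk)\cong\w H_*(S^{\,n-1-|\sigma|};\kk)$, and a cone is acyclic, so $L$ being a cone would contradict $n-1-|\sigma|\geq 0$.

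The only genuinely delicate point is the bookkeeping around the empty simplex and the $(-1)$-sphere convention: one must be careful that the formula in Theorem \ref{thm:3} is being read with $\w H_{-1}(\varnothing;\kk)=\kk$, and that $\dim\varnothing=-1$, so that the facet case of the purity argument does not degenerate. Beyond that, the argument is a formal transcription of the link identity; the Mayer–Vietoris induction needed to nail down purity is already written out verbatim in the proof of Theorem \ref{thm:dual}, so I would simply cite it. I expect the link-of-link identity and the dimension count via purity to be the crux, with everything else routine.
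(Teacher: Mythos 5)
Your strategy is the one the paper intends: the paper itself gives no written proof, only a citation to \cite{N05} and the remark that the result is ``easily deduced from the topological characterisation'' of Theorem~\ref{thm:3}. The second assertion is handled correctly via the identity $\mathrm{link}_{\mathrm{link}_K\sigma}\tau=\mathrm{link}_K(\sigma\cup\tau)$ together with the already-established purity, and the non-cone observation is essentially right (though you should note that the facet case $\mathrm{link}_K\sigma=\{\varnothing\}$ is also not a cone, since a cone necessarily has a vertex).

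The purity paragraph, however, is logically muddled as written. The first sentence, that $\mathrm{link}_K\tau=\{\varnothing\}$ ``forces $\dim\mathrm{link}_K\tau=-1$,'' is a tautology and says nothing about $\dim\tau$. And you cannot simply ``cite the Mayer--Vietoris induction'' from the proof of Theorem~\ref{thm:dual}: that induction takes Alexander duality as its hypothesis, not Stanley's link condition, so invoking it requires first passing through the equivalence Gorenstein* $\Leftrightarrow$ Alexander duality established by Theorems~\ref{thm:3} and~\ref{thm:dual}. The cleanest route is to use Theorem~\ref{thm:3} in the form ``$K$ Gorenstein* over $\kk$ $\Leftrightarrow$ $|K|$ is a GHS over $\kk$'': being a homology $(n-1)$-manifold forces the local homology at an interior point of any $\sigma$ to sit in degree $n-1$, which translates to $\w H_i(\mathrm{link}_K\sigma;\kk)$ being concentrated in degree $n-1-|\sigma|$ for every $\sigma$; since that homology is nonzero, $\dim\mathrm{link}_K\sigma=n-1-|\sigma|$, and taking $\sigma$ a facet (so the link is $\{\varnothing\}$, with homology in degree $-1$) gives $|\sigma|=n$, hence purity. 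With that substitution your argument is complete; the rest is a routine application of the link-of-link identity exactly as you describe.
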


\begin{prop}[{\cite[Corollary 4.8]{MM02}}]\label{thm:2}
Let $K_1$ and $K_2$ be two Gorenstein* complexes over $\kk$ with the same dimension. Then $K_1\#K_2$ is also Gorenstein* over $\kk$.
\end{prop}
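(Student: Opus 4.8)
The plan is to prove the statement by induction on the common dimension $d:=\dim K_1=\dim K_2$, using the topological characterisation of Gorenstein* complexes from Theorem \ref{thm:3}: a complex is Gorenstein* over $\kk$ precisely when, for each of its simplices $\tau$ (including $\tau=\varnothing$), $\w H_*(\mathrm{link}\,\tau;\kk)$ agrees with $\w H_*(S^{\dim(\mathrm{link}\,\tau)};\kk)$. Write $K=K_1\#K_2$ as $(K_1\cup K_2)\setminus\{\sigma\}$, where $\sigma$ is the identified facet, with vertex set $V$, $|V|=d+1$. Inside $K$ the subcomplexes $A:=K_1\setminus\{\sigma\}$ and $B:=K_2\setminus\{\sigma\}$ satisfy $A\cup B=K$ and $A\cap B=\partial\sigma\cong S^{d-1}$. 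The induction hypothesis will be applied to links of proper faces of $\sigma$, whose dimension is strictly smaller than $d$.

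The first step, and the computational heart of the argument, is to check the link condition for the empty face, i.e. that $\w H_*(|K|;\kk)\cong\w H_*(S^d;\kk)$. For this I would run the reduced Mayer--Vietoris sequence of $K=A\cup B$. Its inputs are: $A\cap B=\partial\sigma$ has the homology of $S^{d-1}$; and $A$, $B$ are $\kk$-acyclic. The acyclicity of $A=K_1\setminus\{\sigma\}$ is obtained by feeding the long exact sequence of the pair $(|K_1|,|A|)$ with the facts that $|K_1|/|A|\cong\bar\sigma/\partial\sigma\cong S^d$ and that $|K_1|$ has the homology of $S^d$ (as $K_1$ is Gorenstein*), noting that the fundamental class of $K_1$ maps to a generator of $\w H_d(|K_1|,|A|;\kk)$; symmetrically for $B$. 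The Mayer--Vietoris connecting map then gives $\w H_i(|K|;\kk)\cong\w H_{i-1}(\partial\sigma;\kk)$, which is the desired sphere homology (the degrees $0,1$, and the case $d=0$, need separate but trivial bookkeeping).

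The second step treats links of nonempty faces $\tau\in K$. If $\tau\not\subseteq V$, then $\tau$ lies entirely in exactly one $K_i$, and since $\sigma\not\in K$ and $\sigma\subseteq V$ one checks directly that $\mathrm{link}_K\tau=\mathrm{link}_{K_i}\tau$, which has sphere homology because $K_i$ is Gorenstein*. The only remaining case is $\varnothing\ne\tau\subsetneq\sigma$, and here the induction enters via the combinatorial identity
\[
\mathrm{link}_K\tau=\bigl(\mathrm{link}_{K_1}\tau\cup\mathrm{link}_{K_2}\tau\bigr)\setminus\{\sigma\setminus\tau\},
\]
which exhibits $\mathrm{link}_K\tau$ as a connected sum of $\mathrm{link}_{K_1}\tau$ and $\mathrm{link}_{K_2}\tau$ along their common facet $\sigma\setminus\tau$; verifying it reduces to the observations $\mathrm{link}_{K_1}\tau\cap\mathrm{link}_{K_2}\tau=2^{\sigma\setminus\tau}$ and that $\sigma\setminus\tau$ is a facet of each $\mathrm{link}_{K_i}\tau$. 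By Proposition \ref{thm:6} (or Theorem \ref{thm:3} again), $\mathrm{link}_{K_i}\tau$ is Gorenstein* of dimension $d-|\tau|<d$, so the induction hypothesis applies and $\mathrm{link}_K\tau$ is Gorenstein*, hence has the required sphere homology. A short purity check ($K$ is pure of dimension $d$, so $\dim(\mathrm{link}_K\tau)=d-|\tau|$) then finishes the verification of the hypotheses of Theorem \ref{thm:3}, and with it the inductive step; the base case $d=0$ is immediate by inspection.

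I expect the main obstacle to be bookkeeping rather than anything conceptual: specifically, establishing the acyclicity of the facet-complement $K_i\setminus\{\sigma\}$ cleanly and handling the low-degree and small-$d$ corner cases of Mayer--Vietoris. The structural core of the proof is the connected-sum identity for links of proper faces of $\sigma$, which is what makes the induction close, and that is a brief combinatorial computation.
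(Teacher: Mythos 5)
Your proof is correct and follows exactly the route the paper indicates when it says the result is ``easily deduced from the topological characterisation of Gorenstein* complexes'' (Theorem \ref{thm:3}): verify the link/GHS condition for $K_1\#K_2$. The three cases you isolate are the right ones, the combinatorial identity $\mathrm{link}_K\tau=(\mathrm{link}_{K_1}\tau\cup\mathrm{link}_{K_2}\tau)\setminus\{\sigma\setminus\tau\}$ is correct and does exhibit a connected sum along the common facet $\sigma\setminus\tau$, and the Mayer--Vietoris step together with acyclicity of $K_i\setminus\{\sigma\}$ (via the long exact sequence of $(|K_i|,|K_i\setminus\{\sigma\}|)$ and the fact that the fundamental class of the GHS $|K_i|$ hits a generator of $H_d(|K_i|,|K_i\setminus\{\sigma\}|)\cong H_d(D^d,S^{d-1})$, so that the connecting map is an isomorphism even over $\mathbb{Z}$) gives the empty-link case. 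Since the paper itself only cites \cite{MM02} and sketches no details, your write-up supplies a complete argument in the intended spirit; the only thing I would flag as worth stating explicitly is the excision identification $H_*(|K_i|,|K_i\setminus\{\sigma\}|)\cong H_*(\bar\sigma,\partial\sigma)$, since that, rather than a quotient-space description, is what makes the acyclicity computation clean.
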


\section{$H^*(\mathcal {Z}_{K_1\#K_2})$ for Gorenstein* complexes}\label{sec:3}
As we have seen in Definition \ref{def:1}, for two pure simplicial complexes $K_1$ and $K_2$ with the same dimension, the combinatorial equivalent classes in $\CC(K_1\#K_2)$ are not unique in general. Nevertheless, if $K_1$ and $K_2$ are both Gorenstein*, we can give an universal formula for $H^*(\ZZ_{K_1\#K_2})$ in terms of $H^*(\ZZ_{K_1})$ and $H^*(\ZZ_{K_2})$. Firstly, we introduce an algebraic operation on graded algebras.
\begin{cons}\label{cons:1}
For a finitely graded commutative $\kk$-algebra $A=\bigoplus_{i=0}^d A^i$, define a operation $G$ on $A$, called the \emph{algebraic gyration of $A$}, as
\[G(A)=(A\otimes \Lambda_\kk[v])/(A^d\otimes 1),\ \mathrm{deg}(v)=1.\]
\end{cons}
Let $G^m$ denote the composition of $m$ $G$'s. Then it is easily verified that
\[G^m(A)=(A\otimes\Lambda_\kk[v_1,\dots,v_{m}])/(\bigoplus_{I\neq[m]}A^d\otimes v_I),\]
where $v_I$ is a simplified notation for $\prod_{i\in I}v_i$.

\begin{thm}\label{thm:4}

Let $K_1$ and $K_2$ be two $(n-1)$-dimensional ($n\geq 2$) Grenstein* complex over $\kk$ with $m_1$ and $m_2$ vertices respectively,
and let $K=K_1\#K_2$. Then the reduced cohomology ring of $\mathcal {Z}_{K}$ is given by the isomorphism
\[\w H^*(\mathcal {Z}_{K};\kk)\cong \mathcal {R}(K_1,K_2;\kk)/\mathcal {I}(K_1,K_2;\kk),\]
where
\[
\mathcal {R}(K_1,K_2;\kk)=G^{m_2-n}(\w {H}^*(\mathcal {Z}_{K_1};\kk))\times G^{m_1-n}(\w {H}^*(\mathcal {Z}_{K_2};\kk))\times\w {H}^*(M;\kk),
\]
\[M=\overset{m_1+m_2-2n}{\underset{i=2}{\sharp}}\lambda(i)(S^{i+1}\times S^{m_1+m_2-i-1}),\]
\[\lambda(i)=\tbinom{m_1+m_2-2n}{i}-\tbinom{m_1-n}{i}-\tbinom{m_2-n}{i},\]
$\mathcal {I}(K_1,K_2;\kk)$ is an ideal of $\mathcal {R}(K_1,K_2;\kk)$ generated by
\[([Z_1],0,0)-(0,0,[M])\, \text{ and }\, (0,[Z_2],0)-(0,0,[M]),\] where $[Z_1]$ (resp. $[Z_2]$, $[M]$) is the top dimensional generator of
 $G^{m_2-n}(\w {H}^*(\mathcal {Z}_{K_1};\kk))$ (resp. $G^{m_1-n}(\w {H}^*(\mathcal {Z}_{K_2};\kk))$, $\w {H}^*(M;\kk)$).
\end{thm}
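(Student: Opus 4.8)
The plan is to compute both sides from the Hochster-algebra description of $\w H^*(\ZZ_K)$ (Theorem \ref{thm:1}) and to match them summand by summand. Write $V$ for the vertex set of $K=K_1\#K_2$; by Definition \ref{def:1} we may write $V=A\sqcup S\sqcup B$, where $S$ ($|S|=n$) is the identified facet, $A$ ($|A|=m_1-n$) the remaining vertices of $K_1$, and $B$ ($|B|=m_2-n$) the remaining vertices of $K_2$, and for $J\subseteq V$ write $J=J_A\sqcup J_S\sqcup J_B$ accordingly. By Proposition \ref{thm:2}, $K$ is Gorenstein*, so by Theorem \ref{thm:mfd} all of $\ZZ_K,\ZZ_{K_1},\ZZ_{K_2}$ are moment-angle manifolds and Alexander duality (Theorem \ref{thm:dual}) holds for each of $K,K_1,K_2$. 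The combinatorial input is the identification of the full subcomplex $K_J$: since no simplex of $K_1$ or $K_2$ meets both $A$ and $B$, every simplex of $K_J$ lies in $(K_1)_{J_A\cup J_S}$ or in $(K_2)_{J_S\cup J_B}$; these two full subcomplexes agree along $(K_1)_{J_S}=(K_2)_{J_S}=2^{J_S}$, so $K_J=(K_1)_{J_A\cup J_S}\cup_{2^{J_S}}(K_2)_{J_S\cup J_B}$, with the single facet $\sigma=S$ deleted precisely when $S\subseteq J$.

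Feeding this through Mayer--Vietoris yields the additive statement. When $S\not\subseteq J$ the gluing locus $2^{J_S}$ is contractible (or empty, when $J_S=\varnothing$ and $J_A,J_B\neq\varnothing$), so $\w H^*(K_J)\cong\w H^*((K_1)_{J_A\cup J_S})\oplus\w H^*((K_2)_{J_S\cup J_B})$, plus one extra copy of $\kk$ in degree $0$ in the disjoint case. When $S\subseteq J$ the gluing locus is $\partial\sigma\simeq S^{n-2}$; the Mayer--Vietoris sequence together with the long exact sequence of each pair $((K_i)_{J_i\cup S},(K_i)_{J_i\cup S}\setminus\sigma)$ (excision identifies the pair with $(\sigma,\partial\sigma)$) reduces $\w H^*(K_J)$ to $\w H^*$ of full subcomplexes of $K_1,K_2$ plus residual copies of $\kk$, using Alexander duality in $K_1,K_2$ to identify the groups and the freeness of $\w H^*(S^{n-2})$ and of $H_0$ of full subcomplexes to split the sequences (over $\mathbb Z$ or a field alike). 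Collating over all $J$: the $K_1$-terms $\w H^*((K_1)_{I_1})$, re-indexed by the exterior tensor factor $v_{J_B}$ ($J_B\subseteq B$), assemble into $G^{m_2-n}(\w H^*(\ZZ_{K_1}))$ --- the relation killing $[\ZZ_{K_1}]\otimes v_{J_B}$ for $J_B\neq B$ being exactly the vanishing $\w H^{n-1}(K_{A\sqcup S\sqcup J_B})=0$ that is forced by deleting $\sigma$ from the glued complex (when $S\subseteq J$, $\w H^{n-1}(K_J)$ survives only for $J=V$, where it is the $1$-dimensional top group); symmetrically one gets $G^{m_1-n}(\w H^*(\ZZ_{K_2}))$; and the residual copies of $\kk$ --- one degree-$(i+1)$ class for each $i$-subset of $A\sqcup B$ meeting both blocks, together with its Alexander-dual (in $K$) class in degree $m_1+m_2-i-1$, and one top class --- assemble into $\w H^*(M)$, with $\lambda(i)$ the number of such subsets. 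The three would-be top classes $[Z_1]=[\ZZ_{K_1}]\otimes v_B$, $[Z_2]=[\ZZ_{K_2}]\otimes v_A$ and $[M]$ all land in $\w H^{n-1}(K_V)\cong\kk$, which is precisely the content of the quotient by $\mathcal I$; this gives the additive isomorphism $\w H^*(\ZZ_K)\cong\mathcal R/\mathcal I$.

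For the ring structure I would use the union-product model (Definition \ref{def:cup}) together with Theorem \ref{thm:e} to pass between $*$ and $\sqcup$. The crucial fact is $[u]\sqcup[v]=0$ whenever the supporting subsets overlap: products inside the $G^{m_2-n}(\w H^*(\ZZ_{K_1}))$-block are union products with the exterior generators on $B$, i.e.\ juxtaposition in the $v_j$'s --- exactly the multiplication of Construction \ref{cons:1}, including the relation $[\ZZ_{K_1}]\otimes v_L=0$ ($L\subsetneq B$) already seen additively; likewise for the $K_2$-block; and cross products between the two blocks, between a block and the $M$-block, or inside the $M$-block can be nonzero only on disjoint supports, necessarily land in $\w H^{n-1}(K_V)$, and there equal the common top class --- which is the multiplicative meaning of the generators of $\mathcal I$ and of the product structure of a connected sum of sphere products (forced, once the additive structure is known, by Poincar\'e duality of $\ZZ_K$). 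Absorbing the signs of Theorem \ref{thm:e} into a graded automorphism of $\mathcal R$ then promotes the additive isomorphism to a ring isomorphism. I expect the main obstacle to be the case $S\subseteq J$: one must show that deleting the single facet $\sigma$ has the combined effect of (i) killing exactly the top classes of the $K_1$- and $K_2$-parts in the pattern recorded by the operation $G$ and (ii) creating precisely the ``big-sphere'' summands of $M$ and nothing more --- equivalently, that the Mayer--Vietoris and pair sequences split in exactly the claimed way, with the punctured full subcomplexes $(K_i)_{J_i\cup S}\setminus\sigma$ controlled via Alexander duality --- and then checking that the resulting generators multiply correctly in the top degrees.
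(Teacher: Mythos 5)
Your combinatorial decomposition of $K_J$ and the resulting additive analysis are sound and closely parallel the paper's, though the paper routes it more cleanly through the auxiliary complex $K'=K_1\cup_\sigma K_2$ (its Propositions \ref{prop:1}--\ref{prop:3}), then compares $\w H^*(\ZZ_K)$ with $\w H^*(\ZZ_{K'})$ via the inclusion $i\colon K\hookrightarrow K'$ and a five-case analysis of $i^*_I$. The real issue is in the ring structure, where you assert that the multiplicative splitting is ``forced, once the additive structure is known, by Poincar\'e duality of $\ZZ_K$.'' This is not enough. Degree counting does show that for a new class $\beta_I\in\w H^{n-2}(K_I)\subset\w\HH^{n-1}(K)$ the only possibly nonzero products are with $\w\HH^1(K)$ and land in $\w\HH^n(K)=\w H^{n-1}(K)\cong\kk$. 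But a priori $\beta_I * i^*(\alpha)$ need not vanish for $\alpha$ in the $A_1\oplus A_2$ part of $\w\HH^1$: the additive complement $B$ produced by the Mayer--Vietoris/long-exact-sequence argument is not automatically a multiplicative complement, and Poincar\'e duality of $\ZZ_K$ alone cannot tell you that you may pick $B$ so that $B * i^*(A_1\times A_2)=0$.

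The missing ingredient is exactly the correction step in the paper's proof: since $K_1$ and $K_2$ are Gorenstein*, $A_1\cong\w\HH^*(K_1)\otimes\Lambda$ and $A_2\cong\w\HH^*(K_2)\otimes\Lambda$ are themselves Poincar\'e algebras, so the functional $a\mapsto(i^*)^{-1}(\beta_I*i^*(a))$ on $A_1^1$ is realized by multiplication with some $a_1\in A_1^{n-1}$ (and similarly $a_2\in A_2^{n-1}$); replacing $\beta_I$ by $\beta_I-i^*(a_1)-i^*(a_2)$ annihilates $i^*(A_1\times A_2)$, and only then does the quotient $\mathcal R/\mathcal I$ match ring-theoretically. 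This uses Poincar\'e duality for $K_1$ and $K_2$ separately, not for $K$ -- that is exactly where the Gorenstein* hypothesis on the summands enters the ring-level argument, and your proposal does not invoke it. A secondary, smaller omission: over $\kk=\mathbb Z$ the Poincar\'e pairing is nonsingular only modulo torsion, so one must observe (as the paper does) that $\w\HH^{n-1}$ and $\w\HH^1$ of the relevant complexes are torsion-free for the correction to go through with integer coefficients.
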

Before proving this main theorem, we first give several more general results about the cohomology of moment-angle complexes.
\begin{prop}\label{prop:1}
Suppose we are given two simplices $\Delta^{m_1-1}$ and $\Delta^{m_2-1}$. Let $L=\Delta^{m_1-1}\cup_{\sigma} \Delta^{m_2-1}$ denote the simplicial complex obtained
from $\Delta^{m_1-1}$ and $\Delta^{m_2-1}$ by gluing along a common simplex $\sigma$ of dimension $n-1$. Then
\[\w {H}^*(\mathcal {Z}_{L};\kk)\cong \w H^*(\bigvee_{i\geq2} \lambda_i\, S^{i+1};\kk),\]
where
\[\lambda(i)=\tbinom{m_1+m_2-2n}{i}-\tbinom{m_1-n}{i}-\tbinom{m_2-n}{i}.\]
\end{prop}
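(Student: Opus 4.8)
The plan is to compute $\w H^*(\ZZ_L;\kk)$ directly from the Hochster decomposition of Theorem \ref{thm:1}, for which I first describe all full subcomplexes of $L$. Write $S_1,S_2$ for the vertex sets of $\Delta^{m_1-1},\Delta^{m_2-1}$, so that after the gluing $S_1\cap S_2=\sigma$ is a set of $n$ vertices and $S:=S_1\cup S_2$ has $m_1+m_2-n$ elements; then $L$ is the complex on $S$ whose faces are precisely the subsets of $S_1$ together with the subsets of $S_2$. Hence for $I\subseteq S$ one has $L_I=2^{I\cap S_1}\cup 2^{I\cap S_2}$, that is, $L_I$ is the union of the simplex on $I\cap S_1$ and the simplex on $I\cap S_2$, glued along the simplex on $I\cap S_1\cap S_2=I\cap\sigma$.

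The second step is to compute $\w H^*(L_I;\kk)$. If $I\cap\sigma\neq\varnothing$, then $L_I$ is the union of the simplices $2^{I\cap S_1}$ and $2^{I\cap S_2}$ along the nonempty simplex $2^{I\cap\sigma}$, all three of which have trivial reduced homology, so $\w H^*(L_I;\kk)=0$ by Mayer--Vietoris. If $I\cap\sigma=\varnothing$, then $|L_I|$ is the disjoint union of the geometric simplices on $I\cap S_1$ and on $I\cap S_2$; thus $\w H^*(L_I;\kk)\cong\kk$ concentrated in degree $0$ when both $I\cap S_1$ and $I\cap S_2$ are nonempty, and $\w H^*(L_I;\kk)=0$ otherwise (if one of the two is empty, $L_I$ is a single simplex, or, for $I=\varnothing$, the complex $\{\varnothing\}$ contributing the unit $\w H^{-1}(\varnothing)=\kk$ that is discarded in reduced cohomology). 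Setting $A_j=S_j\setminus\sigma$, so that $|A_1|=m_1-n$, $|A_2|=m_2-n$ and $A_1\cap A_2=\varnothing$, the subsets $I$ with $\w H^*(L_I;\kk)\neq0$ are exactly those contained in $A_1\sqcup A_2$ that meet both $A_1$ and $A_2$; an inclusion--exclusion count gives $\tbinom{m_1+m_2-2n}{k}-\tbinom{m_1-n}{k}-\tbinom{m_2-n}{k}=\lambda(k)$ of them with $|I|=k$, and every such $I$ satisfies $|I|\geq2$.

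Now I feed this into Theorem \ref{thm:1}: a class of $\w H^0(L_I)$ lands in cohomological degree $|I|+1$ of $\ZZ_L$, so $\w H^{k+1}(\ZZ_L;\kk)\cong\kk^{\lambda(k)}$ for $k\geq2$ while all other reduced groups vanish -- additively this is exactly $\w H^*\big(\bigvee_{i\geq2}\lambda(i)\,S^{i+1};\kk\big)$. To obtain the full ring statement it remains to check that the product of any two positive-degree reduced classes vanishes, as in a wedge of spheres. Under the identification $\w H^*(\ZZ_L;\kk)\cong\w{\HH}^*(L;\kk)$, take generators arising from $\w H^0(L_I)$ and $\w H^0(L_J)$ with $I\cap\sigma=J\cap\sigma=\varnothing$. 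By Theorem \ref{thm:e} their star product equals, up to sign, the union product $\w H^0(L_I)\otimes\w H^0(L_J)\to\w H^1(L_{I\cup J})$, which vanishes unless $I\cap J=\varnothing$; and if $I\cap J=\varnothing$ then $(I\cup J)\cap\sigma=\varnothing$, so $L_{I\cup J}$ is a disjoint union of at most two simplices and $\w H^1(L_{I\cup J})=0$. Hence every such product is zero, and the additive isomorphism is an isomorphism of graded rings.

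I do not expect a genuine obstacle here; the argument is a careful unwinding of the Hochster formula. The only points that need attention are the degenerate full subcomplexes (those with $I\cap S_1$ or $I\cap S_2$ empty) and the degree shift by $1$ prescribed by Theorem \ref{thm:1} in passing from $\w H^*(L_I)$ to $\w H^*(\ZZ_L)$; and one must not omit the short verification that the products are trivial, which is precisely what upgrades the additive isomorphism to a ring isomorphism.
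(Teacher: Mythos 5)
Your proposal is correct and follows essentially the same route as the paper's proof: compute $\w H^*(L_I;\kk)$ for all full subcomplexes $L_I$, feed the result into Hochster's formula to get the additive isomorphism, and then observe that all products of positive-degree classes must land in $\w H^1(L_{I\cup J})=0$. The paper states the full-subcomplex computation without the Mayer--Vietoris/cone justification you supply and is terser about the inclusion--exclusion count, but the content is identical.
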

\begin{proof}
Set $\Ss_1$ and $\Ss_2$ are the vertex sets of $\Delta^{m_1-1}$ and $\Delta^{m_2-1}$ respectively.
From the construction of $L$ we get that
\[
\w {H}^i(L_I;\kk)=
\begin{cases}
\kk& \ \text{if} \ i=0\ \text{and}\ I\cap\sigma=\varnothing,\,I\cap \mathcal {S}_1\neq \varnothing,\,I\cap \mathcal {S}_2\neq \varnothing,\\
0& \ \text{otherwise.}
\end{cases}
\]
So the additive isomorphism follows from theorem \ref{thm:1} immediately. To verify the ring structure,
given $[u]\in \w {H}^0(L_{I_1})$ and $[v]\in \w {H}^{0}(L_{I_2})$ with $I=I_1\cup I_2$, $I_1\cap I_2=\varnothing$,
then $[u]*[v]\in \w {H}^1(L_I)=0$. This proves the proposition.
\end{proof}

\begin{prop}\label{prop:2}
Let $K_1$ be a simplicial complex with $m_1$ vertices, and let $L_1=K_1\cup_\sigma\Delta^{m_2-1}$ denote the simplicial complex obtained from $K_1$ and $\Delta^{m_2-1}$ by gluing along a common simplex $\sigma$, and let $L=\Delta^{m_1-1}\cup_{\sigma} \Delta^{m_2-1}$. Then
\[\w {H}^*(\mathcal {Z}_{L_1};\kk)\cong (\w {H}^*(\mathcal {Z}_{K_1};\kk)\otimes\Lambda_\kk[v_1,\dots,v_{m_2-|\sigma|}])
\times\w {H}^*(\mathcal {Z}_{L};\kk)\]
where $\Lambda_\kk[v_1,\dots,v_{m_2-|\sigma|}]$ denotes the graded exterior algebra over $\kk$ with $m_2-|\sigma|$ generators;
$\mathrm{deg}(v_i)=1$ for $1\leq i\leq m_2-|\sigma|$.
\end{prop}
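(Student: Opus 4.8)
The plan is to read off $\w H^*(\mathcal Z_{L_1})$ additively from the Hochster decomposition of Theorem \ref{thm:1}, to package the summands by means of two restriction maps coming from simplicial inclusions, and then to pin down the multiplication with a cochain-level support argument. Throughout, let $\Delta_V$ denote the full simplex on a vertex set $V$.

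\textbf{Set-up and the additive picture.} Write $\mathcal S_1$ for the vertex set of $K_1$ (so $\sigma\subseteq\mathcal S_1$) and $W=\mathcal S_2\setminus\sigma$; then $L_1$ has vertex set $\mathcal S_1\sqcup W$, with $|W|=m_2-|\sigma|$, and $L_1=K_1\cup\Delta_{\mathcal S_2}$ with $K_1\cap\Delta_{\mathcal S_2}=\Delta_\sigma$. For $I\subseteq\mathcal S_1\sqcup W$ put $I_1=I\cap\mathcal S_1$, $I_2=I\cap W$, $J=I\cap\sigma$. First I would observe the direct identification $(L_1)_I=(K_1)_{I_1}\cup_{\Delta_J}\Delta_{J\cup I_2}$ and feed it into a reduced Mayer--Vietoris sequence (using that $\Delta_{J\cup I_2}$ is contractible, and $\Delta_J$ is contractible if $J\neq\varnothing$ and empty if $J=\varnothing$). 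This gives: $\w H^*((L_1)_I)\cong\w H^*((K_1)_{I_1})$ if $I_2=\varnothing$, or if $I_2\neq\varnothing$ and $J\neq\varnothing$; $\w H^*((L_1)_I)\cong\w H^*((K_1)_{I_1})\oplus\kk$ with the extra $\kk$ in degree $0$ if $\varnothing\neq I_2$ and $\varnothing\neq I_1\subseteq\mathcal S_1\setminus\sigma$; and $\w H^*((L_1)_I)=0$ if $\varnothing=I_1\neq I_2$. Let $B\subseteq\w H^*(\mathcal Z_{L_1})$ be the span of the ``extra'' $\kk$'s; the one lying in $\w H^0((L_1)_I)=\w H^0\big((K_1)_{I_1}\sqcup\Delta_{I_2}\big)$ is conveniently generated by the class of the $0$-cocycle $\mathbb 1_{I_2}=\sum_{w\in I_2}(w)$, the indicator of the $\Delta_{I_2}$-component.

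\textbf{The two restriction maps.} Viewing the vertices of $W$ as ghost vertices of $K_1$ one has $\mathcal Z_{K_1}^{\,\mathcal S_1\cup W}=\mathcal Z_{K_1}\times(S^1)^{m_2-|\sigma|}$, and (by K\"unneth, since $H^*((S^1)^{m_2-|\sigma|})$ is free) its reduced cohomology ring contains $\w H^*(\mathcal Z_{K_1})\otimes\Lambda_\kk[v_1,\dots,v_{m_2-|\sigma|}]$ as an ideal, hence a subring. The simplicial inclusion $K_1^{\,\mathcal S_1\cup W}\hookrightarrow L_1$ induces, via Theorem \ref{thm:0}, a ring homomorphism $q\colon\w H^*(\mathcal Z_{L_1})\to\w H^*\big(\mathcal Z_{K_1}\times(S^1)^{m_2-|\sigma|}\big)$; the summand-wise computation above shows $q$ carries $\w H^{\ge1}((L_1)_I)$ and the non-``extra'' part of $\w H^0((L_1)_I)$ isomorphically onto $\w H^*((K_1)_{I_1})\otimes v_{I_2}$, kills exactly $B$, and so has image precisely $\w H^*(\mathcal Z_{K_1})\otimes\Lambda_\kk[v_1,\dots,v_{m_2-|\sigma|}]$. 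Second, $L_1\subseteq L$ induces a ring homomorphism $j^*\colon\w H^*(\mathcal Z_L)\to\w H^*(\mathcal Z_{L_1})$; comparing with the proof of Proposition \ref{prop:1} one checks $j^*$ is injective with image $B$, so $j^*$ identifies $B$ with $\w H^*(\mathcal Z_L)$ as rings and in particular $B\cdot B=0$. Choosing the additive complement $A$ of $B$ that splits off the extra $\kk$ inside each $\w H^0((L_1)_I)$, the homomorphism $q$ restricts to an isomorphism of graded groups $A\xrightarrow{\ \cong\ }\w H^*(\mathcal Z_{K_1})\otimes\Lambda_\kk[v_1,\dots,v_{m_2-|\sigma|}]$; moreover $A$ is a subring, because a star product of two reduced classes lies in simplicial degree $\ge1$, hence in some $\w H^{\ge1}((K_1)_{I_1})\subseteq A$. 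Granting $A\cdot B=B\cdot A=0$, this will give $\w H^*(\mathcal Z_{L_1})=A\times B\cong\big(\w H^*(\mathcal Z_{K_1})\otimes\Lambda_\kk[v_1,\dots,v_{m_2-|\sigma|}]\big)\times\w H^*(\mathcal Z_L)$.

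\textbf{The hard part: $B$ annihilates everything.} By bilinearity it suffices to prove $[\mathbb 1_{I_2}]\cdot[u]=0$ for each generator $[\mathbb 1_{I_2}]\in\w H^0((L_1)_I)$ (so $I_1\subseteq\mathcal S_1\setminus\sigma$, $I_2\neq\varnothing$) and each reduced cocycle $u$ on $(L_1)_{I'}$ with $I\cap I'=\varnothing$. Using Theorem \ref{thm:e} to pass to the union product, the product is represented by $\mathbb 1_{I_2}\sqcup u=\sum_{w\in I_2}(w)\sqcup u$. Since $I_2\subseteq\mathcal S_2$ and no simplex of $L_1$ containing a vertex of $W$ can meet $\mathcal S_1\setminus\sigma$, this cocycle is supported inside the full simplex $\Delta_Q$ on $Q=(I\cup I')\cap\mathcal S_2$, and — being supported on simplices that meet $I_2$ — it vanishes on the face $\Delta_{Q\setminus I_2}$. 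As $(\Delta_Q,\Delta_{Q\setminus I_2})$ has vanishing relative cohomology (both spaces contractible, the inclusion a homotopy equivalence), $\mathbb 1_{I_2}\sqcup u$ is the coboundary of a cochain on $\Delta_Q$ vanishing on $\Delta_{Q\setminus I_2}$. The delicate point I expect to cost the most care is checking that this primitive, extended by zero, remains a primitive over all of $(L_1)_{I\cup I'}$; this reduces to the observation that any simplex of $(L_1)_{I\cup I'}$ having exactly one vertex outside $Q$ has its complementary face contained in $Q\setminus W\subseteq Q\setminus I_2$, where the primitive vanishes. Hence $[\mathbb 1_{I_2}]\cdot[u]=0$, and graded commutativity then also yields $B\cdot\w H^*(\mathcal Z_{L_1})=0$, completing the proof.
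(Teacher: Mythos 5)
Your proof is correct and its overall architecture matches the paper's: you read off the additive structure from the Hochster decomposition via Theorem \ref{thm:1}, package the two pieces $A$ and $B$ using the restriction maps from the simplicial inclusions $K_1^{[m]}\hookrightarrow L_1\hookrightarrow L$, and then show the splitting is multiplicative. The case analysis of $\widetilde H^*((L_1)_I)$ and the identification of $A$ with $\widetilde H^*(\mathcal Z_{K_1})\otimes\Lambda_\kk[v_1,\dots,v_{m_2-|\sigma|}]$ and of $B$ with $\widetilde H^*(\mathcal Z_L)$ are the same as in the paper.

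Where the two arguments diverge is in the step you flag as the hard part, namely $A\cdot B=0$. You prove it by a genuine cochain-level computation: exhibit a relative primitive for $\mathbb 1_{I_2}\sqcup u$ on $(\Delta_Q,\Delta_{Q\setminus I_2})$, extend it by zero, and verify the coboundary still equals $\mathbb 1_{I_2}\sqcup u$ on all of $(L_1)_{I\cup I'}$ by the ``exactly one vertex outside $Q$'' check. I checked this argument and it does go through (including the degenerate case $Q\setminus I_2=\varnothing$, where the product cochain is already identically zero). The paper's proof is substantially shorter at this step: it observes that $B=\mathrm{Ker}\,i_1^*$ is a graded ideal of the Hochster algebra, that $B\cong\widetilde{\mathcal H}^*(L)=\widetilde{\mathcal H}^1(L)$ sits entirely in Hochster degree $1$ (Proposition \ref{prop:1}), and that the star product of any two reduced classes lies in Hochster degree $\geq2$; therefore $A\cdot B\subseteq B\cap\bigoplus_{i\geq2}\widetilde{\mathcal H}^i(L_1)=0$, and $B\cdot B=0$ likewise. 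You had already assembled all the ingredients for this short argument --- you note that $q$ is a ring map killing exactly $B$, and you use the ``product of reduced classes lands in degree $\geq1$'' observation to show $A$ is a subring --- so the cochain-level detour is avoidable. It costs you real work (the support analysis, the extension-by-zero check, the degenerate-case bookkeeping) that the degree argument renders unnecessary. Still, what you wrote is correct, just longer, and the explicit coboundary argument is a nice sanity check on the sign conventions of the union product.

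One tiny slip of the pen: in the sentence ``hence in some $\widetilde H^{\geq1}((K_1)_{I_1})\subseteq A$'' it should read $\widetilde H^{\geq1}((L_1)_I)$, since you are working inside $\widetilde{\mathcal H}^*(L_1)$ at that point; the conclusion is unaffected because your $A$ contains every summand in simplicial degree $\geq1$.
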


\begin{proof}
The coefficient ring $\kk$ will play no special role in the argument so we shall omit it from the notation.
Suppose $m=m_1+m_2-|\sigma|$. Set
\begin{align*}
[m_1]&=\{u_1,\dots,u_{m_1-|\sigma|},w_1,\dots,w_{|\sigma|}\},\\
[m_2]&=\{v_1,\dots,v_{m_2-|\sigma|},w_1,\dots,w_{|\sigma|}\}, \text{ and}\\
[m]&=\{u_1,\dots,u_{m_1-|\sigma|},w_1,\dots,w_{|\sigma|},v_1,\dots,v_{m_2-|\sigma|}\}
\end{align*}
be the vertex sets of $K_1$, $\Delta^{m_2-1}$ and $L_1$ respectively.
Consider simplicial inclusions: $K_1\stackrel{i_1}{\hookrightarrow} L_1\stackrel{i_2}{\hookrightarrow} L$, which induce inclusions
$\mathcal{Z}_{K_1}^{[m]}\stackrel{\phi_1}{\hookrightarrow}\mathcal{Z}_{L_1}\stackrel{\phi_2}{\hookrightarrow}\mathcal {Z}_{L}$.
By Theorem \ref{thm:0}, we have a commutative diagram

\[\begin{CD}
\w H^*(\mathcal{Z}_{L})@>\phi_2^*  >>\w H^*(\mathcal{Z}_{L_1})@>\phi_1^*  >>\w H^*(\mathcal {Z}_{K_1}^{[m]})\\
@V\cong VV @V\cong VV @V\cong VV\\
\w {\mathcal {H}}^*(L)@>i_2^*>>
\w {\mathcal {H}}^*(L_1)@>i_1^* >>
\w {\mathcal {H}}^*_{[m]}(K_1)
\end{CD}\]

From the construction of $L$ and $L_1$ it is easy to see that for any $I\subseteq [m]$, $I\neq\varnothing$
\begin{equation}\label{eq:2}
\begin{split}
(L_1)_I&\cong L_I\simeq pt,\ \text{ if }I\cap [m_1]=\varnothing,\\
(L_1)_I&\simeq(K_1)_I\vee  L_I,\ \text{ if }I\cap [m_1]\neq \varnothing,
\end{split}
\end{equation}
and in the second case we have a commutative diagram
\[\begin{CD}
 (K_1)_I@>(i_1)_I  >>(L_1)_I@>(i_2)_I>>L_I\\
@| @V\simeq  VV @|\\
 (K_1)_I@>j_I>>(K_1)_I\vee  L_I@>p_I>>L_I
\end{CD}\]
where $j_I$ and $p_I$ are the natural inclusion and projection maps respectively.
Therefore $\w {\mathcal {H}}^*(L_1)$ additively splits as $A_1\oplus A_2$, where
\[A_1\cong i^*_1(A_1)= \mathrm{Im}\,i_1^*;\quad A_2=\mathrm{Im}\,i_2^*=\mathrm{Ker}\,i^*_1\cong \w {\mathcal {H}}^*(L).\]
Actually, this splitting is also multiplicative. Since $A_2$ is an ideal of $\w {\mathcal {H}}^*(L_1)$,
we need only to verify that $A_1$ is a subalgebra and for any
$\alpha_1\in A_1$ and $\alpha_2\in A_2$, $\alpha_1*\alpha_2=0$.
Given two elements $\alpha$, $\alpha'\in A_1$, then $\alpha*\alpha'\in \bigoplus_{i\geq 2}\w {\mathcal {H}}^i(L_1)$.
However, from Proposition \ref{prop:1}, we have $\w {\mathcal {H}}^*(L)=\w {\mathcal {H}}^1(L)$, so $\alpha*\alpha'\in A_1$. $A_1$ is an algebra.
Meanwhile, for any $\alpha_1\in A_1$ and $\alpha_2\in A_2$, $\alpha_1*\alpha_2=0\in A_2$ for the same dimension reason. Thus $\w {\mathcal {H}}^*(L_1)=A_1\times A_2$.

According to Theorem \ref{thm:e}, a straightforward calculation shows that
\begin{enumerate}[(1)]
\item In $\HH^*_{[m]}(K_1)$, the subalgebra $\underset{I\cap[m_1]=\varnothing}\bigoplus \w {H}^*({(K_1)}_I)\cong \Lambda_\kk [v_1,\dots,v_{m_2-|\sigma|}$.
Each generator $v_i$ of the exterior algebra corresponds to a generator of
$\w H^{-1}((K_1)_{\{v_i\}})=\w H^{-1}(\varnothing)=\kk$.
\item For $I=I_1\cup I_2$ with $I_1\subseteq  [m_1]$, $I_2\cap [m_1]=\varnothing$, the homomorphism
\begin{align*}
\w {H}^*((K_1)_{I_1})\otimes\w {H}^*((K_1)_{I_2})&\to \w {H}^*((K_1)_I)\\
\alpha\otimes\beta&\mapsto\alpha*\beta
\end{align*}
is an isomorphism.
\end{enumerate}
Therefore we have an isomorphism
\[\bigoplus_{I\subseteq [m]} \w {H}^*({(K_1)}_I)\cong \big(\bigoplus_{I\subseteq [m_1]} \w {H}^*({(K_1)}_I)\big)\otimes
\big(\bigoplus_{I\cap[m_1]=\varnothing} \w {H}^*({(K_1)}_I)\big)\]
From formula (\ref{eq:2}), it is easy to see that
\begin{align*}
\mathrm{Im}\,i_1^*=\bigoplus_{\substack{I\cap[m_1]\neq\varnothing}} \w {H}^*({(K_1)}_I)\cong\big(\bigoplus_{\substack{I\subseteq [m_1]\\I\neq\varnothing}} \w {H}^*({(K_1)}_I)\big)
\otimes\big(\bigoplus_{I\cap[m_1]=\varnothing} \w {H}^*({(K_1)}_I)\big)
\end{align*}
Then the proposition follows at once.
\end{proof}

\begin{prop}\label{prop:3}
Let $K_1$ and $K_2$ be simplicial complexes with $m_1$ and $m_2$ vertices respectively. Let $K=K_1\cup_\sigma K_2$ denote a simplicial complex obtained from
$K_1$ and $K_2$ by gluing along a common simplex $\sigma$, and let $L=\Delta^{m_1-1}\cup_{\sigma} \Delta^{m_2-1}$. Then
\begin{align*}
\w {H}^*(\mathcal {Z}_{K};\kk)\cong &(\w {H}^*(\mathcal {Z}_{K_1};\kk)\otimes\Lambda_\kk[v_1,\dots,v_{m_2-|\sigma|}])\\
&\times(\w {H}^*(\mathcal {Z}_{K_2};\kk)\otimes\Lambda_\kk[u_1,\dots,u_{m_1-|\sigma|}])\times\w {H}^*(\mathcal {Z}_{L};\kk)
\end{align*}
\end{prop}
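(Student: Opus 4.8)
The plan is to mimic the strategy of Proposition \ref{prop:2}, but now ``thickening'' both sides of the connected sum instead of just one. The key observation is that $K = K_1 \cup_\sigma K_2$ sits inside the intermediate complex $L_1 = K_1 \cup_\sigma \Delta^{m_2-1}$ (obtained by filling in the second piece to a simplex) and also inside $L_2 = \Delta^{m_1-1} \cup_\sigma K_2$, and these two ``enlargements'' interact cleanly. Concretely, I would first consider the chain of simplicial inclusions $K_1 \hookrightarrow K \hookrightarrow L_1$ and $K_2 \hookrightarrow K \hookrightarrow L_2$, all on the common vertex set $[m] = [m_1+m_2-|\sigma|]$, and use Theorem \ref{thm:0} to translate everything into statements about reduced Hochster algebras $\w{\HH}^*(-)$.

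Next I would record the homotopy types of the relevant full subcomplexes. For any nonempty $I \subseteq [m]$, write $I = I_1 \cup I_2$ with $I_1 \subseteq [m_1]$, $I_2 \subseteq [m_2]$, $I_1 \cap I_2 \subseteq \sigma$. The point is that $K_I \simeq (K_1)_{I_1} \vee (K_2)_{I_2} \vee L_I$ up to the relevant degenerate cases (when $I$ misses one of the vertex sets entirely, or lies in $\sigma$): this is the same kind of wedge decomposition used in \eqref{eq:2}, coming from the fact that gluing along a full simplex $\sigma$ produces, at the level of full subcomplexes, a wedge whenever the intersection $I \cap \sigma$ is itself a simplex (which it always is). From this and Theorem \ref{thm:1} the additive splitting
\[
\w{\HH}^*(K) \cong B_1 \oplus B_2 \oplus B_3
\]
follows, where $B_1 = \mathrm{Im}(\w{\HH}^*(L_1) \to \w{\HH}^*(K))$ restricted to the part surviving from $K_1$, similarly $B_2$ from $K_2$, and $B_3 \cong \w{\HH}^*(\ZZ_L)$. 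Then, exactly as in Proposition \ref{prop:2}, I would identify $B_1 \cong \w H^*(\ZZ_{K_1}) \otimes \Lambda_\kk[v_1,\dots,v_{m_2-|\sigma|}]$ and $B_2 \cong \w H^*(\ZZ_{K_2}) \otimes \Lambda_\kk[u_1,\dots,u_{m_1-|\sigma|}]$, the exterior generators corresponding to the vertices of the opposite simplex via $\w H^{-1}(\varnothing) = \kk$ and the star-product isomorphisms (1)--(2) of that proof.

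The main work is upgrading this additive splitting to a ring splitting, i.e.\ showing $B_1, B_2$ are subalgebras, $B_3$ is an ideal, and the three pieces annihilate each other pairwise in cross-terms. The ideal statement for $B_3 \cong \w{\HH}^*(\ZZ_L)$ and the fact that $B_i * B_3 = 0$ follow from the degree argument already used: by Proposition \ref{prop:1}, $\w{\HH}^*(L) = \w{\HH}^1(L)$ is concentrated in a single (low) degree, so any product landing in the $B_3$-summand that is forced into degree $\geq 2$ must vanish, and one checks $B_1$ and $B_2$ are closed under $*$ just as before. \textbf{The step I expect to be the genuine obstacle is showing $B_1 * B_2 = 0$} --- that a class pulled back from the $K_1$-side and one pulled back from the $K_2$-side have trivial star product in $\w{\HH}^*(K)$. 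This does not follow from a pure degree count; instead I would argue geometrically on full subcomplexes: a nonzero star product $[u] * [v]$ with $[u] \in \w H^{p-1}(K_{I_1})$, $[v] \in \w H^{q-1}(K_{I_2})$ requires $I_1 \cap I_2 = \varnothing$ and $\sigma_u \cup \sigma_v \in K_{I_1 \cup I_2}$ for representing cochain simplices; but since the only simplices of $K$ meeting both $[m_1]\setminus\sigma$ and $[m_2]\setminus\sigma$ are forbidden (the two pieces meet only in $\sigma$), and since the classes $[u],[v]$ coming from $B_1,B_2$ are supported away from being cone-like over $\sigma$, one shows no such pair of cochains can survive to a cocycle, forcing $[u]*[v] = 0$. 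Once the multiplicative splitting $\w{\HH}^*(K) = B_1 \times B_2 \times B_3$ is established, translating back through Theorem \ref{thm:0} and substituting $B_3 \cong \w H^*(\ZZ_L)$ gives the stated formula.
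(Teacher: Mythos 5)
Your overall strategy (split $\w{\HH}^*(K)$ into three summands matching the three factors, then show the splitting is multiplicative) is in the right spirit, but the step you flag as ``the genuine obstacle,'' showing $B_1 * B_2 = 0$, is exactly where your proposed argument does not close, and the paper takes a different route around it.

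The paper does \emph{not} argue directly at the level of simplices and representing cochains. It proceeds in two stages. Writing $L_1 = K_1 \cup_\sigma \Delta^{m_2-1}$ and $L_2 = \Delta^{m_1-1} \cup_\sigma K_2$, one first splits $\w{\HH}^*(K) = A_1 \times A_2$ where $A_2 := \mathrm{Im}\,j_2^* = \mathrm{Ker}\,i_1^* \cong \w{\HH}^*(L_2)$ (an ideal, since it is a kernel of a restriction to $K_1$) and $A_1 := j_1^*(A_1') \cong \w H^*(\ZZ_{K_1}) \otimes \Lambda_\kk[v_1,\dots,v_{m_2-|\sigma|}]$ is pulled back from the multiplicative splitting of $\w{\HH}^*(L_1)$ already established in Proposition \ref{prop:2}. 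The vanishing $A_1 * A_2 = 0$ is then a short degree count: since $A_1 \subseteq \mathrm{Im}\,j_1^* = \mathrm{Ker}\,i_2^*$ and $A_2$ is an ideal, $\alpha_1 * \alpha_2 \in A_2 \cap \mathrm{Im}\,j_1^* = j_1^*(A_2')$ where $A_2' \cong \w{\HH}^*(L)$; by Proposition \ref{prop:1} this sits entirely in $\HH$-degree $1$, while $\alpha_1*\alpha_2$ has degree $\geq 2$, so it is zero. One then applies Proposition \ref{prop:2} \emph{again}, to $L_2$, to split $A_2$ into $(\w H^*(\ZZ_{K_2}) \otimes \Lambda_\kk[u_1,\dots,u_{m_1-|\sigma|}]) \times \w{\HH}^*(L)$. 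So the paper never needs a direct argument that the ``$K_1$-summand'' and the ``$K_2$-summand'' annihilate each other; that fact comes out as a byproduct of $A_1 * A_2 = 0$ plus the secondary splitting.

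By contrast, your proposed argument for $B_1 * B_2 = 0$ is not rigorous as stated. The wedge decomposition $K_I \simeq (K_1)_I \vee (L_2)_I$ is only a homotopy equivalence obtained by collapsing the simplex $I \cap \sigma$, not a decomposition of cochain complexes; so a class ``in the $K_1$-summand'' of $\w H^*(K_{I_1})$ need not have a cocycle representative supported on simplices of $(K_1)_{I_1}$ (extending a $(K_1)_{I_1}$-cocycle by zero generally fails to be a cocycle on $K_{I_1}$ once $I_1\cap\sigma\neq\varnothing$, and the retraction onto $(K_1)_{I_1}$ is not simplicial). Your claim that ``the classes $[u],[v]$ coming from $B_1, B_2$ are supported away from being cone-like over $\sigma$'' is not precise enough to fix this: you would need to exhibit a compatible choice of cochain representatives on which the union product visibly vanishes, and that is a nontrivial amount of work that your sketch does not supply. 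The paper's degree-counting argument sidesteps this completely, and I would recommend adopting it; alternatively, if you wish to keep your symmetric three-way setup, you can still recover $B_1 * B_2 = 0$ formally from $A_1 * A_2 = 0$ once you identify $B_2$ and $B_3$ as the two factors of $A_2 \cong \w{\HH}^*(L_2)$ under Proposition \ref{prop:2}.
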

\begin{proof}
We use the same notations $[m_1]$, $[m_2]$ and $[m]$ as in the proof of Proposition \ref{prop:2} to denote the vertex sets of
$K_1$, $K_2$ and $K$ respectively. Let $L_1=K_1\cup_\sigma\Delta^{m_2-1}$, $L_2=\Delta^{m_1-1}\cup_\sigma K_2$,
then we have a commutative diagram of simplicial inclusions
\[
\xymatrix{K_1 \ar[rd]^{i_1}&&L_2\ar[rd]^{l_2}\\
&K\ar[ru]^{j_2}\ar[rd]^{j_1}&&L\\
K_2\ar[ru]^{i_2}&&L_1\ar[ru]^{l_1}}
\]
which induces a commutative diagram of algebras
\[
\xymatrix{\w{\mathcal {H}}_{[m]}^*(K_1) &&\w{\mathcal {H}}^*(L_2)\ar[ld]_{j_2^*}\\
&\w{\mathcal {H}}^*(K)\ar[lu]_{i_1^*}\ar[ld]_{i_2^*}&&\w{\mathcal {H}}^*(L)\ar[lu]_{l_2^*}\ar[ld]_{l_1^*}\\
\w{\mathcal {H}}_{[m]}^*(K_2)&&\w{\mathcal {H}}^*(L_1)\ar[lu]_{j_1^*}}
\]
By the construction of these simplicial complexes,
for any $I\subseteq [m]$ we have
\begin{equation}\label{eq:1}
\begin{split}
K_I&\cong(L_1)_I\cong(K_1)_I\ \text{ if } I\cap [m_2]=\varnothing,\\
K_I&\cong(L_2)_I\cong(K_2)_I\ \text{ if } I\cap [m_1]=\varnothing,\\
K_I&\simeq(K_1)_I\vee  (L_2)_I\simeq(K_2)_I\vee(L_1)_I\ \text{ otherwise}.
\end{split}
\end{equation}
So $\w {\mathcal {H}}^*(K)$
additively splits as $A_1\oplus A_2$ (also as $B_1\oplus B_2$), where
\[A_1\cong \mathrm{Im}\,i_1^*=i_1^*(A_1)\ (B_1\cong \mathrm{Im}\,i_2^*=i_2^*(B_1)),\text{ and}\]
\[A_2=\mathrm{Im}\,j_2^*=\mathrm{Ker}\,i_1^*\cong \w {\mathcal {H}}^*(L_2)\
(B_2=\mathrm{Im}\,j_1^*=\mathrm{Ker}\,i_2^*\cong \w {\mathcal {H}}^*(L_1)).\]
From formula (\ref{eq:1}), we also have $\mathrm{Im}\,i_1^*=\mathrm{Im}\,(j_1i_1)^*$. From the proof of Proposition \ref{prop:2},
we know that $\w {\mathcal {H}}^*(L_1)$ multiplicatively splits as $A_1'\oplus A_2'$, where
\[A_1'\cong\mathrm{Im}\,(j_1i_1)^*\cong \w {H}^*(\mathcal {Z}_{K_1})\otimes\Lambda_\kk [v_1,\dots,v_{m_2-|\sigma|}],\]
\[A_2'=\mathrm{Im}\,l^*_1\cong \w {\mathcal {H}}^*(L).\]
So we can take $A_1=j_1^*(A_1')$.

Now we will show that the  splitting $\w {\mathcal {H}}^*(K)=A_1\oplus A_2$ is a multiplicative splitting.
It is clear that $A_1$ is a subalgebra and $A_2$ is an ideal.
For any $\alpha_1\in A_1$,  $\alpha_2\in A_2$, $\alpha_1*\alpha_2\in A_2$ and
$i_2^*(\alpha_1*\alpha_2)=i_2^*(\alpha_1)*i_2^*(\alpha_2)=0$ ($A_1\subseteq \mathrm{Im}\,j_1^*=\mathrm{Ker}\,i_2^*$).
Thus $\alpha_1*\alpha_2\in \mathrm{Ker}\,i^*_2=\mathrm{Im}\,j_1^*$.
Note that $\mathrm{Im}\,j_1^*=j_1^*(A_1')\oplus j_1^*(A_2')=A_1\oplus j_1^*(A_2')$. So $\alpha_1*\alpha_2\in j_1^*(A_2')$.
By Proposition \ref{prop:1}, the elements of $A_2'$ all come from $\w {\mathcal {H}}^1(L_1)$, but $\alpha_1*\alpha_2\in \bigoplus_{i\geq 2}\w {\mathcal {H}}^i(K)$. It follows that $\alpha_1*\alpha_2=0$.
Therefore the above splitting is multiplicative, and the result follows.
\end{proof}

Now let us complete the proof of Theorem \ref{thm:4}.

\begin{proof}[Proof of Theorem \ref{thm:4}]
The cohomology groups with coefficients in $\kk$ will be implicit throughout the proof. We use the notation
$[m_1]$, $[m_2]$ and $[m]$, as in the proof of Proposition \ref{prop:3}, to denote the vertex sets of $K_1$, $K_2$ and $K$ respectively.
Suppose $\sigma$ is the selected common facet of $K_1$ and $K_2$ corresponding to $K$. Set $K'=K_1\cup_\sigma K_2$,
then there is a simplicial inclusion $i:K\hookrightarrow K'$, which induces a inclusion $\phi:\mathcal {Z}_{K}\hookrightarrow \mathcal {Z}_{K'}$. Consider the homomorphisms of cohomology groups
\[i_I^*: \w H^j(K'_I)\to\w H^j(K_I),\ I\subseteq [m],\ -1\leq j\leq n-1.\]
We will analyze $i^*_I$ in five cases:
\begin{enumerate}[(i)]
\item $\sigma\nsubseteq I$. In this case $K'_I=K_I$, so $i_I^*$ is an isomorphism for $-1\leq j\leq n-1$.

\item \label{case:2}$\sigma\subseteq I$ and $[m_1]\nsubseteq I,\,[m_2]\nsubseteq I$.
In this case $K_I$ is a proper subcomplex of $K'_I$, and $K'_I\simeq (K_1)_I\vee (K_2)_I$.
Consider the long exact sequence of cohomology groups
\[\cdots\to \w H^{j-1}(K_I)\xrightarrow{\delta}\w H^j(K'_I,K_I)\to\w H^j(K'_I)\xrightarrow{i_I^*}\w H^j(K_I)\xrightarrow{\delta}\cdots\]
By excision theorem
\[\w H^j(K'_I,K_I)\cong\w H^j(\Delta^{n-1},\partial\Delta^{n-1})=
\begin{cases}
\kk\ \ \text{ if } j=n-1;\\
0\ \ \text{ otherwise.}
\end{cases}\]
$\w H^{n-1}(K'_I)=\w H^{n-1}(K_I)=0$ by Theorem \ref{thm:dual}. Therefore from the long exact sequence above, we have that $i^*_I$ is an isomorphism
for $j\neq n-2$ and $\w H^{n-2}(K_I)$ splits as $\mathrm{Im}\,i_I^*\oplus \kk$.

\item \label{case:3}$[m_1]\subseteq I,\,[m_2]\nsubseteq I$.
The only difference to (\ref{case:2}) is that $\w H^{n-1}(K'_I)\cong \kk$ in this case.
A similar analysis shows that $i^*_I$ is an isomorphism for $j<n-1$ and trivial for $j=n-1$.
From the proof of Proposition \ref{prop:3}, a generator of $\w H^{n-1}(K'_I)\cong \kk$ corresponds to
$[\mathcal {Z}_{K_1}]\otimes v_{I\setminus[m_1]}$ in the formula of Proposition \ref{prop:1},
where $[\mathcal {Z}_{K_1}]$ is a generator of $\w {H}^{m_1+n}(\mathcal {Z}_{K_1})\cong \kk$.

\item$[m_2]\subseteq I,\,[m_1]\nsubseteq I$.  $i^*_I$ is exactly the same as in (\ref{case:3}).
In this case a generator of $\w H^{n-1}(K'_I)$ corresponds to
$[\mathcal {Z}_{K_2}]\otimes u_{I\setminus[m_2]}$ in the formula of Proposition \ref{prop:1},
where $[\mathcal {Z}_{K_2}]$ is a generator of $\w {H}^{m_2+n}(\mathcal {Z}_{K_2})\cong \kk$.

\item \label{case:5}$I=[m]$.
In this case
\[\w H^{n-1}(K'_I)=\w H^{n-1}(K')\cong \kk\oplus \kk,\ \w H^{n-1}(K_I)=\w H^{n-1}(K)\cong \kk,\]
\[\w H^{j}(K'_I)=\w H^{j}(K_I)=0\ \text{ for }\ j<n-1.\]
Let $\xi_1$ and $\xi_2$ be two generators of $\w H^{n-1}(K')$ corresponding respectively to $[\mathcal {Z}_{K_1}]\otimes v_{[m_2-n]}$ and
$[\mathcal {Z}_{K_2}]\otimes u_{[m_1-n]}$ in the formula of Proposition \ref{prop:1}. It is easy to see that
$i^*(\xi_1)=i^*(\xi_2)$.
\end{enumerate}

Combining the above arguments, we have that as $\kk$-module
\[\w H^*(\mathcal {Z}_K)=\phi^*(\w H^*(\mathcal {Z}_{K'}))\oplus B,\]
where
\[B\cong \w H^*(\bigvee_{I\subseteq \mathcal {M}} S_I^{|I|+n-1}),\quad \mathcal {M}=\{I\in [m]:\sigma\subseteq I,\,[m_1]\nsubseteq I,\,[m_2]\nsubseteq I\}.\]
Each sphere summand $S_I^{|I|+n-1}$ above corresponds to a $\kk$ direct summand in $\w H^{n-2}(K_I)$ (see case \eqref{case:2}).
Denote by $\beta_I$ a generator of this $\kk$ direct summand.
On the other hand, by Proposition \ref{prop:3} $\w H^*(\mathcal {Z}_{K'})= A_1\times A_2\times A$ (as algebra), where
\[A_1\cong \w H^*(\mathcal {Z}_{K_1})\otimes\Lambda_\kk[v_1,\dots,v_{m_2-|\sigma|}];\quad
A_2\cong \w {H}^*(\mathcal {Z}_{K_2})\otimes\Lambda_\kk[u_1,\dots,u_{m_1-|\sigma|}];\]
\[A\cong\w {H}^*(\bigvee_{J\in \mathcal {N}} S_J^{|J|+1}),\quad \mathcal {N}=\{I\subseteq [m]:I\cap\sigma=\varnothing,\,I\cap[m_1]\neq\varnothing,\,I\cap[m_2]\neq\varnothing\}.\]
Each sphere summand $S_J^{|J|+1}$ above corresponds to a $\kk$ direct summand in $\w H^0(K'_J)$,
denote by $\alpha_J$ a generator of this $\kk$ direct summand. It is straightforward to see that $\mathcal {M}$ and $\mathcal {N}$ are in one-to-one correspondence\,: $I\mapsto[m]\setminus I$. The previous arguments imply that
$\phi^*(A)\cong A$,
and $\phi^*(A_1\times A_2)\cong (A_1\times A_2)/\mathcal {I}$ where $\mathcal {I}$ is a ideal generated by
\[([\mathcal {Z}_{K_1}]\otimes\alpha,0)\  \text{ for } \alpha\in\bigoplus_{i=0}^{m_2-n-1} \Lambda^i_\kk[v_1,\dots,v_{m_2-n}]\ \text{ and}\]
\[(0,[\mathcal {Z}_{K_2}]\otimes\beta)\ \text{ for } \beta\in \bigoplus_{j=0}^{m_1-n-1}\Lambda^j_\kk[u_1,\dots,u_{m_1-n}]\ \text{ and}\]
\[([\mathcal {Z}_{K_1}]\otimes v_{[m_2-n]},0)-(0,[\mathcal {Z}_{K_2}]\otimes u_{[m_1-n]}).\]

An easy observation shows that the isomorphism in the theorem holds for $\kk$-module homomorphism.
Now let us complete the proof by verifying the ring structure of $\w H^*(\mathcal {Z}_K)$. $\phi^*(\w {H}^*(\mathcal {Z}_{K'}))$ is clearly an algebra. For any two
generators $\beta_{I},\beta_{I'}\in B$, $\varnothing\neq\sigma\subset I\cap I'$, so $\beta_{I}*\beta_{I'}=0$ in $\w {\mathcal {H}}^*(K)$.
and then $B$ is an algebra with trivial product structure.  Thus we need only to verify the multiplication between $B$ and $\phi^*(\w {H}^*(\mathcal {Z}_{K'}))=i^*(\w {\mathcal {H}}^*(K'))$.

If $\kk$ is a field, then by Proposition \ref{thm:2}
and Theorem \ref{thm:AG}, $\mathcal {H}^{*}(K)$ is a Poincar\'e $\kk$-algebra.
First we assert that $\beta_I\in B$ can be chosen properly such that
$\beta_I*i^*(\alpha)=0$ for any $\alpha\in A_1\times A_2$.
To see this, note first that if $\alpha\in A_1$
such that $\beta_I*i^*(\alpha)\neq 0$, then for the dimension reason, $\alpha\in A_1^1$ (define $A_1^i=A_1\cap \w {\mathcal {H}}^i(K')$). By arguments in case (\ref{case:5}) $A_1^n\cong i^*(A_1^n)=\w {\mathcal {H}}^n(K)$.
Then for a chosen $\beta_I$ there is a $\kk$-homomorphism defined by
\begin{align*}
\phi_{\beta_I}: A_1^1&\to A_1^{n},\\
a&\mapsto (i^*)^{-1}(\beta_I*i^*(a)).
\end{align*}
Since $K_1$ is Gorenstein* over $\kk$,
$\w {\mathcal {H}}^*(K_1)$ (without concerning $\mathcal {H}^0(K_1)$)
is a Poincar\'e algebra, and so is $A_1$. Thus there exists an element $a_1\in A_1^{n-1}$, such that the $\kk$-homomorphism
\begin{align*}
\phi_{a_1}: A_1^1&\to A_1^n,\\
a&\mapsto a_1*a
\end{align*}
is equal to $\phi_{\beta_I}$, and so
\begin{align*}
(\beta_I-i^*(a_1))*i^*(a)&=i^*(i^*)^{-1}(\beta_I*i^*(a))-i^*(a_1*a)\\
&=i^*((i^*)^{-1}(\beta_I*i^*(a))-a_1*a)=0
\end{align*}
for any $a\in A_1$. Similarly there exists $a_2\in A_2^{n-1}$,
such that $(\beta_I-i^*(a_2))*i^*(a)=0$
for any $a\in A_2$.
Replacing $\beta_I$ by $\beta_I-i^*(a_1)-i^*(a_2)$ we get  the desired generator.
So we can make $B*i^*(A_1\times A_2)=0$.

It remains to verify the multiplication between $i^*(A)$ and $B$.
For each generator
$\beta_I\in B\subset \w{\mathcal {H}}^{n-1}(K)$ and $\alpha_J\in A\subset \w{\mathcal {H}}^1(K)$, we have that $\beta_I*i^*(\alpha_J)=0$ if $J\neq [m]\setminus I$.
This is because $\w H^{n-1}(K_S)=0$ if $S\neq [m]$. On the other hand, since $B*i^*(A_1\times A_2)=0$, then by the definition of Poincar\'e algebra $\beta_I*i^*(\alpha_{[m]\setminus I})$ is a generator of $\w {\mathcal {H}}^n(K)=\kk$. Thus by choosing the coefficients of  $\{\beta_I\}$ and
$\{\alpha_J\}$ properly, we have
\[\beta_I*i^*(\alpha_J)=
\begin{cases}
[\mathcal {Z}_{K_1}]\otimes v_{[m_2-n]}=[\mathcal {Z}_{K_2}]\otimes u_{[m_1-n]}\ &\text{ if } I\cup J=[m],\,I\cap J=\varnothing,\\
0&\text{ otherwise. }
\end{cases}
\]
Then the desired algebra isomorphism follows.

If $\kk=\mathbb{Z}$. Write $K^{\{0\}}=K$, $K^{\{1\}}=K_1$ and $K^{\{2\}}=K_2$. If we prove that
\begin{align*}
\varphi: \w {\mathcal {H}}^{n-1}(K^{\{i\}})&\to\mathrm{Hom}(\w {\mathcal {H}}^1(K^{\{i\}}),\w {\mathcal {H}}^n(K^{\{i\}}))\\
a&\mapsto\phi_a, \quad\quad \phi_a(b)=a*b
\end{align*}
is an isomorphism for $i=0,1,2$, then the result can be proved in the same way as the field case.
This is clearly true. Since when $K^{\{i\}}$ is Gorenstein*, $\ZZ_{K^{\{i\}}}$ is a manifold, thus satisfies Poincar\'e duality.
Then $\varphi$ is an isomorphism after factoring out the torsion. Note that $\w {\mathcal {H}}^{n-1}(K^{\{i\}})$ and $\w {\mathcal {H}}^{1}(K^{\{i\}})$ are always torsion free, the result follows.
\end{proof}

Since when $K_1$ and $K_2$ are Gorenstein*, $\mathcal {Z}_{K_1}$,  $\mathcal {Z}_{K_2}$ and $\mathcal {Z}_{K_1\#K_2}$ are both manifolds,
then we wish to obtain $\mathcal {Z}_{K_1\#K_2}$ from
$\mathcal {Z}_{K_1}$ and $\mathcal {Z}_{K_2}$ by making some geometrical operation on them. First we introduce the following operation on a manifold, which is studied in \cite{GL13}.

\begin{Def}\label{def:2}
Let $M^n$ be an $n$-manifold without boundary, and let $M^n_{-1}$ be $M^n$ minus an open ball $\overset{\circ}{D^n}$. The \emph{gyration} $\mathcal {G}(M^n)$ of $M^n$ is defined to be the manifold
\[\mathcal {G}(M^n)=\partial(M^n_{-1}\times D^2)=M^n_{-1}\times S^1\cup S^{n-1}\times D^2.\]
\end{Def}

Apparently, $\mathcal {G}(M^n)$ is obtained from the manifold $M^n\times S^1$ by an $(1,n)$-type surgery. A straightforward calculation shows the effect of gyration on cohomology.

\begin{prop}\label{prop:5}
Let $M^n$ be a closed $\kk$-orientable manifold of dimension $n$. Then there is a ring isomorohism
\[\w H^*(\mathcal {G}(M^n);\kk)\cong G(\w H^*(M^n;\kk)).\]
\end{prop}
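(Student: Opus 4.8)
The plan is to realize $\mathcal{G}(M^n)$ as the source of a collapse map onto the half-smash $M^n\ltimes S^1:=(M^n\times S^1)/(\{x_0\}\times S^1)$, which is a direct topological model for the algebraic gyration $G$. Write $W=M^n_{-1}\times D^2$, so that $\mathcal{G}(M^n)=\partial W=M^n_{-1}\times S^1\cup_{S^{n-1}\times S^1}S^{n-1}\times D^2$. Collapsing the subspace $S^{n-1}\times D^2$ identifies $\mathcal{G}(M^n)/(S^{n-1}\times D^2)$ with $(M^n_{-1}\times S^1)/(\partial M^n_{-1}\times S^1)=(M^n_{-1}/\partial M^n_{-1})\ltimes S^1$, and $M^n_{-1}/\partial M^n_{-1}=M^n/D^n$ is homotopy equivalent to $M^n$ since it collapses a contractible disk. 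This yields a quotient map $q\colon\mathcal{G}(M^n)\to M^n\ltimes S^1$, together with a cofibration $S^{n-1}\times D^2\hookrightarrow\mathcal{G}(M^n)\xrightarrow{q}M^n\ltimes S^1$.

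First I would compute $\w H^*(M^n\ltimes S^1;\kk)$. From the cofibration $\{x_0\}\times S^1\hookrightarrow M^n\times S^1\to M^n\ltimes S^1$ and the fact that the restriction $\w H^*(M^n\times S^1;\kk)\to\w H^*(S^1;\kk)$ is split surjective (the class $1\times v$ restricts to $v$), the long exact sequence of the cofibration collapses into short exact sequences identifying $\w H^*(M^n\ltimes S^1;\kk)$ with the ideal $\w H^*(M^n;\kk)\otimes\Lambda_\kk[v]$ inside $H^*(M^n;\kk)\otimes\Lambda_\kk[v]$; that is, $\w H^*(M^n\ltimes S^1;\kk)\cong\w H^*(M^n;\kk)\otimes\Lambda_\kk[v]$ as graded (non-unital) rings, $\deg v=1$. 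Using $\kk$-orientability of $M^n$, the fundamental cohomology class $[M^n]\in\w H^n(M^n;\kk)=\kk$ gives the element $[M^n]\otimes1$, which is nonzero and, being in the top-but-one degree, annihilates the whole ring; hence $\kk\cdot([M^n]\otimes1)$ is an ideal, and by the definition of $G$ we have $\bigl(\w H^*(M^n;\kk)\otimes\Lambda_\kk[v]\bigr)/([M^n]\otimes1)=G(\w H^*(M^n;\kk))$.

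Next I would analyze $q^*$ via the cohomology long exact sequence of the cofibration $S^{n-1}\times D^2\hookrightarrow\mathcal{G}(M^n)\xrightarrow{q}M^n\ltimes S^1$. Since $S^{n-1}\times D^2\simeq S^{n-1}$ has reduced cohomology concentrated in degree $n-1$ (assume $n\geq2$; the case $n=1$ reduces to $\mathcal{G}(S^1)=S^2$), the map $q^*$ is an isomorphism in every degree $\neq n$ and is surjective in degree $n$ with kernel of dimension at most $1$. It therefore suffices to show $q^*([M^n]\otimes1)=0$: then, as $[M^n]\otimes1\neq0$, the kernel of $q^*$ is exactly the ideal $([M^n]\otimes1)$, and $\w H^*(\mathcal{G}(M^n);\kk)\cong\w H^*(M^n\ltimes S^1;\kk)/([M^n]\otimes1)=G(\w H^*(M^n;\kk))$ as graded rings. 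For the vanishing, let $\pi\colon\mathcal{G}(M^n)\to M^n$ be the evident collapse map (project $M^n_{-1}\times S^1$ onto $M^n_{-1}\hookrightarrow M^n$, and send $S^{n-1}\times D^2$ via $S^{n-1}=\partial D^n\hookrightarrow M^n$). One checks that $\pi^*$ agrees with $q^*\circ p^*$ for the retraction $p\colon M^n\ltimes S^1\to M^n$, so that $q^*([M^n]\otimes1)=q^*p^*[M^n]=\pi^*[M^n]$. Now $\iota_U^*\colon\w H^n(\mathcal{G}(M^n);\kk)\to\w H^n(M^n_{-1}\times S^1;\kk)$ is injective (by Mayer--Vietoris for $\mathcal{G}(M^n)=M^n_{-1}\times S^1\cup S^{n-1}\times D^2$, using $\w H^n(S^{n-1}\times D^2;\kk)=0$ and the fact that the restriction into $\w H^{n-1}(\partial M^n_{-1}\times S^1;\kk)$ is onto, so the preceding connecting homomorphism vanishes), while $\iota_U^*(\pi^*[M^n])$ is pulled back from $[M^n]|_{M^n_{-1}}\in\w H^n(M^n_{-1};\kk)=0$ — here $M^n_{-1}$ has the homotopy type of an $(n-1)$-complex. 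Hence $\pi^*[M^n]=0$, as wanted.

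The step I expect to be the main obstacle is the identification of $\ker q^*$ in degree $n$, i.e.\ pinning down $q^*([M^n]\otimes1)$. The delicate point is that this distinguished class sits one degree below the top degree $n+1$ of the closed $(n+1)$-manifold $\mathcal{G}(M^n)$, so it is invisible to a naive restriction-to-pieces argument performed in the top degree; the route above circumvents this with the auxiliary collapse map $\pi$ together with the vanishing of a Mayer--Vietoris connecting map into $\w H^n$. A more computational alternative is to run Mayer--Vietoris on $\mathcal{G}(M^n)=M^n_{-1}\times S^1\cup_{S^{n-1}\times S^1}S^{n-1}\times D^2$ directly for the additive structure and then use Poincar\'e duality for $\mathcal{G}(M^n)$, matched against Poincar\'e duality for $M^n$, to determine the top-degree products; the half-smash argument, by contrast, never uses Poincar\'e duality for $M^n$ itself, so no torsion phenomena intervene when $\kk=\mathbb{Z}$.
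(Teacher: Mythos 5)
The paper gives no proof for this proposition — the text merely remarks that a ``straightforward calculation'' establishes the effect of gyration on cohomology, presumably envisioning a direct Mayer--Vietoris computation on $\mathcal{G}(M^n)=M^n_{-1}\times S^1\cup_{S^{n-1}\times S^1}S^{n-1}\times D^2$ plus Poincar\'e duality for $\mathcal{G}(M^n)$ to pin down the products, i.e.\ exactly the ``computational alternative'' you sketch at the end. Your half-smash approach is a genuine alternative and is cleaner: realizing $\mathcal{G}(M^n)$ as the source of a cofibration $S^{n-1}\times D^2\hookrightarrow\mathcal{G}(M^n)\xrightarrow{q}M^n\ltimes S^1$ delivers the ring structure in one pass and, as you point out, never invokes Poincar\'e duality for $M^n$ or $\mathcal{G}(M^n)$, so no torsion bookkeeping is required over $\mathbb{Z}$. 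I believe the argument is correct, with two small points worth a sentence each. First, after invoking the long exact sequence of the cofibration you assert that ``$q^*$ is an isomorphism in every degree $\neq n$,'' but from the diagram alone $q^*$ is only known to be injective in degree $n-1$, since the restriction $\w H^{n-1}(\mathcal{G}(M^n);\kk)\to\w H^{n-1}(S^{n-1};\kk)\cong\kk$ is not yet seen to vanish. This is harmless: once you show $q^*([M^n]\otimes1)=0$, the connecting homomorphism $\kk\to\w H^n(M^n\ltimes S^1;\kk)$ is forced to be injective, so the preceding restriction is zero and $q^*$ is onto in degree $n-1$ as well — but the two claims should be given in that order. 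Second, both the proposition and your proof implicitly assume $M^n$ connected: $G$ annihilates all of $\w H^n(M^n;\kk)$, whereas $\mathcal{G}(M^n)$ removes a ball from a single component, so for disconnected $M^n$ the two sides disagree. This hypothesis holds in every application in the paper (moment-angle manifolds over simplicial spheres are simply connected), so it is only a matter of making the assumption explicit.
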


From Theorem \ref{thm:4} and Proposition \ref{prop:5}, we immediately get the following Corollary.

\begin{cor}\label{cor:3}
Let $K_1$ and $K_2$ be two $n-1$ dimensional Gorenstein* complexes with $m_1$ and $m_2$ vertices respectively.
Then $H^*(\mathcal {Z}_{K_1\#K_2};\kk)\cong H^*(M;\kk)$, where
\[M=\mathcal {G}^{m_2-n}(\mathcal {Z}_{K_1})\#\mathcal {G}^{m_1-n}(\mathcal {Z}_{K_2})
\#\overset{m_1+m_2-2n}{\underset{i=2}{\sharp}}\lambda(i)(S^{i+1}\times S^{m_1+m_2-i-1}),\]
\[\lambda(i)=\tbinom{m_1+m_2-2n}{i}-\tbinom{m_1-n}{i}-\tbinom{m_2-n}{i},\]
and $\mathcal {G}^{r}(M)$ means iterating the gyration on a manifold $M$ by $r$ times.
\end{cor}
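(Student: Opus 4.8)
The plan is to read the right-hand side of Theorem \ref{thm:4} as the reduced cohomology ring of the manifold $M$ in the statement. This needs two inputs: Proposition \ref{prop:5}, to reinterpret the two algebraic-gyration factors of $\RR(K_1,K_2;\kk)$, and the standard behaviour of cohomology under connected sum, to reinterpret the quotient by $\mathcal{I}(K_1,K_2;\kk)$.

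I would first set up the manifolds. Since $K_1$ and $K_2$ are Gorenstein*, Theorem \ref{thm:mfd} (extended over $\kk$ exactly as in the proof of Theorem \ref{thm:dual}) shows that $\ZZ_{K_1}$ and $\ZZ_{K_2}$ are closed $\kk$-orientable manifolds (a $\kk$-homology manifold when $\kk$ is a field, which is all Proposition \ref{prop:5} needs) of dimensions $m_1+n$ and $m_2+n$; orientability is automatic since moment-angle complexes are simply connected. Because $\mathcal{G}(N)=\partial(N_{-1}\times D^2)$ is again closed and $\kk$-orientable whenever $N$ is, of dimension $\dim N+1$, Proposition \ref{prop:5} iterates to give graded ring isomorphisms
\[
G^{m_2-n}\bigl(\w H^*(\ZZ_{K_1};\kk)\bigr)\cong\w H^*\bigl(\mathcal{G}^{m_2-n}(\ZZ_{K_1});\kk\bigr),\qquad
G^{m_1-n}\bigl(\w H^*(\ZZ_{K_2};\kk)\bigr)\cong\w H^*\bigl(\mathcal{G}^{m_1-n}(\ZZ_{K_2});\kk\bigr),
\]
carrying the top classes $[Z_1]$, $[Z_2]$ of Theorem \ref{thm:4} to the fundamental cohomology classes of the two gyrated manifolds. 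Set $N_1=\mathcal{G}^{m_2-n}(\ZZ_{K_1})$, $N_2=\mathcal{G}^{m_1-n}(\ZZ_{K_2})$, and $N_3=\overset{m_1+m_2-2n}{\underset{i=2}{\sharp}}\lambda(i)(S^{i+1}\times S^{m_1+m_2-i-1})$; a dimension count shows $N_1,N_2,N_3$ are all closed $\kk$-orientable of the common dimension $d=m_1+m_2$, so $M=N_1\#N_2\#N_3$ is defined.

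Next I would establish the connected-sum formula. For closed connected $\kk$-orientable $d$-manifolds $P$, $Q$, collapsing the separating $(d-1)$-sphere of $P\#Q$ gives a map $P\#Q\to P\vee Q$ inducing a ring homomorphism $\w H^*(P;\kk)\times\w H^*(Q;\kk)\to\w H^*(P\#Q;\kk)$ which is an isomorphism in degrees $<d$ and, in degree $d$, a surjection $\kk^2\to\kk$ whose kernel is the line identifying the two fundamental classes; with compatible orientations this kernel is spanned by $[P]-[Q]$. Since that generator lies in top degree, the ideal it generates is its own $\kk$-span, so
\[
\w H^*(P\#Q;\kk)\cong\bigl(\w H^*(P;\kk)\times\w H^*(Q;\kk)\bigr)\big/\bigl([P]-[Q]\bigr).
\]
Applying this twice to the associative connected sum $M=N_1\#N_2\#N_3$ gives that $\w H^*(M;\kk)$ is the product ring $\w H^*(N_1;\kk)\times\w H^*(N_2;\kk)\times\w H^*(N_3;\kk)$ modulo the ideal generated by $([N_1],0,0)-(0,0,[N_3])$ and $(0,[N_2],0)-(0,0,[N_3])$. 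Combining this with the isomorphisms of the previous paragraph identifies the product ring with $\RR(K_1,K_2;\kk)$ (via $[Z_1]\leftrightarrow[N_1]$, $[Z_2]\leftrightarrow[N_2]$) and the ideal with $\mathcal{I}(K_1,K_2;\kk)$, since the manifold written $M$ in Theorem \ref{thm:4} is our $N_3$ and hence $[M]=[N_3]$. Thus $\w H^*(\ZZ_{K_1\#K_2};\kk)\cong\w H^*(M;\kk)$ by Theorem \ref{thm:4}, and since both spaces are connected, $H^*(\ZZ_{K_1\#K_2};\kk)\cong H^*(M;\kk)$.

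The one step that genuinely needs care is the connected-sum computation: one must verify that all cup products between positive-degree classes pulled back from the two different summands vanish, so that $\w H^*(P\#Q)$ is literally the product ring modulo a single top-degree relation and not a nontrivial extension, and then track orientations through the threefold sum so that the resulting ideal matches $\mathcal{I}(K_1,K_2;\kk)$ generator by generator. As reversing the orientation of a summand is a ring automorphism of the corresponding factor, any sign ambiguity in those generators is immaterial for the asserted abstract isomorphism, so this bookkeeping poses no real obstacle.
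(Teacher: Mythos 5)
Your proof is correct and follows exactly the route the paper intends: the paper dismisses the corollary as an immediate consequence of Theorem \ref{thm:4} and Proposition \ref{prop:5}, and your argument supplies precisely the two implicit ingredients — iterating Proposition \ref{prop:5} to realize the algebraic gyrations topologically, and the standard cohomology-of-connected-sum computation to recognize the quotient by $\mathcal{I}(K_1,K_2;\kk)$ — together with the dimension count and orientation bookkeeping needed to assemble them.
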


\begin{rem}
Actually, if one of $K_1$ and $K_2$ is $\partial\Delta^n$ in the above corollary,
then the cohomology ring isomorphism is induced by a homeomorphism of manifolds \cite{CFW16}, i.e., for an $(n-1)$-dimensional Gorenstein* complex $K$ with
$m$ vertices, we have a homeomorphism
\[\mathcal {Z}_{K\#\partial\Delta^n}\approx \mathcal {G}(\mathcal {Z}_{K})\overset{m-n}{\underset{i=1}{\sharp}}\tbinom{m-n}{i}(S^{i+2}\times S^{m+n-i-1}).\]
This is just the conjecture given by S.~Gitler and S.~L\'opez de~Medrano \cite{GL13}.
\end{rem}
For the general case we make the following
\begin{conj}\label{conj:1}
$\mathcal {Z}_{K_1\#K_2}$ is homeomorphic to
\[\mathcal {G}^{m_2-n}(\mathcal {Z}_{K_1})\#\mathcal {G}^{m_1-n}(\mathcal {Z}_{K_2})
\#\overset{m_1+m_2-2n}{\underset{i=2}{\sharp}}\lambda(i)(S^{i+1}\times S^{m_1+m_2-i-1}).\]
\end{conj}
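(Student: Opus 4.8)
The statement upgrades the ring isomorphism of Corollary~\ref{cor:3} to a homeomorphism, so the plan is to produce that homeomorphism by an explicit cut-and-paste. Write $[m]$ for the vertex set of $K=K_1\#K_2$ (so $m=m_1+m_2-n$), let $\sigma$ be the glued facet, and set $r_i=m_i-n$. Inside $(D^2)^{m}$ one has $\mathcal{Z}_{K_1\cup_\sigma K_2}^{[m]}=\mathcal{Z}_{K_1}^{[m]}\cup\mathcal{Z}_{K_2}^{[m]}$, with $\mathcal{Z}_{K_i}^{[m]}\approx\mathcal{Z}_{K_i}\times(S^1)^{r_{3-i}}$ (the complementary vertices are ghosts) and common part $\mathcal{Z}_{2^\sigma}^{[m]}\approx D^{2n}\times(S^1)^{r_1+r_2}$, which on each side is the cell $D(\sigma)$. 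Deleting the facet $\sigma$, i.e. removing the interior of this common part, yields the model
\[
\mathcal{Z}_{K_1\#K_2}\;\approx\;\bigl(\mathcal{Z}_{K_1\setminus\sigma}\times(S^1)^{r_2}\bigr)\;\cup_{\,S^{2n-1}\times(S^1)^{r_1+r_2}}\;\bigl(\mathcal{Z}_{K_2\setminus\sigma}\times(S^1)^{r_1}\bigr),
\]
where $K_i\setminus\sigma$ is $K_i$ with the top face $\sigma$ deleted (a triangulated disc), $\mathcal{Z}_{K_i\setminus\sigma}\approx\mathcal{Z}_{K_i}\setminus\bigl(\overset{\circ}{D^{2n}}\times(S^1)^{r_i}\bigr)$ is a manifold with boundary $S^{2n-1}\times(S^1)^{r_i}$, and the two boundaries are identified via the chosen vertex identification of $\sigma_1$ and $\sigma_2$. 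The base case $K_2=\partial\Delta^{n}$ is precisely the Gitler--L\'opez de~Medrano conjecture established in \cite{CFW16} (the remark after Corollary~\ref{cor:3}); there $K_2\setminus\sigma=\Delta^{0}*\partial\Delta^{n-1}$, so $\mathcal{Z}_{K_2\setminus\sigma}\approx D^2\times S^{2n-1}$ and the model reduces to the one used in that paper.

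The natural induction is on the number of vertices of $K_2$ (symmetrically $K_1$), with that base case. When $K_2$ admits an inverse stellar subdivision at a facet, i.e. $K_2=\mathrm{S}_\tau K_2'=K_2'\#\partial\Delta^{n}$ with $K_2'$ Gorenstein* (Propositions~\ref{thm:6} and~\ref{thm:2}) on one fewer vertex, then $K_1\#K_2=(K_1\#K_2')\#\partial\Delta^{n}$, so \cite{CFW16} applied to $K_1\#K_2'$ together with the inductive hypothesis for $(K_1,K_2')$ writes $\mathcal{Z}_{K_1\#K_2}$ as an iterated gyration of a connected sum of $\mathcal{Z}_{K_1}$, $\mathcal{Z}_{K_2'}$ and products of two spheres. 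It then remains to move gyrations and connected summands around by the identities $\mathcal{G}(M\#N)\approx\mathcal{G}(M)\#N$, hence $\mathcal{G}^{a}(A)\#\mathcal{G}^{b}(B)\approx\mathcal{G}^{a+b}(A)\#B$ (all from \cite{GL13}), and to verify that the sphere-product summands assemble into $\sharp_{i=2}^{m_1+m_2-2n}\lambda(i)(S^{i+1}\times S^{m_1+m_2-i-1})$ --- a combinatorial identity among binomial coefficients which must hold because both sides already coincide in cohomology by Corollary~\ref{cor:3}. The dimension $n=2$ can be handled independently: Gorenstein* $1$-spheres are polygons, $\mathcal{Z}$ of a polygon is an explicit connected sum of products of two spheres, and the formula is then checked directly.

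The genuine obstacle is that not every Gorenstein* complex is an iterated facet stellar subdivision of $\partial\Delta^{n}$ --- already for $n=3$ the boundary of the octahedron is neither a stellar subdivision nor a nontrivial connected sum --- so the induction must proceed through general bistellar moves (by Pachner's theorem any two PL $(n-1)$-spheres are linked by such moves), and one is forced to control the homeomorphism type of $\mathcal{Z}_K$ under a bistellar $k$-move for every $k$. The $0$-move is exactly \cite{CFW16}, and Theorem~\ref{thm:b} recovers its effect on cohomology; but for an intermediate $k$-move the relevant subcomplex inclusion $(\mathrm{link}_K\tau)_I\hookrightarrow K_I$ is the identity of a sphere rather than nullhomotopic, so Theorem~\ref{thm:b} does not apply, and even the cohomological change is not handled by the methods developed here. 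Promoting the required identifications to homeomorphisms is therefore a surgery-theoretic problem: in the dimensions that occur, a connected sum of gyrations and of products of two spheres is not determined up to homeomorphism by its cohomology ring, so Corollary~\ref{cor:3} alone is far from the conjecture. A last point the argument must handle is that $\mathcal{Z}_{K_1\#K_2}$ should come out the same for every choice of gluing data in $\CC(K_1\#K_2)$; the explicit model above, where only the identification of the $n$ coordinates of $\sigma$ varies, is the right tool for this, and consistency with Corollary~\ref{cor:3} makes it plausible but does not prove it.
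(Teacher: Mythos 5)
The statement you were asked to prove is labeled \texttt{conj} in the paper and is never proved there: the authors establish only the cohomological consequence (Corollary~\ref{cor:3}) and, via the remark following it, cite \cite{CFW16} for the single case $K_2=\partial\Delta^{n}$ (the Gitler--L\'opez de~Medrano conjecture). There is therefore no proof of the paper's to compare against, and your write-up is appropriately not a proof of the conjecture either --- you set up the natural cut-and-paste model, identify the correct base case, and then correctly flag the two points at which the argument genuinely fails.

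Your assessment of why the conjecture is out of reach with the paper's tools is accurate and worth recording explicitly. First, as you note, not every Gorenstein* $(n-1)$-complex arises from $\partial\Delta^{n}$ by iterated facet stellar subdivisions (the octahedron already obstructs this at $n=3$: it is flag, hence not a nontrivial connected sum of $2$-spheres, and has no degree-$3$ vertex, hence no facet stellar subdivision can produce it), so any induction must pass through general bistellar $k$-moves, and the paper controls neither the homeomorphism type nor even the cohomology of $\mathcal{Z}_K$ under such moves when $1\le k\le n-2$ (the nullhomotopy hypothesis of Theorem~\ref{thm:5} fails there, precisely as you say). Second, Corollary~\ref{cor:3} yields only a ring isomorphism, and in these high dimensions a connected sum of gyrations and sphere products is not determined up to homeomorphism by its cohomology ring, so no amount of massaging the cohomology computation can close the gap; one would need an explicit ambient identification (as \cite{CFW16} provides in the base case) or a genuine surgery-theoretic classification argument. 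A few minor points: when $\kk$ is a field and $K$ is merely Gorenstein* over $\kk$ rather than an honest PL sphere, $K_i\setminus\sigma$ need not be a triangulated disc and $\mathcal{Z}_{K_i\setminus\sigma}$ need not be a manifold with boundary, so the cut-and-paste model as written is only literally valid in the simplicial-sphere case (which is the only case the conjecture can reasonably hope to address anyway); and the citation of Propositions~\ref{thm:6} and~\ref{thm:2} for ``$K_2'$ Gorenstein*'' runs in the wrong direction --- \ref{thm:2} says a connected sum of Gorenstein* complexes is Gorenstein*, not that a Gorenstein* complex that happens to be a connected sum has Gorenstein* summands --- though for simplicial spheres this is harmless. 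None of this changes your conclusion, which matches the paper: the statement is plausible, consistent with all the cohomological evidence, established when one summand is $\partial\Delta^{n}$, and open in general.
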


\section{Change of $H^*(\ZZ_K)$ after a stellar subdivision on $K$}\label{sec:4}
\begin{Def}\label{def:4}
Let $\sigma\in K$ be a nonempty simplex of a simplicial complex $K$.
The \emph{stellar subdivision} of $K$ at $\sigma$ is obtained by replacing the star of $\sigma$ by the cone
over its boundary:
\[\mathrm{S}_\sigma K=(K\setminus \mathrm{star}_K\sigma)\cup \big(\mathrm{cone}(\partial\sigma*\mathrm{link}_K\sigma)\big).\]
If $\mathrm{dim}\,\sigma=0$ then $\mathrm{S}_\sigma K=K$. Otherwise the complex $\mathrm{S}_\sigma K$ acquires an additional
vertex (the vertex of the cone).
\end{Def}

\begin{Def}
Two simplicial complexes $K,L$ are called \emph{stellar equivalent}, if there is a sequence
of simplicial complexes $K=K_0,K_1,\dots,K_n=L$ such that $K_{i+1}$ is a stellar
subdivision of $K_i$ or $K_i$ is a stellar
subdivision of $K_{i+1}$ for each $i$.
\end{Def}
A classical result in $PL$ theory says that
\begin{thm}[{\cite[Theorem 4.5]{L99}}]
Two simplicial complexes are piecewise linearly
homeomorphic if and only if they are stellar equivalent.
\end{thm}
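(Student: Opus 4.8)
The statement is the classical stellar subdivision theorem of Alexander and Newman, so the plan is to follow the standard $PL$-topology argument (which in the paper we simply cite from \cite{L99}). The \emph{if} direction is the elementary one: a stellar subdivision $\mathrm{S}_\sigma K$ is literally a subdivision of the polyhedron $|K|$ — place the new cone vertex at the barycentre of $\sigma$, so that $|\mathrm{S}_\sigma K| = |K|$ with the identity a piecewise linear homeomorphism. Hence each elementary move in a stellar-equivalence chain $K = K_0, K_1, \dots, K_n = L$ induces a $PL$ homeomorphism of consecutive underlying polyhedra, and composing these gives $|K| \cong_{PL} |L|$.

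The \emph{only if} direction carries the content. First I would reduce it to a statement purely about subdivisions: by definition of piecewise linear homeomorphism, $|K| \cong_{PL} |L|$ yields subdivisions $K'$ of $K$ and $L'$ of $L$ with a simplicial isomorphism $K' \xrightarrow{\cong} L'$; since a simplicial isomorphism is a trivial instance of stellar equivalence, it suffices to prove that
\[
\text{every subdivision } K' \text{ of a simplicial complex } K \text{ is stellar equivalent to } K .
\]
The whole problem is thereby localised to comparing a complex with one of its subdivisions.

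To prove that, I would use two ingredients. (i) \emph{Starring at a point}: if $p$ lies in the relative interior of a simplex $\sigma \in K$, the complex obtained by replacing $\mathrm{star}_K\sigma$ with $p * \partial\sigma * \mathrm{link}_K\sigma$ is combinatorially isomorphic to the stellar subdivision $\mathrm{S}_\sigma K$ of Definition \ref{def:4}; consequently every \emph{derived subdivision} of $K$ (stellar subdivision at all simplices, in order of decreasing dimension, with vertices placed at arbitrary interior points) is obtained from $K$ by a sequence of stellar subdivisions. (ii) \emph{Common stellar refinement}: given $K'$ subdividing $K$, one builds a complex $K''$ that is simultaneously a derived-type subdivision of $K$ \emph{and} reached from $K'$ by stellar subdivisions — concretely, star $K$ successively at the vertices of $K'$ interior to positive-dimensional simplices of $K$, processed in order of decreasing dimension of their carriers, and verify that the result refines $K'$ and is obtained from $K'$ by further starrings. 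Then $K \sim K'' \sim K'$, and an induction (on the number of vertices of $K'$ not already vertices of $K$) closes the argument.

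The main obstacle is step (ii): proving that the chosen sequence of starrings genuinely produces a complex that is an honest subdivision of \emph{both} $K$ and $K'$, and that from the $K'$-side it is again reached by \emph{stellar} (not arbitrary) moves. This is where Alexander's bookkeeping lives — one must track how the carriers of the new vertices sit inside the simplices of $K$, show that starring higher-dimensional carriers first keeps every intermediate complex a legitimate subdivision, and argue on links by induction on dimension so that the moves witnessing $K'' \sim K'$ are valid stellar subdivisions. The reduction, the elementary direction, and the isomorphism bookkeeping are all routine once this combinatorial core is established.
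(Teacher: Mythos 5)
The paper cites this result from Lickorish \cite{L99} without proof, so there is no internal proof to compare against; I can only assess your sketch on its own terms. Your reduction is right: the ``if'' direction is elementary, and the ``only if'' direction does reduce (via the definition of $PL$ homeomorphism plus transitivity of stellar equivalence) to showing that every subdivision $K'$ of a complex $K$ is stellar equivalent to $K$. Step (i), identifying stellar subdivision with starring at an interior point, is also correct.

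The gap is in step (ii), and it is not merely the ``bookkeeping'' you flag — the plan as you state it is actually false. Starring $K$ successively at the new vertices of $K'$ (in order of decreasing carrier dimension) produces a subdivision $K''$ of $K$ with the same vertex set as $K'$, but $K''$ need \emph{not} refine $K'$, and $K'$ is not in general reachable from $K''$ by further starrings alone. The standard counterexample already appears for a single $2$-simplex $ABC$ with midpoint subdivision $K'$ (vertices $D,E,F$ at the edge midpoints, four small triangles including the central $DEF$): starring $K$ at $D$, then $E$, then $F$ yields the ``fan'' triangulation with all three new triangles sharing vertex $D$, which contains the edge $DC$ whereas $K'$ contains the crossing edge $EF$ instead. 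The two differ by a diagonal flip inside the quadrilateral $DECF$, which is a stellar \emph{exchange} (one subdivision followed by one inverse subdivision), not a further starring. In general, two subdivisions of $K$ sharing a vertex set can differ by arbitrarily many such flips, and showing these are always realisable by stellar moves is essentially the whole content of the Alexander--Newman theorem; your outline assumes it rather than proving it. This is precisely the difficulty in Alexander's original 1930 argument, and the modern fix (as in Lickorish's exposition and in Newman's earlier work) proceeds by a different induction — on dimension, working relative to boundaries of simplices and using stellar exchanges — rather than by the direct common-refinement scheme you describe.
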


Thus if we can find a formula (depends only on dim\,$K$ and dim\,$\sigma$) to describe the relation between $H^*(\mathcal {Z}_K)$ and $H^*(\mathcal {Z}_{\mathrm{S}_\sigma K})$ for any $\sigma\in K$, then we can actually get the cohomology of all moment-angle manifolds associated to $PL$-spheres. However, this is impossible,
since the change of $H^*(\mathcal {Z}_{\mathrm{S}_\sigma K})$ from $H^*(\mathcal {Z}_K)$ does not only depend on $\mathrm{dim}\,\sigma$, but also on the position of $\sigma$ in $K$ in general. Actually, the calculation of $H^*(\mathcal {Z}_{\mathrm{S}_\sigma K})$ is very complicated even if the cohomology $H^*(\mathcal {Z}_K)$ is already known in general cases.
Nevertheless, when $\sigma$ satisfies some local conditions, we can get a simple description of $H^*(\mathcal {Z}_{\mathrm{S}_\sigma K})$ by $H^*(\mathcal {Z}_K)$. That is the following

\begin{thm}\label{thm:5}
Let $K$ be a Gorenstein* complex of dimension $n-1$ with $m$ vertices,
$\sigma\in K$ be a simplex of dimension $q<n-1$. Let $\mathcal {V}$ be the vertex set of $\mathrm{link}_K\sigma$, $s=|\mathcal {V}|+|\sigma|$. If for any $I\subset \mathcal {V}$, the inclusion map
$\varphi_I:(\mathrm{link}_K\sigma)_I\to K_I$ is nullhomotopic, and if one of the following additional conditions are satisfied:
\begin{enumerate}[(a)]
\item $\kk$ is a field.
\item $\kk=\mathbb{Z}$ and $H^*(\mathcal {Z}_{\mathrm{link}_K\sigma})$ is torsion free.
\end{enumerate}
Then the cohomology ring of $\mathcal {Z}_{\mathrm{S}_\sigma K}$ is given by the isomorphism
\[ H^*(\mathcal {Z}_{\mathrm{S}_\sigma K};\kk)\cong H^*\big(\mathcal {G}(\mathcal {Z}_K)\#\, Y;\kk\big),\]
where
\begin{align*}
&Y=\underset{i+j\geq1}{\sharp}f_i\cdot\tbinom{m-s}{j}(S^{i+j+2}\times S^{m+n-i-j-1}),\\
&f_i=\mathrm{rank}\,H^i(\mathcal {Z}_{\mathrm{link}_K\sigma}).
\end{align*}
\end{thm}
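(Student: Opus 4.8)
The plan is to realize the stellar subdivision as a pushout, to recognize $\ZZ_{\mathrm{S}_\sigma K}$ as a gyration‑type construction on the moment‑angle space of a ball, and to use the nullhomotopy hypothesis on the $\varphi_I$ to split the resulting Mayer--Vietoris sequences; the operation $G$ together with Proposition \ref{prop:5} then identifies the main piece.

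\textbf{Set‑up and the geometric decomposition.} Write $L=\mathrm{link}_K\sigma$ (Gorenstein* of dimension $n-q-2$ by Proposition \ref{thm:6}, as $q<n-1$), let $w$ be the new vertex of $\mathrm{S}_\sigma K$, let $\VV$ be the vertex set of $L$, and set $\mathrm{del}_K\sigma=\{\tau\in K:\sigma\not\subseteq\tau\}$ and $\mathcal{W}=[m]\setminus(\sigma\cup\VV)$, so $|\mathcal{W}|=m-s$. From Definition \ref{def:4} the $w$‑free faces of $\mathrm{S}_\sigma K$ are exactly $\mathrm{del}_K\sigma$, and
\[\mathrm{S}_\sigma K=\mathrm{del}_K\sigma\ \cup_{\partial\sigma*L}\ \bigl(\{w\}*\partial\sigma*L\bigr),\qquad K=\mathrm{del}_K\sigma\ \cup_{\partial\sigma*L}\ \bigl(\overline\sigma*L\bigr),\]
where $\overline\sigma=2^\sigma$. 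Applying the functor $\ZZ$ (over a common vertex set) and using $\ZZ_{A\cup B}=\ZZ_A\cup_{\ZZ_{A\cap B}}\ZZ_B$, that joins go to products with $\ZZ_{\partial\Delta^q}=S^{2q+1}$ and $\ZZ_{\overline\sigma}=(D^2)^{q+1}$, and that a cone vertex contributes a $D^2$ and a ghost vertex an $S^1$, one finds, with $C:=\ZZ_{\mathrm{del}_K\sigma}$ and $\partial C:=\ZZ_{\partial\sigma*L}\hookrightarrow C$,
\[\partial C\cong S^{2q+1}\times\ZZ_L\times(S^1)^{m-s},\qquad \ZZ_{\mathrm{S}_\sigma K}\cong (C\times S^1)\cup_{\partial C\times S^1}(\partial C\times D^2),\]
and $\ZZ_K\cong C\cup_{\partial C}N'$ with $N'=\ZZ_{\overline\sigma*L}\cong (D^2)^{q+1}\times\ZZ_L\times(S^1)^{m-s}$, $\partial N'=\partial C$. (Here $\mathrm{del}_K\sigma$ is a PL $(n-1)$‑ball with boundary $\partial\sigma*L$, so $C$ is a manifold with that boundary.) Thus $\ZZ_{\mathrm{S}_\sigma K}$ is a gyration‑type construction on the pair $(C,\partial C)$: if $\ZZ_L\times(S^1)^{m-s}$ were a point this would be literally $\mathcal{G}(\ZZ_K)$, and the discrepancy between the collar $N'$ and a disk is exactly what produces the summand $Y$.

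\textbf{Splitting via the hypothesis.} The Mayer--Vietoris sequence of $\ZZ_{\mathrm{S}_\sigma K}=(C\times S^1)\cup_{\partial C\times S^1}(\partial C\times D^2)$ gives an extension expressing $\w H^*(\ZZ_{\mathrm{S}_\sigma K})$ through $\w H^*(C)$, the kernel of the restriction $r\colon H^*(C)\to H^*(\partial C)$ (shifted up by $1$) and its cokernel (shifted up by $2$); running the analogous sequence for $\ZZ_K=C\cup_{\partial C}N'$ re‑expresses $H^*(C)$ and $r$ through $H^*(\ZZ_K)$, $H^*(\ZZ_L)$ and $H^*((S^1)^{m-s})$. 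The hypothesis enters precisely through $r$: by Theorem \ref{thm:0} the map $r$ is assembled from the homomorphisms $\varphi_I^*\colon\w H^*(K_I)\to\w H^*(L_I)$ induced by $L\hookrightarrow K$ on full subcomplexes, and nullhomotopy of every $\varphi_I$ kills all of these on reduced classes, which forces $r$ into the simplest possible form and makes all the sequences split. The delicate point — and what I expect to be the main obstacle — is that ``$\varphi_I$ nullhomotopic'' does \emph{not} by itself control the inclusions $\partial\sigma*L_I\hookrightarrow(\mathrm{del}_K\sigma)_J$ when $L_I$ is degenerate (e.g.\ $I=\varnothing$, where $\partial\sigma*L_\varnothing=\partial\sigma\cong S^{q-1}$ and the inclusion may be essential); these are exactly the terms that carry the fundamental class of $\ZZ_K$, and they must be handled by a separate Mayer--Vietoris/nerve argument over the cover of $\partial\sigma*L_I$ by the contractible subcomplexes $(\sigma\setminus\{a\})*L_I$ ($a\in\sigma$), using that $|\mathrm{del}_K\sigma|$ is a ball (so no unexpected homology arises) together with the Poincar\'e duality of the Gorenstein* complexes $K$ and $L$ (Theorems \ref{thm:AG}, \ref{thm:dual}).

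\textbf{Identifying the two pieces and the ring structure.} With everything split, the summands sort into two blocks. The block ``coming from $K$'' consists, for every $\varnothing\ne J'\subseteq[m]$, of one copy of $\w H^*(K_{J'})$ in its original degree and one copy shifted up by $1$ (from the positions involving $w$), with the single top class of $\ZZ_K$ absorbed into the shifted copy; this is precisely $G(\w H^*(\ZZ_K))$ in the sense of Construction \ref{cons:1}, hence $\w H^*(\mathcal{G}(\ZZ_K))$ by Proposition \ref{prop:5}. The remaining summands are indexed by $I\subseteq\VV$ and a $j$‑element subset of $\mathcal{W}$ (the factor $H^*((S^1)^{m-s})=\Lambda_\kk$ on $m-s$ generators, responsible for the coefficient $\tbinom{m-s}{j}$), and by Theorem \ref{thm:1} applied to $\ZZ_L$ they amount to $f_i=\mathrm{rank}\,H^i(\ZZ_L)$ generators in degree $i+j+2$ for each admissible $i,j$; Poincar\'e duality in the closed $(m+n+1)$‑manifold $\ZZ_{\mathrm{S}_\sigma K}$ pins the dual generators to degree $m+n-i-j-1$, so this block is $\w H^*$ of the connected sum $\sharp\, f_i\tbinom{m-s}{j}(S^{i+j+2}\times S^{m+n-i-j-1})$ with the top classes merged, i.e.\ of $Y$. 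For the multiplication, Theorem \ref{thm:e} fixes the signs; the $Y$‑classes pair to the top only among themselves and annihilate the $\mathcal{G}(\ZZ_K)$‑classes for degree reasons, so the ring is $\w H^*(\mathcal{G}(\ZZ_K))$ and $\w H^*(Y)$ amalgamated at the top class, which is $\w H^*(\mathcal{G}(\ZZ_K)\#\,Y)$. Finally, when $\kk=\mathbb{Z}$ the hypothesis that $H^*(\ZZ_L)$ is torsion free makes every K\"unneth isomorphism and every short exact sequence above split over $\mathbb{Z}$, so the argument is valid integrally.
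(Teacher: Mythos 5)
Your approach is genuinely different from the paper's. You decompose $\ZZ_{\mathrm{S}_\sigma K}$ and $\ZZ_K$ geometrically as gluings of $C=\ZZ_{\mathrm{del}_K\sigma}$ with two different ``caps'' and run Mayer--Vietoris on the moment-angle spaces; the paper never leaves the Hochster algebra. It instead inserts an intermediate complex $K'=K\cup_{\mathrm{star}_K\sigma}\mathrm{cone(star}_K\sigma)$, computes $\w H^*(\ZZ_{K'})$ via Propositions~\ref{prop:10}--\ref{prop:7}, and then analyzes the restriction $\w H^*(K'_I)\to\w H^*((\mathrm{S}_\sigma K)_I)$ one full subcomplex at a time in four cases, invoking Alexander duality (Theorem~\ref{thm:dual}) for each of $\mathrm{S}_\sigma K$, $K$, and $\partial\sigma*\mathrm{link}_K\sigma$ and a bespoke splitting lemma (Lemma~\ref{lem:1}) to kill all extension problems. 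Your geometric set-up (including the identifications of $C$, $\partial C$, $N'$ and the recognition of $\ZZ_{\mathrm{S}_\sigma K}$ as a gyration-type gluing) is correct and is a genuinely illuminating re-packaging.

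The gap is the one you yourself flag, and your proposed fix does not close it. Your claim that ``$r\colon H^*(C)\to H^*(\partial C)$ is assembled from the $\varphi_I^*\colon\w H^*(K_I)\to\w H^*(L_I)$'' is not accurate: by Theorem~\ref{thm:0} the components of $r$ are the restrictions $\w H^*((\mathrm{del}_K\sigma)_J)\to\w H^*((\partial\sigma*L)_J)$ over \emph{all} $J\subseteq[m]$, and these agree with $\varphi_J^*$ only for $J\subseteq\VV$. For $J\supseteq\sigma$ (most critically $J=\sigma$, where the map is an isomorphism $\w H^*(\partial\sigma)\to\w H^*(\partial\sigma)$) and for $J$ meeting $\mathcal{W}$, the hypothesis says nothing, and these are exactly the components that carry the fundamental class and produce the $G$-quotient. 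Your remark that ``a separate Mayer--Vietoris/nerve argument over the cover by $(\sigma\setminus\{a\})*L_I$, plus ball-ness of $\mathrm{del}_K\sigma$ and Poincar\'e duality'' should handle them is a research direction, not a proof; the paper discharges the corresponding burden concretely by proving $\w H^j((\mathrm{S}_\sigma K)_I)\cong\w H^j(K'_I)\oplus\w H^j((\partial\sigma*\mathrm{link}_K\sigma)_I)$ via double Alexander duality, then showing $i_I^*$ is injective and applying Lemma~\ref{lem:1}. Likewise the ring-structure paragraph (``$Y$-classes pair only among themselves for degree reasons'') is too coarse: the paper needs the full strength of the nonsingularity of the cup product pairing of $\ZZ_K$, and over $\mathbb{Z}$ a mod-$m$ argument via Propositions~\ref{prop:A3}--\ref{prop:A4}, to arrange $A_1\cdot B=0$ by adjusting representatives, and an explicit Poincar\'e-dual basis to pin $(A_2)^i\times B^{n-i}\to\kk$. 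So: right skeleton, genuinely different and attractive route, but the two hard steps are named rather than done.
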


\begin{Def}
A simplicial complex $K$ is called \emph{$q$-neighborly} if any $q$ vertices of $K$ span a simplex of $K$.
\end{Def}
If $K$ is a $q$-neighborly simplicial complex, then by definition the $k$-skeleton of $K$ is homotopy equivalent to the wedge of $S^k$'s for $k<q$. Hence $\pi_k(K)=0$
for $k<q-1$.

\begin{cor}\label{cor:1}
Let $K$ be a Gorenstein* complex of dimension $n-1$ with $m$ vertices.
If $\sigma\in K$ ($\mathrm{dim}\,\sigma<n-1$) satisfies $\mathrm{link}_K\sigma\cong\partial\Delta^{n_1}*\cdots*\partial\Delta^{n_k}$,
and $K_{\VV}$ ($\mathcal {V}$ is the vertex set of $\mathrm{link}_K\sigma$) is $q$-neighborly with $q\geq1+\sum_{i=1}^k n_k$,
then $H^*(\mathcal {Z}_{\mathrm{S}_\sigma K})\cong H^*(M)$, where
\begin{align*}
M=&\,\mathcal {G}(\mathcal {Z}_K)\#\overset{m-n-k}{\underset{j=1}{\sharp}}\tbinom{m-n-k}{j}(S^{j+2}\times S^{m+n-j-1})\\
&\#\underset{\substack{I\subset[k],\\I\neq\varnothing}}{\sharp}\overset{m-n-k}{\underset{j=0}{\sharp}}\tbinom{m-n-k}{j}(S^{\lambda_I+j+2}\times S^{m+n-\lambda_I-j-1}),\quad \lambda_I=\sum_{i\in I}(2n_i+1).
\end{align*}
\end{cor}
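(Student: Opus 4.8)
The plan is to obtain Corollary~\ref{cor:1} as a direct application of Theorem~\ref{thm:5}. The work then splits into three tasks: verifying the nullhomotopy hypothesis of that theorem for the given $\sigma$, computing the integer $s$ appearing there, and rewriting the connected sum $Y$ once the ranks $f_i=\mathrm{rank}\,H^i(\ZZ_{\mathrm{link}_K\sigma})$ have been identified.

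\emph{Structure of the link and the nullhomotopy condition.} Write $\VV=\VV_1\sqcup\cdots\sqcup\VV_k$, where $\VV_i$ is the vertex set of the factor $\partial\Delta^{n_i}$ of $\mathrm{link}_K\sigma$, so $|\VV_i|=n_i+1$. For $I\subseteq\VV$ one has $(\mathrm{link}_K\sigma)_I=\ast_i(\partial\Delta^{n_i})_{I\cap\VV_i}$, and each factor is the sphere $S^{n_i-1}$ if $\VV_i\subseteq I$ and a (contractible) simplex otherwise; since a join having a contractible factor is contractible, $(\mathrm{link}_K\sigma)_I$ is contractible unless $I=\bigsqcup_{i\in J}\VV_i$ for some $J\subseteq[k]$, in which case $(\mathrm{link}_K\sigma)_I\simeq S^{d_J}$ with $d_J=\sum_{i\in J}n_i-1$. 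I may assume $q=1+\sum_{i=1}^k n_i$ (smaller $q$ is still allowed and it satisfies $q\le|\VV|$ because $k\ge1$), so $d_J\le\sum_i n_i-1=q-2$. As $K_I=(K_\VV)_I$ is again $q$-neighborly when $|I|\ge q$ (every $q$-subset of $I$ is a face of $K_\VV$, hence of $K_I$) and is a full simplex when $|I|\le q$, the remark preceding Corollary~\ref{cor:1} gives $\pi_{d_J}(K_I)=0$. Hence $\varphi_I\colon(\mathrm{link}_K\sigma)_I\to K_I$ is nullhomotopic for every $I\subseteq\VV$, which is exactly the hypothesis of Theorem~\ref{thm:5}.

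\emph{Computing $s$ and the ranks $f_i$.} By Proposition~\ref{thm:6}, $\mathrm{link}_K\sigma$ is Gorenstein$^*$, and it has dimension $\dim(\partial\Delta^{n_1}\ast\cdots\ast\partial\Delta^{n_k})=\sum n_i-1$; on the other hand a facet of $K$ through $\sigma$ restricts to a facet of $\mathrm{link}_K\sigma$, so $\dim\mathrm{link}_K\sigma=n-|\sigma|-1$, giving $|\sigma|=n-\sum n_i$. Therefore $s=|\VV|+|\sigma|=\big(\sum n_i+k\big)+\big(n-\sum n_i\big)=n+k$, so $m-s=m-n-k$, matching the binomial coefficients in the statement. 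Next, recalling that the moment-angle complex sends joins to products and that $\ZZ_{\partial\Delta^{n_i}}=S^{2n_i+1}$, we get $\ZZ_{\mathrm{link}_K\sigma}=S^{2n_1+1}\times\cdots\times S^{2n_k+1}$, whose cohomology is the exterior algebra $\Lambda_{\mathbb Z}[x_1,\dots,x_k]$ with $\deg x_i=2n_i+1$. In particular $H^*(\ZZ_{\mathrm{link}_K\sigma})$ is torsion free, so hypothesis (b) of Theorem~\ref{thm:5} is met over $\mathbb Z$ as well, and $f_i=\#\{\,J\subseteq[k]:\lambda_J=i\,\}$, where $\lambda_J=\sum_{\ell\in J}(2n_\ell+1)$.

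\emph{Reindexing and the main obstacle.} Substituting these values into $Y=\#_{i+j\ge1}f_i\cdot\binom{m-s}{j}(S^{i+j+2}\times S^{m+n-i-j-1})$ from Theorem~\ref{thm:5} and summing instead over pairs $(J,j)$ with $J\subseteq[k]$, $0\le j\le m-n-k$ and $\lambda_J+j\ge1$, I would separate the term $J=\varnothing$ (where the constraint forces $j\ge1$, producing $\#_{j=1}^{m-n-k}\binom{m-n-k}{j}(S^{j+2}\times S^{m+n-j-1})$) from the terms $J\ne\varnothing$ (where $\lambda_J\ge3$, so every $j\ge0$ contributes, producing $\#_{\varnothing\ne I\subseteq[k]}\#_{j=0}^{m-n-k}\binom{m-n-k}{j}(S^{\lambda_I+j+2}\times S^{m+n-\lambda_I-j-1})$). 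Together with the summand $\mathcal{G}(\ZZ_K)$ these are exactly the manifold $M$ in the statement, so the conclusion $H^*(\ZZ_{\mathrm{S}_\sigma K})\cong H^*(\mathcal{G}(\ZZ_K)\#Y)$ of Theorem~\ref{thm:5} becomes $H^*(\ZZ_{\mathrm{S}_\sigma K})\cong H^*(M)$. Since Theorem~\ref{thm:5} is available, the only genuinely nonformal point is the connectivity estimate $\pi_{d_J}(K_I)=0$: the care needed there is to handle the boundary regime $|I|\le q$ (where $K_I$ is a full simplex) uniformly with the generic case, and to be sure that $q$-neighborliness is inherited by the full subcomplex $K_I=(K_\VV)_I$; everything else is the algebra of exterior products and bookkeeping of the connected sum.
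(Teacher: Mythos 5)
Your proof is correct and follows essentially the same route as the paper's (very terse) proof: compute $s=|\sigma|+|\VV|=n+k$, use $q$-neighborliness of $K_\VV$ together with $\dim(\mathrm{link}_K\sigma)_I<q-1$ to kill the inclusions $\varphi_I$, identify $\ZZ_{\mathrm{link}_K\sigma}=\prod S^{2n_i+1}$, and feed everything into Theorem \ref{thm:5}. You merely spell out details the paper elides — the explicit homotopy types of $(\mathrm{link}_K\sigma)_I$, the inheritance of $q$-neighborliness to $K_I=(K_\VV)_I$, the torsion-freeness check for hypothesis (b), and the reindexing of $Y$ into $M$ — without changing the argument.
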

\begin{proof}
A straightforward verification shows that $|\sigma|+|\VV|=n+k$ and $\mathrm{dim\,link}_\sigma K=n-|\sigma|-1$, so $\mathrm{dim}\,(\mathrm{link}_\sigma K)_I<q-1$ for any $I\subset\VV$. Since $K_I$ is $q$-neighborly, then $\pi_k(K)=0$ for $k<q-1$. Thus the inclusion map $(\mathrm{link}_K\sigma)_I\to K_I$ is nullhomotopic by Whitehead's theorem.
On the other hand, $\mathcal {Z}_{\mathrm{link}_K\sigma}=\prod_{i=1}^kS^{2n_i+1}$, so the conditions in Theorem \ref{thm:5} are all satisfied.
\end{proof}

\begin{exmp}
Let $K=\partial\Delta^{n_1}*\partial\Delta^{n_2}$ with $n_1,\,n_2>2$ (so $\mathcal {Z}_K\approx S^{2n_1+1}\times S^{2n_2+1}$). For any two
simplices $\sigma\in \partial\Delta^{n_1}$ and $\tau\in \partial\Delta^{n_2}$, it is easy to see that
\[\mathrm{link}_K(\sigma\cup\tau)=\partial\Delta^{n_1-|\sigma|}*\partial\Delta^{n_2-|\tau|}.\]
So if $\sigma,\,\tau\neq\varnothing$, and $|\sigma|+|\tau|<n_1+n_2$, then $\sigma\cup\tau\in K$ satisfies the condition in Theorem \ref{thm:5}. Note that
\[\mathcal {Z}_{\mathrm{link}_K(\sigma\cup\tau}=
\begin{cases}
S^{2n_1-2|\sigma|+1}\times S^{2n_2-2|\tau|+1}\ &\text{ for } |\sigma|<n_1,\ |\tau|<n_2,\\
S^{2n_1-2|\sigma|+1}\ &\text{ for } |\tau|=n_2,\\
S^{2n_2-2|\tau|+1}\ &\text{ for } |\sigma|=n_1.
\end{cases}
\]
Thus by Theorem \ref{thm:5}, we have the cohomology ring of $\mathcal {Z}_{\mathrm{S}_{\sigma*\tau} K}$ is isomorphic to the one of the connected sum of sphere products
\begin{align*}
S^{2n_1+2}\times S^{2n_2+1}\# S^{2n_1+1}\times S^{2n_2+2}\#S^{2n_1-2|\sigma|+3}\times S^{2n_2+2|\sigma|}\\\# S^{2n_2-2|\tau|+3}\times S^{2n_1+2|\tau|}\#S^{2(n_1+n_2-|\sigma|-|\tau|)+4}\times S^{2(|\sigma|+|\tau|)-1}.
\end{align*}
\end{exmp}

\begin{rem}
Actually, it can be proved that the isomorphism in Corollary \ref{cor:1} is induced by a homeomorphism of manifolds \cite{CFW16}: $\ZZ_{\mathrm{S}_\sigma K}\approx M.$
\end{rem}

The proof of Theorem \ref{thm:5} is based on a series of propositions.

\begin{prop}\label{prop:10}
Let $K$ be a simplicial complex with $m$ vertices, $\sigma\in K$, $\mathcal {V}$ be the vertex set of $\mathrm{link}_K\sigma$, $s=|\mathcal {V}|+|\sigma|$.
View $K$ as a subcomplex of $\Delta^{m-1}$, and let $L=\Delta^{m-1}\bigcup_{\mathrm{star}_K\sigma}\mathrm{cone(star}_K\sigma)$. Then the reduced cohomology ring of $\mathcal {Z}_{L}$ is given by the isomorphism
\[\w H^*(\mathcal {Z}_{L};\kk)\cong \big(\w H^*(\Sigma^2\mathcal {Z}_{\mathrm{link}_K\sigma};\kk)\otimes\Lambda_\kk[v_1,\cdots,v_{m-s}]\big)
\times \w H^*(\bigvee\limits_{j=1}^{m-s}\tbinom{m-s}{j}S^{j+2};\kk),\]
where $\Sigma$ denotes the suspension operation on spaces.
\end{prop}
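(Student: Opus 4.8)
The plan is to compute $\w H^*(\ZZ_L)$ from Hochster's formula (Theorem \ref{thm:1}), by determining the homotopy type of every full subcomplex $L_I$ and then reading off the multiplication from Theorems \ref{thm:e} and \ref{thm:0}. Write $w$ for the apex of the cone, so that $L$ has vertex set $[m]\sqcup\{w\}$; let $W$ be the vertex set of $\mathrm{star}_K\sigma$ (thus $|W|=s$) and put $U=[m]\setminus W$, $|U|=m-s$. Throughout I use the standard identification $\mathrm{star}_K\sigma=\Delta_\sigma*\mathrm{link}_K\sigma$, where $\Delta_\sigma$ denotes the full simplex on $\sigma$.

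The first step is to analyse the full subcomplexes of $L$. If $w\notin I$ then $L_I$ is the full simplex on $I$, hence contractible, so it makes no contribution. If $I=I'\sqcup\{w\}$ with $\varnothing\neq I'\subseteq[m]$, then a face of $L$ contained in $I$ either avoids $w$ (and so lies in $\Delta_{I'}$, the full simplex on $I'$) or has the form $\{w\}\cup\tau$ with $\tau\in(\mathrm{star}_K\sigma)_{I'}$, whence
\[
L_I=\Delta_{I'}\cup_{(\mathrm{star}_K\sigma)_{I'}}\bigl(\{w\}*(\mathrm{star}_K\sigma)_{I'}\bigr).
\]
Since $\Delta_{I'}$ is contractible, collapsing it gives $L_I\simeq\Sigma\bigl((\mathrm{star}_K\sigma)_{I'}\bigr)$, with the convention $\Sigma\varnothing=S^0$ (consistent with $\w H^{-1}(\varnothing)=\kk$); and $L_{\{w\}}$ is a point. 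Now $(\mathrm{star}_K\sigma)_{I'}=\Delta_{I'\cap\sigma}*(\mathrm{link}_K\sigma)_{I'\cap\VV}$: if $I'\cap\sigma\neq\varnothing$ this is a nonempty cone, hence contractible and $\w H^*(L_I)=0$; if $I'\cap\sigma=\varnothing$ it equals $(\mathrm{link}_K\sigma)_{I'\cap\VV}$, so $\w H^{k}(L_I)\cong\w H^{k-1}\bigl((\mathrm{link}_K\sigma)_{I'\cap\VV}\bigr)$. Hence the only nonzero contributions in Hochster's formula come from $I=\{w\}\sqcup J\sqcup T$ with $J\subseteq\VV$, $T\subseteq U$ and $(J,T)\neq(\varnothing,\varnothing)$.

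Next I would assemble these, tracking the grading — in which, by Theorem \ref{thm:1}, a reduced class in degree $k$ of a full subcomplex on a set $I$ contributes to $\w H^{k+|I|+1}(\ZZ_L)$. For a fixed $T\subseteq U$, the subcollection with $J$ running over the nonempty subsets of $\VV$ reassembles, via Hochster's formula for $\mathrm{link}_K\sigma$, into a copy of $\w H^*(\ZZ_{\mathrm{link}_K\sigma})$, regraded so that its degree-$e$ part now sits in degree $e+|T|+2$ of $\w H^*(\ZZ_L)$ — the extra cone vertex $w$ in $I$ and the suspension in $L_I\simeq\Sigma(\mathrm{link}_K\sigma)_J$ each contributing one unit of the shift $+2$. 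Summing over $T$ produces $\w H^*(\Sigma^2\ZZ_{\mathrm{link}_K\sigma})\otimes\Lambda_\kk[v_1,\dots,v_{m-s}]$ with $\deg v_i=1$. The remaining terms, with $J=\varnothing$ and $T\neq\varnothing$, each have $L_I\simeq S^0$ and contribute a single $\kk$ in degree $|T|+2$; as there are $\tbinom{m-s}{j}$ such $T$ of size $j$, these make up $\w H^*\bigl(\bigvee_{j=1}^{m-s}\tbinom{m-s}{j}S^{j+2}\bigr)$. This establishes the additive part of the isomorphism.

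Finally, the ring structure is forced. Every surviving summand $\w H^*(L_I)$ has $w\in I$, so for any two of them $w\in I_1\cap I_2\neq\varnothing$; hence the union product $\sqcup$ of Definition \ref{def:cup} vanishes on them, and therefore so does the star product (Theorem \ref{thm:e}), which is the cup product of $\ZZ_L$ (Theorem \ref{thm:0}). Thus every product of positive-degree classes in $\w H^*(\ZZ_L)$ is zero; and the right-hand side shares this property, since $\w H^*$ of a double suspension has trivial multiplication, tensoring the resulting non-unital algebra with $\Lambda_\kk[v_1,\dots,v_{m-s}]$ and passing to the reduced cohomology of a wedge of spheres preserve it, and the external product $\times$ introduces no cross terms. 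So the degree-preserving additive isomorphism is automatically a ring isomorphism. The step demanding the most care is the homotopy analysis of the $L_I$ — in particular the degenerate cases where $I'=\varnothing$ or where $(\mathrm{star}_K\sigma)_{I'}$ is empty or a cone — together with the attendant degree bookkeeping, where the suspension coming from the mapping-cone structure of $L_I$ must be combined correctly with the shifts built into Hochster's formula.
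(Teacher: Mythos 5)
Your proposal is correct and follows essentially the same route as the paper: identify the homotopy type of each full subcomplex $L_I$ (only those with the cone apex in $I$ contribute), feed this into Hochster's formula for the additive isomorphism, and observe that the ring structure is trivial on both sides because every contributing $I$ contains the apex so any two of them intersect. The one small organizational difference is that you first collapse $\Delta_{I'}$ to obtain $L_I\simeq\Sigma\bigl((\mathrm{star}_K\sigma)_{I'}\bigr)$ and then resolve $(\mathrm{star}_K\sigma)_{I'}$ as $\Delta_{I'\cap\sigma}*(\mathrm{link}_K\sigma)_{I'\cap\VV}$, which cleanly produces the paper's case split in a single stroke rather than asserting it; the degree bookkeeping you carry out matches the paper's appeal to Theorem \ref{thm:1}.
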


\begin{proof}
Let $[m]=\{v_1,\dots,v_m\}$, $[m+1]=[m]\cup\{v\}$ and $[s]=\{v_{m-s+1},\dots,v_{m}\}$ denote the vertex sets of $K$, $L$
and $\mathrm{star}_K\sigma$ respectively.
It is easy to see that for each $I\in [m+1]$, $I\neq\varnothing$
\begin{equation}\label{eq:4}
L_I\simeq
\begin{cases}
\Sigma\,(\mathrm{link}_K\sigma)_{I\cap[s]}\ \ &\text{ if }v\in I,\ I\cap\sigma=\varnothing \text{ and } I\cap[s]\neq\varnothing,\\
S^0&\text{ if } \{v\}\subsetneq  I\text{ and }I\cap [s]=\varnothing,\\
pt&\text{ otherwise.}
\end{cases}\end{equation}
Then we get the additive isomorphism by Theorem \ref{thm:1}. Clearly the right side of the formula in the proposition has the trivial product structure. From Theorem \ref{thm:e} and the fact that $\w H^*(L_I)\neq 0\Rightarrow v\in I$, we have that the left side also has the trivial product structure. Then the proposition follows.
\end{proof}

\begin{prop}\label{prop:7}
We use the notation as in Proposition \ref{prop:10}.
Suppoe for any $I\subset \mathcal {V}$, the inclusion map
$\varphi_I:(\mathrm{link}_K\sigma)_I\to K_I$ is nullhomotopic. Let
\[K'=K\cup_{\mathrm{star}_K\sigma} \mathrm{cone(star}_K\sigma).\]
Then the cohomology ring of $\mathcal {Z}_{K'}$ is given by the isomorphism
\begin{align*}
\w H^*(\mathcal {Z}_{K'};\kk)\cong \big((\w H^*(\mathcal {Z}_K;\kk)\otimes\Lambda_\kk[v]\big)\times \w H^*(\mathcal {Z}_{L};\kk).
\end{align*}
\end{prop}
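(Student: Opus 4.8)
The plan is to mimic the structure of the proof of Proposition~\ref{prop:3}: realize $\w H^*(\mathcal{Z}_{K'})$ as the reduced Hochster algebra $\w{\mathcal{H}}^*(K')$, and exploit the fact that $K'$ is glued from $K$ and $\mathrm{cone}(\mathrm{star}_K\sigma)$ along $\mathrm{star}_K\sigma$, while $L$ (from Proposition~\ref{prop:10}) is glued from $\Delta^{m-1}$ and the same cone along the same subcomplex. First I would set up the simplicial inclusions $K\stackrel{\iota}{\hookrightarrow}K'$ and $K\stackrel{i_1}{\hookrightarrow}\Delta^{m-1}$, $\Delta^{m-1}\stackrel{i_2}{\hookrightarrow}L$, together with $K'\stackrel{j}{\hookrightarrow}L$ on the matching vertex set $[m+1]$, so that Theorem~\ref{thm:0} turns everything into a commutative diagram of maps between reduced Hochster algebras. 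The key point is to analyze, for each $I\subseteq[m+1]$, the homotopy type of $K'_I$. Writing $\mathcal{V}$ for the vertex set of $\mathrm{link}_K\sigma$ and $[s]$ for the vertex set of $\mathrm{star}_K\sigma$, one checks (as in~\eqref{eq:4}) that $K'_I\simeq K_I$ when $v\notin I$, and when $v\in I$ one gets a homotopy pushout of $K_{I\setminus\{v\}}\leftarrow(\mathrm{star}_K\sigma)_{I\cap[s]}\rightarrow\mathrm{cone}$; since the cone is contractible this is a cofiber, and the nullhomotopy hypothesis on $\varphi_I\colon(\mathrm{link}_K\sigma)_I\to K_I$ is exactly what forces this cofiber to split as a wedge $K_{I\setminus\{v\}}\vee (\text{suspension-type piece})$, paralleling the wedge decomposition~\eqref{eq:1} in Proposition~\ref{prop:3}.

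Next, granting the wedge splitting at the level of each full subcomplex, I would argue that $\w{\mathcal{H}}^*(K')$ splits additively as $\iota^*$-image plus a complement, and that the complement is naturally isomorphic (via $j^*$) to $\w{\mathcal{H}}^*(L)$ modulo the part of $\w{\mathcal{H}}^*(\Delta^{m-1})$ that is killed, which is trivial since $\Delta^{m-1}$ is a simplex. In fact the cleanest route is: $\w H^*(\mathcal{Z}_{K'})\cong \mathrm{Im}\,\iota^* \oplus \mathrm{Ker}\,\iota^*$ as $\kk$-modules, $\mathrm{Ker}\,\iota^*\cong\mathrm{Im}\,j^*\cong\w H^*(\mathcal{Z}_L)$ because over $\Delta^{m-1}$ the same construction already has trivial $\iota^*$-image, and $\mathrm{Im}\,\iota^*\cong \w H^*(\mathcal{Z}_K)\otimes\Lambda_\kk[v]$ because adjoining the cone vertex $v$ to $K$ adds exactly an exterior generator (each $I$ with $v\in I$, $I\cap[s]=\varnothing$ contributes a shifted copy of $(K)_{I\setminus\{v\}}$, while $I\cap[s]\neq\varnothing$, $v\in I$ contributes to the $L$-part). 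Here the hypotheses (a)/(b) — $\kk$ a field, or $\kk=\mathbb Z$ with $H^*(\mathcal{Z}_{\mathrm{link}_K\sigma})$ torsion-free — are used to guarantee that the short exact sequences of $\kk$-modules split and that the various Künneth-type identifications hold integrally.

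Finally, I would promote the additive splitting to a ring splitting, exactly as in Proposition~\ref{prop:2} and Proposition~\ref{prop:3}: $\mathrm{Ker}\,\iota^*$ is an ideal, $\mathrm{Im}\,\iota^*$ is a subalgebra, and the cross-term $\mathrm{Im}\,\iota^*\ast\mathrm{Ker}\,\iota^*$ vanishes by a degree argument — using that, by Proposition~\ref{prop:10}, $\w H^*(\mathcal{Z}_L)$ is concentrated in such a way (suspension of $\mathcal{Z}_{\mathrm{link}_K\sigma}$ tensored with an exterior algebra, plus wedges of spheres) that any product landing in it from the $\iota^*$-image must fall outside the surviving degrees, hence be zero; the sign comparison between the star product and the union product from Theorem~\ref{thm:e} handles the bookkeeping. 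The main obstacle I anticipate is the verification that the nullhomotopy condition genuinely produces a \emph{natural} wedge splitting of $K'_I$ compatible with all the inclusion maps simultaneously (so that the algebra identifications, not just the module identifications, go through), and, in case (b), keeping track of which Tor-groups could a priori carry torsion so that the integral splitting is legitimate; this is where I would spend the most care, likely by an explicit cochain-level description of a splitting using the contractibility of the cone and a chosen nullhomotopy.
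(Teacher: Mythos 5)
Your overall strategy is the right one — analyze $K'_I$ for $I\subseteq[m+1]$, show that the nullhomotopy hypothesis produces the wedge splitting $K'_I\simeq K_I\vee\Sigma\big((\mathrm{link}_K\sigma)_{I\cap\mathcal V}\big)$ (resp.\ $K_I\vee S^0$), and deduce an additive splitting $\w{\mathcal H}^*(K')\cong\mathrm{Im}\,\iota^*\oplus\mathrm{Ker}\,\iota^*$ with $\mathrm{Ker}\,\iota^*\cong\w{\mathcal H}^*(L)$ and $\mathrm{Im}\,\iota^*\cong\w H^*(\mathcal Z_K)\otimes\Lambda_\kk[v]$. All of that matches the paper's proof.

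The gap is in the passage from the additive splitting to the ring splitting. You claim the cross-term $\mathrm{Im}\,\iota^*\ast\mathrm{Ker}\,\iota^*$ vanishes ``by a degree argument'', ``exactly as in Proposition~\ref{prop:2} and Proposition~\ref{prop:3}.'' But in those propositions the degree argument is available precisely because $L$ there is $\Delta^{m_1-1}\cup_\sigma\Delta^{m_2-1}$, so $\w{\mathcal H}^*(L)$ is concentrated in Hochster degree $1$; any product of two positive-degree classes necessarily lands in degree $\geq 2$ and must therefore avoid the $L$-summand. In the present situation $L$ is $\Delta^{m-1}\cup_{\mathrm{star}_K\sigma}\mathrm{cone}(\mathrm{star}_K\sigma)$ and, by Proposition~\ref{prop:10}, $\w{\mathcal H}^*(L)$ contains shifted copies of $\w H^*(\mathcal Z_{\mathrm{link}_K\sigma})$ in all Hochster degrees up to $\dim\mathrm{link}_K\sigma+1$, so it is \emph{not} concentrated in degree $1$ and the same degree count does not force $a*b=0$. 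To get the vanishing, the paper introduces the auxiliary subcomplex $L'=K_{\mathcal V}\cup_{\mathrm{link}_K\sigma}\mathrm{cone}(\mathrm{link}_K\sigma)\hookrightarrow K'$, studies the induced map $j_1^*\colon\w{\mathcal H}^*(K')\to\w{\mathcal H}^*_{[m+1]}(L')$, chooses the lift $A$ of $\mathrm{Im}\,\iota^*$ specifically inside $\mathrm{Ker}\,j_1^*$ (with $\mathrm{Ker}\,j_1^*/A\subset\mathcal H^1(K')$), and then observes that $j_2^*$ restricted to $\bigoplus_{i\geq 2}\mathcal H^i(L)$ is injective, so $j_1^*(a*b)=j_1^*(a)*j_1^*(b)=0$ forces $a*b=0$. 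That auxiliary construction, and the careful choice of the complement $A$, is the essential missing ingredient in your proposal.

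A smaller remark: you appeal to the hypotheses (a) ($\kk$ a field) or (b) ($\kk=\mathbb Z$, $H^*(\mathcal Z_{\mathrm{link}_K\sigma})$ torsion free) to make the short exact sequences split. Those hypotheses belong to Theorem~\ref{thm:5}, not to Proposition~\ref{prop:7}; the splitting here comes for free from the wedge decomposition of $K'_I$ at the level of spaces, so no coefficient restriction is needed for this proposition.
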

\begin{proof}
Let $[m],\, [m+1]$ and $[s]$ be as in the proof of \ref{prop:10}.
We assert that for each $I\in [m+1]$, $I\neq\varnothing$
\begin{equation}\label{eq:3}
K'_I\simeq
\begin{cases}
K_I\bigvee\Sigma((\mathrm{link}_K\sigma)_{I\cap\VV})\ \ &\text{ if }v\in I,\ I\cap\sigma=\varnothing \text{ and } I\cap\VV\neq\varnothing,\\
K_I\bigvee S^0&\text{ if } \{v\}\subsetneq  I\text{ and }I\cap [s]=\varnothing,\\
pt&\text{ if }I=\{v\},\\
K_I&\text{ otherwise.}
\end{cases}\end{equation}
The second, third and forth cases are trivial. For the first case, note that $K'_I$ is the mapping cone of the inclusion $\varphi_I: (\mathrm{link}_K\sigma)_I\to K_I$.
Therefore, the formula follows from the assumption that $\varphi_I$ is nullhomotopic.

Let $L'=K_{\mathcal {V}}\cup_{\mathrm{link}_K\sigma}\mathrm{cone}(\mathrm{link}_K\sigma)$. Then for each $I\in [m+1]$
\begin{equation}\label{eq:5}
L'_I\simeq
\begin{cases}
\Sigma((\mathrm{link}_K\sigma)_{I\cap\VV})\ \ &\text{ if }v\in I \text{ and } I\cap\VV\neq\varnothing,\\
pt\ \text{ or }\ \varnothing&\text{ otherwise.}
\end{cases}\end{equation}
Consider the following commutative diagram of simplicial inclusions
\[
\xymatrix{&L'\ar[d]^{j_1}\ar[dr]^{j_2}&\\
K\ar[r]^{i_1}&K'\ar[r]^{i_2}&L,}
\]
which induces a commutative diagram of Hochster algebras
\[
\xymatrix{&\w{\mathcal {H}}^*_{[m+1]}(L')&\\
\w{\mathcal {H}}^*_{[m+1]}(K)&\w{\mathcal {H}}^*(K')\ar[l]_-{i_1^*}\ar[u]_{j_1^*}&\w{\mathcal {H}}^*(L)\ar[ul]_{j_2^*}\ar[l]_{i_2^*},}
\]
where $i_1,\,i_2,\,j_1,\,j_2$ are natural inclusions.

By formula \eqref{eq:3} and the proof of Proposition \ref{prop:10},
$\w {\mathcal {H}}^*(K')$
additively splits as $A\oplus B$, where
\begin{align*}
&A\cong i_1^*(A)=\mathrm{Im}\,i_1^*,\\
&B=\mathrm{Im}\,i_2^*=\mathrm{Ker}\,i_1^*\cong\w{\mathcal {H}}^*(L).
\end{align*}
Now we show that the splitting is multiplicative. $B$ is an ideal of $\w {\mathcal {H}}^*(K')$ is clear.
From formulae \eqref{eq:3} and \eqref{eq:5}, $A$ can be taken as a direct summand of $\mathrm{Ker}\,j_1^*$, such that $\mathrm{Ker}\,j_1^*/A\subset\HH^1(K')$ and $A=A_1\oplus A_2$ with $A_1=\bigoplus_{v\not\in I} \w H^*(K'_I)$, $A_2\subset\bigoplus_{v\in I} \w H^*(K'_I)$.
At first, we prove that $A$ has to be an algebra. Given two elements $a,\,a'\in A$. Set $a=a_1+a_2$, $a'=a_1'+a_2'$, where
$a_1,a_1'\in A_1$ and $a_2,a_2'\in A_2$.
Clearly, $a_1*a_1'\in A_1$; $a_2*a_2'=0$. If we can prove that $a_1*a_2'$, $a_2*a_1'\in A$, then $a*a'\in A$, and so $A$ is an algebra.
This can be deduced from the fact that $a_1*a_2',\,a_2*a_1'\in\bigoplus_{i\geq2}\HH^i(K')$ and $\mathrm{Ker}\,j_1^*/A\subset\HH^1(K')$.
It remains to show that for any elements $a\in A$ and $b\in B$, $a*b=0$. Since $B$ is an ideal, $a*b\in B$. Note that $a*b\in\bigoplus_{i\geq2}\HH^i(K')$, then
$a*b=i_2^*(c)$ for some $c\in\bigoplus_{i\geq2}\HH^i(L)$. It is easy to see that $j^*_2$ restricted to $\bigoplus_{i\geq2}\HH^i(L)$ is an isomorphism, so $a*b=0$ if and only if $j^*_2(c)=j^*_1(a*b)=0$. Therefore, since $A\subset\mathrm{Ker}\,j^*_1$, then $j_1^*(a*b)=j_1^*(a)*j_1^*(b)=0$.

Finally, by the same argument as in the proof of Proposition \ref{prop:2}, we have \[A\cong \mathrm{Im}\,i_1^*\cong G((\w H^*(\mathcal {Z}_K;\kk)).\] The proposition follows immediately.
\end{proof}

\begin{lem}\label{lem:1}
Suppose there is a short exact sequence of finitely generated abelian groups
\[0\to B\to C\to A\to 0.\]
If $C\cong A\oplus B$, then the exact sequence splits.
\end{lem}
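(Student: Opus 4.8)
The plan is to reduce the statement to the case where $A$, $B$ and $C$ are all finite, and in that case to extract the splitting from the hypothesis by an elementary counting argument. The point that makes the claim nontrivial is that the abstract isomorphism $C\cong A\oplus B$ need not carry the distinguished subgroup $B\subseteq C$ onto the second summand, so one cannot simply read off a complement; what we will deduce from it instead is that $B$ is a \emph{pure} subgroup of $C$.

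First I would pass to torsion subgroups. Writing the sequence as $0\to B\xrightarrow{\iota} C\xrightarrow{\pi} A\to 0$ and using that $\pi$ sends torsion elements to torsion elements, one gets $\operatorname{tor}B\xrightarrow{\iota}\operatorname{tor}C\xrightarrow{\pi}\operatorname{tor}A$ with $\ker(\pi|_{\operatorname{tor}C})=\operatorname{tor}C\cap\iota(B)=\iota(\operatorname{tor}B)$. Since $C\cong A\oplus B$ forces $\operatorname{tor}C\cong\operatorname{tor}A\oplus\operatorname{tor}B$, hence $|\operatorname{tor}C|=|\operatorname{tor}A|\cdot|\operatorname{tor}B|$, the map $\pi|_{\operatorname{tor}C}$ must be onto, so $0\to\operatorname{tor}B\to\operatorname{tor}C\to\operatorname{tor}A\to 0$ is exact with $\operatorname{tor}C\cong\operatorname{tor}A\oplus\operatorname{tor}B$, and all three groups are finite. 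If this truncated sequence splits then so does the original one: a section $\operatorname{tor}A\to\operatorname{tor}C\hookrightarrow C$ of $\pi$ extends to a section $A\to C$ by lifting a basis of the free group $A/\operatorname{tor}A$ through the surjection $\pi$. So we may assume from now on that $A$, $B$, $C$ are finite.

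In the finite case I would argue as follows. For every $n\ge 1$ we have $\pi(nC)=nA$, and the kernel of $nC\to nA$ is $nC\cap\iota(B)$, so $|nC\cap\iota(B)|=|nC|/|nA|=\bigl(|nA|\,|nB|\bigr)/|nA|=|nB|$, where $|nC|=|nA|\,|nB|$ comes from $C\cong A\oplus B$. Since $n\iota(B)\subseteq nC\cap\iota(B)$ and $|n\iota(B)|=|nB|$, this gives $nC\cap\iota(B)=n\iota(B)$ for all $n$, i.e.\ $\iota(B)$ is pure in $C$. A pure subgroup of a finite abelian group is a direct summand, so $C=\iota(B)\oplus C'$ for some subgroup $C'$; then $\pi|_{C'}\colon C'\to A$ has trivial kernel and is surjective because $C'+\iota(B)=C$, hence is an isomorphism, and $(\pi|_{C'})^{-1}$ is the sought-for splitting.

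The only ingredient here that is not pure bookkeeping is the classical fact that pure subgroups of finite (more generally, bounded) abelian groups split off as direct summands, and I expect that to be the conceptual heart of the argument; if one preferred a self-contained treatment, the finite case could instead be handled by induction on $|C|$, splitting off after primary decomposition the cyclic group generated by an element of maximal order lying in whichever of $A$ or $B$ realizes $\exp(C)$, checking that the hypotheses persist for the resulting shorter extension, and recursing.
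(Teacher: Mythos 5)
Your proof is correct and takes a genuinely different route from the paper's. The paper argues via $\mathrm{Ext}$: it decomposes $\mathrm{Ext}(A,B)$ using the primary decomposition of $A$ and $B$, reduces to the cyclic-$p$-group cases $A=\mathbb{Z}_{p^k}$, $B=\mathbb{Z}_{p^l}$ or $\mathbb{Z}$, and asserts that the middle term of a nontrivial extension in those cases cannot be $A\oplus B$. You instead reduce to the finite case by splitting off the torsion exact sequence, then show $\iota(B)$ is \emph{pure} in $C$ via the order count $|nC\cap\iota(B)|=|nC|/|nA|=|nB|=|n\iota(B)|$, and invoke the classical fact that a pure subgroup of a bounded abelian group is a direct summand. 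Your argument is arguably tighter: the paper's reduction to a single pair of cyclic summands glosses over how the nonvanishing of one component of the extension class forces $C\not\cong A\oplus B$ once several cyclic summands are present (and, incidentally, its assertion that every nonzero class in $\mathrm{Ext}(\mathbb{Z}_{p^k},\mathbb{Z}_{p^l})$ has middle term $\mathbb{Z}_{p^{k+l}}$ is not literally true, though the conclusion $C\not\cong A\oplus B$ still holds). What the paper's approach buys is conceptual transparency about where the extension class lives; what yours buys is a uniform counting argument that needs no case-by-case classification of extensions, at the cost of citing the purity/bounded-group splitting theorem (which you correctly flag as the one nonelementary input). Both your reductions — to torsion and then the purity computation — are carried out correctly.
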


\begin{proof}
If we prove that the exact sequence is zero in $\mathrm{Ext}(A,B)$ (here we view $\mathrm{Ext}(A,B)$ as the set of equivalence classes of extension of $A$ by $B$), then the lemma follows.

As we know, every finitely generated abelian group can be expressed as
\[G=\mathbb{Z}^m\oplus G_{p_1}\oplus\cdots\oplus G_{p_n},\]
where $p_i$ is prime and $G_{p_i}$ is the $p_i$-primary component of $G$, i.e.,
\[G_{p_i}=(\mathbb{Z}_{p_i})^{m_1}\oplus(\mathbb{Z}_{p_i^2})^{m_2}\oplus\cdots\oplus (\mathbb{Z}_{p_i^t})^{m_t}.\]
From homological algebra theory
\[\mathrm{Ext}\big(\bigoplus_i A_i,\bigoplus_j B_j\big)=\bigoplus_i\bigoplus_j\mathrm{Ext}(A_i,B_j)\]
for $i$ and $j$ finite. On the other hand, $\mathrm{Ext}(\mathbb{Z},G)=0$ for any group $G$, and $\mathrm{Ext}(G_p,G_q)=0$ if $p\neq q$, where $p$ and $q$ are prime.
Thus we need only prove the cases that $A=\mathbb{Z}_{p^k}$, and $B=\mathbb{Z}_{p^l}$ or $\mathbb{Z}$.

In the case  $A=\mathbb{Z}_{p^k}$ and $B=\mathbb{Z}_{p^l}$, each nonzero class of $\mathrm{Ext}(A,B)$ has the form
\[0\to \mathbb{Z}_{p^l}\to \mathbb{Z}_{p^{k+l}}\to \mathbb{Z}_{p^k}\to 0.\]
In the case $A=\mathbb{Z}_{p^k}$ and $B=\mathbb{Z}$, each nonzero class of $\mathrm{Ext}(A,B)$ has the form
\[0\to \mathbb{Z}\to \mathbb{Z}\to \mathbb{Z}_{p^k}\to 0.\]
In either case, $C$ is not isomorphic to $A\oplus B$. This completes the proof of the lemma.
\end{proof}

Now let us use the preceding results to complete the proof of Theorem \ref{thm:5}.

\begin{proof}[Proof of Theorem \ref{thm:5}]
The cohomology groups with coefficients in $\kk$ will be implicit throughout the proof.
Let $K',\,[m],\,[m+1],\,[s]$ be the same notations as in Proposition \ref{prop:7}. Then there are
simplicial inclusions $\partial\sigma*\mathrm{link}_K\sigma\stackrel{h}{\hookrightarrow}\mathrm{S}_\sigma K\stackrel{i}{\hookrightarrow} K'$ which induce homomorphisms
$\w {\mathcal {H}}^*(K')\xrightarrow{i^*}\w {\mathcal {H}}^*(\mathrm{S}_\sigma K)\xrightarrow{h^*}\w\HH_{[m+1]}^*(\partial\sigma*\mathrm{link}_K\sigma)$.

Consider the homomorphisms of cohomology groups of full subcomplexes
\[i_I^*: \w H^j(K'_I)\to\w H^j((\mathrm{S}_\sigma K)_I),\ I\subseteq [m+1],\ 0\leq j\leq n-1.\]
We will analyze $i^*_I$ and $h^*_I$ in four cases:
\begin{enumerate}[(1)]
\item $\sigma\not\subset I$. In this case $K'_I=(\mathrm{S}_\sigma K)_I$, so $i_I^*$ is an isomorphism for $0\leq j\leq n-1$. Meanwhile, it is easy to verify that $h^*_I=0$ by the null homotopic assumption, so Im\,$i_I^*=\mathrm{Ker}\,h^*_I$.
\item \label{case:02}$\sigma\subseteq I$ and $I\subsetneq[m]$.
In this case $K'_I=K_I=(\mathrm{S}_\sigma K)_I\bigcup_{(\partial\sigma*\mathrm{link}_K\sigma)_I}(\mathrm{star}_K\sigma)_I$.
 Consider the Mayer-Vietoris sequence
\begin{align*}
\cdots\xrightarrow{\delta}\w H^j(K_I')\to\w H^j((\mathrm{S}_\sigma K)_I)\oplus \w H^j((\mathrm{star}_K\sigma)_I)
\to&\w H^j((\partial\sigma*\mathrm{link}_K\sigma)_I)\\&\xrightarrow{\delta}\w H^{j+1}(K_I')\to\cdots
\end{align*}
We assert that this long exact sequences breaks up into short exact sequences (note that $\w H^*\big((\mathrm{star}_K\sigma)_I\big)=0$)
\[0\to\w H^j(K_I')\xrightarrow{i_I^*}\w H^j((\mathrm{S}_\sigma K)_I)
\xrightarrow{h_I^*}\w H^j((\partial\sigma*\mathrm{link}_K\sigma)_I)\to0\] and these short exact sequences all split.
To prove this assertion, first we show that
\[\w H^j((\mathrm{S}_\sigma K)_I)\cong \w H^j(K_I')\oplus\w H^j((\partial\sigma*\mathrm{link}_K\sigma)_I)\]
for all $j\geq0$.
Since $\mathrm{S}_\sigma K$ is Gorenstein*, then by applying Alexander duality theorem we have that
\[\w H^j((\mathrm{S}_\sigma K)_I)\cong \w H_{n-j-2}((\mathrm{S}_\sigma K)_{[m+1]\setminus I}).\]
Clearly, $(\mathrm{S}_\sigma K)_{[m+1]\setminus I}=K'_{[m+1]\setminus I}$. Formula \eqref{eq:3} implies that
\[\w H_{n-j-2}(K'_{[m+1]\setminus I})\cong \w H_{n-j-2}(K_{[m]\setminus I})
\oplus\w H_{n-j-3}((\mathrm{link}_K\sigma)_{[s]\setminus I}).\]
Since $K$ and $\partial\sigma*\mathrm{link}_K\sigma$ are Gorenstein* complex of dimension $n-1$ and $n-2$ (Proposition \ref{thm:6}) respectively,
and $(\mathrm{link}_K\sigma)_{[s]\setminus I}=(\partial\sigma*\mathrm{link}_K\sigma)_{[s]\setminus I}$, then by applying Alexander duality again, we have that
\[\w H_{n-j-2}(K_{[m]\setminus I})\cong \w H^j(K_I)=\w H^j(K_I'),\ \text{ and }\]
\[\w H_{n-j-3}((\mathrm{link}_K\sigma)_{[s]\setminus I})\cong\w H^j((\partial\sigma*\mathrm{link}_K)_I).\]
Combining these isomorphisms together, we obtain that
\[\w H^j((\mathrm{S}_\sigma K)_I)\cong \w H^j(K_I')\oplus\w H^j((\partial\sigma*\mathrm{link}_K\sigma)_I).\]

Next we prove that $i_I^*$ is injective for all $j\geq0$. Then the desired result follows by Lemma \ref{lem:1}. Let $G$, $G'$ and $G''$ be the torsion subgroups of $\w H^j(K_I')$, $\w H^j((\mathrm{S}_\sigma K)_I)$ and $\w H^j((\partial\sigma*\mathrm{link}_K\sigma)_I)$ respectively. Then we have
$G'\cong G\oplus G''$, and so $|G'|=|G|\cdot|G''|$.
Notice that $G\xrightarrow{i'} G'\xrightarrow{h'}G''$ is exact, where $i'$ (resp., $h'$) is the restriction of $i_I^*$
(resp., $h_I^*$) to $G$ (resp., $G'$). Thus $i'$ has to be injective.
On the other hand, Ker$\,i_I^*$ has to be a finite group since
\[\mathrm{rank}\,\w H^j((\mathrm{S}_\sigma K)_I)=\mathrm{rank}\,\w H^j(K_I')+\mathrm{rank}\,\w H^j((\partial\sigma*\mathrm{link}_K\sigma)_I).\]
Hence Ker$\,i_I^*=\mathrm{Ker}\,i'=0$.
\item $I=[m]$. In this case $\w H^*((\mathrm{S}_\sigma K)_I)=0$, and
$\w H^*(K'_I)=\w H^{n-1}(K)\cong \kk.$ In other words, $i^*(([\mathcal {Z}_K]\otimes 1,0))=0$,
where $([\mathcal {Z}_K]\otimes 1,0)$ is a generator of $\w {H}^{m+n}(\mathcal {Z}_{K'})$ in terms of the isomorphism of Proposition \ref{prop:7}.
\item $\{v\}\cup\sigma\in I$. In this case
\[\w H^*(K'_I,(\mathrm{S}_\sigma K)_I)
\cong\w H^*\big(\mathrm{cone}\,(\mathrm{star}_K\sigma)_I,\mathrm{cone}\,(\partial\sigma*\mathrm{link}_K\sigma)_I\big)=0\]
by excision, so $i^*_I$ is an isomorphism. $h^*_I=0$ is clear.
\end{enumerate}

Combining arguments all above, we have a $\kk$-module isomorphism
\[\w{\mathcal {H}}^*(\mathrm{S}_\sigma K)\cong A\oplus B,\]
where $A=\mathrm{Im}\,i^*=\mathrm{Ker}\,h^*\cong \w H^*(\mathcal {Z}_{K'})/(([\mathcal {Z}_{K}]\otimes 1,0))$ is an ideal of $\w\HH^*(\mathrm{S}_\sigma K)$,
\[B\subseteq \bigoplus_{\sigma\subseteq I\subsetneq[m]}\w H^*((\mathrm{S}_\sigma K)_I);\]
\[B\cong h^*(B)=\mathrm{Im}\,h^*=\bigoplus_{\sigma\subseteq I\subsetneq[m]}\w H^*((\partial\sigma*\mathrm{link}_K\sigma)_I).\]
From the proof of Proposition \ref{prop:7}, we have $A=A_1\times A_2$, where $A_1\cong G(\w H^*(\mathcal {Z}_K))$,
$A_2\cong \w H^*(\mathcal {Z}_{L})\subset\bigoplus_{\sigma\cap I=\varnothing}\w H^*((\mathrm{S}_\sigma K)_I)$. This implies $A_2B\subset A_1$, and so $A_1$ is an ideal of $\w{\mathcal {H}}^*(\mathrm{S}_\sigma K)$.
Since $H^*(\mathcal {Z}_{\mathrm{link}_K\sigma})$ is torsion free, both $A_2$ and $B$ are torsion free.

Now let us verify the ring structure of $\w H^*(\mathcal {Z}_{\mathrm{S}_\sigma K})$. We only prove the case that $\kk=\mathbb{Z}$, the field case is similar and easier.
Since $K$ is Gorenstein*, $\ZZ_K$ is a manifold. Thus the cup product pairing is nonsingular (see appendix \ref{subsec:A1}) for $\ZZ_K$ when torsion in $H^*(\mathcal {Z}_K)$ is factored out. Thus as a direct consequence of Proposition \ref{prop:A4}, we can take $B$ such that $A_1B=0$.
It remains to show the multiplication structure between $A_2$ and $B$.
If $a_2\in A_2$, $b\in B$, then $a_2\in\bigoplus_{v\in I}\w H^*((\mathrm{S}_\sigma K)_I)$, and so
$a_2*b\in\bigoplus_{(\{v\}\cup\sigma)\subseteq I}\w H^*((\mathrm{S}_\sigma K)_I)$, i.e., $a_2*b\in A_1$.
We assert that if $a_2\in (A_2)^i$ (here we use the notation $\Aa^i=\Aa\cap\HH^i$ for a graded subalgebra of $\HH^*$), $b\in B^j$ and $a_2*b\neq0$, then $i+j=n$.
Otherwise, if the order of $a_2*b$ is infinite, then by nonsingularity of the cup product pairing of $A_1$, there is a element $a_1\in(A_1)^{n-i-j}$ such that $a_1*a_2*b\neq0$. Since $a_1*a_2=0$, this is a contradiction. If $a_2*b$ has finite order $m$.
We consider the homomorphism $\eta:G(\w H^*(\mathcal {Z}_K))\to G(\w H^*(\mathcal {Z}_K;\mathbb{Z}_m))$ induced by the map $\mathbb{Z}\to\mathbb{Z}_m$ reducing coefficients mod $m$. Then the nonsingularity of the cup product pairing for $\mathbb{Z}_m$ coefficient (see Proposition \ref{prop:A3}) implies that there is a element $a_1\in(A_1;\mathbb{Z}_m)^{n-i-j}$ such that $a_1*\eta(a_2*b)\neq0$. However $a_1*\eta(a_2)=0$ since Proposition \ref{prop:10} holds for any coefficient. Still a contradiction.

It is easy to verify that
\[\mathrm{rank}\,(A_2)^1=\sum_{j=1}^{m-s}\tbinom{m-s}{j}=\sum_{j=0}^{m-s-1}\tbinom{m-s}{j}=\mathrm{rank}\,B^{n-1},\]
and
\[\mathrm{rank}\,(A_2)^i= k_i\cdot\sum_{j=0}^{m-s}\tbinom{m-s}{j}=k_{n-i}\cdot\sum_{j=0}^{m-s}\tbinom{m-s}{j} =\mathrm{rank}\,B^{n-i},\ \text{ for } i>1,\]
where
\[k_i=\mathrm{rank}\,{\mathcal {H}}^{i-1}(\mathrm{link}_K\sigma)=\mathrm{rank}\,{\mathcal {H}}^{n-|\sigma|-i+1}(\mathrm{link}_K\sigma)=\mathrm{rank}\,{\mathcal {H}}^{n-i}(\partial\sigma*\mathrm{link}_K\sigma)\]
(these equalities follow by formula \eqref{eq:4} and Alexander duality of $\mathrm{link}_K\sigma$).

Now from the Poincar\'e duality of $\mathcal {Z}_{\mathrm{S}_\sigma K}$,
there exists a basis $\{\alpha_{i,\,r}\mid 1\leq r\leq \mathrm{rank}\,(A_2)^i\}$ of $(A_2)^i$
and a basis $\{\beta_{j,\,s}\mid 1\leq s\leq \mathrm{rank}\,B^{j}\}$ of $B^{j}$, such that
\[\alpha_{i,\,r}*\beta_{j,\,s}=
\begin{cases}
[\mathcal {Z}_{\mathrm{S}_\sigma K}]&\ \text{ if } i+j=n,\,r=s,\\
0&\ \text{ otherwise.}
\end{cases}\]
The ring isomorphism in Theorem \ref{thm:5} follows by a straightforward calculation using all formulae above.
\end{proof}

\section{Indecomposability of $\ZZ_K$ for $K$ a flag $2$-sphere}\label{sec:5}
Although the combinatorics of a simplicial complex encodes the topology of the associated moment-angle complex completely, and due
to Bahri, Bendersky, Cohen and Gitler \cite{BBCG1}, the decomposition of $\ZZ_K$ after a suspension can be entirely described by the geometric realization of the full subcomplexes of $K$, it is still hard to describe the homeomorphism class or even the homotopy type of $\ZZ_K$ itself except for some very particular cases of $K$.
For these special cases, the topology of $\ZZ_K$ has been studied in many works: the simplest case is that $\ZZ_K$ has the homotopy type of a wedge of spheres \cite{GT07,IK13,GPTW16}, and for $\ZZ_K$ a manifold, the simplest case is that $\ZZ_K$ has the homeomorphism class of a connected sum of sphere products  \cite{M79,GL13,FCMW16}. Gitler and L\'opez de~Medrano \cite{GL13} have shown that there are
infinite number of $\ZZ_K$ which can be decomposed as the connected sum of other nontrivial manifolds (see more examples of moment-angle manifolds with this property in \cite{CFW16}).
A natural question is that
\begin{que}
For which Gorenstein* complex $K$, the associated moment-angle manifold $\ZZ_K$ can not be decomposed as the connected sum of two nontrivial manifolds?
\end{que}
In this section, we answer this question for a special class of simplicial complexes, that is, the flag $2$-spheres. We shall begin with some lemmata on the combinatorial and
algebraic properties of this kind of simplicial complexes.
\begin{lem}\label{lem:2}
Let $K$ be a triangulation of $D^2$ with $m$ ($m>3$) vertices, and let $\partial K$ be the boundary of $K$, $\mathcal {S}$ be the vertex set of $\partial K$. If $K$ is flag and $K_{\mathcal {S}}=\partial K$, then for any $(v_1,v_2)\in MF(K)$, there exists a subset $I$ of $[m]$, such that $\{v_1,v_2\}\subset I$ and $K_I$ is the boundary of a polygon.
\end{lem}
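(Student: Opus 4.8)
The plan is to rephrase the statement graph-theoretically and then induct on the number of vertices. Since $K$ is flag, every member of $MF(K)$ has exactly two elements, so $\{v_1,v_2\}$ is a non-edge of the $1$-skeleton; moreover a full subcomplex $K_I$ is ``the boundary of a polygon'' exactly when $K_I$ is an induced cycle, necessarily of length $\ge 4$ because $v_1$ and $v_2$ are not joined. The first, trivial reduction: if both $v_1$ and $v_2$ lie on $\partial K$, take $I=\mathcal{S}$, for then $K_{\mathcal{S}}=\partial K$ by hypothesis and $|\mathcal{S}|\ge 4$ (a flag triangulation of $D^2$ with triangular boundary is forced by the flag condition to consist of a single $2$-simplex, contradicting $m>3$). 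So I may assume one of the two vertices, say $v_1$, is an interior vertex.

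I would then record the structural facts the induction rests on, all consequences of flagness together with the hypothesis $K_{\mathcal{S}}=\partial K$ that there are no boundary chords: (i) for a boundary vertex $u$, $\mathrm{link}_K u$ is an induced path whose non-endpoint vertices all lie in $\mathrm{int}\,|K|$; (ii) for an interior vertex $w$, $\mathrm{link}_K w$ is an induced cycle of length $\ge 4$ (length $3$ would make the closed star of $w$ together with the filled triangle $\mathrm{link}_K w$ a full subcomplex homeomorphic to $S^2$, hence a connected component of $K$, impossible as $|K|=D^2$ and $m>3$); (iii) every full subcomplex of $K$ is again flag; (iv) $K$ has an interior vertex, and since $v_1$ is interior, $m\ge 5$.

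Now the induction on $m$. Call a boundary vertex $u$ \emph{deletable} if $K':=K_{[m]\setminus\{u\}}$ is still a flag triangulation of $D^2$ with $K'_{\mathcal{S}'}=\partial K'$; by (i) and (iii) this is a local condition (essentially: no non-endpoint vertex of $\mathrm{link}_K u$ is joined in $K$ to a boundary vertex that would turn into a chord of the new boundary). If some deletable $u$ with $u\notin\{v_1,v_2\}$ exists, then $(v_1,v_2)\in MF(K')$, $K'$ has $m-1\ge 4$ vertices, and the inductive hypothesis applied to $K'$ produces $I\subseteq[m]\setminus\{u\}$ with $K_I=K'_I$ the boundary of a polygon, finishing the step.

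The main obstacle is the remaining case, where no such deletable vertex exists. The plan is to show that, thanks to the absence of boundary chords, this can only happen when $K$ is a cone over $\partial K$ (a single interior vertex adjacent to all boundary vertices); then the only non-edges of $K$ join non-consecutive boundary vertices, so $v_1$ cannot be interior and we are back in the trivial case of the first paragraph. Establishing this rigidity — and, more generally, guaranteeing that the deleted vertex can be taken distinct from both $v_1$ and $v_2$ — is the delicate point where the hypothesis $K_{\mathcal{S}}=\partial K$ must be used essentially. Should that reduction prove awkward, the fallback is a direct construction near $v_1$: splice an induced arc through $v_2$ into the induced cycle $\mathrm{link}_K v_1$, choosing the arc to minimize an auxiliary quantity (number of enclosed triangles, or the length of a shortest $v_1$-to-$v_2$ path) so as to exclude chords; the subtle part there is ruling out a chord that separates $v_1$ from $v_2$ along the resulting cycle.
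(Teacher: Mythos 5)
Your outline follows the same broad strategy as the paper — reduce to the case where one of $v_1,v_2$ is interior, then induct on $m$ by removing a boundary vertex distinct from $v_1,v_2$ — but it leaves the crux of the argument unresolved, and you say so yourself: "Establishing this rigidity... is the delicate point," and the fallback construction's "subtle part" of "ruling out a chord that separates $v_1$ from $v_2$" is likewise not carried out. That is exactly where the proof lives, so what you have is a correct reduction plus an honest description of the gap rather than a proof.

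Two concrete difficulties with the part you left open. First, your proposed rigidity statement — that the absence of a "deletable" boundary vertex forces $K$ to be a cone over $\partial K$ — does not hold. It is easy to have non-cone flag triangulations with no boundary chords in which no single boundary vertex can be deleted without introducing a chord (for example, one interior vertex $u$ adjacent to some but not all boundary vertices, together with a configuration where the remaining interior vertices sit "behind" $u$). The paper does not attempt such a rigidity statement; instead, in the case of a boundary vertex $v$ with $|\mathrm{star}_K v|=4$ whose opposite interior vertex $u$ is adjacent to a far boundary vertex, the paper performs a bistellar flip $K'=(K\setminus\mathrm{star}_K v)\cup\Delta^2$, producing a smaller flag disk with no boundary chords to which the inductive hypothesis applies. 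This flip is essential and is absent from your sketch; a pure vertex-deletion induction of the kind you describe does not close. Second, in the case where every boundary vertex away from $\{v_1,v_2\}$ has star with at least five vertices, the paper fixes a cyclic ordering $w_1,\dots,w_n$ of $\partial K$ and, assuming no deletable vertex exists, extracts from the hypothesized "bad" chords $(u_i,w_{k_i})$ a strictly decreasing sequence of indices, yielding a contradiction. This is a genuine combinatorial argument that replaces the vague "auxiliary quantity" minimization in your fallback. Without a precise version of one of these two mechanisms, the induction step does not go through.
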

\begin{proof}
If $v_1, v_2\in \mathcal {S}$, then we can choose $I=\mathcal {S}$. Therefore we always assume $\{v_1,v_2\}\not\subset \mathcal {S}$ in the proof.

We prove it by induction on $m$. Since $K$ is a flag complex, $m=4$ is impossible, then we star with the case that $m=5$. In this case $K$ is the join of the boundary of a $4$-gon and a vertex, then the statement of the lemma obviously holds. For the induction step, taking $v$ to be a vertex in $\mathcal {S}\setminus\{v_1,v_2\}$, put $L=\mathrm{star}_K{(v)}$. Let $l_v$ be the vertex number of $L$.

First we consider the case $l_v=4$ (note that $l_v\geq4$) for some $v\in \mathcal {S}\setminus\{v_1,v_2\}$. In this case $L$ has the form shown in Figure \ref{fig:0}, where $v',v''\in \mathcal {S}$.
\begin{figure}[!ht]
\vspace{8pt}
\begin{overpic}[scale=0.4]{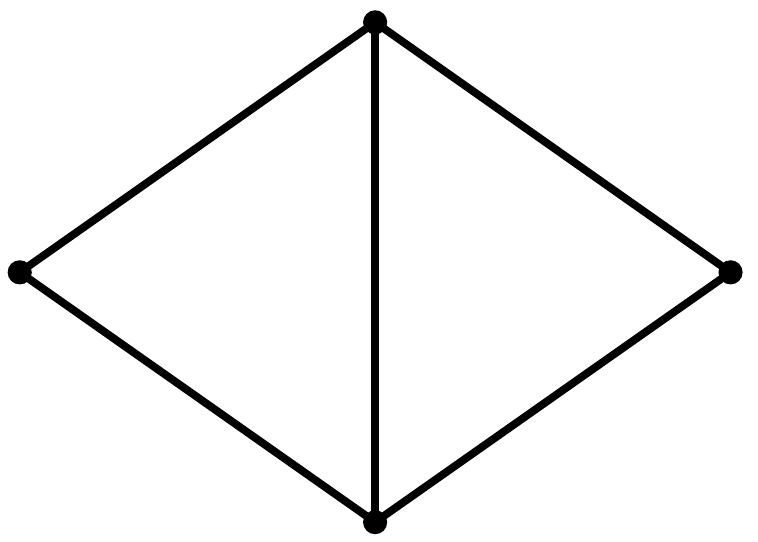}
\put(46,72){$v$}
\put(46,-8){$u$}
\put(100,32){$v''$}
\put(-10,32){$v'$}
\end{overpic}
\vspace{8pt}
\caption{$L$ with $4$ vertices}\label{fig:0}
\end{figure}
If there is no vertex $u'\in \mathcal {S}\setminus\{v,v',v''\}$ such that $(u,u')\in K$, then it is easy to check that $K'=K_{[m]\setminus \{v\}}$ also satisfies the hypotheses of the lemma. Since $K'$ has $m-1$ vertices, then by induction there exists a subset $I\subset [m]\setminus\{v\}$ such that $(v_1,v_2)\subset I$ and $K'_{I}$ is the boundary of a polygon. Note that $K'$ is a full subcomplex of $K$,
so $K'_I=K_I$, and so $I$ is the desired subset for $K$.
If there exists a vertex $u'\in \mathcal {S}\setminus\{v,v',v''\}$ such that $(u,u')\in K$, we may assume $|\mathcal {S}|\geq 5$
(otherwise $K=\text{cone}\,K_{\Ss}$ is the easy case at the beginning of the proof). Thus there is no vertex $v_0$ in $K$ such that $(v_0,v')$ and $(v_0,v'')\in K$ (If such $v_0$ exists, then $v_0=u'$ and $|\Ss|=4$, contradict the assumption that $|\mathcal {S}|\geq5$). Denote by $\Delta^2$ the $2$-dimensional complex consisting of all subsets of $\{v',v'',u\}$, and let
\[K'=(K\setminus L)\cup \Delta^2.\]
Then $K'$ is a triangulation of $D^2$ with $m-1$ vertices.
It is easily verified that $K'$ satisfies all hypotheses of the lemma, hence by induction there exists a subset $I'\subset [m]\setminus\{v\}$ such that $(v_1,v_2)\subset I'$ and $K'_{I'}$ is the boundary of a polygon. If $\{v',v''\}\not\subset I'$, take $I=I'$, otherwise take $I=I'\cup\{v\}$, then $I$ is the desired subset.

Now we consider the case that $l_v\geq 5$ for all $v\in\mathcal {S}\setminus\{v_1,v_2\}$. Firstly, let us give an order on $\mathcal {S}$, e.g. $\mathcal {S}=\{w_1,w_2,\dots w_n\}$ by counting clockwise, and let $L_i=\mathrm{star}_K{(w_i)}$, $\mathcal {V}_i$ be the vertex set of $L_i$, $l_i=|\mathcal {V}_i|$, $1\leq i\leq n$. So if $w_i\neq v_1,v_2$, then $l_i\geq5$ and $L_i$ is shown in Figure \ref{fig:1}.
\begin{figure}[!ht]
\vspace{8pt}
\begin{overpic}[scale=0.45]{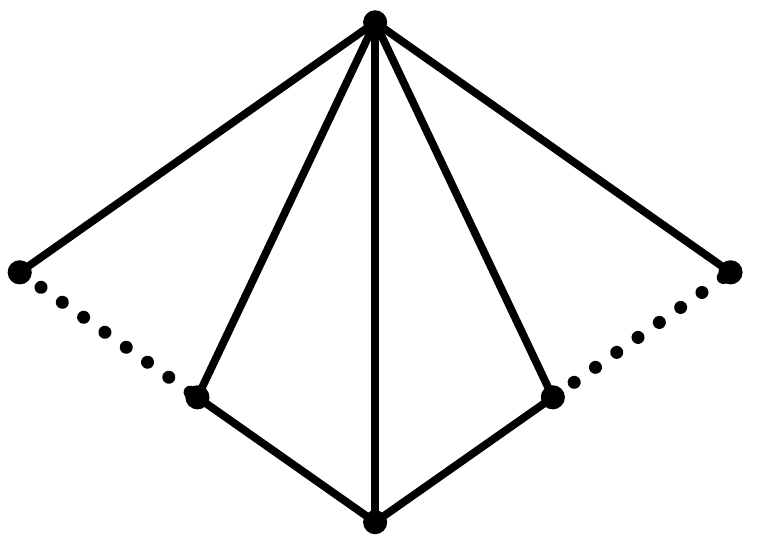}
\put(46,72){$w_i$}
\put(46,-8){$u_i$}
\put(100,32){$w_{i+1}$}
\put(-26,32){$w_{i-1}$}
\end{overpic}
\vspace{8pt}
\caption{}\label{fig:1}
\end{figure}

For $w_i\neq v_1,v_2$, if there are no vertices
\[u_i\in \mathcal {V}_i\setminus\{w_{i-1},w_{i},w_{i+1}\}\ \text{ and }\ w_{k_i}\in \mathcal {S}\setminus\{w_{i-1},w_{i},w_{i+1}\}\]
such that $(u_i,w_{k_i})\in K$, then it is easy to see that $K'=K_{[m]\setminus \{w_i\}}$ satisfies the hypotheses of the lemma. Hence by the same arguments as above, we get the desired subset $I\in [m]\setminus\{w_i\}$. Next we prove that this
kind of $w_i$ always exists, then we complete the proof of the lemma.

Suppose on the contrary that $u_i$ and $w_{k_i}$ always exist (such that $(u_i,w_{k_i})\in K$) for each $w_i\neq v_1,v_2$, and suppose without loss of generality that $w_1\neq v_1,v_2$, then $2<k_1<n$. By the assumption $\{v_1,v_2\}\not\subset \mathcal {S}$ at the beginning of the proof, we have \[\{v_1,v_2\}\cap\{w_1,w_2,\dots,w_{k_1-1}\}=\varnothing\quad\text{or}\]
\[\{v_1,v_2\}\cap\{w_{k_1+1},w_{k_1+2},\dots,w_n\}=\varnothing.\]
Provided without loss of generality that $\{v_1,v_2\}\cap\{w_1,w_2,\dots,w_{k_1-1}\}=\varnothing$. Since $K_{I_1}$ (where $I_1=\{w_1,u_1,w_{k_1}\}$) is a triangulation of $D^1$, $K_{I_1}$ separates $K$ into two simplicial complexes $K_1,\,K_1'$ which are both triangulations of $D^2$. Suppose $K_1$ is the one contains $\{w_1\},\{w_2\},\dots, \{w_{k_1}\}$. If $u_2=u_1$, then $k_1>3$ by the hypothesis $l_2\geq5$ and the flagness of $K$, thus we can rechoose $w_{k_2}$ if needed such that $w_{k_2}=w_{k_1}>3$. If $u_2\neq u_1$, then $w_{k_2}$ must belong to $K_1$, so $3<k_2\leq k_1$. Proceeding inductively, we get that $i+1<k_i\leq k_{i-1}$ for $1\leq i\leq k_1-1$. But taking $i=k_1-1$ we have $k_1<k_1$, a contradiction.
\end{proof}

\begin{cor}\label{cor:2}
If $K$ is a flag $2$-sphere (with $m$ vertices), then for any $(v_1,v_2)\in MF(K)$, there exists a subset $I$ of $[m]$, such that $\{v_1,v_2\}\subset I$ and $K_I$ is the boundary of a polygon.
\end{cor}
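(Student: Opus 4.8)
The plan is to realize Corollary~\ref{cor:2} as a direct consequence of Lemma~\ref{lem:2}, by cutting the sphere $K$ open along the link of a suitably chosen vertex so as to land in the setting of that lemma.

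First I would choose a vertex $w$ of $K$ with $w\notin\{v_1,v_2\}$; such a vertex exists because a flag $2$-sphere has at least six vertices (indeed every vertex link is a cycle of length $\geq 4$, so every vertex has degree $\geq 4$), in particular $m>4$. Put $K'=K_{[m]\setminus\{w\}}$. Since $K$ is a $2$-sphere, $K'$ is precisely the complement in $K$ of the open star of $w$, hence a triangulation of $D^2$ with $m-1>3$ vertices whose boundary $\partial K'$ is the cycle $\mathrm{link}_K w$; being a full subcomplex of the flag complex $K$, it is again flag. The one hypothesis of Lemma~\ref{lem:2} that needs checking is $K'_{\mathcal{S}}=\partial K'$, where $\mathcal{S}$ is the vertex set of $\partial K'$; equivalently, that $\mathrm{link}_K w$ is an induced subcomplex of $K$. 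There is no chord, because an edge between two non-consecutive vertices of the cycle $\mathrm{link}_K w$, together with the two edges joining them to $w$, would be a $3$-clique, hence a $2$-simplex of $K$ by flagness, forcing those vertices to be consecutive in the link; and there is no $2$-simplex on $\mathcal{S}$, since such a simplex together with $w$ would be a $4$-clique, hence a $3$-simplex of $K$ by flagness, contradicting $\dim K=2$. Thus $K'$ satisfies all the hypotheses of Lemma~\ref{lem:2}.

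Finally, $(v_1,v_2)$ remains a missing face of $K'$: its two vertices survive in $K'$ because $w\neq v_1,v_2$, and $(v_1,v_2)\notin K\supseteq K'$. So Lemma~\ref{lem:2} yields a subset $I\subseteq[m]\setminus\{w\}$ with $\{v_1,v_2\}\subseteq I$ and $K'_I$ the boundary of a polygon; since $K'=K_{[m]\setminus\{w\}}$ is a full subcomplex and $I\subseteq[m]\setminus\{w\}$, we have $K'_I=K_I$, and $I$ is the desired subset. The proof involves no genuinely new idea beyond Lemma~\ref{lem:2}; the only points requiring care are the choice of the disk — deleting the open star of a \emph{vertex}, rather than, say, the interior of a single triangle (which would leave a non-induced boundary triangle, so that the hypothesis $K_{\mathcal{S}}=\partial K$ fails) — and the short verification, using flagness together with two-dimensionality, that the boundary cycle $\mathrm{link}_K w$ is induced. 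That bookkeeping is the only mild obstacle.
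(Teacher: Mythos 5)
Your proof is correct and follows exactly the same route as the paper's: choose a vertex $w\notin\{v_1,v_2\}$, pass to the disk $K_{[m]\setminus\{w\}}$, and apply Lemma~\ref{lem:2}. The paper simply asserts that this full subcomplex "apparently" satisfies the hypotheses of that lemma, whereas you spell out the verification (flagness is inherited, the boundary cycle $\mathrm{link}_K w$ is induced because a chord or a filled triangle on it would by flagness create a $2$-simplex or $3$-simplex contradicting the link structure or $\dim K=2$, and $(v_1,v_2)$ remains a missing face); this added care is welcome but does not change the argument.
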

\begin{proof}
Choose any vertex $v\in [m]\setminus\{v_1,v_2\}$, then $K_{[m]\setminus\{v\}}$ is apparently a triangulation of $D^2$ and satisfies the hypotheses in Lemma \ref{lem:2}. Hence the statement follows immediately.
\end{proof}
\begin{lem}\label{prop:8}
If $K$ is a flag $2$-sphere, then the Hochster ring $\mathcal {H}^*(K)$ is generated by $\mathcal {H}^1(K)$,
i.e., $\w{\mathcal {H}}^*(K)/[\w{\mathcal {H}}^*(K)]^2=\mathcal {H}^1(K)$.
\end{lem}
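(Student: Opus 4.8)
The plan is to compute $\w\HH^*(K)$ additively, strip off its top degree by Poincar\'e duality, and reduce what is left to a cochain computation fed by Corollary \ref{cor:2}. \emph{Additive structure:} for $\varnothing\neq J\subsetneq[m]$ the homotopy equivalence $|K_J|\simeq|K|\setminus|K_{[m]\setminus J}|$ presents $|K_J|$ as a proper open subset of $S^2$, hence as a wedge of circles up to homotopy, so $\w H^i(K_J)=0$ for $i\geq2$; and $\w H^*(K_{[m]})=\w H^*(S^2)$ sits in degree $2$. Therefore $\w\HH^i(K)=0$ for $i\notin\{1,2,3\}$, $\w\HH^3(K)=\w H^2(K)=\kk\,[\ZZ_K]$, and $\w\HH^2(K)=\bigoplus_{\varnothing\neq J\subsetneq[m]}\w H^1(K_J)$. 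Thus the lemma is equivalent to the two statements (a) every $\w H^1(K_J)$ lies in $[\w\HH^1(K)]^2$, and (b) $[\ZZ_K]\in[\w\HH^1(K)]^3$.

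\emph{Reducing (b) to (a).} As $K$ is Gorenstein*, $\ZZ_K$ is a closed manifold (Theorem \ref{thm:mfd}); and since each $\w H^*(K_J)$ is $\kk$-free, $H^*(\ZZ_K)\cong\HH^*(K)$ satisfies Poincar\'e duality, so the star-product pairing into $\w\HH^3(K)=\kk$ is nonsingular. Pick a missing edge $(i,j)$ of $K$ (one exists because $K$ is not a simplex) and a generator $\alpha$ of $\w H^0(K_{\{i,j\}})\cong\kk\subset\w\HH^1(K)$. Nonsingularity gives $\beta$ with $\alpha*\beta=[\ZZ_K]$, and a degree count shows the only part of $\beta$ that can contribute lies in $\w H^1(K_{[m]\setminus\{i,j\}})\subset\w\HH^2(K)$; so (a) forces $[\ZZ_K]\in[\w\HH^1(K)]^3$.

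\emph{Proof of (a).} I would prove, by induction on $|J|$, that $\w H^1(K_J)$ is spanned by star products $\xi*\eta$ with $\xi\in\w H^0(K_{I_1})$, $\eta\in\w H^0(K_{I_2})$ and $I_1\sqcup I_2=J$. If $K_J$ is disconnected, write $K_J=K_{J_1}\sqcup K_{J_2}$ with $J=J_1\sqcup J_2$; then $\w H^1(K_J)=\w H^1(K_{J_1})\oplus\w H^1(K_{J_2})$ inside $\w\HH^2(K)$, and by the inductive hypothesis each summand is already of the required form, its factors being supported on subsets of $J_i\subseteq J$. So one may assume $K_J$ connected. Now Theorem \ref{thm:dual} identifies $\w H^1(K_J)\xrightarrow{[K]\sqcap}\w H_0(K_{[m]\setminus J})$, whose target is spanned by the differences $[b]-[a]$ with $a,b$ in distinct components of $K_{[m]\setminus J}$; such a pair is a missing edge of $K$. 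Hence it suffices to realise the class dual to $[b]-[a]$ as a star product as above.

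\emph{The polygon computation, and the main obstacle.} Corollary \ref{cor:2} puts the missing edge $(a,b)$ on the boundary $P$ of a polygon $K_L$, necessarily of length $\geq4$ since $K$ is flag. The chord $\{a,b\}$ cuts $P$ into two arcs, so $K_{L\setminus\{a,b\}}$ is a disjoint union of two paths and $\w H^0(K_{L\setminus\{a,b\}})\cong\kk$; and $|P|$ is a simple closed curve, so by the Jordan curve theorem $K_{[m]\setminus L}$ has exactly two components $C^+$, $C^-$, one in each complementary disk, so $\w H^0(K_{[m]\setminus L})\cong\kk$. Let $\xi$, $\eta$ be the cocycles equal to $1$ on one arc, resp.\ on $C^+$, and $0$ elsewhere. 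A direct check shows $\xi\sqcup\eta$ is a $1$-cocycle supported on the edges joining that arc to $C^+$, and evaluating it on the cycle $\mathrm{link}_K(a)$ (which lies in $K_{[m]\setminus\{a,b\}}$, as $(a,b)$ is missing) picks out a single edge — the one edge of $\mathrm{link}_K(a)$ at the arc-neighbour of $a$ leading into $C^+$, because that vertex sits on the wall between the two sectors of the star of $a$ cut off by $P$. Thus $\xi*\eta=\pm\,\xi\sqcup\eta\neq0$, and together with the naturality of Alexander duality under restriction to full subcomplexes (a ring map by Theorem \ref{thm:0}) this identifies the dual class. The step I expect to be genuinely delicate is the vertex-set bookkeeping: arranging that the factors of such products are supported on subsets of $J$ (equivalently, choosing $K_L$, or a suitable partition of $J$, in a way compatible with $J$), which I expect to require the finer combinatorics underlying Lemma \ref{lem:2} and a careful organisation of the induction on $|J|$.
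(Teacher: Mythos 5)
Your setup (additive computation of $\w{\HH}^*(K)$, reduction of (b) to (a) via nonsingularity of the pairing into $\w{\HH}^3(K)$) matches the paper's first reduction. The difficulty begins in step (a), and you have correctly located the sore spot yourself; but it is not a mere bookkeeping nuisance — it is the entire content of the lemma, and your proposal does not close it.

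There are two linked problems. First, the reduction to $K_J$ connected does not come for free: if $K_J=K_{J_1}\sqcup K_{J_2}$, then $\w H^1(K_J)$, $\w H^1(K_{J_1})$ and $\w H^1(K_{J_2})$ are \emph{three distinct direct summands} of $\w{\HH}^2(K)$, indexed by $J$, $J_1$, $J_2$ respectively, not nested subgroups. A star product $\xi*\eta$ with $\operatorname{supp}(\xi)\sqcup\operatorname{supp}(\eta)=J_1$ lands in the $J_1$-summand, not the $J$-summand; so the inductive hypothesis on $J_1$ does not directly produce what is needed for $J$. (This can probably be repaired by extending one factor by a locally constant cochain over $K_{J_2}$ and checking the product is unchanged since there are no edges between $K_{J_1}$ and $K_{J_2}$, but that check is not done.) Second and more seriously, the polygon computation produces $\xi*\eta\in\w H^1\bigl(K_{[m]\setminus\{a,b\}}\bigr)$, i.e.\ a class in the summand indexed by $[m]\setminus\{a,b\}$, whereas what you must exhibit is a class in $\w H^1(K_J)$ for the given (generally much smaller) $J$, with factors whose supports partition $J$. ``Naturality of Alexander duality under restriction'' does not by itself transport a union product of cochains supported on $L\setminus\{a,b\}$ and $[m]\setminus L$ to one whose factors partition $J$; the restriction of a product is the product of restrictions, but the supports would then partition $J\cap(L\setminus\{a,b\})$ together with $J\cap([m]\setminus L)$, and you would still have to argue that this restricted product is the right generator, which is not clear.

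The paper takes a different route precisely to avoid this. It fixes $I$ (your $J$), observes that $\w H^1(K_I)\cong\w H^0(K_{\wh I})$ by Poincar\'e duality of the Poincar\'e algebra $\HH^*(K)$, picks the missing edge $(v_1,v_2)$ inside $\wh I$, and then works with the polygon $K_L$ \emph{through the Hochster algebra map} $\varphi^*_{\HH}:\HH^*(K_{\wh I\cup L})\to\HH^*_{\wh I\cup L}(K_L)$. Because $\HH^*(K_L)$ is itself a Poincar\'e algebra (the polygon is Gorenstein*), one gets abstractly an element $b\in\w H^0((K_L)_{L\setminus\wh I})$ with $\varphi^*_{\wh I}(a)*b$ a generator of $\w H^1(K_L)$, hence $a*(\varphi^*_{L\setminus\wh I})^{-1}(b)$ is a generator of $\w H^1(K_{\wh I\cup L})$. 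Then Poincar\'e duality of $\HH^*(K)$ a second time produces a complementary $c\in\w H^0(K_{[m]\setminus(\wh I\cup L)})$, and the crucial cancellation $a*(\varphi^*_{L\setminus\wh I})^{-1}(b)*c=a*a'$ with $a$ a rank-one generator forces $(\varphi^*_{L\setminus\wh I})^{-1}(b)*c=a'$. The supports of the two factors are $L\setminus\wh I$ and $[m]\setminus(\wh I\cup L)$, which manifestly partition $I$. In other words, the paper never tries to write down the factorisation explicitly at the cochain level; it characterises $a'$ by duality and lets the polygon's own duality supply the missing factor on the correct vertex set. That is the idea your proposal lacks, and I would not expect a direct cochain argument plus restriction to substitute for it without essentially reconstructing this dual pairing argument.
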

\begin{proof}
Suppose the vertex set of $K$ is $[m]$. Since $\mathcal {H}^*(K)$ is a Poincar\'e algebra (note that $\mathcal {H}^*(K)$ is torsion free), then $\mathcal {H}^3(K)=\w H^2(K)=\mathbb{Z}$ is generated by $\mathcal {H}^1(K)\oplus\mathcal {H}^2(K)$. Thus we need only prove that $\mathcal {H}^2(K)$ is generated by $\mathcal {H}^1(K)$, i.e., for any $I\subset [m]$,
if $a\in \w H^1(K_I)$, then $a=\sum_{i}b_i*c_i$, where $b_i\in \w H^0(K_{J_i})$, $c_i\in \w H^0(K_{L_i})$ with $J_i\cap L_i=\varnothing$ and $J_i\cup L_i=I$ for each $i$.

First we prove this for the case $\w H^1(K_I)=\mathbb{Z}$ with a generator $a'$.
Let $\lambda$ be a generator of $\w H^2(K)=\mathbb{Z}$, and let $\wh I=[m]\setminus I$.
Then by Poincar\'e duality of $\mathcal {H}^*(K)$, $\w H^0(K_{\wh I})=\mathbb{Z}$ with a generator $a$ so that $a*a'=\lambda$.
This implies that $K_{\wh I}$ has two path-components: $K_{\wh I}'$ and $K_{\wh I}''$.
Take $\{v_1\}\in K_{\wh I}'$ and $\{v_2\}\in K_{\wh I}''$, then $(v_1,v_2)\in MF(K_{\wh I})\subset MF(K)$. Hence by Corollary \ref{cor:2}, there exists a subset $J\subset [m]$, such that $\{v_1,v_2\}\subset J$ and $K_J$ is the boundary of a polygon. Let $K_1=K_J$, $K_2=K_{\wh I\cup J}$. Consider the simplicial inclusion $\varphi: K_1\to K_2$ which induces a homomorphism of Hochster rings
\[\varphi_{\mathcal {H}}^*:\mathcal {H}^*(K_2)\to \mathcal {H}^*_{\wh I\cup J}(K_1)\]
(View $K_1$ as a simplicial complex on $\wh I\cup J$). It is clear that
\[\varphi^*_{\wh I}: \w H^0((K_2)_{\wh I})\to \w H^0((K_1)_{\wh I})\] is a monomorphism (in fact Im$\varphi^*_{\wh I}$  is a direct summand of $H^0((K_1)_{\wh I})$)
and \[\varphi^*_{J\setminus\wh I}: \w H^0((K_2)_{J\setminus\wh I})\to \w H^0((K_1)_{J\setminus\wh I})\] is an isomorphism.
Since $\mathcal {H}^*_{\wh I\cup J}(K_1)$ is a Poincar\'e algebra (note $K_1$ is a triangulation of $S^1$), there exists an element $b$ of $\w H^0((K_1)_{J\setminus\wh I})$
such that $\varphi^*_{\wh I}(a)*b=\xi$, where $\xi$ is a generator of $\w H^1(K_1)=\mathbb{Z}$. This implies that
$a*(\varphi^*_{J\setminus\wh I})^{-1}(b)$ is one of the generators of $\w H^1(K_2)$ ($\varphi_{\mathcal {H}}^*$ is surjective on $\mathcal {H}^2$).
Thus by applying the Poincar\'e duality of $\mathcal {H}^*(K)$, there is an element $c\in \w H^0(K_{I'})$, where $I'=[m]\setminus(\wh I\cup J)$, such that \[a*(\varphi^*_{J\setminus\wh I})^{-1}(b)*c=\lambda=a*a'.\] It follows that
\[(\varphi^*_{J\setminus\wh I})^{-1}(b)*c=a'.\] Then we get the desired result.

For the general case $\w H^1(K_I)=\mathbb{Z}^n$, similarly we have $\w H^0(K_{\wh I})=\mathbb{Z}^n$. So $K_{\wh I}$ has $n+1$ path-components, say
$L_0,L_1,\dots,L_n$. Give a basis $\{a_i\}_{1\leq i\leq n}$ of $\w H^0(K_{\wh I})$ defined by $a_i=\sum_{(v)\in L_i}(v)$. It determines a dual
basis $\{a_i'\}_{1\leq i\leq n}$ of $\w H^1(K_I)$, i.e., $a_i*a_j'=\lambda$ for $i=j$ and zero otherwise. If $(v_0,v_1)\in MF(K_{\wh I})\subset MF(K)$
with $\{v_0\}\in L_0$ and $\{v_1\}\in L_1$, then by applying Corollary \ref{cor:2} agian, there exists a subset $J\subset [m]$ such that $(v_0,v_1)\in J$ and $K_J$ is the boundary of a polygon. As in the preceding paragraph, let $K_1=K_J$, $K_2=K_{\wh I\cup J}$. Consider the simplicial inclusion $\varphi: K_1\to K_2$ which induces a homomorphism of Hochster rings
\[\varphi_{\mathcal {H}}^*:\mathcal {H}^*(K_2)\to \mathcal {H}^*_{\wh I\cup J}(K_1).\]
In similar fation there exists an element $b$ of $\w H^0((K_1)_{J\setminus\wh I})$
such that $\varphi^*_{\wh I}(a_i)*b=\xi$, where $\xi$ is a generator of $\w H^1(K_1)=\mathbb{Z}$, for $i=1$ and zero otherwise. This implies that
$a_1*(\varphi^*_{J\setminus\wh I})^{-1}(b)$ is one of the generators of $\w H^1(K_2)$.
Appealing to the Poincar\'e duality of $\mathcal {H}^*(K)$, there is an element $c\in \w H^0(K_{I'})$ (where $I'=[m]\setminus(\wh I\cup J)$) such that
\[a_1*(\varphi^*_{J\setminus\wh I})^{-1}(b)*c=\lambda=a_1*a_1'\quad \text{and}\] \[a_i*(\varphi^*_{J\setminus\wh I})^{-1}(b)*c=0\quad \text{ for } i\neq1.\] It follows that
$(\varphi^*_{J\setminus\wh I})^{-1}(b)*c=a_1'$, and so $a_1'$ is generated by $\mathcal {H}^1(K)$.
Similarly, $a_i'$ is generated by $\mathcal {H}^1(K)$ for each $i$, and the lemma follows.
\end{proof}

Note that Lemma \ref{prop:8} dose not hold for general flag complexes. To see this, consider the following example:
\begin{exmp} Let $K$ be a $2$-dimensional flag complex shown in Figure \ref{fig:2}.
\begin{figure}[!ht]
\includegraphics[scale=0.5]{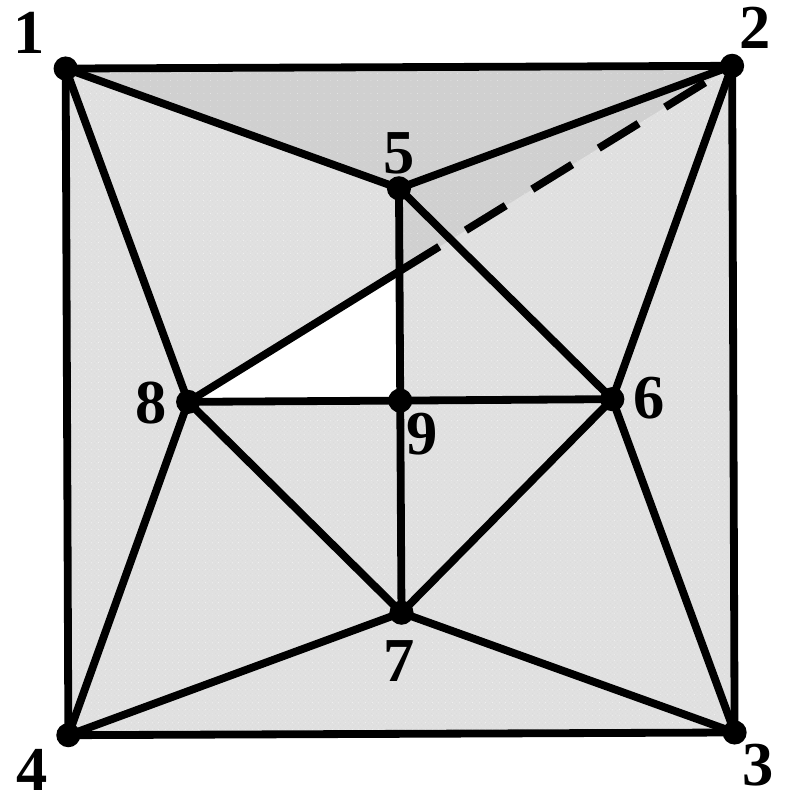}
\caption{}\label{fig:2}
\end{figure}

$K$ contains nine vertices and eleven $2$-simplices:
\[\begin{split}
\{(1,2,5),(1,2,8),(1,4,8),(2,3,6),(2,5,6),(3,4,7),&\\
(3,6,7),(4,7,8),(5,6,9),(6,7,9),(7,8,9)&\}.
\end{split}\]
It is easy to see that $|K|\simeq S^1$, so $\w H^1(K)\cong \mathbb{Z}$. However $\w H^1(K)=\mathcal {H}^2(K)$ is not generated by $\mathcal {H}^1(K)=\w H^0(K)$, since
a straightforward observation shows that  for any division $I\cup J=[9]$, $\w H^0(K_I)=0$ or $\w H^0(K_J)=0$.
\end{exmp}

\begin{Def}\label{def:5}
A ring $\mathcal {R}$ is called \emph{decomposable} if there exists nonzero rings $\mathcal {R}_1$ and $\mathcal {R}_2$  such that $\mathcal {R}\cong\mathcal {R}_1\times \mathcal {R}_2$. Otherwise, $\mathcal {R}$ is called \emph{indecomposable}. If $\mathcal {R}$ is a graded ring, and the corresponding graded decomposition exists, then
$\mathcal {R}$ is called \emph{graded decomposable}. Otherwise, $\mathcal {R}$ is called \emph{graded indecomposable}.

A $\kk$-algebra $A$ is called \emph{decomposable} if there exists nonzero $\kk$-algebra $A_1$ and $A_2$ such that $A\cong A_1\times A_2$. Otherwise, $A$ is called \emph{indecomposable}. The others are defined similarly for $\kk$-algebras.
\end{Def}

\begin{thm}\label{thm:7}
Let $K$ be a flag $2$-sphere. Then $\w {H}^*(\mathcal {Z}_{K})/([\mathcal {Z}_{K}])$ is a graded indecomposable ring, where $[\mathcal {Z}_{K}]$ is the top class of $\w {H}^*(\mathcal {Z}_{K})$.
\end{thm}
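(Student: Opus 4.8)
The plan is to rephrase the statement in terms of the Hochster algebra $\mathcal{H}^*(K)$, to reduce a ring decomposition to an ``orthogonal'' splitting of $\mathcal{H}^1(K)$, and then to use the polygon lemma (Corollary \ref{cor:2}) together with the Poincar\'e duality of Gorenstein* complexes to forbid such a splitting.

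First I would fix the algebraic picture. By Theorem \ref{thm:1} we identify $\widetilde{H}^*(\mathcal{Z}_K)$ with the reduced Hochster algebra $\widetilde{\mathcal{H}}^*(K)$. Since $K$ is a $2$-sphere every full subcomplex $K_J$ is a subcomplex of $S^2$, so $\mathcal{H}^*(K)$ is torsion free, with $\widetilde{\mathcal{H}}^*(K)=\mathcal{H}^1(K)\oplus\mathcal{H}^2(K)\oplus\mathcal{H}^3(K)$ and $\mathcal{H}^3(K)=\widetilde{H}^2(K)=\mathbb{Z}\,[\mathcal{Z}_K]$; hence $R:=\widetilde{H}^*(\mathcal{Z}_K)/([\mathcal{Z}_K])\cong\mathcal{H}^1(K)\oplus\mathcal{H}^2(K)$, a ring whose only nonzero products are the multiplications $\mathcal{H}^1(K)\times\mathcal{H}^1(K)\to\mathcal{H}^2(K)$. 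By Lemma \ref{prop:8}, $\mathcal{H}^2(K)=\mathcal{H}^1(K)\cdot\mathcal{H}^1(K)$, and by Theorem \ref{thm:AG} the pairing $\mathcal{H}^1(K)\times\mathcal{H}^2(K)\to\mathcal{H}^3(K)=\mathbb{Z}$ is perfect; composing it with the multiplication yields a symmetric (up to sign) trilinear form $T(x,y,z)=xyz$ on $\mathcal{H}^1(K)$ which is nondegenerate, in the sense that for each $x\neq 0$ there are $y,z$ with $T(x,y,z)\neq 0$. A graded decomposition $R\cong R_1\times R_2$ with $R_i\neq 0$ is then equivalent to a nontrivial splitting $\mathcal{H}^1(K)=V_1\oplus V_2$ with $V_1\cdot V_2=0$ in $\mathcal{H}^2(K)$ --- equivalently, $T$ vanishes on every triple containing a vector of $V_1$ and a vector of $V_2$. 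Using the intrinsic $\mathbb{Z}^m$-multigrading of $\mathcal{H}^*(K)$ (inherited from $\mathrm{Tor}$), one may take the $V_i$ to be multigraded, so that each $\widetilde{H}^0(K_J)$ splits as $(V_1)_J\oplus(V_2)_J$; in particular, for each missing edge $(u,v)$ the rank-one group $\widetilde{H}^0(K_{\{u,v\}})=\mathbb{Z}\,e_{uv}$, and more generally every primitive multihomogeneous class, lies entirely in $V_1$ or entirely in $V_2$ --- call this its \emph{colour}.

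Next I would read off colour relations from Corollary \ref{cor:2}. Given a missing edge $(u,v)$, pick $I\supseteq\{u,v\}$ with $K_I$ the boundary of a polygon, so that $(u,v)$ is a chord of it. Working inside the subring $\bigoplus_{J\subseteq I}\widetilde{H}^*(K_J)\cong\mathcal{H}^*_I(K_I)$ --- the Hochster algebra of the circle $K_I$, a Poincar\'e algebra with one-dimensional top $\widetilde{H}^1(K_I)$ --- the two complementary arcs give a generator $f_{uv}\in\widetilde{H}^0(K_{I\setminus\{u,v\}})\cong\mathbb{Z}$ with $e_{uv}\cdot f_{uv}$ a generator of $\widetilde{H}^1(K_I)$, and the Alexander/Poincar\'e duality of $K$ (Theorem \ref{thm:dual}) then produces a generator $g_{uv}\in\widetilde{H}^0(K_{[m]\setminus I})\cong\mathbb{Z}$ with $e_{uv}\cdot f_{uv}\cdot g_{uv}=\pm[\mathcal{Z}_K]$. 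Thus $T(e_{uv},f_{uv},g_{uv})=\pm 1$, so $e_{uv},f_{uv},g_{uv}$ share a colour; applying the same device to any $J$ with $K_J$ disconnected (choosing a missing edge inside $K_J$ to locate a polygon $K_{J'}$ with $J\subseteq J'$) shows that every primitive multihomogeneous class occurs in such a nonzero triple product and is equicoloured with its two partners. If one now proves that the graph on primitive multihomogeneous classes, with an edge whenever two of them appear together in some nonzero triple product, is connected, then all of $\mathcal{H}^1(K)$ has a single colour, so $V_2=0$, contradicting $R_2\neq 0$; hence $R$ is graded indecomposable.

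The step I expect to be the genuine obstacle is precisely this connectivity statement: that the atoms $e_{uv}$ (one per missing edge) together with the arc- and complement-classes produced by their polygons are all linked through nonzero triple products. I would attack it by the combinatorial method already used for Lemma \ref{lem:2} --- inducting on the number of vertices while either deleting a vertex whose star is small or replacing a star by a single triangle --- so as to reduce the linkage of an arbitrary polygon to that of a fixed reference polygon, whose chords are manifestly pairwise linked. One should also verify carefully that a graded decomposition of $R$ is compatible with the $\mathbb{Z}^m$-multigrading by which $e_{uv}$ and the other atoms are homogeneous; everything preceding this (the reduction to the trilinear form $T$ and the single-polygon computation) is formal and rests only on the results already established.
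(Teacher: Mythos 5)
Your reduction to a trilinear form $T$ on $\mathcal{H}^1(K)$ and the single-polygon computation $T(e_{uv},f_{uv},g_{uv})=\pm 1$ are both correct and match the spirit of the paper, but there is a genuine gap at the pivot of the argument: you assert that the pieces $V_1,V_2$ of a graded splitting of $R$ ``may be taken to be $\mathbb{Z}^m$-multigraded, so that each $\widetilde H^0(K_J)$ splits.'' A decomposition $R\cong R_1\times R_2$ is only assumed to respect the cohomological $\mathbb{Z}$-grading, and nothing forces it to respect the $\mathbb{Z}^m$-multigrading. For instance, if $e_1\in\widetilde H^0(K_{J_1})$ and $e_2\in\widetilde H^0(K_{J_2})$ with $J_1\neq J_2$, nothing rules out a priori that $V_1$ is spanned by $e_1+e_2$ and $V_2$ by $e_1-e_2$. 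This is precisely the difficulty the paper's Lemmas~\ref{lem:3} and~\ref{lem:4} are designed to overcome: there one writes $\phi^{-1}(r_i)=\lambda_\sigma+\sum_{\tau\neq\sigma}k_\tau\lambda_\tau+(\text{degree-2 terms})$ and uses the polygon construction together with the nondegeneracy of the pairing to force the cross-terms $k_\tau$ to vanish. So your phrase ``everything preceding this is formal'' is exactly backwards: the multigrading step is where almost all the work lives, and it cannot be assumed.

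Two further issues. First, your connectivity step is sketchier than the paper's: the paper proves an explicit combinatorial lemma (Lemma~\ref{lem:6}) that any two missing faces of a non-suspension flag $2$-sphere are joined by a chain of pairwise-intersecting missing faces, and a bridging lemma (Lemma~\ref{lem:5}) to transfer colour across an intersection point; your proposed induction ``deleting a small star or replacing a star by a triangle'' is plausible but not carried out, and it is not obviously easier than the paper's route. Second, the suspension case $K=S^0*\partial\text{(}n\text{-gon)}$ must be handled separately (the connectivity statement fails for $\partial\Delta^1*\partial\Delta^1*\partial\Delta^1$-type degenerations); the paper disposes of it by directly identifying $\mathcal{Z}_K$ as $S^3\times(\text{connected sum of sphere products})$ and checking indecomposability there. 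Your argument does not mention this case at all.
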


\begin{Def}\label{def:6}
An $n$-dimensional manifold $M^n$ is called \emph{prime} if $M^n=M_1^n\# M_2^n$ implies $M_1=S^n$ or $M_2=S^n$.
\end{Def}
The following theorem as a consequence of Theorem \ref{thm:7} answers the question at the beginning of this section for flag $2$-spheres.
\begin{thm}\label{thm:8}
Let $K$ be a flag $2$-sphere. Then $\mathcal {Z}_{K}$ is a prime manifold.
\end{thm}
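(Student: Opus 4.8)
The plan is to argue by contradiction, deducing from Theorem~\ref{thm:7} that $\ZZ_K$ admits no nontrivial connected-sum decomposition. First I would record the basic shape of $\ZZ_K$: a flag $2$-sphere has at least $6$ vertices (the boundary of the octahedron $\partial\Delta^1*\partial\Delta^1*\partial\Delta^1$ being the smallest one), so by Theorem~\ref{thm:mfd} the moment-angle manifold $\ZZ_K$ is a closed, simply connected manifold of dimension $n:=m+3\ge 9$; in particular it is orientable and $n\ge 5$, so the topological Poincar\'e conjecture is available.

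Now suppose $\ZZ_K=M_1\#M_2$. Since $n\ge 3$, van Kampen gives $\pi_1(\ZZ_K)\cong\pi_1(M_1)*\pi_1(M_2)$, hence both $M_1$ and $M_2$ are simply connected (so orientable) closed $n$-manifolds. Before using the ring structure I would dispose of a degenerate possibility: if $\w H_j(M_i;\mathbb Z)=0$ for all $1\le j\le n-1$, then by Poincar\'e duality $M_i$ is a simply connected homology $n$-sphere, hence (Hurewicz and Whitehead) a homotopy $n$-sphere, hence homeomorphic to $S^n$ since $n\ge 5$. Therefore, assuming neither $M_i$ is homeomorphic to $S^n$, we may assume $\w H^*(M_i;\mathbb Z)/([M_i])\ne 0$ for $i=1,2$, where $[M_i]$ denotes the top cohomology class of $M_i$.

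The core of the argument is the standard computation of the cohomology ring of a connected sum. Let $p_i\colon\ZZ_K\to M_i$ be the collapse maps attached to the decomposition $\ZZ_K=(M_1\setminus\overset{\circ}{D^n})\cup_{S^{n-1}}(M_2\setminus\overset{\circ}{D^n})$. A Mayer--Vietoris computation shows that each $p_i^*$ is injective, that $\w H^j(\ZZ_K;\mathbb Z)=p_1^*\w H^j(M_1;\mathbb Z)\oplus p_2^*\w H^j(M_2;\mathbb Z)$ for $0<j<n$, and that the relative cup product forces $p_1^*(a)\cup p_2^*(b)=0$ for all positive-degree $a,b$ (such a product factors through $H^*\bigl(\ZZ_K,(M_1\setminus\overset{\circ}{D^n})\cup(M_2\setminus\overset{\circ}{D^n})\bigr)=0$). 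Hence $p_i^*$ descends to a graded ring monomorphism $R_i:=\w H^*(M_i;\mathbb Z)/([M_i])\hookrightarrow R:=\w H^*(\ZZ_K;\mathbb Z)/([\ZZ_K])$, and inside $R$ we get $R=R_1\oplus R_2$ as graded groups, $R_1R_2=0$, and each $R_i$ closed under multiplication (the product of two classes from the same summand lands back in $R_i$ once the top class is killed, since $p_i^*$ carries the top class of $M_i$ to that of $\ZZ_K$). Thus $R\cong R_1\times R_2$ as graded rings with both factors nonzero, contradicting the graded indecomposability of $R$ proved in Theorem~\ref{thm:7}. Consequently $M_1$ or $M_2$ is homeomorphic to $S^n$, i.e.\ $\ZZ_K$ is prime.

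I expect the only real subtlety to be making the connected-sum ring formula watertight --- in particular the vanishing of the cross terms \emph{including in the top degree}, and the passage from a direct sum of graded groups to an honest product of graded rings after quotienting by $([\ZZ_K])$; once this is in hand, Theorem~\ref{thm:7} immediately yields the contradiction. The hypotheses needed for the homotopy-sphere step (simple connectivity of $\ZZ_K$ and $n\ge 5$) are automatic here, as noted above.
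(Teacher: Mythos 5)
Your proof is correct and follows essentially the same route as the paper: simple connectivity of $\ZZ_K$ plus van Kampen and the high-dimensional Poincar\'e conjecture reduce the question to the ring-level statement $\w H^*(\ZZ_K)/([\ZZ_K])\cong \w H^*(M_1)/([M_1])\times\w H^*(M_2)/([M_2])$, and Theorem~\ref{thm:7} finishes the job. The only difference is that you spell out the Mayer--Vietoris justification of the connected-sum ring formula, which the paper states without proof.
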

\begin{proof}
Suppose $\mathcal {Z}_{K}=M_1\#M_2$. Since $\mathcal {Z}_{K}$ is always simply connected (cf. \cite[Corollary 6.19]{BP02}), then by Van Kampen's theorem $M_1$ and $M_2$ are both simply connected. Hence by Poincar\'e conjecture, if $M_i$ has the homology of a sphere, then it actually homeomorphic to the sphere. Note that
\[\w {H}^*(\mathcal {Z}_{K})/([\mathcal {Z}_{K}])=\w H^*(M_1)/([M_1])\times\w H^*(M_2)/([M_2]).\]
Then the theorem follows from Theorem \ref{thm:7} immediately.
\end{proof}
The proof of Theorem \ref{thm:7} is separated into several lemmata, in which we always assume that $K$ is a flag $2$-sphere with vertex set $[m]$.
\begin{lem}\label{lem:3}
If there is an (ungraded) isomorphism of rings
\[\phi:\w{\mathcal {H}}^*(K)/\mathcal {H}^3(K)\xrightarrow{\cong}\mathcal {R}_1\times \mathcal {R}_2\] for two nonzero rings $\mathcal {R}_1$ and $\mathcal {R}_2$, then there exist two subset $I_1,\,I_2\in [m]$ such that $K_{I_1}$ and $K_{I_2}$ are triangulations of $S^1$, and
$\phi(\w H^1(K_{I_i}))\subset \mathcal {R}_i$, $i=1,2$.
\end{lem}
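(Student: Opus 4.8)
The plan is to pull the product decomposition back to an intrinsic splitting of the Hochster algebra and then to locate the two circles inside that splitting by the method of Lemma~\ref{prop:8}. Write $R:=\w{\mathcal{H}}^*(K)/\mathcal{H}^3(K)$. Since $|K|\cong S^2$, every proper full subcomplex has trivial $\w H^2$, so $\mathcal{H}^3(K)=\w H^2(K)\cong\mathbb{Z}$ and $R=\mathcal{H}^1(K)\oplus\mathcal{H}^2(K)$, with $\mathcal{H}^1\cdot\mathcal{H}^2=\mathcal{H}^2\cdot\mathcal{H}^2=0$ and, by Lemma~\ref{prop:8} (which says $\mathcal{H}^*(K)$ is generated by $\mathcal{H}^1(K)$), $\mathcal{H}^1\cdot\mathcal{H}^1=\mathcal{H}^2$. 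First I would isolate two intrinsic subobjects of $R$: the square $R\cdot R=\mathcal{H}^2(K)$, and the two-sided annihilator $\mathrm{Ann}_R(R)=\{x\in R:xR=0\}$, which also equals $\mathcal{H}^2(K)$ --- its degree-one part is the left radical of the multiplication $\mu\colon\mathcal{H}^1(K)\otimes\mathcal{H}^1(K)\to\mathcal{H}^2(K)$, and this radical is trivial by Poincar\'e duality of $\mathcal{H}^*(K)$ (a torsion-free Poincar\'e algebra, as $\mathcal{Z}_K$ is a closed manifold, cf.\ the proof of Lemma~\ref{prop:8}) together with Lemma~\ref{prop:8}. Hence $\phi$ carries $R\cdot R$ and $\mathrm{Ann}_R(R)$ onto the corresponding subobjects of $\mathcal{R}_1\times\mathcal{R}_2$, and, setting $P_i=\phi^{-1}(\mathcal{R}_i)$, $V_i=P_i\cap\mathcal{H}^2(K)$ and $Q_i=$ the image of $P_i$ in $\mathcal{H}^1(K)=R/\mathcal{H}^2(K)$, one gets direct sum decompositions $\mathcal{H}^1(K)=Q_1\oplus Q_2$ and $\mathcal{H}^2(K)=V_1\oplus V_2$.

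Because $P_1P_2=0$, one gets $\mu(Q_1,Q_2)=0$; combined with surjectivity of $\mu$ and unimodularity of the Poincar\'e pairing $\langle\,,\rangle\colon\mathcal{H}^1(K)\times\mathcal{H}^2(K)\to\mathcal{H}^3(K)=\mathbb{Z}$, this forces $V_i=\mu(Q_i,Q_i)$, the identification $V_i=Q_i^{\vee}$ under $\langle\,,\rangle$, and the block form $\langle Q_i,V_j\rangle=0$ for $i\neq j$ with $\langle\,,\rangle\colon Q_i\times V_i\to\mathbb{Z}$ unimodular. Moreover all four summands are nonzero: if $V_i=0$ then $P_i\cdot P_i\subseteq V_i=0$ and $P_i\cdot P_{3-i}=0$, so $P_i\subseteq\mathrm{Ann}_R(R)=\mathcal{H}^2(K)$, whence $P_i=V_i=0$, contradicting $\mathcal{R}_i\neq0$. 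Now I would reformulate the target combinatorially. If $K_I$ is a full subcomplex which is a triangulated circle, then $\w H^1(K_I)\cong\mathbb{Z}$, its complement $K_{[m]\setminus I}$ has exactly two components, and (by Alexander duality, Theorem~\ref{thm:dual}, unimodularity of the pairing, and the bidegrees of the union product) the Poincar\'e pairing identifies the generator $\lambda_I$ with $\pm$ the Hochster-projection $\pi_{[m]\setminus I}\colon\mathcal{H}^1(K)\twoheadrightarrow\w H^0(K_{[m]\setminus I})$, so $\lambda_I$ pairs trivially with every other Hochster summand. Therefore $\w H^1(K_I)\subseteq V_i$ precisely when $\pi_{[m]\setminus I}$ annihilates $Q_{3-i}$; equivalently, writing $\lambda_I=\pm\,u\sqcup w$ as the union product of the two complementary-arc classes of the polygon (supported on a partition $I=J'\sqcup J''$, their product a generator of $\w H^1(K_I)$), it is enough to arrange $u,w\in Q_i$, since then $\lambda_I\in\mu(Q_i,Q_i)=V_i$.

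To build the circle for $V_1$ I would run the argument of Lemma~\ref{prop:8} on the block data. Since $V_1=\mu(Q_1,Q_1)\neq0$, choose $\alpha,\beta\in Q_1$ with $\alpha*\beta\neq0$; expanding $\alpha,\beta$ in the Hochster decomposition and using that $\w H^0(K_L)\neq0$ iff $K_L$ is disconnected, one obtains an index $L$ and vertices $v_1,v_2$ lying in two different components of the (full, hence induced) subcomplex $K_L$, so that $(v_1,v_2)\in MF(K)$. Corollary~\ref{cor:2} then supplies a full subcomplex $K_J\supseteq\{v_1,v_2\}$ which is the boundary of a polygon, and feeding the inclusion $K_J\hookrightarrow K_{L\cup J}$ and unimodularity of the pairing into the bookkeeping of the proof of Lemma~\ref{prop:8} expresses a generator of $\w H^1(K_J)$ as a union product $u\sqcup w$ of classes supported on complementary subsets. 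At this point one uses the relations $\mu(Q_1,Q_2)=0$ and $\langle Q_1,V_2\rangle=0$ to adjust the construction so that $u,w$ land in $Q_1$, which gives $\w H^1(K_J)\subseteq V_1$; set $I_1:=J$, and run the mirror-image argument from $V_2$ to get $I_2$.

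The main obstacle is exactly the final adjustment in the previous step. The decomposition $\mathcal{H}^1(K)=Q_1\oplus Q_2$ is in no way aligned with the combinatorial direct sum $\bigoplus_I\w H^0(K_I)$, so one cannot merely ``project onto a single Hochster summand and stay in $Q_1$''. The witnessing data --- the pair $\alpha,\beta$, hence the missing edge $(v_1,v_2)$ and the polygon $K_J$ produced by Corollary~\ref{cor:2} --- must be chosen delicately enough that the resulting fundamental class is genuinely monochromatic rather than split between $V_1$ and $V_2$. Carrying this out requires combining the explicit form of the union product on a polygon (the two complementary-arc classes multiply to a generator of $\w H^1(S^1)$), the orthogonality relations established in the first two steps, and the constructive content of Corollary~\ref{cor:2}; this is where the substance of the lemma sits.
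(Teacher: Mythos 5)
Your framework is sound and tracks the paper's approach closely --- both arguments use the same ingredients: the additive decomposition $\w{\mathcal{H}}^*(K)/\mathcal{H}^3(K) = \mathcal{H}^1(K)\oplus\mathcal{H}^2(K)$, Lemma~\ref{prop:8} (generation by $\mathcal{H}^1$), Corollary~\ref{cor:2} to produce a polygon from a missing edge, and the Poincar\'e pairing between $\mathcal{H}^1$ and $\mathcal{H}^2$. The intrinsic bookkeeping you set up (square, annihilator, block decompositions $\mathcal{H}^1 = Q_1\oplus Q_2$, $\mathcal{H}^2 = V_1\oplus V_2$, orthogonality relations) is correct and in fact a cleaner reorganization of what the paper does implicitly.

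However, you explicitly stop short of the proof: you identify the ``final adjustment'' --- arranging that the two arc classes $u,w$ whose union product generates $\w H^1(K_J)$ both lie in $Q_1$ --- as ``where the substance of the lemma sits'' and do not supply it. That is exactly the step that cannot be waved through. Writing $u = u_1+u_2$, $w = w_1+w_2$ in $Q_1\oplus Q_2$ gives $u\sqcup w = u_1 w_1 + u_2 w_2$ (cross terms vanish since $\mu(Q_1,Q_2)=0$), which a priori splits between $V_1$ and $V_2$; nothing in your orthogonality relations alone forces the $V_2$-piece to vanish. The paper closes this gap differently: rather than trying to make a \emph{pre-chosen} polygon monochromatic, it starts from $r_1\in\mathcal{R}_1$ with a nonzero Hochster projection $p_I\phi^{-1}(r_1)\in\w H^0(K_I)$, constructs $J$ and a class $a\in\w H^0(K_{J\setminus I})$ pairing nontrivially with it, and then runs a case analysis on the decomposition $\phi(a)=\gamma_1+\gamma_2$ (writing $\phi^{-1}(\gamma_i)$ in Hochster coordinates) to prove that the $\mathcal{H}^1$-part of $\phi^{-1}(\gamma_1)$ equals $a$ on the nose. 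Each bad case is killed by a targeted Poincar\'e-duality pairing (either within $\mathcal{H}^*(K_J)$ or within $\mathcal{H}^*(K)$ via Lemma~\ref{prop:8}) producing a nonzero element of $\phi^{-1}(\mathcal{R}_1)\cap\phi^{-1}(\mathcal{R}_2)$, a contradiction. Only after $a_1=a$, $b\in\mathcal{H}^2(K)$ is established does the computation $\phi(a\ast a')=\gamma_1\phi(a')\in\mathcal{R}_1$ give $\phi(\w H^1(K_J))\subseteq\mathcal{R}_1$. You would need to carry out an analysis of this kind rather than an abstract ``adjustment''; as written the proposal has a genuine gap precisely at its load-bearing step.
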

\begin{proof}
Note first that
\[\w{\mathcal {H}}^*(K)/\mathcal {H}^3(K)=\mathcal {H}^1(K)\oplus\mathcal {H}^2(K)\text{ (as group)}.\]
Let $p:\mathcal {H}^1(K)\oplus\mathcal {H}^2(K)\to \mathcal {H}^1(K)$ be the projection map.
We claim that $p$ restricted to $\mathcal {R}_1$ and $\mathcal {R}_2$ are both nonzero. Otherwise suppose $p\mid\mathcal {R}_1=0$, then
$p\mid\mathcal {R}_2$ must be surjective. Since by Lemma \ref{prop:8}, $\w{\mathcal {H}}^*(K)/\mathcal {H}^3(K)$ is generated by
$\mathcal {H}^1(K)$, then $\w{\mathcal {H}}^*(K)/\mathcal {H}^3(K)$ is generated by $\mathcal {R}_2$, a contradiction.

Suppose $r_1\in \mathcal {R}_1$ such that $p_I\phi^{-1}(r_1)\neq 0$ for some $I\subset [m]$, where $p_I$ is the composition of $p$ and the projection
\[p'_I:\mathcal {H}^1(K)=\bigoplus_{I\in[m]}\w H^0(K_I)\to \w H^0(K_I).\]
Assume $(v_1,v_2)\in MF(K_I)\subset MF(K)$. Thus by Lemma \ref{lem:2}, there exists a subset $J\subset [m]$ such that $(v_1,v_2)\subset J$ and $K_J$ is the boundary of a polygon. Applying the same argument as in the proof of Lemma \ref{prop:8}, there exists an element
$a\in \w H^0(K_{J\setminus I})$ such that $a*p_I\phi^{-1}(r_1)\neq0$. Hence $a*\phi^{-1}(r_1)\neq0$. If $\phi(a)=\gamma_1+\gamma_2$ with $\gamma_1\in \mathcal {R}_1$ and $\gamma_2\in \mathcal {R}_2$, then we can write
\[\phi^{-1}(\gamma_1)=a_1+b,\quad \phi^{-1}(\gamma_2)=a_2-b\] with $a_1,a_2\in \w H^0(K_{J\setminus I})$, $a_1+a_2=a$ and $b\in\bigoplus_{I'\neq J\setminus I}\w H^*(K_{I'})$.  Set $b=b_1+b_2$, where $b_1\in \mathcal {H}^1(K)$,
$b_2\in \mathcal {H}^2(K)$.

If $b_1=0$, then we claim that $a_1=a$ and $a_2=0$. Otherwise $a_1=0$ and $a_2=a$ or $a_1,a_2\neq0$. In the first case,
$a*\phi^{-1}(r_1)=\phi^{-1}(\gamma_2)*\phi^{-1}(r_1)\neq0$ (note $\mathcal {H}^1(K)*\mathcal {H}^2(K)=0$ in $\w{\mathcal {H}}^*(K)/\mathcal {H}^3(K)$), so we have $\gamma_2r_1\neq0$, contrary to the fact that $\mathcal {R}_1\mathcal {R}_2=0$.
For the second case, from the
Poincar\'e duality of $\mathcal {H}^*(K_J)$, there exist two elements $a_1',a_2'\in \w H^0(K_{I\cap J})$ such that $a_1*a_1'=a_2*a_2'\neq0$ in
$\w H^1(K_J)=\mathbb{Z}$, so\[\phi^{-1}(\gamma_1)*a_1'=\phi^{-1}(\gamma_2)*a_2'\neq0,\] and so $\gamma_1\phi(a_1')=\gamma_2\phi(a_2')\neq0$.
This implies that $\mathcal {R}_1\cap\mathcal {R}_2\neq0$, a contradiction. When $b\in\mathcal {H}^2(K)$ and $a=a_1$, then from the fact that
$a*a'\neq0$ in $\w H^1(K_J)=\mathbb{Z}$ for some $a'\in \w H^0(K_{I\cap J})$ and \[\phi(a*a')=\phi((a+b)*a')=\gamma_1\phi(a')\in \mathcal {R}_1,\]
we get the desired result $\phi(\w H^1(K_J))\subset \mathcal {R}_1$ (put $I_1=J$). Next we prove that $b_1$ must be zero.

If $b_1\neq0$, without loss of generality we assume $b=b_1\in \w H^0(K_{I'})$ for some $I'\neq J\setminus I$, then there exists an element $b'\in \w H^1(K_{\wh {I'}})$, where $\wh {I'}=[m]\setminus I'$, such that $b*b'\neq0$ in $\w H^2(K)=\mathbb{Z}$ by the
Poincar\'e duality of $\mathcal {H}^*(K)$. Lemma \ref{prop:8} implies that $b'=\sum_i c_i*e_i$ with $c_i\in \w H^0(K_{J_i})$,
$e_i\in \w H^0(K_{J'_i})$, $J_i\cap J'_i=\varnothing$ and $J_i\cup J'_i=\wh {I'}$ for each $i$. Thus $b*c_i*e_i\neq0$ for
some $i$. Since $I'\neq J\setminus I$, then $\wh {I'}\cap (J\setminus I)\neq\varnothing$, and then $J_i\cap (J\setminus I)\neq\varnothing$ or
$J'_i\cap(J\setminus I)\neq\varnothing$. Assume $J_i\cap (J\setminus I)\neq\varnothing$, then
\[\phi^{-1}(\gamma_1)*c_i=(a_1+b)*c_i=b*c_i=(b-a_2)*c_i=-\phi^{-1}(\gamma_2)*c_i\neq0.\]
This implies that $\phi^{-1}(\mathcal {R}_1)\cap\phi^{-1}(\mathcal {R}_2)\neq0$, a contradiction. So $b_1=0$.

Similarly, by considering $r_2\in \mathcal {R}_2$ such that $p_I\phi^{-1}(r_2)\neq 0$ for some $I\subset [m]$, we get a subset $I_2\subset [m]$ such that $\phi(\w H^1(K_{I_2}))\subset \mathcal {R}_2$ and $K_{I_2}$ a triangulation of $S^1$.
\end{proof}
\begin{lem}\label{lem:4}
If there is a graded isomorphism
\[\phi:\w {H}^*(\mathcal {Z}_{K})/([\mathcal {Z}_{K}])\xrightarrow{\cong}\mathcal {R}_1\times \mathcal {R}_2\] for two nonzero graded rings $\mathcal {R}_1$ and $\mathcal {R}_2$, and if there is a subset $I\subset [m]$ such that $K_I$ is the boundary of a polygon and
$\phi(\w H^1(K_I))\subset \mathcal {R}_1$ (here $\w H^1(K_I)$ is viewed as a subgroup of $\w {H}^*(\mathcal {Z}_{K})/([\mathcal {Z}_{K}])$ under the isomorphism given in Theorem \ref{thm:1}), then for any $\sigma\in MF(K_I)$, $\phi(\w H^0(K_\sigma))\subset \mathcal {R}_1$.
\end{lem}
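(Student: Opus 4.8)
The plan is to transport the decomposition $\mathcal{R}_1\times\mathcal{R}_2$ into the polygon $K_I$, where Poincar\'e duality is available, and then to bootstrap back to $K$ using the ideal structure and Corollary \ref{cor:2}. First I record two elementary facts about $K_I$: deleting the two non-adjacent vertices of $\sigma$ splits $K_I$ into two arcs, so $\w H^0(K_{I\setminus\sigma})\cong\mathbb{Z}$; and the only full subcomplex $K_J$ with $J\subseteq I$ and $\w H^1(K_J)\ne0$ is $K_I$ itself. Since $K_I$ is a triangulation of $S^1$ it is Gorenstein* (Theorem \ref{thm:3}), so by Theorem \ref{thm:AG} the algebra $\mathcal{H}^*(K_I)$ is a Poincar\'e algebra; in particular $\w H^1(K_I)\cong\mathbb{Z}$ is its socle and the union product restricts to a unimodular pairing $\mu$ on $\mathcal{H}^1(K_I)=\bigoplus_{J\subseteq I}\w H^0(K_J)$. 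Poincar\'e duality then provides generators $a$ of $\w H^0(K_\sigma)$ and $c$ of $\w H^0(K_{I\setminus\sigma})$ with $a*c=\lambda_I$, a generator of $\w H^1(K_I)$, and by the second fact a union product with $a$ (resp.\ $c$) can detect a class in $\w H^1(K_I)$ only through a $\w H^0(K_J)$-summand with $J=I\setminus\sigma$ (resp.\ $J=\sigma$).

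Next I construct a retraction onto the polygon. The full-subcomplex inclusion $K_I\hookrightarrow K$ induces, by Theorem \ref{thm:0}, a ring map $\w{\mathcal{H}}^*(K)\to\mathcal{H}^*_{[m]}(K_I)$; since the ghost vertices of $K_I$ in $[m]$ contribute circle factors, $\mathcal{H}^*_{[m]}(K_I)\cong\mathcal{H}^*(K_I)\otimes\Lambda_\kk[e_1,\dots,e_{m-|I|}]$ by K\"unneth, and composing with $e_i\mapsto0$ gives a surjective ring map $\rho\colon\w{\mathcal{H}}^*(K)\to\w{\mathcal{H}}^*(K_I)$ which restricts to the identity on $\mathcal{H}^1(K_I)$ and fixes $\lambda_I$. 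As $\w{\mathcal{H}}^*(K_I)$ vanishes above the degree of $\lambda_I$, $\rho$ kills $[\mathcal{Z}_{K}]$ and descends to a graded surjection $\bar\rho\colon\w H^*(\mathcal{Z}_{K})/([\mathcal{Z}_{K}])\to\w{\mathcal{H}}^*(K_I)$. Write $\w H^*(\mathcal{Z}_{K})/([\mathcal{Z}_{K}])=\mathcal{A}_1\oplus\mathcal{A}_2$ with $\mathcal{A}_i=\phi^{-1}(\mathcal{R}_i)$ (ideals with $\mathcal{A}_1\mathcal{A}_2=0$), and set $\mathcal{S}_i=\bar\rho(\mathcal{A}_i)$: then $\mathcal{S}_1+\mathcal{S}_2=\w{\mathcal{H}}^*(K_I)$, $\mathcal{S}_1\mathcal{S}_2=0$ and $\lambda_I\in\mathcal{S}_1$. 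Since $\mathcal{S}_1\cap\mathcal{S}_2$ annihilates $\w{\mathcal{H}}^*(K_I)$ and the only such elements lie in the socle $\w H^1(K_I)$ (by non-degeneracy of the Poincar\'e pairing), $\mathcal{S}_1$ and $\mathcal{S}_2$ meet trivially in every degree below the top, so $\mathcal{H}^1(K_I)=V_1\oplus V_2$ with $V_i=\mathcal{S}_i\cap\mathcal{H}^1(K_I)$; from $\mathcal{S}_1\mathcal{S}_2=0$ and unimodularity of $\mu$, $V_1$ and $V_2$ are mutual $\mu$-orthogonal complements. Splitting $a=a_1+a_2$, $c=c_1+c_2$ along $\mathcal{A}_1\oplus\mathcal{A}_2$ and using $\mathcal{A}_1\mathcal{A}_2=0$, the hypothesis $\lambda_I\in\mathcal{A}_1$ turns $a*c=\lambda_I$ into $a_1*c_1=\lambda_I$, $a_2*c_2=0$, and also $c*a_1=\lambda_I$; applying $\bar\rho$ and the detection fact of the first paragraph forces the $\w H^0(K_\sigma)$-component of $a_1$ to equal $a$, i.e.\ $a_2$ has no $\w H^0(K_\sigma)$-component.

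It remains to prove $a_2=0$, which I expect to be the main obstacle. Because $\phi$ is graded and $\w H^0(K_\sigma)$ sits in the bottom nonzero degree of $\w H^*(\mathcal{Z}_{K})$, the class $a_2$ is an integral combination $\sum_{\tau\in MF(K)}n_\tau a_\tau$ of missing-face classes with $n_\sigma=0$; it lies in the ideal $\mathcal{A}_2$ and therefore annihilates $\mathcal{A}_1$. Assuming $a_2\ne0$, pick $\tau_0\ne\sigma$ with $n_{\tau_0}\ne0$. By Corollary \ref{cor:2} there is $J\subseteq[m]$ with $\tau_0\subseteq J$ and $K_J$ the boundary of a polygon; the same construction gives a surjective ring retraction onto $\w{\mathcal{H}}^*(K_J)$, and pairing $a_{\tau_0}$ against its Poincar\'e dual inside $K_J$ exhibits the image of $a_2$ there as a nonzero multiple of $\lambda_J\in\w H^1(K_J)\cong\mathbb{Z}$. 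The remaining work is to turn this into a contradiction by transporting the hypothesis $\w H^1(K_I)\subseteq\mathcal{A}_1$ to the polygons produced by Corollary \ref{cor:2}: using Lemma \ref{prop:8} (so that every degree-$2$ Hochster class of $K$, and of each $K_J$, is a product of degree-$1$ classes) together with the connectedness of the flag $2$-sphere $K$, one should be able to show that no nonzero class of $\mathcal{A}_2$ can be supported over missing faces once $\w H^1(K_I)$ is trapped in $\mathcal{A}_1$. Making this transport precise, rather than the reduction to the polygon, is the delicate point, and it is where the flagness of $K$ (through Corollary \ref{cor:2}) is used essentially.
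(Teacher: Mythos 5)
Your step showing that $a_2$ has no $\w H^0(K_\sigma)$-component (the analogue of the paper's claim that $l_\sigma=0$) is sound; the retraction $\bar\rho$ onto $\w{\mathcal{H}}^*(K_I)$ is a legitimate alternative to the paper's route, which instead recycles the structure $\phi^{-1}(r_2)=\lambda'+b$ with $b\in\mathcal{H}^2(K)$ obtained in the proof of Lemma \ref{lem:3}. Both work; the paper's version is shorter since it can quote Lemma \ref{lem:3}.

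The real gap is the step you yourself flag, namely proving $a_2=0$ (equivalently $l_\tau=0$ for every missing face $\tau\neq\sigma$), and the route you sketch will not close it. Your plan is to transport ``$\w H^1(K_I)\subset\mathcal{R}_1$'' to a polygon $K_J$ supplied by Corollary \ref{cor:2} containing some $\tau_0$ in the support of $a_2$, and to conclude that ``no nonzero class of $\mathcal{A}_2$ can be supported over missing faces.'' That conclusion is false in general: the proof of Theorem \ref{thm:7} explicitly produces, via Lemma \ref{lem:3}, \emph{two} polygons $K_{I_1},K_{I_2}$ with $\phi(\w H^1(K_{I_1}))\subset\mathcal{R}_1$ and $\phi(\w H^1(K_{I_2}))\subset\mathcal{R}_2$, and then Lemma \ref{lem:4} applied to $K_{I_2}$ puts the missing-face classes of $K_{I_2}$ into $\mathcal{R}_2$. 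So one cannot expect the hypothesis on the single polygon $K_I$ to propagate to an arbitrary $K_J$; the ``transport'' is exactly the harder chaining done later (via Lemmas \ref{lem:5} and \ref{lem:6}) inside the proof of Theorem \ref{thm:7}, not something available inside Lemma \ref{lem:4}.

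What the paper does instead is stay inside $K$ (not a polygon). For $\tau\neq\sigma$ with $l_\tau\neq0$, take the Poincar\'e dual $\lambda_{\wh\tau}\in\w H^1(K_{\wh\tau})$ of $\lambda_\tau$ in $\mathcal{H}^*(K)$ and use Lemma \ref{prop:8} to write $\lambda_{\wh\tau}=\sum_i c_i*e_i$ with $c_i\in\w H^0(K_{J_i})$, $e_i\in\w H^0(K_{J_i'})$, $J_i\sqcup J_i'=\wh\tau$; pick $i$ with $\lambda_\tau*c_i*e_i\neq0$. Since $\tau\neq\sigma$ are distinct two-element missing faces, $\sigma\cap\wh\tau\neq\varnothing$, hence $\sigma$ meets one of $J_i,J_i'$, say $J_i$, so $\lambda_\sigma*c_i=0$. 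Then $\phi^{-1}(r_\sigma')*c_i=-\phi^{-1}(r_\sigma'')*c_i$, and the $\w H^1(K_{\tau\cup J_i})$-component of the right-hand side is $l_\tau\,\lambda_\tau*c_i\neq0$, which sits in positive, sub-top degree. The left side lies in $\phi^{-1}(\mathcal{R}_1)$ (an ideal), the right in $\phi^{-1}(\mathcal{R}_2)$, so $\mathcal{R}_1\cap\mathcal{R}_2\neq0$, the desired contradiction. This ``orthogonality'' $\lambda_\sigma*c_i=0$ coming from $\sigma\cap\wh\tau\neq\varnothing$ is the missing idea in your sketch.
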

\begin{proof}
Let $\lambda_\sigma$ be a generator of $\w H^0(K_\sigma)=\mathbb{Z}$ for each $\sigma\in MF(K)$. Set $\phi(\lambda_\sigma)=r_\sigma'+r_\sigma''$ with $r_\sigma'\in \mathcal {R}_1$ and $r_\sigma''\in \mathcal {R}_2$. Since $\phi$ preserves grading and each
 element of $\w {H}^3(\mathcal {Z}_{K})$ belongs to $\bigoplus_{\sigma\in MF(K)}\w H^0(K_\sigma)$, then we can suppose
\[\phi^{-1}(r_\sigma')=\sum_{\tau\in MF(K)}k_\tau\cdot\lambda_\tau,\ \ \phi^{-1}(r_\sigma'')=\sum_{\tau\in MF(K)}l_\tau\cdot\lambda_\tau,\quad k_\tau, l_\tau\in \mathbb{Z}.\]
Since $\phi^{-1}(r_\sigma')+\phi^{-1}(r_\sigma'')=\lambda_\sigma$, then
\[k_\sigma+l_\sigma=1,\ \text{ and }\ k_\tau=-l_\tau\ \text{ for }\ \tau\neq\sigma.\]

If $\sigma\in MF(K_I)$, first we prove that $l_\sigma=0$.
Suppose on the contrary that $l_\sigma\neq0$, then $\phi^{-1}(r_\sigma'')*\lambda'\neq0$ for some $\lambda'\in\w H^0(K_{I\setminus\sigma})=\mathbb{Z}$.
Set $\phi(\lambda')=r_1+r_2$ with $r_1\in\RR_1$, $r_2\in\RR_2$. Since $\phi^{-1}(r_\sigma'')*\lambda'\neq0$, we have $r_\sigma''\phi(\lambda')=r_\sigma''(r_1+r_2)\neq0$, and then $r_\sigma''r_2\neq0$. By the same argument as in the proof of Lemma \ref{lem:3}, we have $\phi^{-1}(r_2)=\lambda'+b$ for some $b\in \mathcal {H}^2(K)$.
Now, since $\lambda_\sigma*(\lambda'+b)\neq0\in \w H^1(K_I)$ ($\lambda_\sigma*b=0$ in $\w {H}^*(\mathcal {Z}_{K})/([\mathcal {Z}_{K}])$ for dimension reasons), then
\[\phi(\lambda_\sigma)\phi(\lambda'+b)\neq0\in\phi(\w H^1(K_I))\subset \mathcal {R}_1.\]
On the other hand, notice that $\RR_2$ is an ideal of $\RR_1\times\RR_2$, so \[\phi(\lambda_\sigma)\phi(\lambda'+b)=\phi(\lambda_\sigma)r_2\in\mathcal {R}_2.\] This implies that $\mathcal {R}_1\cap\mathcal {R}_2\neq0$, a contradiction.

Next we prove that $k_\tau=l_\tau=0$ for all $\tau\neq\sigma$. Let $\lambda_{\wh\tau}$ be a generator of
$\w H^1(K_{\wh\tau})=\mathbb{Z}$, where $\wh\tau=[m]\setminus\tau$.
Then $\lambda_\tau*\lambda_{\wh\tau}$ is a generator of $\w {H}^2(K)$. By Lemma \ref{prop:8}, $\lambda_{\wh\tau}=\sum_i c_i*e_i$ with $c_i\in \w H^0(K_{J_i})$, $e_i\in \w H^0(K_{J'_i})$, $J_i\cap J'_i=\varnothing$ and $J_i\cup J'_i=\wh {\tau}$ for each $i$. Thus $\lambda_\tau*c_i*e_i\neq0$ (in $\w{\mathcal {H}}^*(K)$) for
some $i$. Since $\tau\neq \sigma$, then $\wh {\tau}\cap \sigma\neq\varnothing$, and then $J_i\cap \sigma\neq\varnothing$ or
$J'_i\cap\sigma\neq\varnothing$. Assume $J_i\cap \sigma\neq\varnothing$, then $\lambda_\sigma*c_i=0$. So if $l_\tau\neq0$ for some $\tau\neq\sigma$,  then
\[\phi^{-1}(r_\sigma')*c_i=-\phi^{-1}(r_\sigma'')*c_i\neq0.\]
This implies that $\phi^{-1}(\mathcal {R}_1)\cap\phi^{-1}(\mathcal {R}_2)\neq0$, a contradiction. So $\phi(\lambda_\sigma)=r_\sigma'\in \mathcal {R}_1$,
and the lemma follows.
\end{proof}
\begin{lem}\label{lem:5}
If $\sigma_1,\sigma_2\in MF(K)$ and $\sigma_1\cap\sigma_2=\{v\}$, then there exist three subsets $I_1,I_2,I_3\subset [m]$ such that $K_{I_j}$ is the boundary of a polygon, $j=1,2,3$, $\sigma_i\in MF(K_{I_i})$ and $MF(K_{I_3})\cap MF(K_{I_i})\neq\varnothing$, $i=1,2$.
\end{lem}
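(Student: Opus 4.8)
The plan is to obtain $I_1$ and $I_2$ directly from Corollary \ref{cor:2} and to concentrate all the effort on the bridging polygon $I_3$.

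Since $K$ is flag, the missing faces $\sigma_1=\{v,v_1\}$ and $\sigma_2=\{v,v_2\}$ are simply non-edges of $K$, with $v_1\neq v_2$. Applying Corollary \ref{cor:2} to $\sigma_1$ gives a set $I_1$ with $\{v,v_1\}\subseteq I_1$ and $K_{I_1}$ the boundary of a polygon; because $K_{I_1}$ is the induced subcomplex on $I_1$ and $\{v,v_1\}\notin K$ while $\{v\},\{v_1\}\in K_{I_1}$, the pair $\{v,v_1\}$ is a missing face of the cycle $K_{I_1}$, i.e. $\sigma_1\in MF(K_{I_1})$. Likewise Corollary \ref{cor:2} applied to $\sigma_2$ produces $I_2$ with $\sigma_2\in MF(K_{I_2})$. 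So it remains to produce $I_3$, and for this it suffices to find one induced polygon $K_{I_3}$ whose vertex set contains all of $v$, $v_1$, $v_2$: then $\{v,v_1\}$ and $\{v,v_2\}$, being non-edges of $K$, are missing faces of $K_{I_3}$ as well, so $\sigma_1\in MF(K_{I_3})\cap MF(K_{I_1})$ and $\sigma_2\in MF(K_{I_3})\cap MF(K_{I_2})$, exactly as required. (If in some configuration no polygon through all three vertices exists, the statement only asks that $K_{I_3}$ share one missing face with $K_{I_1}$ and one with $K_{I_2}$, where the freedom to take $I_1,I_2,I_3$ distinct leaves extra room.)

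To construct the polygon through $v,v_1,v_2$ I would follow the strategy of Lemma \ref{lem:2}, working inside the disk $D=K_{[m]\setminus\{v\}}$. Its boundary is $\mathrm{link}_K v$, which is an induced cycle of $K$: if two vertices of the link were adjacent in $K$ they would span a triangle with $v$, and flagness would force that triangle to be a face, so the edge already lies in the link. Hence $D$ is a flag triangulation of a disk whose induced subcomplex on the boundary vertices is the boundary — precisely the hypotheses of Lemma \ref{lem:2}. Now an induced polygon of $K$ through $v$ is exactly a cycle running $v,c_2,\dots,c_1$ and back to $v$, where $c_1,c_2\in\mathrm{link}_K v$ and the arc from $c_2$ to $c_1$ is an induced path $P$ in $D$ whose interior vertices all lie in the interior of $D$; such a polygon contains $v_1$ and $v_2$ exactly when $v_1,v_2$ are interior vertices of $P$, since neither is a neighbour of $v$ and so neither can be $c_1$ or $c_2$. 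Thus the task becomes: in the flag disk $D$, find an induced path between two boundary vertices that passes through the two prescribed interior vertices $v_1,v_2$.

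This last point I would prove by induction on $m$, in the spirit of Lemma \ref{lem:2}: delete from $\mathrm{link}_K v$ a vertex $w\neq v_1,v_2$ whose star has four vertices, or, when every boundary vertex has degree at least $5$, perform the local retriangulation of $\mathrm{star}_K w$ used there; check the hypotheses persist for the smaller complex; invoke the inductive path and re-extend it across $w$. Small cases, and the cases where $v_1\sim v_2$ or where $v_1$ or $v_2$ is adjacent to $\mathrm{link}_K v$, are handled directly — the last by passing to the link of a common neighbour of $v$ and $v_i$ instead. The main obstacle is exactly this inductive step: an induced shortcut inserted to keep the path chordless may bypass $v_1$ or $v_2$, and ruling that out is where the flag condition (absence of empty triangles) must be used; when it genuinely cannot be avoided one retreats to the weaker conclusion that $K_{I_3}$ merely shares a chord with each of $K_{I_1}$ and $K_{I_2}$, once more using that the three index sets need not coincide.
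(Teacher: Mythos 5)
Your proposal has a genuine gap. You reduce the problem to finding a single induced polygon of $K$ passing through all three of $v,v_1,v_2$, which would make both $\sigma_1$ and $\sigma_2$ missing faces of $K_{I_3}$ simultaneously. But you do not actually produce such a polygon: your sketch of an induction ``in the spirit of Lemma~\ref{lem:2}'' ends with ``the main obstacle is exactly this inductive step \dots\ ruling that out is where the flag condition must be used; when it genuinely cannot be avoided one retreats to the weaker conclusion \dots,'' i.e.\ you acknowledge the step is unresolved and the fallback is left vague. Moreover, the existence of an induced cycle through three prescribed vertices is a considerably stronger assertion than what the lemma needs, and it is not clear it holds in general; building the whole argument on it is risky even if the induction could be repaired.

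The frustrating part is that you already had the paper's answer in your hands and walked past it. You correctly observe that $\operatorname{link}_K v$ is an induced cycle of $K$ (flagness kills any chord between two of its vertices), but you only use it as the boundary of a disk for an inductive path construction. The paper's proof simply takes $I_3$ to be the vertex set of $\operatorname{link}_K v$ (after first checking the trivial case $MF(K_{I_1})\cap MF(K_{I_2})\neq\varnothing$, where $I_3=I_1$ works). To see that $MF(K_{I_3})\cap MF(K_{I_1})\neq\varnothing$: the cycle $K_{I_1}$ passes through $v$, so $v$ has exactly two neighbours $a_1,b_1$ in $K_{I_1}$; both lie in $\operatorname{link}_K v=I_3$; they are non\hyp adjacent in $K$ (otherwise $\{v,a_1,b_1\}$ would be a face and $K_{I_1}$ would contain a triangle), so $\{a_1,b_1\}\in MF(K)$ and lies in both $MF(K_{I_1})$ and $MF(K_{I_3})$. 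The same argument applies to $I_2$. Note that the shared missing face is \emph{not} $\sigma_i$ itself; the lemma only asks for some common missing face, and exploiting that freedom is exactly what lets the proof avoid the hard geometric routing problem you were attempting.
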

\begin{proof}
Suppose that $I_1$ and $I_2$ have been taken such that $\sigma_i\in MF(I_i)$ and $K_{I_i}$ is a triangulation of $S^1$, $i=1,2$ (Lemma \ref{lem:2} guarantees the existence of $I_1$ and $I_2$). If $MF(I_1)\cap MF(I_2)\neq\varnothing$, we can take $I_3=I_1$. Otherwise, let $I_3$ be the vertex set of $\mathrm{link}_K(v)$, then $K_{I_3}$ is a triangulation of $S^1$ by the flagness of $K$. It is easily verified that $MF(K_{I_3})\cap MF(K_{I_i})\neq\varnothing$, $i=1,2$.
\end{proof}
\begin{Def}
A simplicial complex $K$ is called a \emph{suspension complex}, if $K=\Sigma L$ ($\Sigma L=S^0*L$) for some complex $L$.
\end{Def}
\begin{lem}\label{lem:6}
If $K$ is not a suspension complex, then for any two different missing faces $\sigma_1,\sigma_2\in MF(K)$, there is a sequence of missing faces $\tau_1,\tau_2,\dots,\tau_n$ such that
$\tau_1=\sigma_1$, $\tau_n=\sigma_2$ and $\tau_i\cap\tau_{i+1}\neq\varnothing$, $i=1,\dots,n-1$.
\end{lem}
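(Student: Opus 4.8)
The plan is to restate the lemma as a connectivity property of a graph and to identify the failure of that connectivity with $K$ being a suspension. Since $K$ is a flag complex, every missing face has exactly two vertices, so writing $G$ for the $1$-skeleton of $K$ and $\overline{G}$ for its complement graph on the vertex set $[m]$, the set $MF(K)$ is exactly the edge set of $\overline{G}$, and two distinct missing faces meet precisely when they are incident edges of $\overline{G}$. Consequently a chain $\tau_1,\dots,\tau_n$ as in the statement is nothing but a walk in $\overline{G}$ through the edges $\tau_1,\dots,\tau_n$: if $\overline{G}$ is connected, then for missing faces $\sigma_1=\{p,q\}$ and $\sigma_2=\{r,s\}$ one takes any walk in $\overline{G}$ from $p$ to $r$, whose consecutive edges already form a valid chain, and attaches $\sigma_1$ and $\sigma_2$ at the two ends (deleting a repetition if $\sigma_1$ or $\sigma_2$ already occurs there). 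Thus it suffices to prove the contrapositive of what is needed: if $\overline{G}$ is disconnected, then $K$ is a suspension complex.

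Then I would fix a partition $[m]=A\sqcup B$ into nonempty sets with no edge of $\overline{G}$ joining them; equivalently, every pair $\{a,b\}$ with $a\in A$ and $b\in B$ is an edge of $G$. Since $K$ is flag it equals the flag (clique) complex of $G$, and for $\rho\subseteq[m]$ with $\sigma=\rho\cap A$, $\tau=\rho\cap B$ the mixed pairs of $\rho$ are automatically edges of $G$, so $\rho\in K$ if and only if $\sigma\in K_A$ and $\tau\in K_B$. Hence $K=K_A * K_B$, a join of the two nonempty flag full subcomplexes $K_A$ and $K_B$. From $\dim K_A+\dim K_B+1=\dim K=2$ with $\dim K_A,\dim K_B\geq 0$, one of the factors — say $K_A$ — is $0$-dimensional, i.e.\ a discrete set of $|A|$ vertices.

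Next I would read off the homology of $|K|=|K_A| * |K_B|\cong S^2$ from the K\"unneth formula for joins,
\[\w H_n(X*Y)\cong\bigoplus_{i+j=n-1}\w H_i(X)\otimes\w H_j(Y)\ \oplus\ \bigoplus_{i+j=n-2}\mathrm{Tor}\big(\w H_i(X),\w H_j(Y)\big).\]
The vanishing of $\w H_1(S^2)$ forces $\w H_0(|K_A|)\otimes\w H_0(|K_B|)=0$; the possibility $\w H_0(|K_A|)=0$ is excluded, because a connected $0$-dimensional complex is a point and then $|K|$ would be a cone rather than $S^2$. Hence $|K_B|$ is connected, and since $\w H_1(|K_A|)=0$ ($K_A$ being $0$-dimensional) the degree-$2$ term collapses to $\w H_0(|K_A|)\otimes\w H_1(|K_B|)\cong\mathbb{Z}$. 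As $\w H_0(|K_A|)$ is free abelian of rank $|A|-1$, this gives $|A|=2$ and $\w H_1(|K_B|)\cong\mathbb{Z}$. Therefore $K_A=S^0$ and $K=S^0 * K_B=\Sigma K_B$ is a suspension complex, contrary to hypothesis. So $\overline{G}$ is connected, and the lemma follows from the first paragraph.

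The one step carrying real content is the join decomposition $K=K_A*K_B$: this is where flagness is essential and where the hypothesis ``$K$ is not a suspension'' surfaces as precisely the right obstruction. The remaining steps — the dictionary with $\overline{G}$, the dimension count, and the K\"unneth bookkeeping — are routine. If one prefers to avoid homology in the last paragraph, the same conclusion follows from a local analysis: near an interior point of an edge of $K_B$ the join $|K_A|*|K_B|$ looks like $\mathbb{R}\times\mathrm{cone}(|A|\ \text{points})$, which is a manifold only for $|A|=2$; but I expect the homological route to be the shortest to write up cleanly.
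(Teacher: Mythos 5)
Your proof is correct, and it takes a genuinely different (and cleaner) route than the paper's. The paper argues directly by a case analysis on the links $\mathrm{link}_K(v_1)$, $\mathrm{link}_K(v_1')$, etc., producing the chain explicitly in each case and showing that the only obstructing configuration forces $K\cong S^0*S^0*S^0$. You instead globalize: flagness means $MF(K)$ is exactly the edge set of the complement graph $\overline G$ of the $1$-skeleton, and the desired chain exists for all pairs of missing faces exactly when $\overline G$ is connected. You then show the contrapositive: a partition $[m]=A\sqcup B$ with no $\overline G$-edges across it means every mixed pair is an edge of $G$, and since $K$ is the clique complex of $G$ this gives the join $K=K_A*K_B$; the dimension count $\dim K_A+\dim K_B=1$ puts one factor in dimension $0$, and K\"unneth (or the local manifold condition you mention as an alternative) then forces that factor to be $S^0$, so $K$ is a suspension. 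The one small imprecision is calling the chain ``a walk in $\overline G$'' (a chain only requires consecutive edges to meet, not that they concatenate into a walk), but your actual construction --- take a genuine walk from $p$ to $r$ and append $\sigma_1,\sigma_2$ at the ends --- does produce a valid chain, so this is harmless. The advantage of your approach is conceptual clarity and that it isolates the exact role of flagness (the complement-graph reformulation) and of $\dim K=2$ (the join must be trivial); the paper's approach is more hands-on and self-contained but harder to see through. Both are correct proofs of the lemma.
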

\begin{proof}
If $\sigma_1\cap\sigma_2\neq\varnothing$, there is nothing to prove, so we assume $\sigma_1\cap\sigma_2=\varnothing$. Set $\sigma_1=(v_1,v_1')$, $\sigma_2=(v_2,v_2')$.
If $\sigma_2\not\in MF(\mathrm{link}_K(v_1))\cap MF(\mathrm{link}_K(v'_1))$, then one of $(v_1,v_2),\,(v_1,v_2'),\,(v_1',v_2),\,(v_1',v_2')$ is a missing face of $K$, say $(v_1,v_2)\in MF(K)$. We then get the desired sequence: $\tau_1=\sigma_1,\tau_2=(v_1,v_2),\tau_3=\sigma_2$. Similarly we can prove the case that $\sigma_1\not\in MF(\mathrm{link}_K(v_2))\cap MF(\mathrm{link}_K(v'_2))$.

If $\sigma_2\in MF(\mathrm{link}_K(v_1))\cap MF(\mathrm{link}_K(v'_1))$, since $K$ is not a suspension complex, there exists a vertex $u$ of $\mathrm{link}_K(v_1)$ such that
$(v_1',u)$ is a missing face of $K$. Take a missing face $\sigma_3\in MF(\mathrm{link}_K(v_1))$ so that $u\in\sigma_3$. Since $\mathrm{link}_K(v_1)$  is a triangulation of $S^1$, thus if $\mathrm{link}_K(v_1)$ has more than $5$ vertices, then $\sigma_2$ and $\sigma_3$ can obviously be connected by a sequence of missing faces of $\mathrm{link}_K(v_1)$, which satisfies the condition in the lemma.  Thus we get the desired
sequence again: \[\tau_1=\sigma_1,\tau_2=(v_1',u),\tau_{3}=\sigma_3,\dots,\tau_n=\sigma_2.\]

The remaining case is that:
\begin{align*}
&\sigma_2\in MF(\mathrm{link}_K(v_1))\cap MF(\mathrm{link}_K(v'_1))\ \text{ and }\\
&\sigma_1\in MF(\mathrm{link}_K(v_2))\cap MF(\mathrm{link}_K(v'_2))\ \text{ and }\\
&\mathrm{link}_K(v_1)\cong \mathrm{link}_K(v'_1)\cong \mathrm{link}_K(v_2)\cong\mathrm{link}_K(v_2')\cong S^0*S^0.
\end{align*}
It is easy to see that $K\cong S^0*S^0*S^0$ in this case, contrary to the hypotheses, so the lemma holds.
\end{proof}
Now let us use the preceding results to complete the proof of Theorem \ref{thm:7}.
\begin{proof}[proof of Theorem \ref{thm:7}]
Suppose on the contrary that there is a graded ring isomorphism
\[\phi:\w {H}^*(\mathcal {Z}_{K})/([\mathcal {Z}_{K}])\xrightarrow{\cong}\mathcal {R}_1\times \mathcal {R}_2\] for two nonzero graded rings $\mathcal {R}_1$ and $\mathcal {R}_2$. Then by Lemma \ref{lem:3}, there exist two subset $I_1,\,I_2\in [m]$ such that $K_{I_1}$ and $K_{I_2}$ are triangulations of $S^1$, and
$\phi(\w H^1(K_{I_1}))\subset \mathcal {R}_1$, $\phi(\w H^1(K_{I_2}))\subset \mathcal {R}_2$. Lemma \ref{lem:4} implies that
$\phi(\w H^0(K_{\sigma_1}))\subset \mathcal {R}_1$ (resp. $\phi(\w H^0(K_{\sigma_2}))\subset \mathcal {R}_2$.) for each $\sigma_1\in MF(K_{I_1})$ (resp. $\sigma_2\in MF(K_{I_2})$).

If $K$ is not a suspension complex, Lemma \ref{lem:6} says that $\sigma_1\in MF(K_{I_1})$ and $\sigma_2\in MF(K_{I_2})$
can be connected by a sequence of missing faces $\tau_1,\tau_2,\dots,\tau_n$ such that
$\tau_1=\sigma_1$, $\tau_n=\sigma_2$ and $\tau_i\cap\tau_{i+1}\neq\varnothing$, $i=1,\dots,n-1$.
Let $\lambda_{\tau_i}$ be a generator of $\w H^0(K_{\tau_i})=\mathbb{Z}$. Then by combining the conclusion of Lemma \ref{lem:4} and Lemma \ref{lem:5} together, we have that
\[\phi(\lambda_{\tau_i})\in \mathcal {R}_1\cap \mathcal {R}_2\quad \text{for all } 1\leq i\leq n.\]
This is a contradiction.

If $K$ is a suspension complex, i.e., $K=S^0*L$ so that $L$ is the boundary of a $m$-gon ($m>3$), then
\[\ZZ_K\approx S^3\times(\overset{m-2}{\underset{j=1}\#}j\tbinom{m-1}{j+1}S^{j+2}\times S^{m-j}).\]
Hence $\w {H}^*(\mathcal {Z}_{K})/([\mathcal {Z}_{K}])$ is clearly indecomposable.
\end{proof}

Following the same procedure as in the proof of Theorem \ref{thm:7}, we get the following corollary which is needed later.
\begin{cor}\label{cor:6}
If $K$ is a flag $2$-sphere, then $\w {H}^*(\mathcal {Z}_{K};\kk)/([\mathcal {Z}_{K}])$ is a graded indecomposable $\kk$-algebra.
\end{cor}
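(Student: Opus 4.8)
The plan is to mimic the proof of Theorem \ref{thm:7} verbatim and simply check that each ingredient survives the change of coefficients. A simplicial $2$-sphere is Gorenstein* over $\mathbb{Z}$ and over every field, so for $\kk=\mathbb{Z}$ the assertion is literally Theorem \ref{thm:7} (a $\mathbb{Z}$-algebra being the same thing as a ring); hence I would assume from now on that $\kk$ is a field. The combinatorial inputs — Lemma \ref{lem:2}, Corollary \ref{cor:2}, Lemma \ref{lem:5} and Lemma \ref{lem:6} — never mention the coefficient ring and are used unchanged.

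For the algebraic inputs I would re-run the arguments with $\mathbb{Z}$ replaced by $\kk$ throughout. By Theorem \ref{thm:AG} together with Theorem \ref{thm:1}, $\mathcal{H}^*(K;\kk)\cong H^*(\ZZ_K;\kk)$ is a Poincar\'e $\kk$-algebra, and being a $\kk$-vector space it is automatically ``torsion free''; moreover every full subcomplex $K_I$ of the $2$-sphere $K$ is homotopy equivalent to a wedge of circles or to a point, so $\w H^*(K_I;\kk)$ is free over $\kk$ and concentrated in degrees $0$ and $1$. With these facts in hand, Lemma \ref{prop:8} (generation of $\mathcal{H}^*(K;\kk)$ by $\mathcal{H}^1$), Lemma \ref{lem:3} (localizing a hypothetical ring splitting onto two polygon-boundary subcomplexes $K_{I_1},K_{I_2}$ with $\phi(\w H^1(K_{I_i};\kk))\subset\mathcal{R}_i$) and Lemma \ref{lem:4} (propagation of the splitting to $\w H^0(K_\sigma;\kk)$ for $\sigma\in MF(K_{I_i})$) go through word for word: every ``generator of $\mathbb{Z}$'' becomes a basis vector of the relevant one-dimensional $\kk$-space, every direct-sum decomposition $\phi^{-1}(\gamma)=a+b$ (with $b$ of higher Hochster degree) is now a decomposition of $\kk$-vector spaces, and every nonvanishing/nondegeneracy step is supplied by the Poincar\'e duality of $\mathcal{H}^*(K;\kk)$ (or of $\mathcal{H}^*(K_J;\kk)$ for a polygon boundary $K_J$) exactly as in the integral case.

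With the $\kk$-versions of Lemmata \ref{lem:3}, \ref{lem:4}, \ref{lem:5}, \ref{lem:6} in place, I would conclude as in the proof of Theorem \ref{thm:7}. Given a graded $\kk$-algebra isomorphism $\phi\colon\w H^*(\ZZ_K;\kk)/([\ZZ_K])\xrightarrow{\cong}\mathcal{R}_1\times\mathcal{R}_2$ with both factors nonzero, Lemma \ref{lem:3} yields $I_1,I_2\subset[m]$ with $K_{I_i}$ a triangulation of $S^1$ and $\phi(\w H^1(K_{I_i};\kk))\subset\mathcal{R}_i$. If $K$ is not a suspension complex, Lemma \ref{lem:6} connects a missing face of $K_{I_1}$ to one of $K_{I_2}$ by a chain $\tau_1,\dots,\tau_n$ of consecutively intersecting missing faces, and combining Lemmata \ref{lem:4} and \ref{lem:5} forces $\phi(\lambda_{\tau_i})\in\mathcal{R}_1\cap\mathcal{R}_2$ for every $i$, a contradiction. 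If instead $K=S^0*L$ with $L$ the boundary of an $m$-gon ($m>3$), then $\ZZ_K\approx S^3\times(\overset{m-2}{\underset{j=1}{\#}}j\tbinom{m-1}{j+1}S^{j+2}\times S^{m-j})$, and a direct inspection shows $\w H^*(\ZZ_K;\kk)/([\ZZ_K])$ is indecomposable over any $\kk$.

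I expect no genuinely new difficulty: the only real work is bookkeeping — making sure that in the integral proof every occurrence of ``$=\mathbb{Z}$'', every integral duality basis, and every torsion-counting argument is replaced by its honest $\kk$-linear analogue, and in particular that the Poincar\'e pairings being invoked are the $\kk$-bilinear ones guaranteed by Theorem \ref{thm:AG}. Once this dictionary is fixed, the proof of Theorem \ref{thm:7} transcribes directly and gives the corollary.
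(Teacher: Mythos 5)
Your proposal is correct and is essentially identical to the paper's (one-line) proof, which simply says to follow the same procedure as in the proof of Theorem \ref{thm:7}. You correctly identify that the combinatorial lemmata are coefficient-independent and that the algebraic steps only rely on the Poincar\'e pairing of $\mathcal{H}^*(K;\kk)$ supplied by Theorem \ref{thm:AG}, so the argument transcribes to field coefficients without change.
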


When $K$ is a flag $n$-sphere with $n\geq3$, it seems that $\w {H}^*(\mathcal {Z}_{K})/([\mathcal {Z}_{K}])$ is also an indecomposable ring (we can not find a counterexample),
although the arguments in the proof for the case $n=2$ can not be applied to this case exactly. So we give the following
\begin{conj}
If $K$ is a flag $n$-dimensional ($n\geq3$) Gorenstein* complex, then $\w {H}^*(\mathcal {Z}_{K})/([\mathcal {Z}_{K}])$ is an indecomposable ring, and consequently
$\mathcal {Z}_{K}$ is a prime manifold.
\end{conj}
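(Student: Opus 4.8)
The plan is to bootstrap the algebraic statement of Theorem~\ref{thm:7} to the geometric one, using only standard facts about connected sums of closed simply connected manifolds together with the topological Poincar\'e conjecture.

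First I would record the ambient framework. A flag triangulation of $S^2$ is a generalized homology sphere, so by Theorem~\ref{thm:mfd} the moment-angle complex $\mathcal{Z}_K$ is a closed topological manifold; if $K$ has $m$ vertices then $\dim\mathcal{Z}_K=m+3\ge 7$ (every triangulation of $S^2$ has at least $4$ vertices), so in particular $\dim\mathcal{Z}_K\ge 5$. Since $\mathcal{Z}_K$ is simply connected (a standard fact about moment-angle complexes) it is orientable, hence carries a fundamental class $[\mathcal{Z}_K]$ and satisfies Poincar\'e duality over $\mathbb{Z}$; this is what makes the statement meaningful, and I will use it without further comment.

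Next I would argue by contradiction. Suppose $\mathcal{Z}_K\approx M_1\#M_2$, write $n=\dim\mathcal{Z}_K$, and assume that neither $M_i$ is homeomorphic to $S^n$. Applying van Kampen's theorem to the usual picture of a connected sum shows that $M_1$ and $M_2$ are both simply connected. I would then invoke the standard computation of the cohomology ring of a connected sum of closed oriented $n$-manifolds: after killing the top class there is a ring isomorphism
\[
\w H^*(M_1\#M_2)\big/\big([M_1\#M_2]\big)\;\cong\;\Big(\w H^*(M_1)\big/([M_1])\Big)\times\Big(\w H^*(M_2)\big/([M_2])\Big).
\]
Indeed, below the top degree $\w H^*$ of a connected sum is the direct sum of the reduced cohomologies of the two summands with all mixed cup products vanishing, while $[M_1\#M_2]$ is identified, under the two collapse maps $M_1\#M_2\to M_i$, with the fundamental class of each $M_i$, and is killed in the quotient. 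Applying this with $M_1\#M_2=\mathcal{Z}_K$, the left-hand side is $\w H^*(\mathcal{Z}_K)/([\mathcal{Z}_K])$, which is graded indecomposable by Theorem~\ref{thm:7}; hence one of the two factors on the right is the zero ring, say $\w H^*(M_2)/([M_2])=0$. This forces $\w H^i(M_2)=0$ for $0<i<n$ and $\w H^n(M_2)\cong\mathbb{Z}$, so the closed simply connected manifold $M_2$ has the integral homology of $S^n$. Since $n\ge 5$, the topological Poincar\'e conjecture gives $M_2\approx S^n$, contradicting our assumption. Therefore $\mathcal{Z}_K$ admits no nontrivial connected-sum decomposition, i.e.\ it is prime.

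The only step that requires genuine care is the multiplicativity of the connected-sum splitting of $\w H^*/([\,\cdot\,])$: one must check that the additive direct-sum decomposition below the top degree is compatible with cup products, so that the quotient is genuinely a \emph{product} of rings with mixed products vanishing. Everything of real depth is then carried by Theorem~\ref{thm:7}, and the remaining ingredients (simple connectivity and orientability of $\mathcal{Z}_K$, the dimension bound needed for the Poincar\'e conjecture, and the van Kampen step) are routine. Should one wish to avoid importing external facts about $\#$, the connected-sum step could instead be carried out directly in terms of the reduced Hochster algebra of $K$ and the cellular model for $\mathcal{Z}_K$, but that refinement is unnecessary here.
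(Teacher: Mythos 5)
The statement you were given is not a theorem proved in the paper but an explicitly \emph{open} conjecture: the paragraph immediately preceding it states ``When $K$ is a flag $n$-sphere with $n\geq 3$, it seems that $\w{H}^*(\mathcal{Z}_K)/([\mathcal{Z}_K])$ is also an indecomposable ring (we can not find a counterexample), although the arguments in the proof for the case $n=2$ can not be applied to this case exactly.'' Your proposal does not address this. You have in fact reproduced the paper's proof of Theorem~\ref{thm:8}, which deduces primeness of $\mathcal{Z}_K$ from the indecomposability statement of Theorem~\ref{thm:7} using simple connectivity, van Kampen, the cohomology of connected sums, and the topological Poincar\'e conjecture in dimension at least five. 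You even write the hypothesis as ``A flag triangulation of $S^2$'' and cite Theorem~\ref{thm:7}, which is precisely the $n=2$ result; that does nothing for the case $n\geq 3$.

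The genuine content of the conjecture, and the only thing that is not routine, is the first clause: that $\w{H}^*(\mathcal{Z}_K)/([\mathcal{Z}_K])$ is an indecomposable ring for flag $n$-dimensional Gorenstein* complexes with $n\geq 3$. Your proposal simply assumes this and carries out the easy ``consequently'' step. But the paper's argument for indecomposability in the $2$-sphere case rests on a chain of combinatorial and algebraic lemmata (Lemma~\ref{lem:2}, Corollary~\ref{cor:2}, Lemma~\ref{prop:8}, Lemmas~\ref{lem:3}--\ref{lem:6}) whose proofs use in an essential way that the relevant full subcomplexes $K_I$ are boundaries of polygons, that missing faces are edges, that links of vertices are circles, and that the Hochster algebra is generated in its lowest nontrivial degree by ``separating circles.'' None of these are available in dimension $n\geq 3$, and the paper says explicitly that the $n=2$ proof does not transfer. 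So the main idea is missing: you would need a new argument establishing indecomposability of the quotient Hochster algebra for flag Gorenstein* complexes of higher dimension, and your proposal contains no such argument.
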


Note that if a simplicial $2$-sphere $K$ is not flag and $K\neq\partial\Delta^3$ (which means that $K$ is the connected sum of two simplicial $2$-spheres), then $\w {H}^*(\mathcal {Z}_{K})/([\mathcal {Z}_{K}])$ is not an indecomposable ring by Corollary \ref{cor:3}. Hence when $K$ is a nontrivial (i.e., $K\neq\partial\Delta^3$) simplicial $2$-sphere, the flagness of $K$ is an `if and only if' condition for $\mathcal {Z}_{K}$ to be prime. Naturally, We may propose the following
\begin{prob}
Find a `if and only if' condition such that $\mathcal {Z}_{K}$ is prime for higher dimensional Gorenstein* $K$.
\end{prob}

\section{Decomposition of $H^*(\ZZ_K)$ for simplicial $2$-spheres}\label{sec:6}
In this section, we give a brief discussion about the cohomological rigidity of moment-angle manifolds associated to simplicial $2$-spheres (see \cite{FMW16} for a more specific study about this topic). Cohomology ring does not distinguish closed smooth manifolds in general but for moment-angle manifolds, based on the special toric action on $\ZZ_K$ and by checking some examples, some toric topologist proposed the following (cf. \cite{{B08},CMS11}):
\renewcommand{\theQue}{\Alph{Que}}
\begin{Que}\label{que:1}
Suppose $\mathcal {Z}_{K_1}$ and $\mathcal {Z}_{K_2}$ are two moment-angle manifolds such that
\[H^*(\mathcal {Z}_{K_1})\cong H^*(\mathcal {Z}_{K_2})\] as graded rings. Are $\mathcal {Z}_{K_1}$ and $\mathcal {Z}_{K_1}$ homeomorphic?
\end{Que}

The moment-angle manifolds giving the positive answer to the question are called \emph{cohomologically rigid}.
Buchstaber asked another question in his lecture note \cite[Problem 7.6]{B08} (here we restrict his question to Gorenstein* complexes):

\begin{Que}
Let $K_1$ and $K_2$ be two Gorenstein* complexes (may have different dimension), and let $\mathcal {Z}_{K_1}$ and
$\mathcal {Z}_{K_2}$ be their respective moment-angle manifolds. When a graded ring
isomorphism $H^*(\mathcal {Z}_{K_1};\kk)\cong H^*(\mathcal {Z}_{K_2};\kk)$
implies a combinatorial equivalence $K_1\approx  K_2$?
\end{Que}
Let us call the Gorenstein* complexes giving the positive answer to the
question \emph{$B$-rigid over $\kk$} (if $\kk=\mathbb{Z}$, simply refer to it as \emph{$B$-rigid}). It is clear that  $K$ is $B$-rigid implies that $\mathcal {Z}_{K}$ is cohomologically rigid.

\begin{Def}
If a simplicial $(n-1)$-sphere $K$ can be expressed as a connected sum of two simplicial spheres,
then $K$ is called \emph{reducible}. Otherwise, $K$ is called \emph{irreducible}. Furthermore if $K$ is irreducible and $K\neq\Delta^n$, then $K$ is called
\emph{essentially irreducible}.
\end{Def}
Hence any simplicial sphere $K$ can be decomposed into
\[K=K_1\#K_2\#,\dots,\#K_n\]
such that each $K_i$ is irreducible, and it is easily verified that this decomposition is unique up to a permutation of $[n]$.
In particular if $K$ is a simplicial $2$-sphere, then the condition that $K$ is flag is equivalent to saying $K$ to be \emph{essentially irreducible}. From Theorem \ref{thm:4} we get the following
\begin{prop}
Let $K\in \mathcal {C}(K_1\#K_2\#\cdots\#K_m)$ for irreducible simplicial $n$-spheres $K_1,K_2,\dots,K_m$. If $K$ is $B$-rigid, then $K$ is the only element in \[\mathcal {C}(K_1\#K_2\#\cdots\#K_m).\]
\end{prop}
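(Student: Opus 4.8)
The plan is to deduce the statement from the $B$-rigidity of $K$ by showing that all the complexes in $\CC(K_1\#\cdots\#K_m)$ have moment-angle manifolds with the same graded cohomology ring, irrespective of the facet choices and vertex identifications made along the way. Once this is established, for an arbitrary $K'\in\CC(K_1\#\cdots\#K_m)$ one has $H^*(\ZZ_{K'};\kk)\cong H^*(\ZZ_K;\kk)$ as graded rings; moreover $K'$, being an iterated connected sum of Gorenstein* spheres, is itself Gorenstein* (iterate Proposition \ref{thm:2}, or use that a connected sum of simplicial $n$-spheres is a simplicial $n$-sphere), so the hypothesis that $K$ is $B$-rigid forces $K'\approx K$. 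As $K'$ was arbitrary, $\CC(K_1\#\cdots\#K_m)$ consists of the single combinatorial type of $K$, which is the assertion.

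The heart of the argument is the independence claim, which I would prove by induction on $m$ (equivalently, on the number of connected-sum operations used to build a given member). First I would record the elementary bookkeeping: gluing two simplicial $n$-spheres with $\mu_1$ and $\mu_2$ vertices along a common facet (a face of dimension $n$, with $n+1$ vertices) yields a simplicial $n$-sphere with exactly $\mu_1+\mu_2-(n+1)$ vertices; hence every member of $\CC(K_1\#\cdots\#K_m)$, and every intermediate complex arising in its assembly, is a simplicial $n$-sphere (so Gorenstein* over $\mathbb{Z}$, so its moment-angle complex is a manifold and Theorem \ref{thm:4} applies), with the fixed vertex number $N=\sum_{i=1}^m|K_i|-(m-1)(n+1)$. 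The base case $m=1$ is trivial since $\CC(K_1)=\{K_1\}$. For the inductive step I would write a given member as $L'\#K_{i_0}$ for a suitable index $i_0$, with $L'$ an iterated connected sum of the remaining $m-1$ factors; by the inductive hypothesis the isomorphism type of $H^*(\ZZ_{L'};\kk)$ and the integer $|L'|$ depend only on the data $\{(|K_i|,H^*(\ZZ_{K_i};\kk))\}_{i\neq i_0}$. Then Theorem \ref{thm:4} (equivalently Corollary \ref{cor:3} together with Proposition \ref{prop:5}) presents $\w H^*(\ZZ_{L'\#K_{i_0}};\kk)$ as $\RR(L',K_{i_0};\kk)/\mathcal{I}(L',K_{i_0};\kk)$, a construction assembled entirely from the algebraic gyrations $G^{\bullet}$ of $\w H^*(\ZZ_{L'};\kk)$ and $\w H^*(\ZZ_{K_{i_0}};\kk)$, their intrinsic top classes, and the sphere-product factor $M$, which depends only on $|L'|$, $|K_{i_0}|$ and $n$. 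Hence the isomorphism type of $H^*(\ZZ_{K'};\kk)$ is determined by $\{(|K_i|,H^*(\ZZ_{K_i};\kk))\}_{i=1}^m$ alone and is independent of all the choices, which completes the induction.

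The main obstacle I anticipate is not computational — Theorem \ref{thm:4} supplies all the real content — but organisational: for $m>2$ one must pin down a precise definition of $\CC(K_1\#\cdots\#K_m)$ and verify that every member really does decompose as $L'\#K_{i_0}$ with $L'$ an iterated connected sum of the other factors, i.e.\ that reordering and re-associating the connected-sum operations does not enlarge the set of combinatorial types obtained. This is a routine but slightly delicate compatibility which I would establish at the outset. A minor secondary point is the dimension convention: Theorem \ref{thm:4} is stated for $(n-1)$-dimensional complexes with $n\geq 2$, which covers all simplicial spheres of dimension $\geq 1$, while the degenerate case of $0$-spheres (where the connected sum is vacuous) should simply be excluded or checked by hand.
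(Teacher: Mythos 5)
Your argument is correct and is essentially the elaboration of what the paper leaves implicit: the paper simply introduces the proposition with the phrase ``From Theorem~\ref{thm:4} we get the following,'' and the content is precisely your observation that Theorem~\ref{thm:4} (with the vertex count $N=\sum|K_i|-(m-1)(n+1)$) presents $H^*(\ZZ_{K'};\kk)$ in terms of data independent of the choices of facets and identifications, so every $K'\in\CC(K_1\#\cdots\#K_m)$ has the same cohomology ring and $B$-rigidity of $K$ forces $K'\approx K$. Your induction on $m$ and the remark about associativity of the multi-factor connected sum are a careful spelling-out of the same route rather than a different one.
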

For the case $K$ is a simplicial $2$-sphere, Choi and Kim \cite{CK11} gave a "$\Longleftrightarrow $" condition for $K$ to be the only element in $\mathcal {C}(K_1\#K_2\#\cdots\#K_m)$ by using a result of Fleischner and Imrich \cite[Theorem 3]{FI79}. The following notation
is adopted from \cite{CK11}. Let $T_4,\, C_8,\, O_6,\, D_{20}$ and $I_{12}$ be the boundary of the five Platonic solids (subscripts indicating vertex numbers): the tetrahedron, the
cube, the octahedron, the dodecahedron and the icosahedron respectively; $\xi_1(C_8)$ and $\xi_1(D_{20})$ be the \emph{first-subdivision} of $C_8$ and $D_{20}$ resp. (see Figure \ref{fig:3}); $\xi_2(C_8)$ and $\xi_2(D_{20})$ be the \emph{second-subdivision} of $C_8$ and $D_{20}$ resp. (see Figure \ref{fig:4}); $B_n$ be the suspension of the boundary of a $(n-2)$-gon, called a \emph{bipyramid} (see Figure \ref{fig:5}).
\begin{figure}[!ht]
\centering
\subfloat{\includegraphics[scale=1.4]{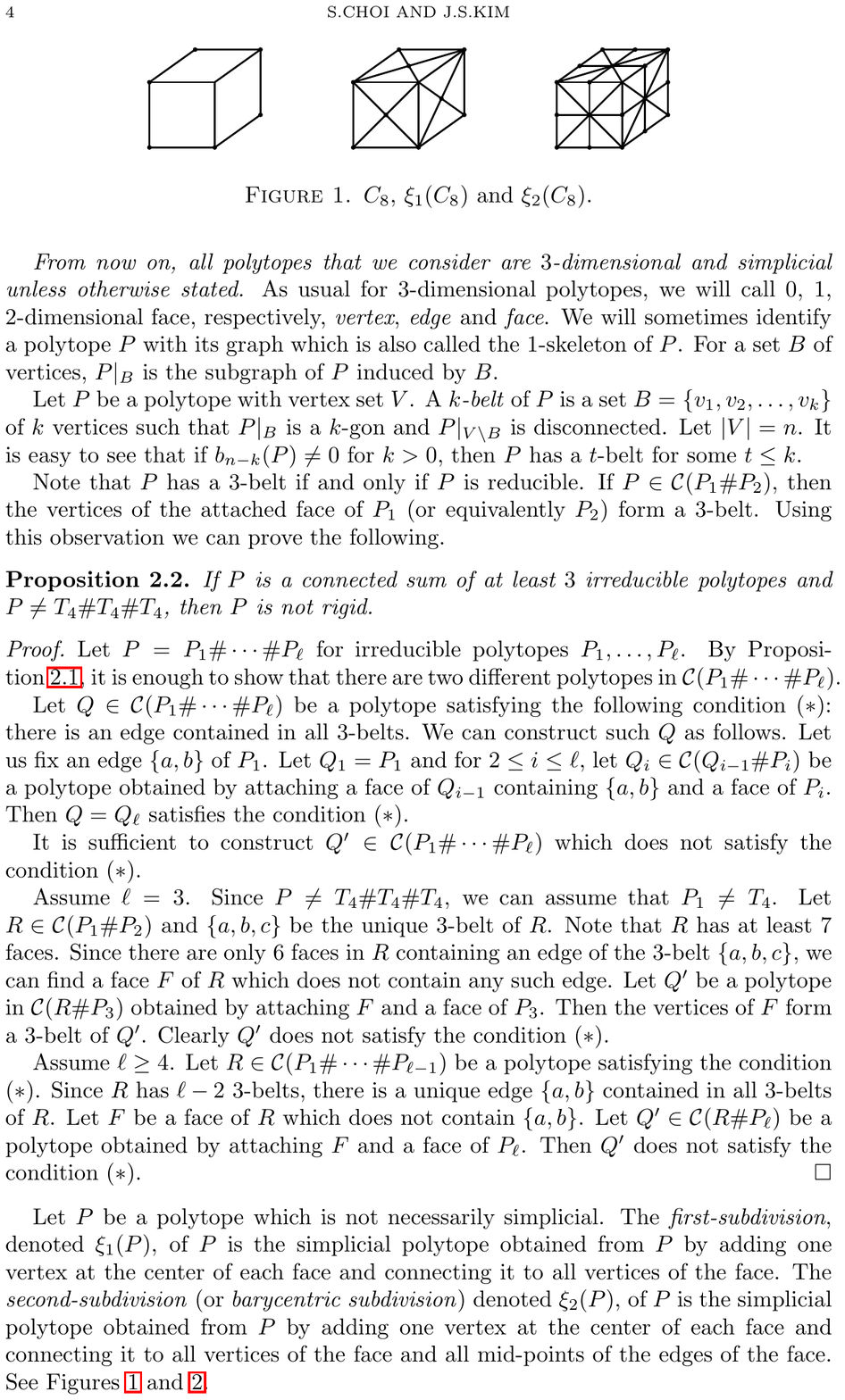}}\quad
\subfloat{\includegraphics[scale=1.4]{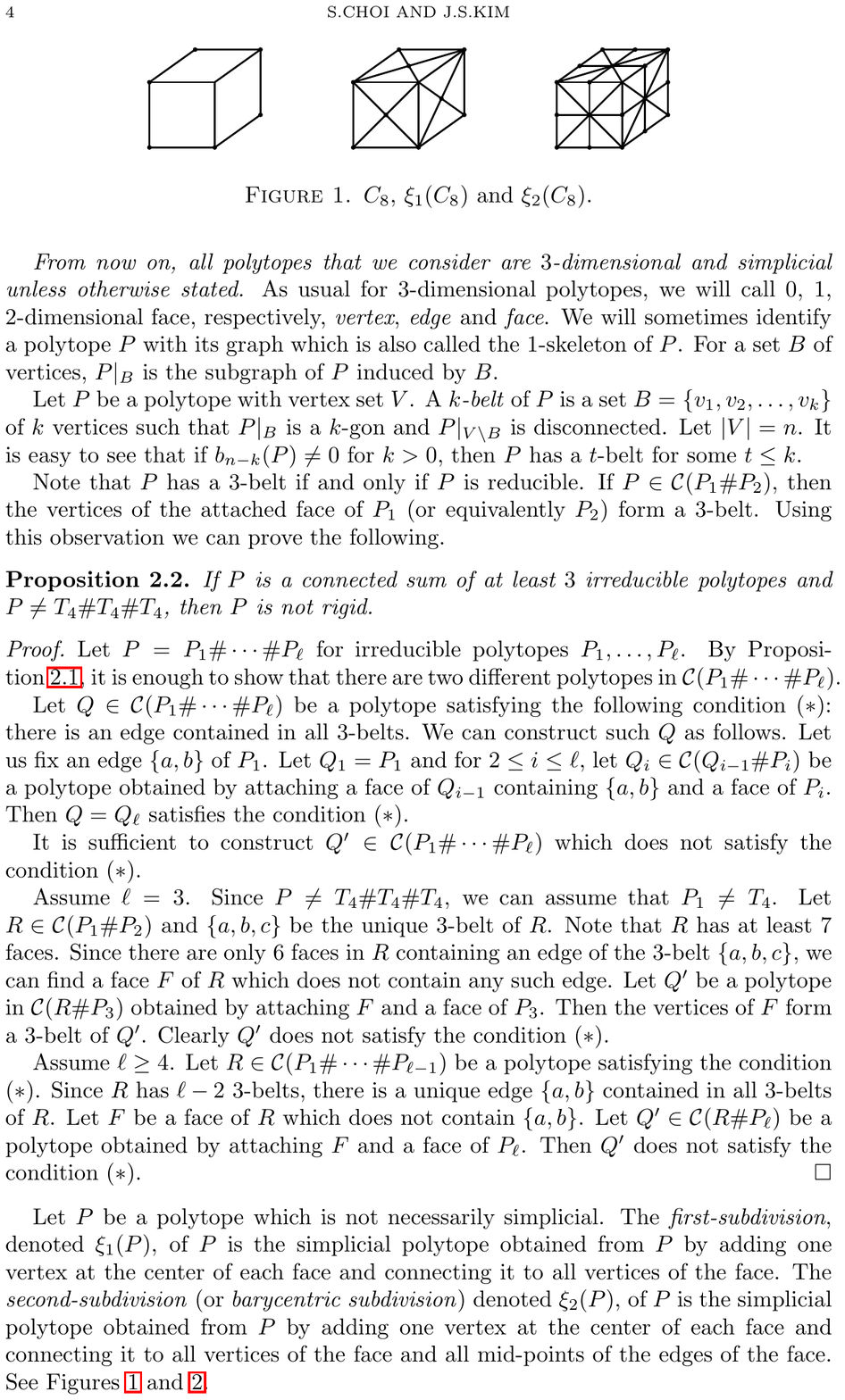}}\quad
\subfloat{\includegraphics[scale=1.4]{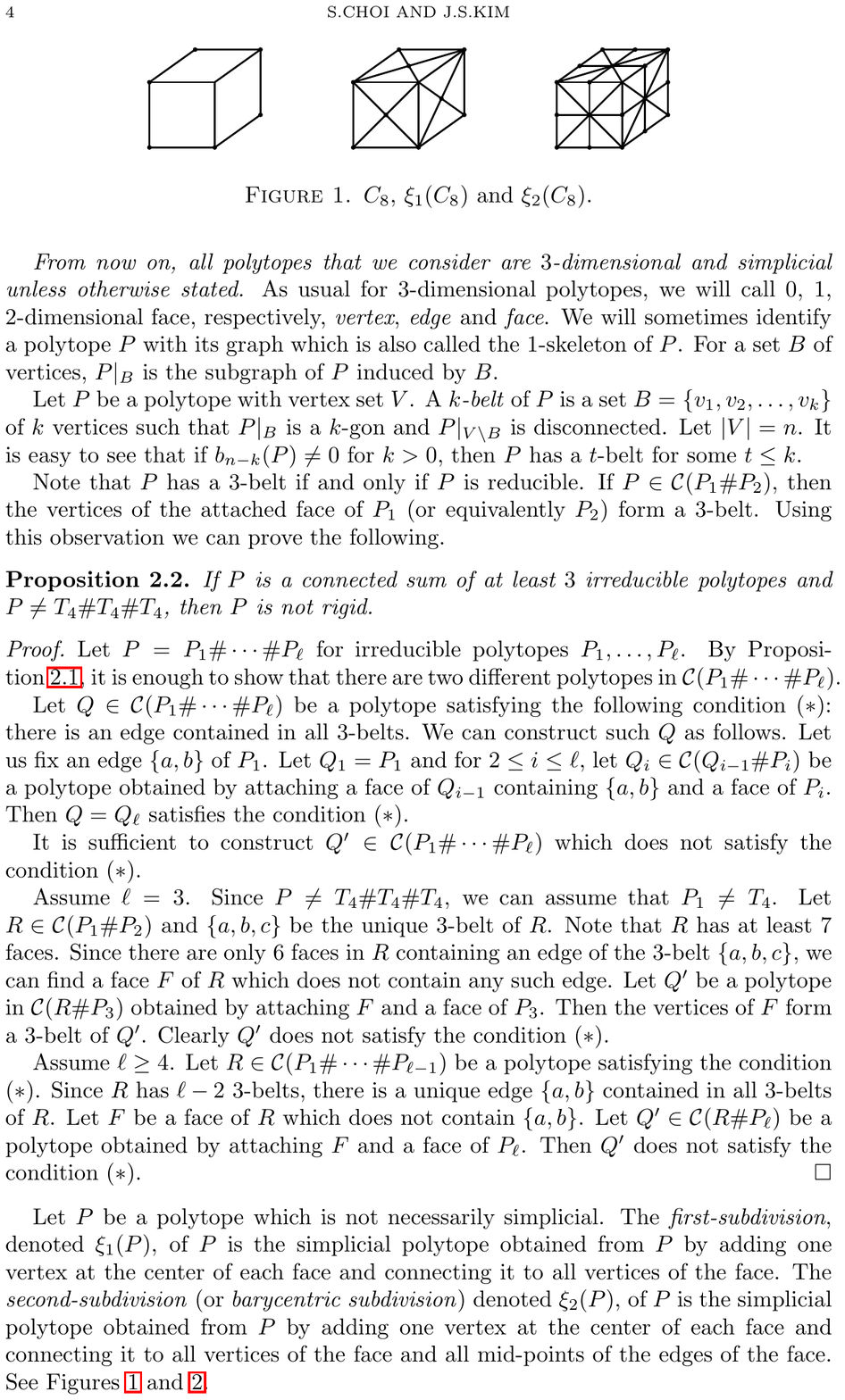}}\quad
\caption{$C_8$, $\xi_1(C_8)$ and $\xi_2(C_8)$.}\label{fig:3}
\end{figure}\vspace{-0.5cm}
\begin{figure}[!ht]
\centering
\subfloat{\includegraphics[scale=0.9]{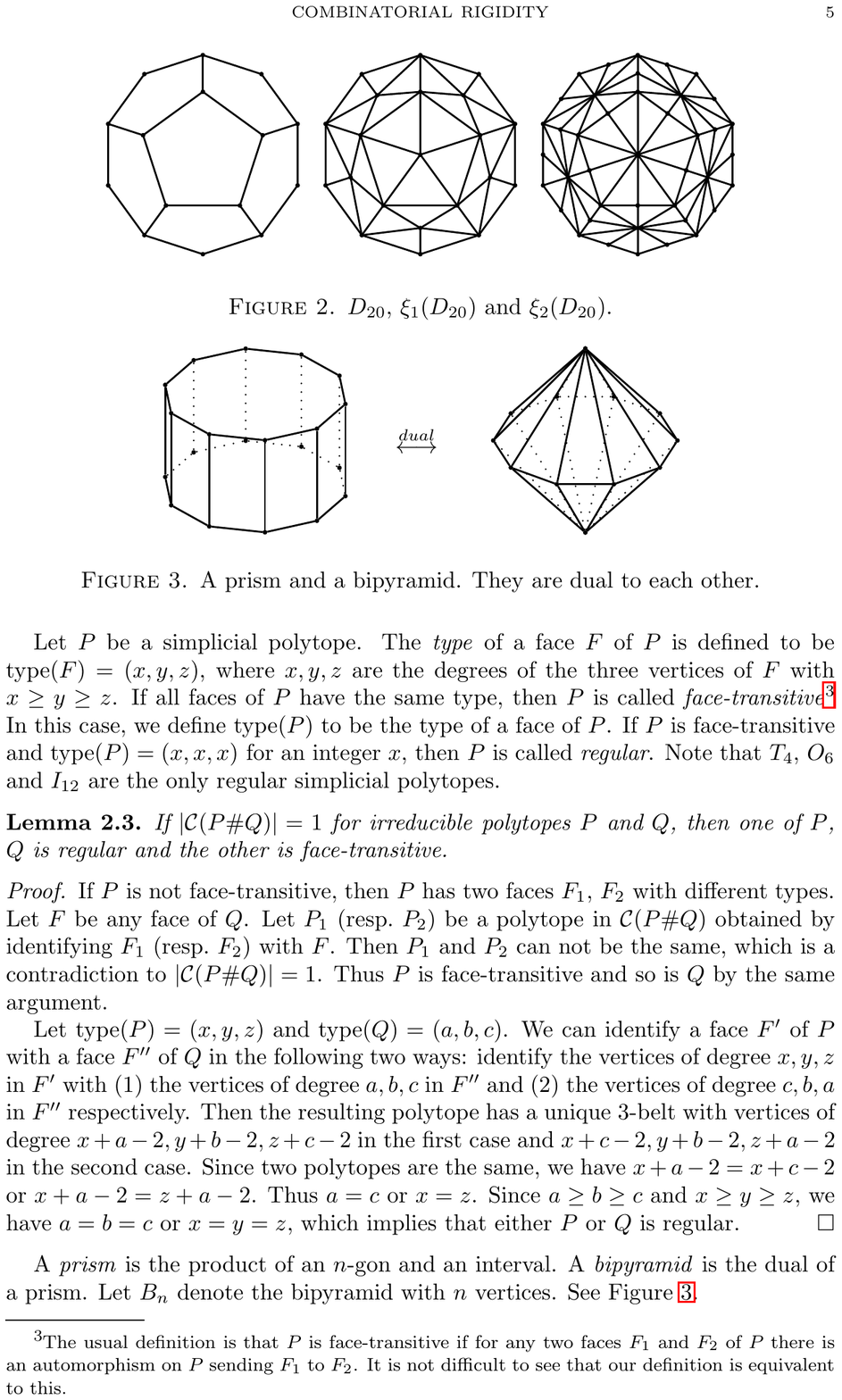}}\quad
\subfloat{\includegraphics[scale=0.9]{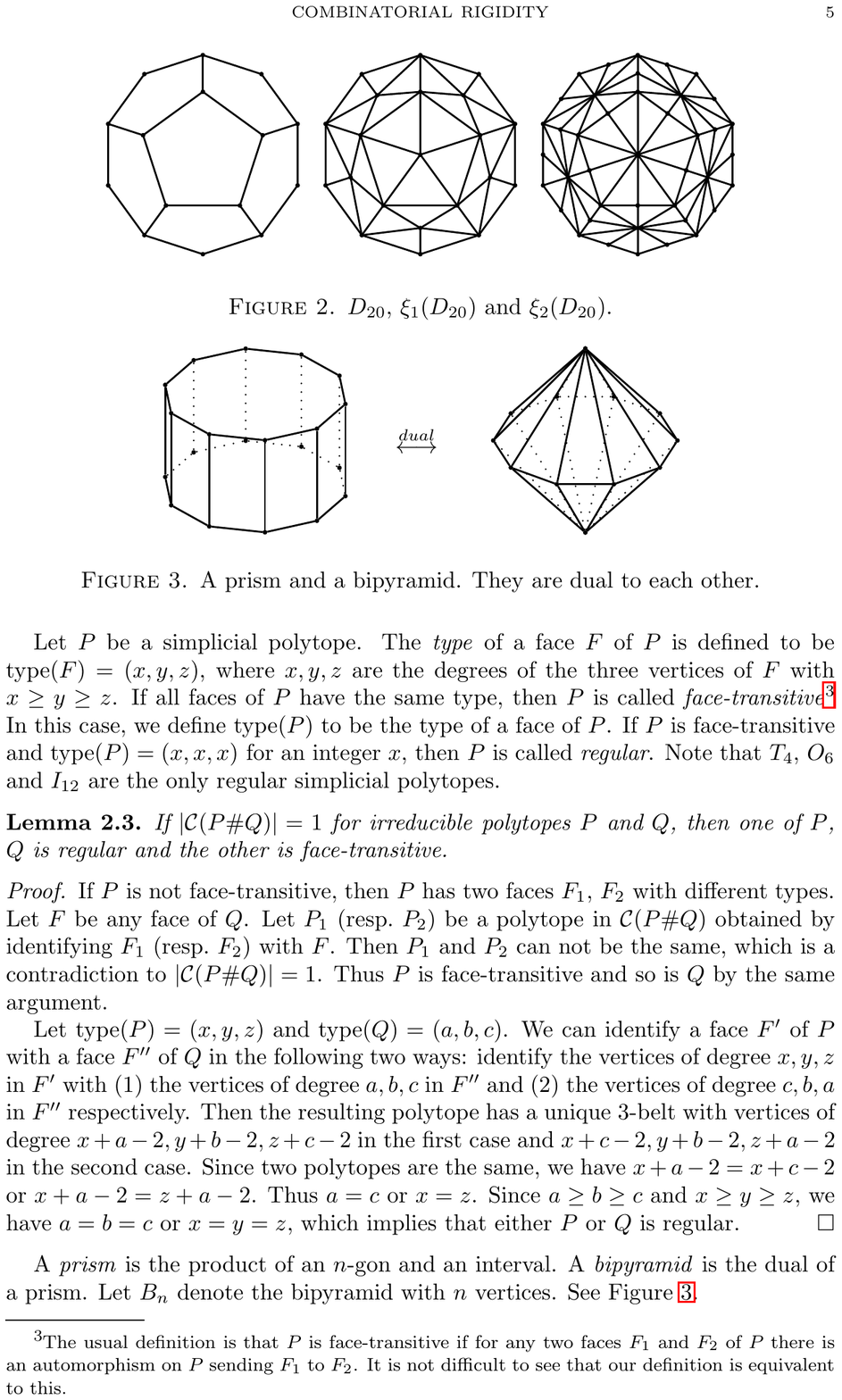}}\quad
\subfloat{\includegraphics[scale=0.9]{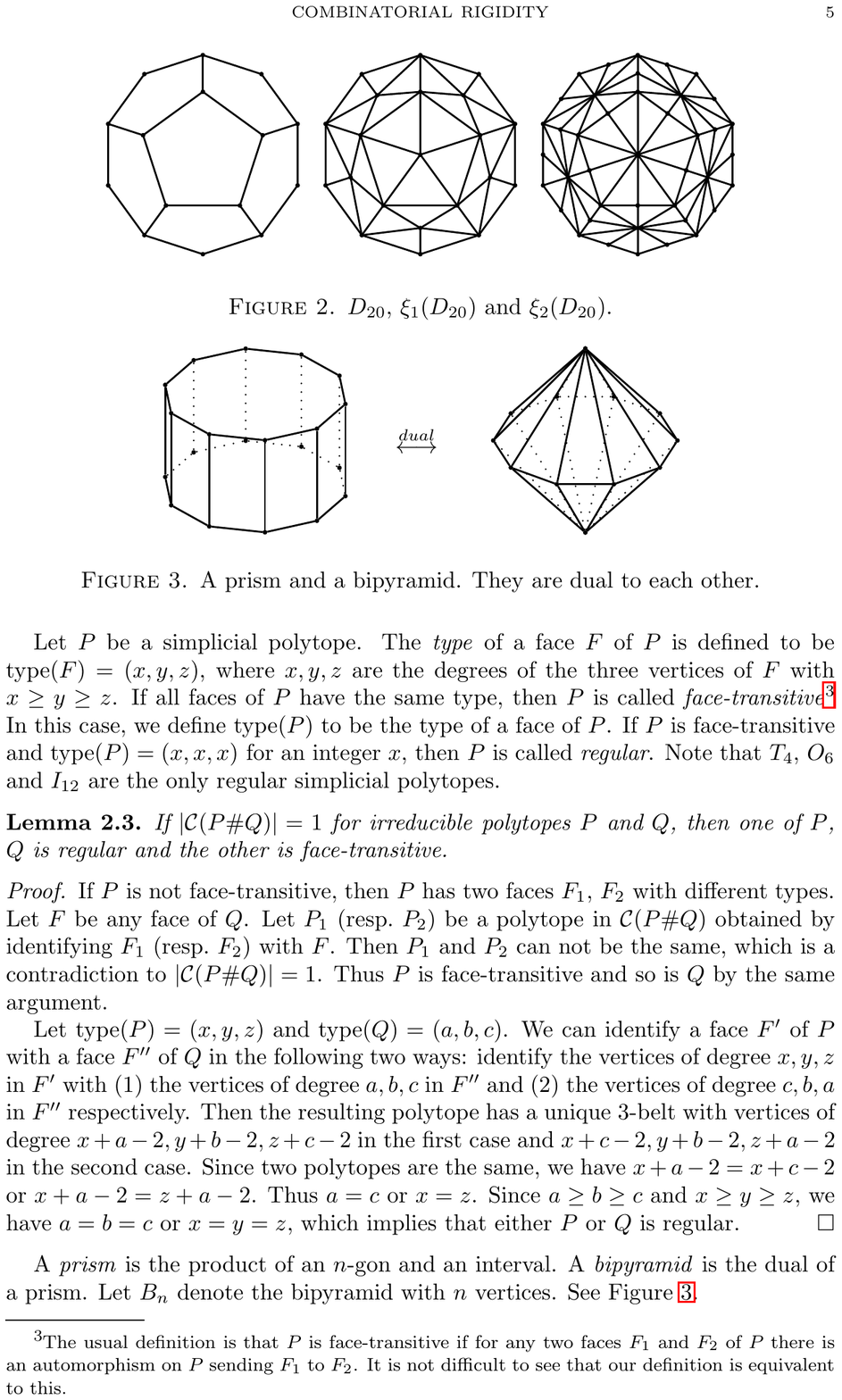}}\quad
\caption{$D_{20}$, $\xi_1(D_{20})$ and $\xi_2(D_{20})$.}\label{fig:4}
\end{figure}
\begin{figure}[!ht]
\includegraphics[scale=1.1]{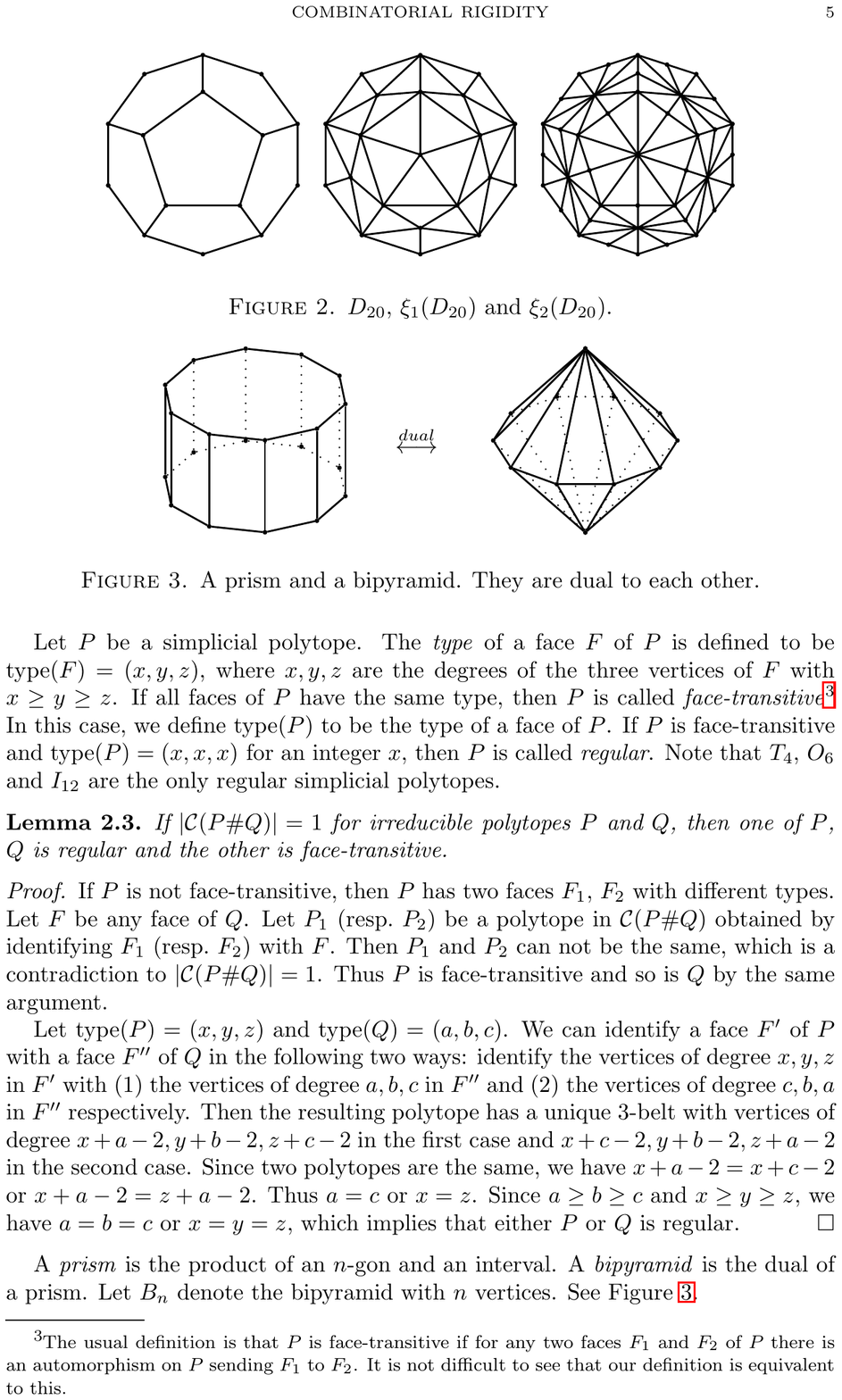}
\caption{$B_n$}\label{fig:5}
\end{figure}
\begin{thm}[{\cite[Theorem 1.2]{CK11}}]\label{thm:d}
Let $K$ be a reducible simplicial $2$-sphere. If $K$ is the only element in $\mathcal {C}(K_1\#K_2\#\cdots\#K_m)$ (where each $K_i$ is irreducible),
then $K=T_4\#T_4\#T_4$ or $K=K_1\#K_2$, where
\begin{align*}
&K_1\in\{T_4,O_6,I_{12}\},\\
&K_2\in\{T_4,O_6,I_{12}, \xi_1(C_8), \xi_2(C_8), \xi_1(D_{20}), \xi_2(D_{20})\}\cup\{B_n : n\geq7\}.
\end{align*}
\end{thm}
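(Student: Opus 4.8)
The plan is to recast the hypothesis that $K$ is the only element of $\CC(K_1\#\cdots\#K_m)$ as a symmetry condition on the irreducible summands, and then to feed in the classification of highly symmetric simplicial $2$-spheres due to Fleischner and Imrich.

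\textbf{Reformulation.} A simplicial $2$-sphere with at least four vertices has a $3$-connected planar $1$-skeleton, and a $3$-cycle in it fails to be facial exactly when it is separating; hence such a complex is reducible precisely when it contains a non-facial $3$-cycle, and the irreducible decomposition $K=K_1\#\cdots\#K_m$ is recovered canonically (up to reordering the $K_i$) by cutting along a maximal family of pairwise disjoint non-facial $3$-cycles. Consequently every combinatorial automorphism of a connected sum respects this cut structure, and two connected sums of the same multiset of pieces are combinatorially equivalent only through automorphisms of the individual pieces (together with identifications of isomorphic summands). For $m=2$ this yields the criterion: $\CC(K_1\#K_2)$ is a single element if and only if
\begin{enumerate}[(a)]
\item $\mathrm{Aut}(K_1)$ and $\mathrm{Aut}(K_2)$ act transitively on facets, so that the two glued triangles may be chosen arbitrarily; and
\item for facets $\sigma_i\in K_i$, the images $H_i$ of the stabilizers $\mathrm{Stab}_{\mathrm{Aut}(K_i)}(\sigma_i)$ in the symmetric group $S_3$ of permutations of the three vertices of $\sigma_i$ satisfy $H_1\cdot H_2=S_3$, so that the gluing bijection $\sigma_1\leftrightarrow\sigma_2$ is immaterial.
\end{enumerate}

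\textbf{The case $m=2$.} By the enumeration of facet-transitive simplicial $2$-spheres of Fleischner and Imrich \cite[Theorem 3]{FI79}, such a complex is one of: the Platonic triangulations $T_4,O_6,I_{12}$ (whose facet-stabilizer $H$ is the full $S_3$, i.e.\ they are flag-transitive); the bipyramids $B_n$; or the triangulations $\xi_1(C_8),\xi_2(C_8),\xi_1(D_{20}),\xi_2(D_{20})$ of the cube and the dodecahedron. In particular the facet-stabilizer $H$ is always $S_3$, a reflection $\mathbb{Z}/2$, or trivial, and is never the rotation group $A_3$ (there is no chiral spherical triangulation); since a product $H_1H_2$ of subgroups of $S_3$ of those three types equals $S_3$ only when $H_1$ or $H_2$ is itself $S_3$, condition (b) forces one of $K_1,K_2$ to be flag-transitive, hence in $\{T_4,O_6,I_{12}\}$, while (a) merely forces the other to be facet-transitive; conversely every such pair satisfies (a) and (b). Removing from the facet-transitive list the reducible member $B_5=T_4\#T_4$ and recording $B_6=O_6$, this is exactly the family displayed in the theorem.

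\textbf{The case $m\ge3$.} An analysis of how the non-facial $3$-cycles sit inside an iterated connected sum shows that if $\CC(K_1\#\cdots\#K_m)$ is a singleton with $m\ge3$, then every pairwise connected sum $K_i\#K_j$ must itself be facet-transitive: in some realization $K_i$ and $K_j$ are glued directly, and if $K_i\#K_j$ had facets in two different $\mathrm{Aut}(K_i\#K_j)$-orbits these could each receive a third summand, producing two members of $\CC(K_1\#\cdots\#K_m)$ that differ along the relevant cut. Now $\mathrm{Aut}(K_i\#K_j)$ fixes the unique cut triangle $T$ of $K_i\#K_j$, acts on its three vertices and on the two halves it bounds, and has trivial kernel for that action, so $|\mathrm{Aut}(K_i\#K_j)|\le|S_3\times\mathbb{Z}/2|=12$; transitivity on the $f_i+f_j-2$ facets ($f_i$ being the number of facets of $K_i$) then forces $K_i\cong K_j$ and $2f_i-2\le12$, i.e.\ $f_i\le7$, and since the only irreducible simplicial $2$-sphere with so few facets is $T_4$ ($f=4$; the unique $5$-vertex sphere $B_5$ is reducible), we get $K_i=K_j=T_4$ and $K_i\#K_j=B_5$, which is indeed facet-transitive. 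Hence every $K_i=T_4$. Finally, $T_4^{\#3}$ is obtained from $B_5$ by replacing one triangle by three, and a direct inspection shows its facets do not all lie in one automorphism orbit (they do not all carry the same multiset of vertex-degrees), so $T_4^{\#3}$ is not facet-transitive and no further summand can be attached rigidly; this excludes $m\ge4$, while for $m=3$ one checks directly that $\CC(T_4^{\#3})$ is a single element, since $T_4\#T_4=B_5$ is forced, $B_5$ is facet-transitive, and $T_4$ is flag-transitive.

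The step I expect to be the genuine obstacle is the classification underlying the case $m=2$: the complete enumeration of facet-transitive (isohedral) simplicial $2$-spheres, which is the substantive combinatorial content and is exactly where \cite[Theorem 3]{FI79} is indispensable; once that list and the no-chiral-triangulation observation are in hand, the group-theoretic reformulation and the reduction of $m\ge3$ to $m=2$ are comparatively routine bookkeeping.
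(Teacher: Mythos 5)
The paper does not prove this statement. It is quoted verbatim as \cite[Theorem 1.2]{CK11}, and the only information the paper gives about its proof is the remark that Choi and Kim obtained it from the Fleischner--Imrich classification of facet-transitive simplicial $2$-spheres \cite[Theorem~3]{FI79}. So there is no ``paper's own proof'' for your attempt to be measured against. Your reconstruction does head in exactly that attributed direction --- translate uniqueness of the connected sum into facet-transitivity conditions on the summands, feed in the Fleischner--Imrich list --- and it does land on the correct list of $K_1$, $K_2$ and on $T_4\#T_4\#T_4$ for $m=3$.

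Two steps, however, are asserted rather than proved. In the $m=2$ case you dispose of the possibility of a facet stabilizer equal to $A_3$ by declaring ``there is no chiral spherical triangulation''; that is a general claim you do not justify and do not need --- what suffices (and is true) is that no entry on the Fleischner--Imrich list actually has stabilizer exactly $A_3$, which should simply be read off the classification. More substantially, the $m\ge3$ reduction is heuristic. The inference that attaching a third summand at facets in different $\mathrm{Aut}(K_i\#K_j)$-orbits yields inequivalent members of $\CC(K_1\#\cdots\#K_m)$ needs an argument that any isomorphism of the two resulting complexes must respect the canonical decomposition into irreducible pieces and hence restrict to an automorphism of $K_i\#K_j$ carrying one attachment facet to the other; one also has to rule out isomorphisms that permute isomorphic summands, and you give no such argument. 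The exclusion of $m\ge4$ rests on the one-line observation that $T_4^{\#3}$ is not facet-transitive, which leans on the same unproved inference one level up. The plan is sound and the output matches the theorem, but as written this is an outline, not a proof.
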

In the remainder, we focus our attention on the simplicial $2$-spheres, starting with the following theorem.
\begin{thm}\label{thm:9}
Let $K$ and $K'$ be simplicial $2$-spheres. If
\[K=K_1\#K_2\#,\dots,\#K_n,\quad K'=K'_1\#K'_2\#,\dots,\#K'_{n'}\]
such that each $K_i$ and $K_i'$ are irreducible, and if $H^*(\mathcal {Z}_{K};\kk)\cong H^*(\mathcal {Z}_{K'};\kk)$ (as graded rings), then
$n=n'$ and there is a permutation $j\curvearrowright j'$ such that
\[H^*(\mathcal {Z}_{K_j};\kk)/([\mathcal {Z}_{K_j}])\cong H^*(\mathcal {Z}_{K'_{j'}};\kk)/([\mathcal {Z}_{K'_{j'}}]),\ 1\leq j\leq n.\]
\end{thm}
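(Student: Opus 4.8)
The strategy is to exhibit the multiset $\{\,\w H^*(\mathcal{Z}_{K_j})/([\mathcal{Z}_{K_j}])\,\}$, up to graded isomorphism, as an invariant of the graded ring $H^*(\mathcal{Z}_K)$, and then to compare the two sides. First I would pass to the induced graded ring isomorphism $\w H^*(\mathcal{Z}_K)/([\mathcal{Z}_K])\cong\w H^*(\mathcal{Z}_{K'})/([\mathcal{Z}_{K'}])$: a graded isomorphism preserves the top nonzero degree, so $K$ and $K'$ have the same number $m$ of vertices, and it carries $[\mathcal{Z}_K]$ to a unit multiple of $[\mathcal{Z}_{K'}]$, hence descends to the quotients. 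Next I would use that an irreducible simplicial $2$-sphere is either $\partial\Delta^3$ or a flag $2$-sphere (a simplicial $2$-sphere other than $\partial\Delta^3$ that is not flag contains an empty triangle and is therefore reducible), so after reindexing $K=F_1\#\cdots\#F_p\#\partial\Delta^3\#\cdots\#\partial\Delta^3$ with each $F_i$ flag and $q=n-p$ copies of $\partial\Delta^3$, and similarly for $K'$.

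The key computation is the following. For a non-negatively graded $\kk$-algebra $R$ with $R^0=\kk$, define its \emph{core} $\mathrm{core}(R):=R/(\text{two-sided ideal generated by }R^1)$; one checks that $\mathrm{core}$ commutes with finite products and is idempotent. Inspecting Construction \ref{cons:1}, if $A$ is a graded algebra with $A^0=\kk$, $A^1=0$ and $\dim_\kk A^d=1$ (where $d=\deg A$), then for every $s\ge 0$ the only degree-one elements of $G^s(A)$ are the exterior generators, so $\mathrm{core}\big(\w{G^s(A)}/(\text{top})\big)\cong A_{>0}/A^d$; taking $A=H^*(\mathcal{Z}_N)$ for a simplicial $2$-sphere $N$ (so $\w H^1(\mathcal{Z}_N)=0$ by Theorem \ref{thm:1}) and using Proposition \ref{prop:5}, this reads $\mathrm{core}\big(\w H^*(\mathcal{G}^s\mathcal{Z}_N)/(\text{top})\big)\cong\w H^*(\mathcal{Z}_N)/([\mathcal{Z}_N])$. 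Now I would prove by induction on $n$, using Corollary \ref{cor:3} (which gives, for each connected sum $K'\#K_n$, a graded ring isomorphism $\w H^*(\mathcal{Z}_{K'\#K_n})/([\mathcal{Z}_{K'\#K_n}])\cong \w H^*(\mathcal{G}^{m_n-3}\mathcal{Z}_{K'})/(\text{top})\times\w H^*(\mathcal{G}^{m'-3}\mathcal{Z}_{K_n})/(\text{top})\times\w H^*(P)/(\text{top})$ with $P$ a connected sum of products $S^a\times S^b$, $a,b\ge 3$) together with the multiplicativity of $\w H^*(-)/(\text{top})$ under connected sum, that
\[
\mathrm{core}\big(\w H^*(\mathcal{Z}_K)/([\mathcal{Z}_K])\big)\;\cong\;\Big(\prod_{i=1}^{p}\w H^*(\mathcal{Z}_{F_i})/([\mathcal{Z}_{F_i}])\Big)\times Q,
\]
where $Q$ is a finite product of one-dimensional square-zero graded $\kk$-modules in degrees $\ge 3$; the $\partial\Delta^3$-summands contribute the zero ring $\w H^*(S^7)/(\text{top})=0$ and drop out.

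Finally I would invoke the Krull--Schmidt theorem for finitely generated graded rings: the decomposition of $\mathrm{core}\big(H^*(\mathcal{Z}_K)\big)$ into indecomposable graded rings is unique up to order and isomorphism. By Theorem \ref{thm:7} each factor $C_i:=\w H^*(\mathcal{Z}_{F_i})/([\mathcal{Z}_{F_i}])$ is indecomposable, and by Lemma \ref{prop:8} it is not square-zero (it has nonzero products landing below the top degree), while the factors constituting $Q$ are one-dimensional and square-zero; hence $\{C_1,\dots,C_p\}$ is exactly the multiset of non-square-zero indecomposable factors of $\mathrm{core}\big(H^*(\mathcal{Z}_K)\big)$, an invariant of the graded ring $H^*(\mathcal{Z}_K)$. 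Comparing with $K'$ yields $p=p'$ and, after a permutation, $C_i\cong C'_i$ for all $i$. Moreover the number of vertices $m_i$ of $F_i$ equals the top nonzero degree of $C_i$ (using Poincar\'e duality together with $H^2(\mathcal{Z}_{F_i})=0$ and $H^3(\mathcal{Z}_{F_i})\ne 0$), so $\sum_i m_i$ is an invariant; since $m$ is an invariant and $m=\sum_i m_i+n-4p+3$, it follows that $n=n'$, whence $q=q'$. The desired permutation $j\curvearrowright j'$ then matches the flag summands via $C_i\cong C'_i$ and matches the $q=q'$ copies of $\partial\Delta^3$ arbitrarily (for these, $H^*(\mathcal{Z}_{\partial\Delta^3})/([\mathcal{Z}_{\partial\Delta^3}])=\kk$), and restoring the unit turns $C_i\cong C'_i$ into the stated isomorphism.

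I expect the main obstacle to be the bookkeeping in the inductive structure formula --- in particular verifying cleanly that passing to cores annihilates all the extra exterior generators produced by the iterated algebraic gyrations in Corollary \ref{cor:3}, uniformly for $\kk=\mathbb{Z}$ and for fields --- together with setting up a version of Krull--Schmidt valid for these non-unital graded rings (equivalently, showing that the ``connected components'' of $\mathrm{core}(\,\cdot\,)$, under the relation of being joined by a chain of homogeneous elements with nonzero consecutive products, are well defined). The indecomposability statement of Theorem \ref{thm:7} is precisely what allows one to identify those components with the $C_i$.
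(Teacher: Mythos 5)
There is a genuine gap in your argument, and it lies in the ``core'' computation. You assert that after applying Construction \ref{cons:1} the exterior generators $v_1,\dots,v_s$ become degree-one elements of $G^s(A)$, and you then quotient them away. But this would only be true if $A^0=\kk$, as you yourself stipulate; however Proposition \ref{prop:5} reads $\w H^*(\mathcal{G}(M))\cong G(\w H^*(M))$, and the \emph{reduced} cohomology $\w H^*(M)$ has $(\w H^*(M))^0=0$. Consequently, in $G^s(\w H^*(\mathcal{Z}_N))$ the degree-one part is $(\w H^*(\mathcal{Z}_N))^1\otimes 1\oplus(\w H^*(\mathcal{Z}_N))^0\otimes\Lambda^1=\w H^1(\mathcal{Z}_N)\oplus 0$, which vanishes since $\w H^1(\mathcal{Z}_N)=0$. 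The elements $v_i$ never appear ``bare''; the first time they occur is paired with classes in $\w H^3(\mathcal{Z}_N)$, i.e.\ in degree $4$. Geometrically this is consistent: a Mayer--Vietoris computation shows $\w H^1(\mathcal{G}(M))=\w H^1(M)$, so the gyration creates no new degree-one classes. Your claim that ``$\mathrm{core}\big(\w H^*(\mathcal{G}^s\mathcal{Z}_N)/(\text{top})\big)\cong\w H^*(\mathcal{Z}_N)/([\mathcal{Z}_N])$'' is therefore false: the ring $\w H^*(\mathcal{G}^s\mathcal{Z}_N)/(\text{top})$ has no degree-one elements at all, so $\mathrm{core}$ is the identity on it, whereas the right-hand side has strictly smaller total dimension. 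The same phenomenon occurs for $\w H^*(\mathcal{Z}_K)/([\mathcal{Z}_K])$ itself, since $H^1(\mathcal{Z}_K)=0$; thus your proposed invariant $\mathrm{core}\big(\w H^*(\mathcal{Z}_K)/([\mathcal{Z}_K])\big)$ is nothing other than $\w H^*(\mathcal{Z}_K)/([\mathcal{Z}_K])$, not the product $\prod_i C_i\times Q$ you want, and the inductive decomposition formula is incorrect.

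The remaining ingredients of your outline -- identifying each irreducible $2$-sphere as either $\partial\Delta^3$ or a flag sphere, decomposing $\w H^*(\mathcal{Z}_K)/([\mathcal{Z}_K])$ into a product via Theorem \ref{thm:4}/Corollary \ref{cor:3}, applying the indecomposability from Theorem \ref{thm:7} and the algebraic Krull--Schmidt theorem (the paper's Theorem \ref{prop:13} and Corollary \ref{cor:a}), and then counting vertex numbers via the degree-$3$ part and the top degree -- do line up with the paper's actual proof. The paper's route, however, does not pass through a ``core'' construction: it applies Krull--Schmidt directly to the factors $M_i=G^{n_i}(\w H^*(\mathcal{Z}_{K_i}))/(\text{top})$, whose graded indecomposability is established separately in Lemma \ref{lem:7}, and only afterwards extracts $\w H^*(\mathcal{Z}_{K_i})/([\mathcal{Z}_{K_i}])$ from the identification $M_i\cong\big(\w H^*(\mathcal{Z}_{K_i})/([\mathcal{Z}_{K_i}])\big)\otimes\Lambda_\kk[n_i]$ once $n_i$ has been pinned down. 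To salvage your proposal you would need a different, degree-$1$-free, intrinsic description of the exterior ideal (equivalently, a cancellation argument showing that a graded isomorphism $B\otimes\Lambda_\kk[n]\cong B'\otimes\Lambda_\kk[n]$ forces $B\cong B'$ for these $B$); the naive ``quotient by the ideal generated by $R^1$'' does not supply it.
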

We first separate off a
\begin{lem}\label{lem:7}
Let $K$ be a flag $2$-sphere, then $G^n(\w H^*(\mathcal {Z}_{K};\kk))/([\mathcal {Z}_{K}]\otimes v_{[n]})$, $n\geq1$, is a graded indecomposable $\kk$-algebra.
\end{lem}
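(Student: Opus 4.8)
The plan is first to reduce the statement to a tensor-product identification, and then to mimic the proof of \S\ref{sec:5}. Write $A=\w H^*(\ZZ_K;\kk)$ and let $d$ be its top degree, so $A^d=\kk\langle[\ZZ_K]\rangle$; since $A$ is the reduced cohomology of a connected space, $A^0=0$, hence $A^d$ annihilates $A$. By Construction \ref{cons:1}, $G^n(A)=\big(A\otimes\Lambda_\kk[v_1,\dots,v_n]\big)\big/\big(\bigoplus_{I\neq[n]}A^d\otimes v_I\big)$, whose top-degree part is the line $A^d\otimes v_{[n]}$ spanned by $[\ZZ_K]\otimes v_{[n]}$. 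As that element annihilates everything, factoring it out is the same as further dividing $A\otimes\Lambda_\kk[v_1,\dots,v_n]$ by the ideal $A^d\otimes\Lambda_\kk[v_1,\dots,v_n]$, so that
\[
G^n\big(\w H^*(\ZZ_K;\kk)\big)\big/\big([\ZZ_K]\otimes v_{[n]}\big)\;\cong\;B\otimes_\kk\Lambda_\kk[v_1,\dots,v_n],\qquad B:=\w H^*(\ZZ_K;\kk)\big/([\ZZ_K]),
\]
as graded $\kk$-algebras. It therefore suffices to prove that $B\otimes_\kk\Lambda$ is graded indecomposable, where $\Lambda:=\Lambda_\kk[v_1,\dots,v_n]$, knowing from Corollary \ref{cor:6} (equivalently Theorem \ref{thm:7}) that $B$ is.

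Next I would check that $B\otimes\Lambda$ retains the features on which the argument of \S\ref{sec:5} rests. Identifying $\w H^*(\ZZ_K;\kk)$ with $\w{\HH}^*(K;\kk)$, one has $B=\HH^1(K)\oplus\HH^2(K)$ with $\HH^1(K)*\HH^1(K)\subseteq\HH^2(K)$ and all other products among positive-degree elements zero, and $\HH^2(K)=(\HH^1(K))^2$ by Lemma \ref{prop:8}; hence $B\otimes\Lambda=(\HH^1(K)\otimes\Lambda)\oplus(\HH^2(K)\otimes\Lambda)$, the second summand is exactly the annihilator ideal, and $(\HH^1(K)\otimes\Lambda)^2=\HH^2(K)\otimes\Lambda$, so $B\otimes\Lambda$ is generated by $\HH^1(K)\otimes\Lambda$ just as $B$ is generated by $\HH^1(K)$. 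The combinatorial inputs (Lemmas \ref{lem:2}--\ref{lem:6}, Corollary \ref{cor:2}) concern $K$ alone and are unchanged; the nondegenerate pairings $\w H^0((K_L)_{L_1})\otimes\w H^0((K_L)_{L_2})\to\w H^1(K_L)$ for polygon boundaries $K_L$ persist componentwise after $\otimes\Lambda$; and the role played in \S\ref{sec:5} by Poincar\'e duality of $\ZZ_K$ can be played here by Poincar\'e duality of the closed manifold $\ZZ_K\times(S^1)^n$, whose reduced cohomology contains $\w H^*(\ZZ_K)\otimes\Lambda$ as an ideal. With these in hand one runs the proof of Theorem \ref{thm:7}, reading $\bigoplus_{I\subseteq[n]}\w H^*(K_J)\otimes v_I$ for each $\w H^*(K_J)$: given a graded splitting $B\otimes\Lambda\cong\RR_1\times\RR_2$, the adapted forms of Lemmas \ref{lem:3}--\ref{lem:4} confine suitable classes $\lambda_\sigma\in\w H^0(K_\sigma)$ ($\sigma\in MF(K)$), together with all their exterior translates, to $\RR_1$ or to $\RR_2$, and Lemma \ref{lem:6} then forces a common class into $\RR_1\cap\RR_2=0$ — unless $K$ is a suspension complex, in which case $\w H^*(\ZZ_K)/([\ZZ_K])$, and hence $B\otimes\Lambda$, is visibly indecomposable.

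The one genuinely new point — and the step I expect to be the main obstacle — is precisely that the exterior factor must be shown not to create extra decompositions, i.e. that confining a class to one factor in its $\otimes1$-form already forces all of its translates $\cdot\otimes v_I$ into that factor; otherwise an exterior generator $v_j$ could be split off, as indeed happens if $B$ is replaced by $\w H^*(S^3)$. The gadget that makes this work is that distinct monomials $v_I$ are linearly independent over $B$, so an identity $\sum_I y_I\otimes v_I=0$ in $B\otimes\Lambda$ forces each $y_I=0$. A representative use: suppose that at some stage $\lambda_{(a_0,b_0)}\otimes1$ (for a missing edge $(a_0,b_0)$) has been confined to $\RR_1$ while some least-degree witness $\xi=\sum_{(a,b)\in MF(K),\,j}c_{(a,b),j}\,\lambda_{(a,b)}\otimes v_j$ with $c_{(a_0,b_0),j_0}\neq0$ lies in $\RR_2$; choose, by Corollary \ref{cor:2}, a polygon boundary $K_L$ with $a_0,b_0\in L$ and, by its Poincar\'e pairing, $\mu\in\w H^0(K_{L\setminus\{a_0,b_0\}})\subseteq\HH^1(K)$ with $\lambda_{(a_0,b_0)}*\mu\neq0$; then, arguing with the $\RR_1$- and $\RR_2$-components of $\phi(\mu\otimes v_j)$ exactly as in Lemma \ref{lem:4}, $\xi*(\mu\otimes1)=\sum_j\big(\sum_{(a,b)}c_{(a,b),j}\,\lambda_{(a,b)}*\mu\big)\otimes v_j$ lies in $\RR_1\cap\RR_2=0$, and the linear independence of the $v_j$ together with that of the distinct summands $\w H^1(K_{\{a,b\}\cup(L\setminus\{a_0,b_0\})})$ of $\HH^2(K)$ carrying the nonzero classes $\lambda_{(a,b)}*\mu$ forces $c_{(a_0,b_0),j_0}=0$, a contradiction. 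Threading this gadget through every step of the \S\ref{sec:5} argument, so that the exterior-index bookkeeping stays consistent with the missing-face combinatorics, is essentially all of the work; the case $\kk=\mathbb Z$ requires nothing extra, since $\HH^*(K)$, and hence $B\otimes\Lambda$, is torsion free.
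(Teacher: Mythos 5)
Your proposal is correct and takes essentially the same route as the paper: the paper's entire proof of Lemma \ref{lem:7} consists of the tensor-product identification $G^n(\w H^*(\ZZ_K;\kk))/([\ZZ_K]\otimes v_{[n]})\cong(\w H^*(\ZZ_K;\kk)/([\ZZ_K]))\otimes\Lambda_\kk[n]$ followed by the single sentence that the argument then follows the line of the proof of Theorem \ref{thm:7}. Your write-up supplies what the paper elides — the justification of the identification, the verification that the relevant structural features of $B=\w H^*(\ZZ_K;\kk)/([\ZZ_K])$ (generation by $\HH^1$, nondegenerate polygon pairings, the role of Poincar\'e duality) persist after tensoring with $\Lambda$, and the linear-independence gadget that keeps the exterior indices from producing a spurious splitting — so it is a more detailed realization of the same plan rather than a different one.
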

\begin{proof}
Note that \[G^n(\w H^*(\mathcal {Z}_{K};\kk))/([\mathcal {Z}_{K}]\otimes v_{[n]})\cong (\w H^*(\mathcal {Z}_{K};\kk)/([\mathcal {Z}_{K}]))\otimes\Lambda_\kk[n].\]
Then the proof just follows the line of the proof of Theorem \ref{thm:7}.
\end{proof}
\begin{proof}[Proof of Theorem \ref{thm:9}]
Let $m$ be the vertex number of $K$, and $m_i$ be the vertex number of $K_i$.
First we do the case $\kk$ is a field. From Theorem \ref{thm:4}, we know that
\[\w H^*(\mathcal {Z}_K;\kk)/([\mathcal {Z}_K])\cong M_1\times M_2\times\cdots\times M_k\times\kk^l,\]
where each $M_i\cong G^{n_i}\big(\w {H}^*(\mathcal {Z}_{K_i};\kk)\big)/([\mathcal {Z}_{K_i}]\otimes v_{[n_i]})$ with $K_i$ flag, $n_i=m-m_i$ and each
$\kk$ summand in the above formula has trivial multiplication.
Lemma \ref{lem:7} says that $M_i$ are all graded indecomposable. Thus according to Theorem \ref{prop:13} in the appendix, $k,l$ are determined by $\w {H}^*(\mathcal {Z}_K;\kk)$.

Clearly, $K_i$ contains $(3m_i-6)$ $1$-faces, and so has $\tbinom{m_i}{2}-(3m_i-6)$ missing faces.
In other words, \[\mathrm{dim}_\kk(M_i^3)=\binom{m_i}{2}-(3m_i-6)=\frac{m_i^2-7m_i}{2}+6\] (each missing face corresponds to a generator of $\w {H}^3(\mathcal {Z}_{K_i};\kk)$). Since $f(x)=\frac{x^2-7x}{2}+6$ is a strictly monotone increasing function on $x\geq4$, thus $m_i$ ($1\leq i\leq k$) is determined by $\mathrm{dim}_\kk(M_i^3)$. On the other hand, the vertex number $m$ of $K$ is determined by the top dimension $d$ of
$H^*(\mathcal {Z}_K;\kk)$: $m=d-3$, so the cardinality $k'$ of $\{j\in[n]\mid K_j=\partial\Delta^3\}$ equals $d-3-\sum_{i=1}^k m_i$. Therefore
$n=k+k'$ is determined by $H^*(\mathcal {Z}_K;\kk)$, i.e., $n=n'$. Since $\w H^*(\mathcal {Z}_K;\kk)/([\mathcal {Z}_K])\cong \w H^*(\mathcal {Z}_{K'};\kk)/([\mathcal {Z}_{K'}])$, then by combining Theorem \ref{thm:4}, Corollary \ref{cor:6}, Theorem \ref{prop:13} and Lemma \ref{lem:7}, we have up to a permutation
\[G^{n_i}(\w {H}^*(\mathcal {Z}_{K_i};\kk))/([\mathcal {Z}_{K_i}]\otimes v_{[n_i]})\cong G^{n_i}(\w {H}^*(\mathcal {Z}_{K'_i};\kk))/([\mathcal {Z}_{K'_i}]\otimes v_{[n_i]}).\]
Since
\[G^{n_i}(\w H^*(\mathcal {Z}_{K_i};\kk))/([\mathcal {Z}_{K_i}]\otimes v_{[n_i]})\cong (\w H^*(\mathcal {Z}_{K_i};\kk)/([\mathcal {Z}_{K_i}]))\otimes\Lambda_\kk[n_i],\]
it follows that $\w {H}^*(\mathcal {Z}_{K_i};\kk)/([\mathcal {Z}_{K_i}])\cong\w {H}^*(\mathcal {Z}_{K'_i};\kk)/([\mathcal {Z}_{K'_i}])$.

For the integral case, the theorem can be proved in the same way, just by using Corollary \ref{cor:a} instead of Theorem \ref{prop:13}.
\end{proof}
\begin{Def}\label{def:7}
A simplicial sphere $K$ is called \emph{strongly $B$-rigid} if whenever there is another simplicial sphere $L$ such that
$H^*(\mathcal {Z}_K)/([\mathcal {Z}_K])\cong H^*(\mathcal {Z}_L)/([\mathcal {Z}_L])$, then $K\approx L$.
\end{Def}
For flag spheres, we make the following
\begin{conj}\label{conj:2}
Every flag $2$-sphere is strongly $B$-rigid, or more generally, every flag Gorenstein* complex is strongly $B$-rigid.
\end{conj}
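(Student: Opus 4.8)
We address the first case; let $K$ be a flag $2$-sphere and suppose $L$ is a simplicial sphere admitting a graded ring isomorphism
\[\Phi\colon\w H^*(\ZZ_K)/([\ZZ_K])\xrightarrow{\ \cong\ }\w H^*(\ZZ_L)/([\ZZ_L]).\]
The plan is in three stages: (i) force $L$ to be a flag $2$-sphere; (ii) reconstruct the $1$-skeleton graph $G(K)$ from the ring $\w H^*(\ZZ_K)/([\ZZ_K])$ as an isomorphism invariant, so that $G(K)\cong G(L)$; (iii) conclude $K\approx L$, since a flag complex is the clique complex of its $1$-skeleton. Stage (ii) is the crux.

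For Stage (i), write $\bar R=\w H^*(\ZZ_K)/([\ZZ_K])$ and $\bar R'=\w H^*(\ZZ_L)/([\ZZ_L])\cong\bar R$. By Lemma~\ref{prop:8}, $\bar R$ is generated in cohomological degree $3$; hence so is $\bar R'$. If $L$ had a missing face $\sigma$ of minimal cardinality $k+1\geq3$, then $L_\sigma=\partial\Delta^{k}$ would contribute a nonzero class in $\w H^{k-1}(L_\sigma)\cong\kk$ in cohomological degree $2k+1\geq5$, lying outside the square of the augmentation ideal (every product from proper nonempty subsets of $\sigma$ vanishes, those subcomplexes being contractible), i.e.\ a ring generator of $\bar R'$ in degree $\geq5$, a contradiction; note $\sigma\neq[m']$ since otherwise $L=\partial\Delta^{n'}$ and $\bar R'=0\neq\bar R$. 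Hence every missing face of $L$ has two vertices, so $L$ is flag. To pin the dimension, observe that for a flag $2$-sphere the truncated ring is concentrated in the two strata $\HH^1(K)$ and $\HH^2(K)=\HH^1(K)\cdot\HH^1(K)$ (Lemma~\ref{prop:8}) with $\HH^1(K)\cdot\HH^2(K)=0$, so the cube of the augmentation ideal of $\bar R$ vanishes; for a flag sphere of dimension $\geq3$ one expects this cube to be nonzero, which would force $\dim L=2$, and then, flagness being equivalent to essential irreducibility for simplicial $2$-spheres, $L$ is a flag $2$-sphere. (That the augmentation-ideal cube is nonzero for higher flag spheres is the delicate point of this stage, tied to generation of $\HH^*$ in low degree.)

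For Stage (ii) one must characterize $G(K)$ ring-theoretically. The degree-$3$ stratum is $\HH^1(K)=\bigoplus_{I}\w H^0(K_I)$, with a distinguished basis indexed by $MF(K)$; the subspace spanned by the missing-\emph{edge} classes $\{\lambda_\sigma:\sigma\in MF(K),\,|\sigma|=2\}$ must be singled out, and then the relation ``$\{v,w\}$ is an edge of $K$'' read off. The tool is Corollary~\ref{cor:2}: each missing edge $\sigma$ lies in a full subcomplex $K_I$ that is a polygon boundary, and for such $I$ the surjective restriction $\w H^*(\ZZ_K)\twoheadrightarrow\w H^*(\ZZ_{K_I})$ together with the Poincar\'e duality of $\HH^*(K_I)$ pins $\lambda_\sigma$ down up to sign, as does the rank-one ideal $\HH^1(K)\cdot\lambda_\sigma$ it generates. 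The vertex set of $K$ should then be recovered as a combinatorial gadget built from the connected components of these polygon subcomplexes, exactly in the spirit of Lemmas~\ref{lem:3}, \ref{lem:4}, \ref{lem:5} from the proof of Theorem~\ref{thm:7}: a pair of vertices spans a missing edge iff some missing-edge class is supported on it, and every other pair is an edge. This produces $G(K)$ as an invariant of $\bar R$, so $G(K)\cong G(L)$; since $K$ and $L$ are both flag, each equals the clique complex of its $1$-skeleton, whence $K\approx L$ (Stage (iii)), i.e.\ strong $B$-rigidity.

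The main obstacle is Stage (ii): proving the set of missing-edge classes, hence the edge set of $G(K)$, is invariant under an \emph{arbitrary} graded ring isomorphism rather than only under geometrically induced ones. Lemma~\ref{prop:8}, Corollary~\ref{cor:2} and the duality of $\HH^*(K)$ rigidify the degree-$3$ stratum and its product into the middle stratum, but excluding ``exotic'' isomorphisms that mix $\lambda_\sigma$ with composite classes is subtle; the known hard configuration is a $4$-belt in $K$ (a full $4$-cycle subcomplex that is not the link of an edge), which introduces extra symmetry of $\bar R$, and the flag $2$-spheres with no $4$-belt are handled in \cite{FMW16}. For the general flag Gorenstein* statement, Lemma~\ref{prop:8} fails for arbitrary flag complexes, so one would first need a substitute for degree-$3$ generation --- presumably extracted from the Poincar\'e duality of the manifold $\ZZ_K$ --- before the reconstruction above can even be set up.
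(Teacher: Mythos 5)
The statement you were asked to prove is a \emph{conjecture} in the paper, not a theorem: the paper explicitly leaves it open and only cites \cite{FMW16} for the special case of flag $2$-spheres with no $4$-belt. There is therefore no proof of this statement in the paper to compare your attempt against, and your proposal --- which is candid about this --- also does not constitute a proof.

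That said, a few remarks on the proposal itself. Stage~(i), showing that any competing $L$ must be flag, is essentially sound: if $\sigma$ is a missing face of $L$ of minimal cardinality $k+1\geq 3$ (and $\sigma$ is not the full vertex set), then the generator of $\w H^{k-1}(L_\sigma)\cong\kk$ sits in cohomological degree $2k+1\geq 5$, and since every proper nonempty subset of $\sigma$ spans a simplex, the union product into $\w H^*(L_\sigma)$ from lower strata is identically zero, so this is a genuine ring generator in degree $\geq5$ --- contradicting degree-$3$ generation inherited from Lemma~\ref{prop:8}. However, the argument for $\dim L=2$ is only heuristic as you note; the paper's Proposition~\ref{prop:9} pins down the dimension via the LBC, but it is stated for an isomorphism of the \emph{full} rings $H^*(\ZZ_K)\cong H^*(\ZZ_L)$, not the truncated rings appearing in the definition of strong $B$-rigidity (Definition~\ref{def:7}), so one cannot invoke it verbatim. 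Some version of the LBC counting argument would need to be redone for the quotient ring, or one would need to recover the top degree first. Stage~(ii) is exactly the open problem: you correctly identify that the hard part is showing the span of missing-edge classes in $\HH^1(K)$, and hence the edge set of $K$, is an invariant of the abstract graded ring, and that $4$-belts produce extra automorphisms of $\w H^*(\ZZ_K)/([\ZZ_K])$ which make this delicate. This is precisely the obstruction the paper acknowledges, and the tools you list (Lemma~\ref{prop:8}, Corollary~\ref{cor:2}, Poincar\'e duality of $\HH^*(K)$, the filtration by polygon subcomplexes) are the right ones; they are what \cite{FMW16} deploys in the $4$-belt-free case. Stage~(iii) is immediate once (ii) is done. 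In short: the strategy is the natural one and matches what the literature attempts, but the conjecture is genuinely open and your writeup does not close the central gap --- nor does the paper.
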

In \cite{FMW16}, we have shown that this conjecture is true if  $K$ is a flag $2$-sphere without \emph{$4$-belt} (an \emph{$n$-belt} of $K$ is a full subcomplex isomorphic to the boundary of an $n$-gon).
Notice that if Conjecture \ref{conj:1} and Conjecture \ref{conj:2} are both true, then Theorem \ref{thm:9}, together with Proposition \ref{prop:9} below, give a positive answer to Question \ref{que:1} for moment-angle manifolds associated to simplicial $2$-spheres.
\begin{prop}\label{prop:9}
Let $\mathcal {Z}_K$ be a moment-angle manifold associated to a simplicial $2$-sphere $K$. If $H^*(\mathcal {Z}_K)\cong H^*(\mathcal {Z}_L)$ for a
moment-angle manifold $\mathcal {Z}_L$, then $L$ is a simplicial $2$-sphere.
\end{prop}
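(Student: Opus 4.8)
The plan is to recognise $|L|$ as a generalized homology sphere via Theorem~\ref{thm:mfd}, read off two numerical invariants from the ring isomorphism $H^*(\mathcal{Z}_K)\cong H^*(\mathcal{Z}_L)$, and then argue combinatorially that these numbers force $\dim|L|=2$; since a homology $2$-manifold with the homology of $S^2$ is $S^2$, $L$ will then be a simplicial $2$-sphere.

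First I fix the dimension. Write $m_K$ for the number of vertices of $K$, so $\mathcal{Z}_K$ is a closed manifold of dimension $m_K+3$. Since $\mathcal{Z}_L$ is a moment-angle manifold, Theorem~\ref{thm:mfd} shows that $|L|$ is a generalized homology sphere; put $d=\dim|L|$ and let $m_L$ be the number of vertices of $L$, so that $\mathcal{Z}_L$ is a closed manifold of dimension $m_L+d+1$. The top nonzero cohomological degree of a closed manifold equals its dimension and is an invariant of the graded ring, so $m_L+d+1=m_K+3$.

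Next I compute $\operatorname{rank}H^3$. By Theorem~\ref{thm:1}, for a simplicial complex $P$ on its own vertex set $H^3(\mathcal{Z}_P)\cong\bigoplus_{|J|=2}\widetilde H^0(P_J)$ — the other conceivable summands, $\widetilde H^1$ of a point and $\widetilde H^{-1}$ of a $2$-element set, vanish — which is free of rank $e(P)$, the number of pairs of vertices of $P$ not spanning an edge. Hence the isomorphism forces $e(L)=e(K)$; and for the simplicial $2$-sphere $K$, Euler's formula $f_1(K)=3m_K-6$ gives $e(K)=\binom{m_K}{2}-(3m_K-6)=\binom{m_K-3}{2}$, so $e(L)=\binom{m_K-3}{2}$.

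Finally I rule out $d\neq 2$; this is the step I expect to be the real work. If $d=0$, then $|L|\cong S^0$ has exactly two vertices, contradicting $m_L=m_K+2\geq 6$. If $d=1$, then $L$ is a triangulated circle with $f_1(L)=m_L=m_K+1$, whence $e(L)=\binom{m_K+1}{2}-(m_K+1)=\binom{m_K-3}{2}+(3m_K-7)>\binom{m_K-3}{2}$, a contradiction. If $d\geq 3$, then for each vertex $v$ of $L$ the complex $\operatorname{link}_L v$ has dimension $d-1$ and satisfies $\widetilde H_{d-1}(\operatorname{link}_L v)\neq 0$, because $|L|$ is a homology $d$-manifold; a $(d-1)$-dimensional complex with nonzero top homology has at least $d+1$ vertices, so every vertex of $L$ has degree $\geq d+1$, giving $f_1(L)\geq\tfrac12(d+1)m_L$ and $e(L)\leq\binom{m_L}{2}-\tfrac12(d+1)m_L=\tfrac12 m_L(m_L-d-2)$. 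Substituting $m_L=m_K+2-d$ and using $m_L\geq d+2$ (so $m_K\geq 2d$), a direct expansion shows $\tfrac12(m_K+2-d)(m_K-2d)<\binom{m_K-3}{2}$ for every $d\geq 3$, contradicting $e(L)=\binom{m_K-3}{2}$. Hence $d=2$, so $|L|$ is a homology $2$-manifold — necessarily a closed surface — with the homology of $S^2$; then $\chi(|L|)=2$ forces $|L|\cong S^2$, and $L$ is a simplicial $2$-sphere.
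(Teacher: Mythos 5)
Your proof is correct, and it reaches the same two numerical invariants as the paper (top degree of the ring, and $\operatorname{rank}H^3$ as the count of nonedges), but the step that rules out $d\geq 3$ is handled differently. The paper invokes the Lower Bound Theorem of Barnette--Kalai, which gives $f_1(L)\geq m_L(d+1)-\binom{d+2}{2}$ for a $d$-dimensional homology manifold $L$ on $m_L$ vertices, and then carries out a two-step monotonicity argument. You instead observe that for a $d$-dimensional GHS, every vertex link is a $(d-1)$-dimensional complex with nonvanishing top homology and therefore has at least $d+1$ vertices, so each vertex has degree $\geq d+1$ and $f_1(L)\geq\tfrac12(d+1)m_L$. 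This is a strictly weaker lower bound than LBC as soon as $m_L>d+2$, but it turns out to be enough: the resulting quadratic inequality $(m_K+2-d)(m_K-2d)<(m_K-3)(m_K-4)$ reduces to $(9-3d)m_K+2d^2-4d-12<0$, which holds for all $m_K\geq 2d$ whenever $d\geq 3$. So your route replaces a nontrivial combinatorial input (LBC) by an elementary fact about links in a GHS, at the price of a slightly longer elementary computation; the paper's route gets a sharper bound for free but relies on a deeper theorem. Your handling of $d=0$ and $d=1$ agrees with the paper (the paper leaves $d=0$ implicit), and your final remark that a homology $2$-manifold with the homology of $S^2$ is a genuine $2$-sphere is a small extra step that the paper also elides.
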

Before proving Proposition \ref{prop:9}, we introduce a fundamental fact from the polytope theory, known as the \emph{Lower
Bound Conjecture} (LBC), which was first proved by Barnette \cite{B73} for simplicial polytopes, and generalized to triangulated manifolds by Kalai
\cite{K87}.
\begin{thm}[LBC]
Let $K$ be a triangulated homology $(n-1)$-manifold with $m$ vertices, and let $e$ be the number of edges ($1$-simplices) of $K$. Then $e\geq mn-\binom{n+1}{2}$.
\end{thm}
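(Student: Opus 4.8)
The plan is to prove the Lower Bound Conjecture along the rigidity-theoretic line of Kalai, which turns the edge inequality into an assertion about the generic rigidity of the $1$-skeleton $G(K)$. I take $n\ge3$ throughout (for $n\le2$ the bound fails, e.g.\ for a polygon boundary, so the statement is understood for homology manifolds of dimension at least $2$). For a finite graph $G$ on a vertex set $V$ with $|V|=m$ and a map $f\colon V\to\mathbb{R}^n$, recall the \emph{rigidity matrix} $R_n(G,f)$: one row per edge, $nm$ columns split into blocks indexed by $V$, the row of $\{u,v\}$ carrying $f(u)-f(v)$ in block $u$, $f(v)-f(u)$ in block $v$, and zeros elsewhere. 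Call $G$ \emph{generically $n$-rigid} if $\mathrm{rank}\,R_n(G,f)=nm-\binom{n+1}{2}$ for $f$ with algebraically independent coordinates; this is the maximum possible rank, already attained by the complete graph $K_m$ once $m\ge n+1$ (its kernel then being exactly the $\binom{n+1}{2}$-dimensional space of infinitesimal isometries of $\mathbb{R}^n$). Since $\mathrm{rank}\,R_n(G,f)$ never exceeds the number of rows, a generically $n$-rigid graph on $m\ge n+1$ vertices has at least $nm-\binom{n+1}{2}$ edges. Every homology $(n-1)$-manifold has $m\ge n+1$ (it is pure of dimension $n-1$, and $m=n$ would force $K=\Delta^{n-1}$, which is not closed; the minimum $m=n+1$ is realized by $\partial\Delta^{n}=S^{n-1}$), so the theorem follows once we know that $G(K)$ is generically $n$-rigid.

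To get this I would prove, by induction on $n\ge3$, the statement $P(n)$: the $1$-skeleton of every connected triangulated homology $(n-1)$-\emph{sphere} is generically $n$-rigid. The base case $P(3)$, that the graph of a triangulated $2$-sphere is generically $3$-rigid, is Gluck's theorem; it follows from the infinitesimal rigidity of convex simplicial $3$-polytopes (Cauchy--Dehn--Alexandrov) together with the Zariski-openness of the set of infinitesimally rigid frameworks. The inductive step uses two classical facts of generic rigidity theory, which I would invoke as black boxes: the \emph{cone lemma} (the one-vertex cone $\{v\}*G$ is generically $n$-rigid precisely when $G$ is generically $(n-1)$-rigid) and the \emph{gluing lemma} (the union of two generically $n$-rigid graphs whose vertex sets meet in at least $n$ points is generically $n$-rigid).

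For $P(n)$ with $n\ge4$, let $S$ be a connected homology $(n-1)$-sphere. Each $\mathrm{link}_S v$ is a connected homology $(n-2)$-sphere of dimension $n-2\ge2$, so $G(\mathrm{link}_S v)$ is generically $(n-1)$-rigid by $P(n-1)$, and since $\mathrm{star}_S v=\{v\}*\mathrm{link}_S v$ the cone lemma makes $G(\mathrm{star}_S v)$ generically $n$-rigid. As $G(S)=\bigcup_{v}G(\mathrm{star}_S v)$ and $G(S)$ is connected, enumerate $V=\{v_1,\dots,v_m\}$ so that each $v_i$ ($i\ge2$) is adjacent to some $v_j$ with $j<i$; then $\mathrm{star}_S v_i$ and $\mathrm{star}_S v_j$ share the vertices $\{v_i,v_j\}\cup V(\mathrm{link}_S\{v_i,v_j\})$, and because $\mathrm{link}_S\{v_i,v_j\}$ is a homology $(n-3)$-sphere it has at least $n-1$ vertices, so this overlap has at least $n+1\ge n$ vertices. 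Iterating the gluing lemma along this enumeration shows $G(S)$ is generically $n$-rigid, which is $P(n)$. The identical star-gluing argument, now with $S$ replaced by an arbitrary connected homology $(n-1)$-manifold $K$ (still $n\ge4$, so vertex links are homology $(n-2)$-spheres of dimension $\ge2$ and $P(n-1)$ applies), shows $G(K)$ is generically $n$-rigid; and for $n=3$ the bound $e\ge 3m-6$ is immediate from Euler's relation $e=3m-3\chi(K)$ with $\chi(K)\le2$. In every case, combining generic $n$-rigidity of $G(K)$ with the edge-count observation of the first paragraph gives $e=|E(G(K))|\ge nm-\binom{n+1}{2}$.

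The main obstacle is the rigidity-theory infrastructure rather than the manifold combinatorics: proving Gluck's theorem (equivalently, the infinitesimal rigidity of convex triangulated $2$-spheres and the passage to generic frameworks) and establishing the cone and gluing lemmas is where the real work lies. Once these are available, the only manifold-specific ingredient is the bookkeeping that links of $k$-faces of a homology $(n-1)$-manifold are homology $(n-1-k)$-spheres — hence connected and supported on at least $n-k$ vertices for $k\le1$ — which is exactly what forces adjacent stars to overlap in enough vertices to feed the gluing lemma. A secondary point to verify is the rank computation for the complete graph (that $\mathrm{rank}\,R_n(K_m,f)=nm-\binom{n+1}{2}$ as soon as $m\ge n+1$), which underpins the reduction to generic rigidity.
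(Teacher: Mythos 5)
The paper does not actually prove this statement: it is quoted as the classical Lower Bound Theorem with citations to Barnette \cite{B73} and Kalai \cite{K87}, so there is no in-paper argument to compare against, and what you have written is, in outline, precisely Kalai's rigidity-theoretic proof. Your reduction (a generically $n$-rigid graph on $m\ge n+1$ vertices has at least $nm-\binom{n+1}{2}$ edges since the rigidity matrix has one row per edge), the induction on sphere dimension with Gluck's theorem as base case, and the star-decomposition fed into the cone and gluing lemmas --- using that vertex links of a homology $(n-1)$-manifold are generalized homology $(n-2)$-spheres and edge links are generalized homology $(n-3)$-spheres on at least $n-1$ vertices, so adjacent stars overlap in at least $n+1$ vertices --- are all correct and are exactly the ingredients of \cite{K87}; the Euler-characteristic shortcut $e=3m-3\chi\ge 3m-6$ for $n=3$ is also fine and conveniently avoids needing rigidity of non-sphere surfaces. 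Two caveats. First, the real content (Gluck's theorem and the cone/gluing lemmas of generic rigidity) is invoked as a black box; that is acceptable here, since the paper itself treats the entire theorem as a citation, but it means your text is a proof outline rather than a self-contained proof. Second, your argument --- like the theorem itself --- needs $K$ connected and $n\ge 3$: the inequality as literally stated in the paper fails for disconnected manifolds (two disjoint copies of $\partial\Delta^3$ give $e=12<18$) and for $n=2$, so these standing hypotheses, which you use implicitly in the enumeration step and flag only for small $n$, should be stated explicitly; they are harmless for the paper's application, where $K$ is a connected Gorenstein* complex of dimension at least $2$.
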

\begin{proof}[proof of Proposition \ref{prop:9}]
The proof is by using LBC to show that if $\mathrm{dim}L>2$, then
\[\mathrm{rank}\,H^3(\mathcal {Z}_L)\neq\mathrm{rank}\,H^3(\mathcal {Z}_K).\]
Let $m$ and $l$ be the vertex number of $K$ and $L$ respectively ($m\geq4$ is obvious). From the analysis in the proof of Theorem \ref{thm:9}, $\mathrm{rank}\,H^3(\mathcal {Z}_K)$ equals the number of two-element missing faces of $K$, and so
$\mathrm{rank}\,H^3(\mathcal {Z}_K)=\binom{m}{2}-(3m-6)$. If $\mathrm{dim}\,L=1$, then $l=m+1$, and
$\mathrm{rank}\,H^3(\mathcal {Z}_L)=\tbinom{m+1}{2}-(m+1)$. Thus \[\mathrm{rank}\,H^3(\mathcal {Z}_L)-\mathrm{rank}\,H^3(\mathcal {Z}_K)=3m-7>0.\] If $\mathrm{dim}\,L=n>2$,
since the top dimension $d$ of $H^*(\mathcal {Z}_L)$ equals $n+l+1=m+3$, then $n+2\leq l<m$. By LBC we have
\[\mathrm{rank}\,H^3(\mathcal {Z}_L)\leq \binom{l}{2}-l(n+1)+\binom{n+2}{2}=\frac{l(l-2n-3)}{2}+\binom{n+2}{2}.\]
A straightforward calculation shows that the function $f(x)=\frac{x(x-2n-3)}{2}$ is strictly monotone increasing on $x\geq n+2$.
Thus \[\mathrm{rank}\,H^3(\mathcal {Z}_L)<\binom{m}{2}-m(n+1)+\binom{n+2}{2}=\binom{m}{2}-\frac{(n+1)(2m-2-n)}{2}.\]
Since the function $g(x)=\frac{(x+1)(2m-2-x)}{2}$ is strictly monotone increasing on $x\leq m-2$, then
\[\mathrm{rank}\,H^3(\mathcal {Z}_L)<\binom{m}{2}-(3m-6)=\mathrm{rank}\,H^3(\mathcal {Z}_K).\]
This is a contradiction, so it must have $n=2$.
\end{proof}
Now we go to another question: When a reducible $2$-sphere $K$ is $B$-rigid? Theorem \ref{thm:d} gives a necessary condition for $K$ to be $B$-rigid.
\begin{thm}\label{thm:10}
Let $K$ be a reducible simplicial $2$-sphere. If $K$ is $B$-rigid,
then $K=T_4\#T_4\#T_4$ or $K=K_1\#K_2$, where
\begin{align*}
&K_1\in\{T_4,O_6,I_{12}\},\\
&K_2\in\{T_4,O_6,I_{12}, \xi_1(C_8), \xi_2(C_8), \xi_1(D_{20}), \xi_2(D_{20})\}\cup\{B_n : n\geq7\}.
\end{align*}
\end{thm}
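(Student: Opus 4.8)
The plan is to obtain Theorem~\ref{thm:10} as an immediate consequence of the Proposition stated just before Theorem~\ref{thm:9} (which in turn rests on Theorem~\ref{thm:4}) together with the combinatorial classification of Choi and Kim recorded in Theorem~\ref{thm:d}. First I would fix the decomposition of $K$ into irreducibles: since $K$ is a reducible simplicial $2$-sphere, one can write $K=K_1\#K_2\#\cdots\#K_n$ with $n\geq2$, each $K_i$ irreducible, and this list of $K_i$'s is unique up to permutation. In particular $K$ is one element of the set $\mathcal{C}(K_1\#K_2\#\cdots\#K_n)$.

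The key step is to show that $B$-rigidity of $K$ collapses $\mathcal{C}(K_1\#K_2\#\cdots\#K_n)$ to the single combinatorial class $\{K\}$. This is exactly the content of the Proposition quoted just before Theorem~\ref{thm:9}, whose proof I would recall: if $K'$ is any other connected sum in $\mathcal{C}(K_1\#K_2\#\cdots\#K_n)$, then iterating Theorem~\ref{thm:4} (equivalently, Corollary~\ref{cor:3}) gives a graded ring isomorphism $H^*(\mathcal{Z}_{K'};\kk)\cong H^*(\mathcal{Z}_K;\kk)$, because the cohomology ring of the moment-angle manifold of a connected sum of Gorenstein* complexes depends only on the cohomology rings of the summands and not on the way in which the glueing facets are identified. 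Since $K$ is $B$-rigid, this isomorphism forces $K'\approx K$, so $K$ is indeed the only element of $\mathcal{C}(K_1\#K_2\#\cdots\#K_n)$.

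Finally I would invoke Theorem~\ref{thm:d}: applied to $K$, now known to be the unique element of $\mathcal{C}(K_1\#K_2\#\cdots\#K_n)$, it yields precisely that $K=T_4\#T_4\#T_4$ or $K=K_1\#K_2$ with $K_1\in\{T_4,O_6,I_{12}\}$ and $K_2\in\{T_4,O_6,I_{12},\xi_1(C_8),\xi_2(C_8),\xi_1(D_{20}),\xi_2(D_{20})\}\cup\{B_n:n\geq7\}$, which is the assertion of the theorem. No real obstacle remains at this stage: the two substantive ingredients --- the cohomological formula of Theorem~\ref{thm:4} (powering the Proposition) and the graph-theoretic input of Fleischner--Imrich behind Theorem~\ref{thm:d} --- are already available, and the only point needing attention is the bookkeeping that the irreducible decomposition producing $\mathcal{C}(K_1\#\cdots\#K_n)$ is the same one to which Theorem~\ref{thm:d} refers, which is guaranteed by the uniqueness of that decomposition up to permutation.
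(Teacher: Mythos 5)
Your proof is correct and follows exactly the same route the paper intends: the unlabeled Proposition (which rests on Theorem~\ref{thm:4} / Corollary~\ref{cor:3}) shows that $B$-rigidity forces $K$ to be the unique member of $\mathcal{C}(K_1\#\cdots\#K_n)$, and Theorem~\ref{thm:d} then gives the stated list. The paper does not spell this out in a proof environment, but your argument is precisely the intended reasoning.
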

In fact this is also a sufficient condition for $K$ to be $B$-rigid, this is shown in \cite{FMW16} by proving that the irreducible $2$-spheres appear in Theorem \ref{thm:d} are all strongly $B$-rigid.

\section*{Appendix}
\setcounter{section}{1}
\setcounter{thm}{0}
\renewcommand{\thesection}{\Alph{section}}

\subsection{Poincar\'e duality and cup product pairing}\label{subsec:A1}
Let $\RR$ be a commutative ring. For a closed $\RR$-orientable $n$-manifold $M$ ($[M]$ its fundamental class), consider the cup product pairing
\[H^k(M;\RR)\times H^{n-k}(M;\RR)\longrightarrow  \RR,\qquad (\phi,\psi)\mapsto (\phi\s\psi)[M]\]
Such a bilinear pairing $A\times B\to \RR$ is said to be \emph{nonsingular} if the maps $A\to \mathrm{Hom}_\RR(B;\RR)$
and $B\to \mathrm{Hom}_\RR(A;\RR)$, obtained by viewing the pairing as a function of each variable
separately, i.e., $a\mapsto\rho_a$, $\rho_a(b)=(a\s b)[M]$, are both isomorphisms.

As we kown, the cup product pairing is nonsingular for closed $\RR$-orientable
manifolds when $\RR$ is a field, or when $\RR=\mathbb{Z}$ and torsion in $H^*(M;\mathbb{Z})$ is factored out. There is a generalization of this result to the case that
when $\RR=\mathbb{Z}_m$. First we need the following result which is a generalization of universal coefficient theorem for cohomology.

\begin{thm}\label{thm:A1}
Let $\RR$ be a commutative ring. If $\RR$ is an injective $\RR$-module itself, then for any space $X$, there is an isomorphism
\[h: H^k(X;\RR)\cong \mathrm{Hom}_{\RR}(H_k(X;\RR),\RR).\]
\end{thm}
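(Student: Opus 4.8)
The plan is to reduce the statement to a standard fact about exact functors and chain complexes, so that no universal-coefficient spectral sequence or explicit $\mathrm{Ext}$-computation is needed. First I would recall that the singular chain complex $C_*=C_*(X;\RR)$ is a complex of \emph{free} $\RR$-modules (free on the singular simplices of $X$), with $H_k(X;\RR)=H_k(C_*)$ and $H^k(X;\RR)=H^k\bigl(\mathrm{Hom}_\RR(C_*,\RR)\bigr)$ by definition. The hypothesis that $\RR$ is injective as a module over itself is, by definition of an injective module, precisely the assertion that the contravariant additive functor $F=\mathrm{Hom}_\RR(-,\RR)$ is exact on $\RR$-modules. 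So the theorem follows once one proves the general lemma: \emph{an exact contravariant additive functor $F$ commutes with cohomology}, i.e.\ there is an isomorphism $H^k(F(C_*))\cong F(H_k(C_*))$, natural in $C_*$.

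For the lemma I would argue as follows. Write $Z_k=\mathrm{Ker}\,\partial_k$ and $B_k=\mathrm{Im}\,\partial_{k+1}$, so $H_k=Z_k/B_k$, and split $C_*$ into the two usual families of short exact sequences
\[0\to Z_k\to C_k\to B_{k-1}\to 0,\qquad 0\to B_k\to Z_k\to H_k\to 0.\]
Applying the exact functor $F$ turns these into short exact sequences $0\to F(B_{k-1})\to F(C_k)\to F(Z_k)\to 0$ and $0\to F(H_k)\to F(Z_k)\to F(B_k)\to 0$. The coboundary $\delta^{k}\colon F(C_k)\to F(C_{k+1})$ is $F(\partial_{k+1})$; factoring $\partial_{k+1}$ as $C_{k+1}\twoheadrightarrow B_k\hookrightarrow Z_k\hookrightarrow C_k$ and applying $F$ one reads off that $\mathrm{Im}\,\delta^{k-1}=F(B_{k-1})$ (as a submodule of $F(C_k)$) and that $\mathrm{Ker}\,\delta^{k}$ is the preimage in $F(C_k)$ of $F(H_k)\subseteq F(Z_k)$. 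The first short exact sequence then restricts to $0\to F(B_{k-1})\to\mathrm{Ker}\,\delta^k\to F(H_k)\to 0$, whence $H^k(F(C_*))=\mathrm{Ker}\,\delta^k/\mathrm{Im}\,\delta^{k-1}=\mathrm{Ker}\,\delta^k/F(B_{k-1})\cong F(H_k)$. Naturality of the resulting isomorphism is immediate from the construction. Specializing $F=\mathrm{Hom}_\RR(-,\RR)$ and $C_*=C_*(X;\RR)$ gives the map $h$ of the theorem.

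The only point requiring any care — and the closest thing to an obstacle — is pinning down exactly where the hypothesis enters: the freeness of $C_*$ is not actually used in the lemma (exactness of $F$ alone suffices), and the essential input is the self-injectivity of $\RR$, which is what makes $\mathrm{Hom}_\RR(-,\RR)$ exact and thereby kills the $\mathrm{Ext}^1$-term of the usual universal coefficient sequence. I would also remark that this hypothesis covers precisely the cases needed elsewhere in the appendix: every field is self-injective, and $\RR=\mathbb{Z}_m$ is a finite self-injective (quasi-Frobenius) ring, so that Theorem \ref{thm:A1} applies with $\mathbb{Z}_m$-coefficients and underpins the nonsingularity of the $\mathbb{Z}_m$ cup product pairing invoked in the proof of Theorem \ref{thm:5}.
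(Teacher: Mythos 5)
Your proof is correct, and it takes a genuinely different route from the paper's. The paper's argument stays within the universal coefficient theorem framework: it invokes the natural map $h$ from the proof of the UCT, constructs a section of $h$ by hand using injectivity of $\RR$ to extend a cocycle $\varphi\colon\mathrm{Ker}\,d_k\to\RR$ to all of $C_k\otimes\RR$, then writes down the resulting split short exact sequence with an $\mathrm{Ext}_\RR$ term and kills that term because $\RR$ is injective. Your argument instead observes at the outset that self-injectivity of $\RR$ is precisely the statement that $F=\mathrm{Hom}_\RR(-,\RR)$ is exact, and proves the clean general lemma that an exact contravariant additive functor commutes with (co)homology of any chain complex, via the two families of short exact sequences $0\to Z_k\to C_k\to B_{k-1}\to 0$ and $0\to B_k\to Z_k\to H_k\to 0$. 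This is the conceptual core that the UCT proof also contains, but you avoid invoking $\mathrm{Ext}$ or the UCT sequence altogether, and you correctly note that freeness of $C_*$ is not needed for the lemma (exactness of $F$ alone suffices), whereas the paper's route implicitly relies on the freeness of $C_*(X)\otimes\RR$ over $\RR$ to have the UCT exact sequence at all. Both constructions give the same natural comparison map $h$, which is what is used later in Proposition A.3, so nothing downstream is affected. Your closing remarks — that fields and $\mathbb{Z}_m$ are self-injective, hence the result covers the cases used in the appendix — match the paper's own observation via the Baer criterion.
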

\begin{proof}
Let $(C_*(X),d)$ be the singular chain complex of $X$. Then
\[H^*(X;\RR)=H^*(\mathrm{Hom}(C_*(X),\RR),\delta)=H^*(\mathrm{Hom}_{\RR}(C_*(X)\otimes\RR,\RR),\delta),\]
wherein $\delta$ is the dual coboundary map.
From the proof universal coefficient theorem, we know that there is a natural homomorphism
\[h:H^*(\mathrm{Hom}_{\RR}(C_*(X)\otimes\RR,\RR),\delta)\to \mathrm{Hom}_{\RR}(H_*(C_*(X)\otimes\RR,d),\RR).\]
The elements of $\mathrm{Hom}_{\RR}(H_k(X;\RR),\RR)$ can be represented by $\RR$-homomorphisms $\varphi:{\mathrm{Ker}\,d_k}\to\RR$ that vanish on ${\mathrm{Im}\,d_{k+1}}$. Since $\RR$ is injective, $\varphi$ can be extended to an $\RR$-homomorphism $\bar\varphi:{C_k\otimes\RR}\to\RR$ that still vanish on $\mathrm{Im}\,d_{k+1}$.
Thus we have an $\RR$-homomorphism $\mathrm{Hom}_{\RR}(H_k(X;\RR),\RR)\to\mathrm{Ker}\,\delta_k$. Composing this with the quotient map
$\mathrm{Ker}\,\delta_k\to H^k(X;\RR)$ gives a homomorphism from $\mathrm{Hom}_{\RR}(H_k(X;\RR),\RR)$ to $H^k(X;\RR)$. If we follow this map by $h$ we get the identity map on $\mathrm{Hom}_{\RR}(H_k(X;\RR),\RR)$. Hence by the same reasoning as in the proof of universal coefficient theorem, we have the split exact sequence
\[0\to\mathrm{Ext}_\RR(H_{k-1}(X;\RR),\RR)\to H^k(X;\RR)\xrightarrow{h}\mathrm{Hom}_{\RR}(H_k(X;\RR),\RR)\to 0.\]
Note that $\mathrm{Ext}_\RR(H_{k-1}(X;\RR),\RR)\equiv 0$ since $\RR$ is injective. We get the desired result.
\end{proof}

According to Baer Criterion, $\mathbb{Z}_m$ is an injective $\mathbb{Z}_m$-module, so we immediately get the following
\begin{cor}
For any integer $m\geq2$, there is an isomorphism
\[H^k(X;\mathbb{Z}_m)\cong \mathrm{Hom}_{\mathbb{Z}_m}(H_k(X;\mathbb{Z}_m),\mathbb{Z}_m).\]
\end{cor}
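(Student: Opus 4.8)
The plan is to obtain the corollary as the special case $\RR=\mathbb{Z}_m$ of Theorem \ref{thm:A1}, so the only point actually requiring an argument is that $\mathbb{Z}_m$ is an injective module over itself. For this I would appeal to \emph{Baer's criterion}: a module $E$ over a commutative ring $R$ is injective if and only if, for every ideal $I\subseteq R$, every $R$-homomorphism $I\to E$ extends to an $R$-homomorphism $R\to E$.

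First I would record that the ideals of $\mathbb{Z}_m$ are precisely the principal ideals $(d)$, where $d$ runs over the positive divisors of $m$. Then, given such an ideal $(d)$ and a $\mathbb{Z}_m$-homomorphism $\phi\colon (d)\to\mathbb{Z}_m$, set $a=\phi(d)\in\mathbb{Z}_m$. Since $(m/d)\cdot d\equiv 0 \pmod m$, well-definedness of $\phi$ forces $(m/d)\cdot a\equiv 0 \pmod m$, i.e.\ $d\mid a$ in $\mathbb{Z}$; writing $a=db$, the map $\tilde\phi\colon\mathbb{Z}_m\to\mathbb{Z}_m$, $x\mapsto bx$, is a well-defined $\mathbb{Z}_m$-homomorphism with $\tilde\phi(d)=db=a=\phi(d)$, hence $\tilde\phi$ extends $\phi$. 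This verifies Baer's criterion, so $\mathbb{Z}_m$ is self-injective (equivalently, the annihilator of $d$ in $\mathbb{Z}_m$ being generated by $m/d$ is exactly what makes the extension exist).

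Once injectivity is in hand, Theorem \ref{thm:A1} applied to $\RR=\mathbb{Z}_m$ gives the isomorphism $H^k(X;\mathbb{Z}_m)\cong\mathrm{Hom}_{\mathbb{Z}_m}(H_k(X;\mathbb{Z}_m),\mathbb{Z}_m)$ for every space $X$, which is precisely the assertion. There is no serious obstacle here: the only care needed is to state Baer's criterion in the correct form and to carry out the short divisibility bookkeeping in the extension step, after which the corollary is immediate from the already-established Theorem \ref{thm:A1}.
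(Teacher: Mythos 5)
Your proof is correct and follows exactly the same route as the paper: the paper also deduces the corollary from Theorem \ref{thm:A1} by invoking Baer's criterion to conclude that $\mathbb{Z}_m$ is an injective $\mathbb{Z}_m$-module. The only difference is that you spell out the divisibility check for Baer's criterion, which the paper leaves implicit.
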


Let $\eta: H^*(M;\mathbb{Z})\to H^*(M;\mathbb{Z}_m)$ (resp. $\eta':H_*(M;\mathbb{Z})\to H_*(M;\mathbb{Z}_m)$) be the homomorphism induced by the map $\mathbb{Z}\to\mathbb{Z}_m$ reducing coefficients mod $m$. It is easy to check that $\mathrm{Im}\,\eta\cong H^{*}(M;\mathbb{Z})\otimes\mathbb{Z}_m$ (resp. $\mathrm{Im}\,\eta'\cong H_{*}(M;\mathbb{Z})\otimes\mathbb{Z}_m$) is a direct summand of $H^*(M;\mathbb{Z}_m)$ (resp. $H_*(M;\mathbb{Z}_m)$). The proof of the following theorem is just by following the way in \cite[Proposition 3.38]{H02}.

\begin{prop}\label{prop:A3}
(i) If $\RR$ is an injective $\RR$-module itself, then the cup product pairing is nonsingular for closed $\RR$-orientable $n$-manifolds.

(ii) For closed orientable manifolds, the cup product pairing (with $\mathbb{Z}_m$ coefficient) restricted to $\mathrm{Im}\,\eta\times(H^*(M;\mathbb{Z}_m)/\mathrm{Ker}\,\w h)$ and $(H^*(M;\mathbb{Z}_m)/\mathrm{Im}\,\eta)\times\mathrm{Ker}\,\w h$ are both nonsingular, where $\w h$ is the map in the
split exact sequence of universal coefficient theorem:
\[0\to\mathrm{Ext}(H_{k-1}(M;\mathbb{Z}),\mathbb{Z}_m)\to H^k(M;\mathbb{Z}_m)\xrightarrow{\w h}\mathrm{Hom}(H_k(M;\mathbb{Z}),\mathbb{Z}_m)\to 0.\]
\end{prop}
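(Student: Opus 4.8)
The plan is to run the classical Poincar\'e-duality-pairing argument of \cite[Proposition 3.38]{H02}, with the universal coefficient theorem replaced in part (i) by Theorem \ref{thm:A1}.

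\textbf{Part (i).} Fix a fundamental class $[M]\in H_n(M;\RR)$ of the closed $\RR$-orientable $n$-manifold $M$. Cap product with $[M]$ gives the Poincar\'e duality isomorphisms $D_k\colon H^k(M;\RR)\xrightarrow{\;\cong\;}H_{n-k}(M;\RR)$. The standard identity relating cup and cap products, $(\phi\s\psi)[M]=\psi(\phi\f[M])$ for $\phi\in H^k(M;\RR)$, $\psi\in H^{n-k}(M;\RR)$ (up to the usual sign convention), shows that the adjoint of the cup product pairing in its second variable,
\[
H^{n-k}(M;\RR)\longrightarrow\mathrm{Hom}_{\RR}\bigl(H^k(M;\RR),\RR\bigr),\qquad \psi\longmapsto\bigl(\phi\mapsto(\phi\s\psi)[M]\bigr),
\]
factors as $H^{n-k}(M;\RR)\xrightarrow{\;h\;}\mathrm{Hom}_{\RR}(H_{n-k}(M;\RR),\RR)\xrightarrow{\;D_k^{*}\;}\mathrm{Hom}_{\RR}(H^k(M;\RR),\RR)$, where $h$ is the evaluation homomorphism and $D_k^{*}$ is precomposition with $D_k$. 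Since $\RR$ is injective over itself, Theorem \ref{thm:A1} says $h$ is an isomorphism, and $D_k^{*}$ is an isomorphism because $D_k$ is; hence the composite is an isomorphism, i.e.\ the pairing is nonsingular in its second variable for every $k$. Graded commutativity $(\phi\s\psi)[M]=(-1)^{k(n-k)}(\psi\s\phi)[M]$ then upgrades this to nonsingularity in the first variable as well.

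\textbf{Part (ii), first step.} Now $\RR=\mathbb{Z}_m$ and $M$ is closed orientable, hence $\mathbb{Z}_m$-orientable. I would first observe that the cup product pairing kills the block $\mathrm{Im}\,\eta\times\mathrm{Ker}\,\w h$: writing $[M]_m=\eta'([M])$ for the mod-$m$ reduction of the integral fundamental class, for $\phi=\eta(\bar\phi)$ with $\bar\phi\in H^k(M;\mathbb{Z})$ and $\psi\in\mathrm{Ker}\,\w h$, naturality of the cap product under the coefficient map $\mathbb{Z}\to\mathbb{Z}_m$ gives $\phi\f[M]_m=\eta'(\bar\phi\f[M])$, so $(\phi\s\psi)[M]_m=\psi(\phi\f[M]_m)=\w h(\psi)(\bar\phi\f[M])=0$. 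Consequently the pairing descends to well-defined pairings on $\mathrm{Im}\,\eta\times\bigl(H^*(M;\mathbb{Z}_m)/\mathrm{Ker}\,\w h\bigr)$ and on $\bigl(H^*(M;\mathbb{Z}_m)/\mathrm{Im}\,\eta\bigr)\times\mathrm{Ker}\,\w h$, so the two statements in (ii) at least make sense.

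\textbf{Part (ii), second step.} For nonsingularity of the first descended pairing I would identify $H^{n-k}(M;\mathbb{Z}_m)/\mathrm{Ker}\,\w h\cong\mathrm{Im}\,\w h=\mathrm{Hom}_{\mathbb{Z}}(H_{n-k}(M;\mathbb{Z}),\mathbb{Z}_m)$ (as $\w h$ is onto) and $\mathrm{Im}\,\eta^k\cong H^k(M;\mathbb{Z})\otimes\mathbb{Z}_m\cong H_{n-k}(M;\mathbb{Z})\otimes\mathbb{Z}_m$, the last isomorphism being integral Poincar\'e duality $\f[M]$. Tracing $(\phi\s\psi)[M]_m=\psi(\phi\f[M]_m)$ through these identifications exhibits the descended pairing as, up to sign, the canonical evaluation pairing $\bigl(H_{n-k}(M;\mathbb{Z})\otimes\mathbb{Z}_m\bigr)\times\mathrm{Hom}_{\mathbb{Z}}\bigl(H_{n-k}(M;\mathbb{Z}),\mathbb{Z}_m\bigr)\to\mathbb{Z}_m$. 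Since $\mathrm{Hom}_{\mathbb{Z}}(-,\mathbb{Z}_m)=\mathrm{Hom}_{\mathbb{Z}_m}(-\otimes\mathbb{Z}_m,\mathbb{Z}_m)$ and $\mathbb{Z}_m$ is self-injective (Baer's criterion, as used in the Corollary of Theorem \ref{thm:A1}), the biduality map of the finitely generated $\mathbb{Z}_m$-module $H_{n-k}(M;\mathbb{Z})\otimes\mathbb{Z}_m$ is an isomorphism, so this evaluation pairing is nonsingular. The second descended pairing is treated the same way after identifying $\mathrm{Ker}\,\w h=\mathrm{Ext}_{\mathbb{Z}}(H_{k-1}(M;\mathbb{Z}),\mathbb{Z}_m)$ and $H^*(M;\mathbb{Z}_m)/\mathrm{Im}\,\eta\cong\mathrm{Tor}^{\mathbb{Z}}_1(H_{*-1}(M;\mathbb{Z}),\mathbb{Z}_m)$ (the Bockstein part), using integral Poincar\'e duality to match degree $k-1$ with degree $n-k$; alternatively it follows from the first by graded commutativity.

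\textbf{Main obstacle.} The difficulty is bookkeeping rather than conceptual: one must keep straight the two distinct ``universal coefficient'' maps in play (the self-duality map $h$ of Theorem \ref{thm:A1}, with $\mathbb{Z}_m$-coefficient homology, versus the map $\w h$ onto $\mathrm{Hom}_{\mathbb{Z}}(H_*(M;\mathbb{Z}),\mathbb{Z}_m)$), pin down sign conventions so that the descended pairings really are the evaluation pairings claimed, and check that $\mathrm{Ker}\,\w h$ (like $\mathrm{Im}\,\eta$) is a genuine direct summand so that the quotients in the statement are legitimate $\mathbb{Z}_m$-modules. Once these identifications are in place, nonsingularity reduces to the single module-theoretic fact that evaluation into the self-injective ring $\mathbb{Z}_m$ is a perfect pairing on finitely generated modules.
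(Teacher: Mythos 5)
Your proof is correct and follows essentially the same route as the paper's: part (i) is word-for-word the same factorization of the adjoint pairing through Poincar\'e duality and the self-duality map of Theorem \ref{thm:A1}, and part (ii) reduces, as in the paper, to the perfectness of the evaluation pairing for finitely generated $\mathbb{Z}_m$-modules after identifying the relevant quotients via Poincar\'e duality and the universal coefficient maps. The only cosmetic difference is in the ``cross-block vanishes'' step: you invoke naturality of the cap product under $\mathbb{Z}\to\mathbb{Z}_m$ directly, whereas the paper first lifts $\psi\in\mathrm{Ker}\,\w h$ to $\mathrm{Ker}\,h\subset H^*(M;\mathbb{Z})$ using surjectivity of $\eta|_{\mathrm{Ker}\,h}$; both are valid, and yours is marginally shorter.
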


\begin{proof}
For (i), consider the composition
\[H^{n-k}(M;\RR)\xrightarrow{h}\mathrm{Hom}_{\RR}(H_{n-k}(M;\RR),\RR)\xrightarrow{D^*}\mathrm{Hom}_{\RR}(H^k(M;\RR);\RR),\]
where $h$ is the map in Theorem \ref{thm:A1} and $D^*$ is the Hom dual of the Poincar\'e duality map
$D:H^k\to H_{n-k},\ \phi\mapsto [M]\f\phi$. The composition $D^*h$ sends $\psi\in H^{n-k}(M;R)$ to the homomorphism
$\phi\mapsto \psi([M]\f\phi)=\phi\s\psi([M])$. The first statement follows by Theorem \ref{thm:A1} immediately.

For (ii), consider the following commutative diagram
\[\begin{CD}
\mathrm{Ext}(H_{k-1}(M;\mathbb{Z}),\mathbb{Z})@>>>H^k(M;\mathbb{Z})@>h >>\mathrm{Hom}(H_k(M;\mathbb{Z}),\mathbb{Z})\\
@V\eta VV @V\eta VV @V\eta VV  \\
\mathrm{Ext}(H_{k-1}(M;\mathbb{Z}),\mathbb{Z}_m)@>>>H^k(M;\mathbb{Z}_m)@>\w h>>\mathrm{Hom}(H_k(M;\mathbb{Z}),\mathbb{Z}_m)
\end{CD}\]
It is easy to see that $\eta$ restricted to $\mathrm{Ker}\,h=\mathrm{Ext}(H_{k-1}(M;\mathbb{Z}),\mathbb{Z})$ is surjective.
Hence for any $\phi\in\mathrm{Im}\,\eta$, $\psi\in\mathrm{Ker}\,\w h$, there exist $\phi'\in H^{n-k}(M;\mathbb{Z})$ and $\psi'\in\mathrm{Ker}\,h$ such that $\phi=\eta(\phi')$ and $\psi=\eta(\psi')$. On the other hand, for such $\phi',\psi'$, $[M]\f\phi'\in H_k(M;\mathbb{Z})$, so $\phi'\s\psi'([M])=\psi'([M]\f\phi')=0$, i.e. $\phi'\s\psi'=0$. Thus $\phi\s\psi=\eta(\phi'\s\psi')=0$. So the cup product pairings in (ii) are well defined.

Now we consider the following commutative diagram
\[\begin{CD}
H^{k}(M;\mathbb{Z})@>D:=[M]\smallfrown  >>H_{n-k}(M;\mathbb{Z})\\
@V\eta  VV @V\eta' VV\\
H^{k}(M;\mathbb{Z}_m)@>D:=[M]\smallfrown >>H_{n-k}(M;\mathbb{Z}_m)
\end{CD}\]
Since $M$ is orientable, the maps in the rows are isomorphisms. It is easy to check that the map in the bottom row restricted to $\mathrm{Im}\,\eta\xrightarrow{[M]\f}\mathrm{Im}\,\eta'$ is an isomorphism. Therefore since the following diagram is commutative,
\[
\xymatrix{H^{k}(M;\mathbb{Z}_m)\ar[dr]_{\w h}\ar[r]^-{h}_-{\cong}&\mathrm{Hom}(H_k(M;\mathbb{Z}_m),\mathbb{Z}_m)\ar[d]^{(\eta')^*}\\
&\mathrm{Hom}(H_k(M;\mathbb{Z}),\mathbb{Z}_m)}
\]
we have \[H^{k}(M;\mathbb{Z}_m)/\mathrm{Ker}\,\w h=\mathrm{Hom}(H_k(M;\mathbb{Z}_m),\mathbb{Z}_m)/\mathrm{Ker}\,(\eta')^*=\mathrm{Hom}(\mathrm{Im}\,\eta',\mathbb{Z}_m).\] Composing this with the Hom dual of the Poincar\'e duality map $D^*:\mathrm{Hom}(\mathrm{Im}\,\eta',\mathbb{Z}_m)\to \mathrm{Hom}(\mathrm{Im}\,\eta,\mathbb{Z}_m)$,
we get the desired isomorphism $H^{k}(M;\mathbb{Z}_m)/\mathrm{Ker}\,\w h\to \mathrm{Hom}(\mathrm{Im}\,\eta,\mathbb{Z}_m)$.
Meanwhile, the composition
\[\mathrm{Im}\,\eta\xrightarrow{D}\mathrm{Im}\,\eta'\xrightarrow{\lambda}\mathrm{Hom}(\mathrm{Hom}(\mathrm{Im}\,\eta',\mathbb{Z}_m),\mathbb{Z}_m)=\mathrm{Hom}(H^{k}(M;\mathbb{Z}_m)/\mathrm{Ker}\,\w h,\,\mathbb{Z}_m)\]
gives another isomorphism, where $\lambda$ is the double dual of $\mathrm{Im}\,\eta$, i.e. $\lambda(x)(\phi)=\phi(x)$ for $x\in \mathrm{Im}\,\eta', \,\phi\in\mathrm{Hom}(\mathrm{Im}\,\eta',\mathbb{Z}_m)$.

The nonsingularity for $(H^*(M;\mathbb{Z}_m)/\mathrm{Im}\,\eta)\times\mathrm{Ker}\,\w h$ is similar.
\end{proof}

Let $M$ be an orientable $n$-manifold, $\Aa$ be a graded ideal of $H^*(M)$ satisfies that $\Aa$ is a direct summand of $H^*(M)$ and the cup product pairing restricted to $\Aa/Torsion(\Aa)$ is nonsingular. So there exists a graded subgroup $\BB\subset H^*(M)$ such that $H^*(M)=\Aa\oplus\BB$ (as groups). However, the choice of $\BB$ satisfying the above condition is not unique, and the cup product of $\Aa$ and $\BB$ is not zero in general.

\begin{prop}\label{prop:A4}
Let $M,\,\Aa,\,\BB$ be as above, $\mathcal {T}=Torsion(H^*(M;\mathbb{Z}))$.  Suppose $m$ is the exponent of $\mathcal {T}$. If there exists a subring $\Gamma$ of $H^*(M;\mathbb{Z}_m)$ such that $\eta(\Aa)\subset\Gamma$ and $H^*(M;\mathbb{Z}_m)=\Gamma\oplus\eta(\BB)$ for some $\BB$, then $\BB$ can be chosen properly such that $\Aa\s \BB=0$.
\end{prop}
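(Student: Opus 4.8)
The plan is to modify the given complement $\BB$ by adding to its elements suitable elements of $\Aa$, so that all cup products $\Aa\s\BB$ vanish, exploiting that $\Aa$ is an ideal. Since $\Aa\s\BB\subseteq\Aa$ and the cup product pairing of $\Aa/Torsion(\Aa)$ is nonsingular, a product $a\s b$ ($a\in\Aa,\ b\in\BB$) of infinite order is detected after cupping with a further element of $\Aa$ into $H^n(M;\mathbb Z)\cong\mathbb Z$, while a product of finite order has order dividing $m$ and is therefore seen mod $m$. Thus the whole problem reduces to killing the \emph{top-degree} products, provided one simultaneously controls the torsion that the integral pairing cannot see; this is exactly where the hypothesis on the subring $\Gamma$ and Proposition~\ref{prop:A3} enter. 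I would first record two preliminary facts: (1) since $m$ is the exponent of $\mathcal T=Torsion(H^*(M;\mathbb Z))$, multiplication by $m$ kills $\mathcal T$, so $\eta$ is injective on $\mathcal T$ (a nonzero torsion class is not an $m$-th multiple), and moreover $\eta(\mathcal T)\subseteq\ker\w h$ because an integral torsion class evaluates to $0$ on $H_*(M;\mathbb Z)$; (2) $\mathbb Z_m$ is an injective $\mathbb Z_m$-module (Baer criterion, cf. Theorem~\ref{thm:A1}) and $M$ is $\mathbb Z_m$-orientable, so by Proposition~\ref{prop:A3}(i) the $\mathbb Z_m$-cup product pairing on $H^*(M;\mathbb Z_m)$ is nonsingular, refined on the summands $\mathrm{Im}\,\eta$ and $\ker\w h$ by Proposition~\ref{prop:A3}(ii).

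Next I would do the top-degree correction. Fix a homogeneous generating set of $\BB$. For a generator $b\in\BB^k$ the functional $a\mapsto(a\s b)[M]$ on $\Aa^{n-k}$ vanishes on torsion (the target is $\mathbb Z$), so nonsingularity of the pairing of $\Aa/Torsion(\Aa)$ furnishes an element $a_b\in\Aa^k$ with $(a\s b)[M]=(a\s a_b)[M]$ for all $a\in\Aa^{n-k}$; since a class in $H^n(M;\mathbb Z)$ is determined by its pairing with $[M]$, this says $a\s(b-a_b)=0$ for all $a\in\Aa^{n-k}$. Replacing each generator $b$ by $b-a_b$ yields a new complement $\BB$ of $\Aa$ in $H^*(M;\mathbb Z)$ with
\[a\s b=0\ \text{ in }H^n(M;\mathbb Z)\ \text{ whenever }a\in\Aa,\ b\in\BB,\ \deg a+\deg b=n.\]
Because $\BB$ has changed only by elements of $\Aa$, $\eta(\BB)$ changes only by elements of $\eta(\Aa)\subseteq\Gamma$, so $H^*(M;\mathbb Z_m)=\Gamma\oplus\eta(\BB)$ persists; applying $\eta$ to the displayed identity gives $\eta(a)\s\eta(b)=0$ in $H^n(M;\mathbb Z_m)$ for $a\in\Aa,\ b\in\BB$ of complementary degree, and—using that $\Gamma$ is a \emph{subring} containing $\eta(\Aa)$, and that $\Aa$ is an ideal so $\eta(\Aa)\s\eta(\BB)\subseteq\eta(\Aa)$—I would promote this to $\Gamma\s\eta(\BB)=0$ in top degree.

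The bootstrap then runs as follows. Suppose $a\in\Aa^i,\ b\in\BB^j$ with $i+j<n$ and $a\s b\ne 0$ in $\Aa^{i+j}\subseteq H^{i+j}(M;\mathbb Z)$. If $a\s b$ has infinite order, nonsingularity of the pairing on $\Aa/Torsion(\Aa)$ gives $a'\in\Aa^{n-i-j}$ with $\big((a\s b)\s a'\big)[M]\ne 0$, and rewriting $(a\s b)\s a'=(a\s a')\s b$ with $a\s a'\in\Aa^{n-j}$ contradicts the top-degree vanishing. If $a\s b$ has finite order (necessarily dividing $m$), then by fact (1) $\eta(a\s b)=\eta(a)\s\eta(b)\ne 0$ and it lies in $\eta(\Aa)\cap\ker\w h$; using the mod-$m$ nonsingularity of Proposition~\ref{prop:A3} I would produce $c\in H^{n-i-j}(M;\mathbb Z_m)$ detecting it, decompose $c$ along $\Gamma\oplus\eta(\BB)$, and rewrite the resulting top-degree class as $(c\s\eta(a))\s\eta(b)$ with $c\s\eta(a)\in\Gamma^{n-j}$ (here $\Gamma$ a subring and $\Aa$ an ideal), whence $\Gamma\s\eta(\BB)=0$ in top degree gives a contradiction. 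Therefore $\Aa\s\BB=0$.

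The main obstacle will be the mod-$m$ bookkeeping in the paragraph above: upgrading the automatic vanishing $\eta(\Aa)\s\eta(\BB)=0$ to $\Gamma\s\eta(\BB)=0$ in top degree, and handling the torsion classes where they actually live, namely in $\mathrm{Im}\,\eta\cap\ker\w h$, where one must invoke the \emph{restricted} nonsingularity of Proposition~\ref{prop:A3}(ii)—the full $\mathbb Z_m$-pairing, though nonsingular on $H^*(M;\mathbb Z_m)$, need not restrict to a nonsingular pairing on $\Gamma$ itself. I expect the hypothesis that a subring $\Gamma$ with $\eta(\Aa)\subseteq\Gamma$ and $H^*(M;\mathbb Z_m)=\Gamma\oplus\eta(\BB)$ exists is precisely what makes this upgrade go through; verifying that it does is the crux of the argument, the rest being formal manipulation of the ideal property and Poincar\'e duality.
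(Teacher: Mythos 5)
Your first stage coincides with the paper's: choose a homogeneous basis of $\BB$, use nonsingularity of the pairing on $\w\Aa=\Aa/Torsion(\Aa)$ to find $a_i\in\Aa^{k_i}$ with $\rho_{a_i}=\rho_{b_i}$, and replace each $b_i$ by $b_i'=b_i-a_i$; this kills the top-degree integral products and forces $b_i'\s\Aa\subset Torsion(\Aa)$. Your general strategy of then passing to $\mathbb{Z}_m$ coefficients to detect the surviving torsion products is also the paper's.

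The genuine gap is the step you yourself flag as the crux. You assert, after reducing mod $m$, that the vanishing $\eta(\Aa)\s\eta(\BB)=0$ in top degree can be \emph{promoted} to $\Gamma\s\eta(\BB)=0$ in top degree ``using that $\Gamma$ is a subring containing $\eta(\Aa)$, and that $\Aa$ is an ideal.'' This promotion is not a formal consequence of those facts. Writing $\Gamma=\CC\oplus\eta(\Aa)$, the complement $\CC$ has no reason to pair trivially with $\eta(\BB)$: the subring and ideal hypotheses give you information about where products land, not that they vanish. In fact the statement $\Gamma^{n-k_i}\s\eta(\BB)^{k_i}=0$ is simply false for the $\BB'$ obtained after your first correction. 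What closes the argument is a \emph{second} modification of $\BB$, which your proposal omits: using Proposition~\ref{prop:A3}(ii) (the nonsingularity of the restricted pairing between $H^*(M;\mathbb{Z}_m)/\mathrm{Im}\,\eta\cong\CC$ and $\mathrm{Ker}\,\w h$), pick $\gamma_i\in\mathrm{Ker}\,\w h$ with $\rho_{\gamma_i}\mid\CC=\rho_{\eta(b_i')}\mid\CC$, lift $\gamma_i=\eta(a_i')$ with $a_i'\in\mathcal T$ (here one uses $\mathrm{Ker}\,\w h=\eta(\mathrm{Ker}\,h)=\eta(\mathcal T)$, relying on $m$ being the exponent of $\mathcal T$), and set $b_i''=b_i'-a_i'$. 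Since $a_i'$ is torsion, it does not disturb the integral top-degree vanishing, and by construction $\rho_{\eta(b_i'')}\mid\CC=0$, so indeed $\rho_{\eta(b_i'')}\mid\Gamma=0$. Only after this second correction does your concluding contradiction (decomposing a detecting class $c$ along $\Gamma\oplus\eta(\BB)$ and noting $c\s\eta(a)\in\Gamma$) go through; the paper runs this contradiction with $c\in\CC$ directly rather than decomposing an arbitrary detecting class, but that difference is cosmetic. In short: you have correctly located the hard point but left it as an expectation rather than filling it, and the missing ingredient is a second, torsion-valued correction to $\BB$ driven by the restricted mod-$m$ nonsingularity.
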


\begin{proof}
Firstly, let us arbitrarily take $\BB\subset H^*(M)$ so that $H^*(M)=\Aa\oplus\BB$. Set $\w\Aa=\Aa/Torsion(\Aa)$,
and chose a homogenous basis $\{b_i\}_{i=1}^s$ for $\BB$, $b_{i}\in\BB^{k_i}$. As defined at the beginning of the appendix, $\rho_{b_i}\in\mathrm{Hom}(\w\Aa^{n-k_i},\mathbb{Z})$.
Since the cup product pairing on $\w\Aa$ is nonsingular, then there exists an $a_i\in\Aa^{k_i}$ such that $\rho_{a_i}=\rho_{b_i}$.
Let $b_i'=b_i-a_i$, then $b_i'\s \Aa^{n-k_i}=0$. We claim that $b_i'\s \Aa\subset Torsion(\Aa)$. Otherwise $b_i'\s a\neq0\in\w\Aa^{k_i+j}$ for some $a\in\Aa^{j}$, $j<n-k_i$.
Still from the nonsingularity of $\w\Aa$, there is an $a'\in \Aa^{n-k_i-j}$ such that $b_i'\s a\s a'\neq0$, this is a contradiction. Let $\BB'=\langle b_i'\rangle_{i=1}^s$ be the free abelian group generated by $b_i'$, then apparently $H^*(M)=\Aa\oplus\BB'$.

Now we consider the cohomology ring with coefficient $\mathbb{Z}_m$. It is easy to see that $\eta(\Aa)$ is a direct summand of $\Gamma$. Suppose $\Gamma=\CC\oplus\eta(\Aa)$, then
$\CC=H^*(M;\mathbb{Z}_m)/\mathrm{Im}\,\eta$. Thus the restriction map $\rho_{\eta(b_i')}\mid{\CC}$ is an element of $\mathrm{Hom}_{\mathbb{Z}_m}(H^*(M;\mathbb{Z}_m)/\mathrm{Im}\,\eta,\mathbb{Z}_m)$.
According to Proposition \ref{prop:A3} (ii), there is an element $\gamma_i\in\mathrm{Ker}\,\w h$ such that $\rho_{\gamma_i}\mid\CC=\rho_{\eta(b_i')}\mid\CC$. From the proof of Proposition \ref{prop:A3} (ii)
we know that $\mathrm{Ker}\,\w h=\eta(\mathrm{Ker}\,h)$. $\mathrm{Ker}\,h=\mathcal {T}$ by the assumption that $\mathcal {T}$ has exponent $m$.
Hence there exists an $a_i'\in \mathcal {T}$ such that $\eta(a_i')=\gamma_i$.
Now let $b_i''=b_i'-a_i'$. Clearly $b_i''\s \Aa^{n-k_i}=0$, so $\rho_{\eta(b_i'')}\mid\eta(\Aa)=0$. Meanwhile, $\rho_{\eta(b_i'')}\mid\CC=\rho_{\eta(b_i')}-\rho_{\gamma_i}\mid\CC=0$. So $\rho_{\eta(b_i'')}\mid\Gamma=0$. We claim that $b_i''\s\Aa=0$ for each $i$. Otherwise $b_i''\s a\neq0\in Torsion(\Aa^{k_i+j})\subset\mathrm{Ker}\,h$ for some $a\in\Aa^{j}$, $j<n-k_i$. It follows that $\eta(b_i''\s a)\neq0\in\mathrm{Ker}\,\w h$. By applying Proposition
\ref{prop:A3} (ii) again, we can find $c\in\CC^{n-k_i-j}$ such that $\eta(b_i'')\s\eta(a)\s c\neq0\in H^n(M;\mathbb{Z}_m)$. However, since $\Gamma$ is a subring, $c\s\eta(a)\in\Gamma$, contradict to the fact $\rho_{\eta(b_i'')}\mid\Gamma=0$. Finally, let $\BB''=\langle b_i''\rangle_{i=1}^s$, then we have $H^*(M)=\Aa\oplus\BB''$ and $\Aa\s\BB''=0$.
\end{proof}

\subsection{Algebra version of Krull-Schmidt Theorem}
This appendix is devoted to generalizing a famous theorem in group and module theory, which is called ``Krull-Schmidt theorem'' (for a while the theorem was also called the ``Wedderburn-Remak-Krull-Schmidt theorem'') to the ring or algebra case. This theorem can be stated as below (cf. \cite{J80} p.115). A module $M$ is \emph{decomposable} if $M=M_1\oplus M_2$ where $M_i\neq0$. Otherwise, $M$ is called \emph{indecomposable}.
\begin{K-S}
Let $M$ be a module that is both artinian
and noetherian and let $M=M_1\oplus M_2\oplus\cdots\oplus M_n=N_1\oplus N_2\oplus\cdots\oplus N_m$ where the
$M_i$ and $N_j$ are indecomposable. Then $m=n$ and there is a permutation $i\curvearrowright i'$
such that $M_i\cong N_{i'}$, $1\leq i\leq n$.
\end{K-S}
A natural and similar question is that: Suppose a ring $\RR$ is a
direct product of a finite number of indecomposable rings. Are the components unique?
Now by following the line of the proof of this theorem and by adding some supplementary arguments, we can show that there is a similar condition to assure this.

If $f$ is an endomorphism of a ring $\RR$, we define \[f^\infty \RR=\bigcap_{n=0}^\infty f^n(\RR)\ \text{ and }\
f^{-\infty}0=\bigcup_{n=0}^\infty\mathrm{Ker}\,f^n.\]

The following result is the ring version of Fitting's Lemma (cf. \cite{J80} p.113).
\begin{lem}\label{lem:a}
Let $f$ be an endomorphism of a ring $\RR$ that is both artinian and noetherian. If $f^n(\RR)$ is an ideal of $\RR$ for every $n\geq0$, then we have the Fitting decomposition
\[\RR=f^\infty \RR\times f^{-\infty}0.\]
\end{lem}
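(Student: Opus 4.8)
The plan is to transcribe the classical proof of Fitting's Lemma for modules and then promote the resulting abelian-group splitting of $\RR$ to a direct product decomposition of rings. First I would record the elementary structural facts: since $f$ is a ring endomorphism, so is each iterate $f^n$, whence $\mathrm{Ker}\,f^n$ is an ideal of $\RR$; and by hypothesis each $f^n(\RR)$ is an ideal of $\RR$ as well. Consequently $\mathrm{Ker}\,f\subseteq\mathrm{Ker}\,f^2\subseteq\cdots$ is an ascending chain of ideals and $f(\RR)\supseteq f^2(\RR)\supseteq\cdots$ is a descending chain of ideals. Noetherianity forces the first chain to stabilize and artinianity forces the second to stabilize; after replacing the two stabilization indices by their maximum $n$, I obtain $\mathrm{Ker}\,f^n=\mathrm{Ker}\,f^{n+k}=f^{-\infty}0$ and $f^n(\RR)=f^{n+k}(\RR)=f^\infty\RR$ for all $k\geq0$.

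The core step is then the two short computations familiar from the module case. Given $x\in\RR$, the relation $f^n(x)\in f^n(\RR)=f^{2n}(\RR)$ produces $y$ with $f^n(x)=f^{2n}(y)$; then $x-f^n(y)\in\mathrm{Ker}\,f^n=f^{-\infty}0$ and $f^n(y)\in f^n(\RR)=f^\infty\RR$, so $\RR=f^\infty\RR+f^{-\infty}0$. Conversely, if $x=f^n(z)$ lies in $\mathrm{Ker}\,f^n$, then $f^{2n}(z)=0$, so $z\in\mathrm{Ker}\,f^{2n}=\mathrm{Ker}\,f^n$ by stabilization, and hence $x=0$; thus $f^\infty\RR\cap f^{-\infty}0=0$. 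This gives the internal direct sum $\RR=f^\infty\RR\oplus f^{-\infty}0$ as abelian groups.

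Finally, I would upgrade this to a ring-theoretic statement by using that both summands are ideals: $f^\infty\RR=f^n(\RR)$ is an ideal by hypothesis, and $f^{-\infty}0=\mathrm{Ker}\,f^n$ is an ideal automatically. For any two ideals $I,J$ of $\RR$ with $I\cap J=0$ and $I+J=\RR$ one has $IJ\subseteq I\cap J=0$, so the additive bijection $I\times J\to\RR$, $(a,b)\mapsto a+b$, is multiplicative (the cross terms $ab$ and $ba$ land in $I\cap J$ and so vanish) and hence a ring isomorphism. Applied to $I=f^\infty\RR$ and $J=f^{-\infty}0$ this yields $\RR=f^\infty\RR\times f^{-\infty}0$, as desired.

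The only point requiring care is keeping track of where the hypothesis ``$f^n(\RR)$ is an ideal'' intervenes: it is needed first so that the descending chain $f^n(\RR)$ is a chain of ideals (making artinianity applicable), and again at the end so that $f^\infty\RR$ is an ideal, which is precisely what turns the group-theoretic splitting into a ring direct product. Beyond that bookkeeping the argument is a routine adaptation of the module version of Fitting's Lemma, so I expect no serious obstacle.
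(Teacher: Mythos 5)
Your proof is correct and follows essentially the same route as the paper's: stabilize the ascending chain of kernels via noetherianity and the descending chain of images via artinianity, take a common stabilization index, run the two standard Fitting computations to get the internal direct sum, and then use the ideal hypothesis to upgrade to a ring direct product. The only difference is that you spell out two points the paper leaves implicit — that $\mathrm{Ker}\,f^n$ is automatically an ideal, and that for complementary ideals $I,J$ with $I\cap J=0$ the cross terms vanish, so the additive splitting is already a ring direct product — which is a welcome bit of extra care rather than a genuinely different argument.
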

\begin{proof}
Since $\RR$ is artinian, there is an integer $s$ such that $f^s(\RR)=f^{s+1}(\RR)=\cdots=f^\infty\RR$. Since $\RR$ is noetherian, there exists $t$ such that
$\mathrm{Ker}\,f^t=\mathrm{Ker}\,f^{t+1}=\cdots=f^{-\infty}0$. Let $r=\mathrm{max}\{s,t\}$, so $f^\infty\RR=f^r(\RR)$ and $f^{-\infty}0=\mathrm{Ker}\,f^{r}$.
If $z\in f^{\infty}\RR\cap f^{-\infty}0$, so $z=f^ry$ for some $y\in \RR$. Then $0=f^rz=f^{2r}y$ and $y\in\mathrm{Ker}\,f^{2r}=\mathrm{Ker}\,f^{r}$. Hence $z=f^ry=0$.
Thus $f^{\infty}\RR\cap f^{-\infty}0$.

Now let $x\in\RR$. Then $f^rx\in f^r(\RR)=f^{2r}(\RR)$, so $f^rx=f^{2r}y$ for some $y\in\RR$. Then $f^r(x-f^ry)=0$ and so $z=x-f^ry\in f^{-\infty}0$. Then $x=f^ry+z$ and $f^ry\in f^{\infty}\RR$. Hence there is an additively splitting $\RR=f^\infty \RR\oplus f^{-\infty}0$. From the hypothesis that $f^r(\RR)$ is an ideal of $\RR$, we get the desired decomposition.
\end{proof}
Now we give the algebra version of Krull-Schmidt Theorem  as follows.
\begin{thm}\label{prop:13}
Let $R,\,Q$ be two finitely dimensional $\kk$-algebras with $\kk$ a field and let
\begin{align*}
R&=R_1\times R_2\times\cdots\times R_n,\\
Q&=Q_1\times Q_2\times\cdots\times Q_m,
\end{align*}
where the $R_i$ and $Q_j$ are indecomposable $\kk$-algebras. If $R\cong Q$, then $m=n$ and there is a permutation $i\curvearrowright i'$ such that $R_i\cong Q_{i'}$, $1\leq i\leq n$.
Moreover, if $R,\,Q$ are isomorphic graded $\kk$-algebras and the $R_i$ and $Q_j$ are graded indecomposable, then the isomorphisms $R_i\cong Q_{i'}$ are graded isomorphisms.
\end{thm}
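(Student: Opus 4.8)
The plan is to argue directly with central idempotents rather than to transcribe the module-theoretic proof of Krull--Schmidt. The underlying observation is that a decomposition $R = R_1 \times \cdots \times R_n$ of a $\kk$-algebra into (nonzero) factors is the same data as a complete family of pairwise orthogonal central idempotents $e_1,\dots,e_n$ of $R$ with $\sum_i e_i = 1$, where $e_i$ is the identity of $R_i$ and $R_i = e_i R$; and that $R_i$ is \emph{indecomposable} precisely when the only central idempotents of $R$ contained in $R_i$ are $0$ and $e_i$. In the graded setting the same description holds with ``central idempotent'' replaced by ``homogeneous central idempotent'', which is the content of graded indecomposability.

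First I would fix an isomorphism $\phi\colon R \xrightarrow{\sim} Q$ and replace $Q_j$ by $\phi^{-1}(Q_j)=:R_j'$, reducing the problem to: a single algebra $R$ is written in two ways as a product of indecomposables, $R = \prod_{i=1}^n R_i = \prod_{j=1}^m R_j'$, with identities $e_i$ of $R_i$ and $e_j'$ of $R_j'$, and the factors must be matched. Fixing $i$ and expanding $e_i = e_i\cdot 1 = \sum_{j} e_i e_j'$ exhibits $e_i$ as a sum of pairwise orthogonal central idempotents $e_i e_j'$, each lying in $R_i$; indecomposability of $R_i$ forces each $e_i e_j'$ to be $0$ or $e_i$, and since they sum to $e_i \neq 0$ exactly one of them, for $j=\sigma(i)$, equals $e_i$. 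Reading $e_i e_{\sigma(i)}' = e_i$ inside $R'_{\sigma(i)}$ and invoking indecomposability of $R'_{\sigma(i)}$ gives $e_i = e'_{\sigma(i)}$, hence $R_i = e_i R = e'_{\sigma(i)} R = R'_{\sigma(i)}$ as subalgebras of $R$. Orthogonality of $\{e_i\}$ makes $\sigma$ injective (from $\sigma(i)=\sigma(i')$ one gets $e_i = e_{i'}$, so $e_i = e_i e_{i'} = 0$, a contradiction), so $n\le m$; the symmetric argument gives $m\le n$, so $n=m$ and $\sigma$ is a bijection, and $\phi$ restricts to an isomorphism $R_i = \phi^{-1}(Q_{\sigma(i)}) \cong Q_{\sigma(i)}$. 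For the graded refinement I would add the standard remark that in any graded ring the identity is homogeneous of degree $0$ (compare homogeneous components in $1\cdot a = a$), so each $e_i$, each $e_j'$ and each product $e_i e_j'$ is homogeneous of degree $0$; every step then takes place among homogeneous central idempotents, $R_i = e_i R = R'_{\sigma(i)}$ are equalities of graded subalgebras, and the isomorphisms $R_i\cong Q_{\sigma(i)}$ are graded.

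The point to flag is why one cannot simply copy the module proof: unital $\kk$-algebra maps cannot be added, so the endomorphism ring of an indecomposable algebra need not be local and the classical ``a finite sum of non-units is a non-unit'' step is unavailable. The remedy is exactly the reframing above --- the idempotents coming from the two decompositions are \emph{central}, hence automatically commute and are orthogonal, which collapses the exchange argument of the module case to the one-line identity $e_i = \sum_j e_i e_j'$. (Alternatively one can push the classical argument through using the ring version of Fitting's Lemma, Lemma~\ref{lem:a}, applied to the maps $e_1 e_j' e_1\colon R_1 \to R_1$, but the central-idempotent route is shorter and makes the graded statement transparent; note also that finite-dimensionality of $R,Q$ is not really used once the given decompositions are finite.)
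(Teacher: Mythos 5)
Your argument is clean and correct for \emph{unital} $\kk$-algebras, and in that setting the central-idempotent reformulation is indeed the shortest route. But it silently assumes a hypothesis the paper does not have and cannot afford: that $R$, $Q$ and each factor $R_i$, $Q_j$ possess an identity element. The whole construction hinges on writing $R_i = e_i R$ for the identity $e_i$ of $R_i$, on the expansion $e_i = e_i\cdot 1 = \sum_j e_i e_j'$, and (for the graded refinement) on the remark that ``in any graded ring the identity is homogeneous of degree $0$.'' None of this is available here. The algebras to which the theorem is applied in \S\ref{sec:6} are quotients of \emph{reduced} cohomology rings $\w H^*(\ZZ_K;\kk)/([\ZZ_K])$, which are nonunital (they live entirely in positive degrees, so there is no degree-$0$ component in which an identity could sit). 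Worse, in the decomposition used in the proof of Theorem~\ref{thm:9},
\[
\w H^*(\ZZ_K;\kk)/([\ZZ_K]) \cong M_1\times\cdots\times M_k\times \kk^{\,l},
\]
the final $\kk$ summands are one-dimensional with \emph{zero} multiplication; such a factor contains no nonzero idempotent at all, let alone an identity, so it is not of the form $e_i R$ for any idempotent, and your characterization of the factors collapses. (Unitalization does not repair this: $(R_1\times R_2)^{+}$ has dimension one less than $R_1^{+}\times R_2^{+}$, so adjoining a unit does not commute with finite products.) The last phrase of your proposal --- that finite-dimensionality ``is not really used'' --- is also a symptom of the mismatch: in the actual nonunital setting the paper needs finite-dimensionality precisely to count the trivial-multiplication factors via $\dim_\kk$.

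The paper avoids the issue by never touching ring elements: it manufactures $\kk$-\emph{linear} endomorphisms $g_t = p_1\phi^{-1}j_t q_t\phi i_1$ of $R_1$, observes $\sum_t g_t = \mathrm{id}_{R_1}$ (identity \emph{map}, not identity \emph{element}), proves that the images $g_t^k(R_1)$ are ideals, and then runs a Fitting-lemma/exchange argument (Lemma~\ref{lem:a}) on these maps. Indecomposable factors with trivial multiplication are dealt with separately at the end by a dimension count. If you want to keep an idempotent-flavored argument, you would first have to recast the proof entirely inside $\mathrm{End}_\kk(R)$ (where idempotents are the projections $i_s p_s$, $j_t q_t$) rather than inside $R$ --- which is essentially what the paper's use of the $g_t$'s is doing in disguise.
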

\begin{proof}
Let $i_s: R_s\to R$ and $j_t: Q_t\to Q$ be the respective inclusions and let $p_s: R\to R_s$, $q_t: Q\to Q_t$ be the respective projections.
Suppose $\phi:R\to Q$ is an isomorphism. Define $e_{t}=q_t\phi i_1:R_1\to Q_t$ and $f_{t}=p_1\phi^{-1}j_t:Q_t\to R_1$. Then $g_{t}=f_{t}e_{t}$
is an endomorphism of $R_1$. Note that
\[\sum_{t=1}^m g_{t}=p_1\phi^{-1}(\sum_{t=1}^mj_tq_t)\phi i_1=p_1\phi^{-1}1_R\phi i_1=1_{R_1}.\]
So we have
\[R_1=g_{1}(R_1)+g_{2}(R_1)+\cdots+g_{m}(R_1)\text{ (as $\kk$-module)}.\]
On the other hand, since $Q_i\cdot Q_j=0$ ($i\neq j$), then for $x_i=g_i(y_i)\in g_{i}(R_1)$, $x_j=g_j(y_j)\in g_{j}(R_1)$ we have that
\begin{equation}\label{eq:a1}
x_ix_j=p_1\phi^{-1}(y_i)\cdot p_1\phi^{-1}(y_j)=p_1\phi^{-1}(y_iy_j)=0.
\end{equation}
This implies that $g_{t}(R_1)$ is an  ideal of $R_1$ for all $t$. Similarly, we have that ${g_{t}}^k(R_1)$ is an ideal of $R_1$ for all $k>0$.

We claim that one of $g_t$ is an automorphism of $R_1$. We prove this by induction on $m$. If $m=1$, there is nothing to prove. For the induction step,
we may assume $g_1$ is nilpotent (otherwise $g_1$ is an automorphism by Lemma \ref{lem:a} and the indecomposability of $R_1$).  It is easy to see by equation \eqref{eq:a1} that $h=1-g_1=g_2+\cdots+g_m$ is an endomorphism of $R_1$. Since ${g_1}^r=0$ for some $r$, then \[(1-g_1)(1+g_1+\cdots+{g_1}^{r-1})=1=(1+g_1+\cdots+{g_1}^{r-1})(1-g_1).\]
Thus $h$ is an automorphism of $R_1$. Let $g_i'=g_ih^{-1}$, then $g_2'+\cdots+g'_m=1$. Hence by induction there is a $t$ ($2\leq t\leq m$) such that $g_t'$ is an automorphism of $R_1$, and so is $g_t$.

Assume $g_t$ is an automorphism of $R_1$, now we prove that $e_{t}:R_1\to Q_{t}$ is an isomorphism. It is clear that $e_t$ is a monomorphism and $f_{t}$ restricted to $e_{t}(R_1)$ is an isomorphism. It follows that $Q_{t}$ additively splits as $e_{t}(R_1)\oplus \mathrm{Ker}\,f_{t}$.
Moreover, by a similar analysis as before, we have $e_{t}(R_1)$ is an ideal of $Q_{t}$. Thus there is a multiplicative splitting $Q_{t}=e_{t}(R_1)\times \mathrm{Ker}\,f_{t}$.
The fact that $Q_{t}$ is indecomposable implies that $Q_{t}=e_{t}(R_1)$, i.e. $e_{t}$ is an isomorphism.

Define $e_{s,t}=q_t\phi i_s:R_s\to Q_t$ (thus $e_{1,t}=e_t$ in terms of the former notation).
Let \[\NN_{i}=\{t\in [m]\mid e_{i,t}\text{ is an isomorphism}\}.\]
By the preceding argument, $\NN_{i}\neq\varnothing$ for each $i\in [n]$. Next we show that if $R_i$ has nontrivial multiplication then $\NN_{i}\cap\NN_{j}=\varnothing$ for $j\neq i$.
To see this, suppose $R_i\cong Q_t$ has nontrivial multiplication, then there are two elements $a,b\in Q_t$ such that $ab\neq0$. Suppose on the contrary that there exists $j\neq i$ such that $e_{j,t}$ is an isomorphism, then there are $x\in R_i,\ y\in R_j$ so that $q_t\phi(x)=a,\ q_t\phi(y)=b$. However $ab=q_t\phi(xy)=0$ since $xy=0$, a contradiction.

Suppose $n'$ is the number of $R_i$ which has nontrivial multiplication, and $m'$ is the number of $Q_j$ which has nontrivial multiplication.
Hence $\NN_{i}\cap\NN_{j}=\varnothing$ ($j\neq i$) implies that $n'\leq m'$. From the symmetry of $R$ and $Q$, we also have $m'\leq n'$, then $m'=n'$. Thus up to a permutation of $n'$ there is an $1$-to-$1$ correspondence between the indecomposable
factors with nontrivial multiplication of $R$ and $Q$: $R_i\cong Q_{i}$. On the other hand, since $\kk$ is a field, any indecomposable $\kk$-algebra with trivial multiplication is an $1$-dimensional $\kk$-vector space. Thus the theorem follows by the fact that the dimensions of $R$ and $Q$ over $\kk$ are equal.

The graded case can be proved in the same way.
\end{proof}
\begin{cor}\label{cor:a}
Let $R\cong Q$ be torsion-free rings with finite ranks and let
\begin{align*}
R&=R_1\times R_2\times\cdots\times R_n,\\
Q&=Q_1\times Q_2\times\cdots\times Q_m,
\end{align*}
where the $R_i$ and $Q_j$ are indecomposable rings.
If the $R_i\otimes\kk$ and $Q_j\otimes\kk$ are indecomposable $\kk$-algebras whenever $\kk=\mathbb{Q}$ or $\mathbb{Z}_p$ ($p$ is a prime), then $m=n$ and there is a permutation $i\curvearrowright i'$ such that $R_i\cong Q_{i'}$, $1\leq i\leq n$.
\end{cor}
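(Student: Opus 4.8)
The plan is to run the proof of Theorem~\ref{prop:13} verbatim over $\mathbb{Z}$, the only place it uses field coefficients being its single appeal to Fitting's Lemma~\ref{lem:a} (which needs an artinian hypothesis that a torsion-free finite-rank ring does not have); I would replace that one step by a combination of the rational reduction $-\otimes\mathbb{Q}$ and the modular reductions $-\otimes\mathbb{Z}_p$. First I would fix a ring isomorphism $\phi\colon R\to Q$ and, exactly as in Theorem~\ref{prop:13}, form the ring homomorphisms $e_t=q_t\phi i_1$, $f_t=p_1\phi^{-1}j_t$ and $g_t=f_te_t$, so that the $g_t$ are ring endomorphisms of $R_1$ with $\sum_t g_t=1_{R_1}$ and, by the computation \eqref{eq:a1}, $g_i(R_1)g_j(R_1)=0$ for $i\neq j$. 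As in that proof, the whole task reduces to proving that \emph{some} $g_t$ is a ring automorphism of $R_1$, together with the $\NN_i$-counting that closes the argument there.

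Since $R_1$ is torsion-free of finite rank it is a finitely generated free abelian group, so $R_1\otimes\mathbb{Q}$ and each $R_1\otimes\mathbb{Z}_p$ are finite-dimensional algebras over a field, hence both artinian and noetherian, and by hypothesis they are indecomposable (and likewise the $Q_j$). For $\kk\in\{\mathbb{Q}\}\cup\{\mathbb{Z}_p\}$ the endomorphisms $g_t\otimes\kk$ satisfy the same relations, so the inductive step of Theorem~\ref{prop:13} (which uses only Lemma~\ref{lem:a} and indecomposability) shows that \emph{at least one} $g_t\otimes\kk$ is an automorphism of $R_1\otimes\kk$. Assume first $R_1$ has nontrivial multiplication, so $R_1\otimes\kk$ does as well; then \emph{at most one} $g_t\otimes\kk$ can be an automorphism, for if $g_i\otimes\kk$ and $g_j\otimes\kk$ ($i\neq j$) were both surjective then $(R_1\otimes\kk)^2=g_i(R_1\otimes\kk)\cdot g_j(R_1\otimes\kk)=0$, a contradiction. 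Let $t_0$ be the unique index with $g_{t_0}\otimes\mathbb{Q}$ an automorphism. For a prime $p$: if $g_t\otimes\mathbb{Q}$ is not an automorphism it is nilpotent (Lemma~\ref{lem:a} and indecomposability of $R_1\otimes\mathbb{Q}$), so $g_t^N(R_1)\otimes\mathbb{Q}=0$ for some $N$, hence $g_t^N(R_1)=0$ since $R_1$ is torsion-free, hence $g_t\otimes\mathbb{Z}_p$ is nilpotent and not an automorphism; thus the automorphism index modulo $p$ is again $t_0$. Consequently $g_{t_0}$ is injective (being so after $\otimes\mathbb{Q}$), its cokernel $C=R_1/g_{t_0}(R_1)$ is finite (again from $\otimes\mathbb{Q}$), and $C\otimes\mathbb{Z}_p=0$ for every $p$ (as $g_{t_0}\otimes\mathbb{Z}_p$ is onto), so $C=0$ and $g_{t_0}$ is a ring automorphism of $R_1$.

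From this point the remainder of the proof of Theorem~\ref{prop:13} applies over $\mathbb{Z}$ without change: $e_{t_0}\colon R_1\to Q_{t_0}$ is injective, $f_{t_0}$ restricts to a ring isomorphism $e_{t_0}(R_1)\xrightarrow{\cong}R_1$, and $Q_{t_0}=e_{t_0}(R_1)\oplus\ker f_{t_0}$ as abelian groups with both summands ideals and trivial cross-product, so $Q_{t_0}=e_{t_0}(R_1)\times\ker f_{t_0}$ as rings; indecomposability of $Q_{t_0}$ forces $\ker f_{t_0}=0$, i.e. $e_{t_0}\colon R_1\cong Q_{t_0}$. Running this for each $i$ with $R_i$ of nontrivial multiplication, and symmetrically for each such $Q_j$ (here using the indecomposability of the $Q_j\otimes\kk$), and setting $\NN_i=\{t:e_{i,t}=q_t\phi i_i\text{ is a ring isomorphism}\}$, one gets each such $\NN_i\neq\varnothing$ and $\NN_i\cap\NN_j=\varnothing$ for $i\neq j$ exactly as in Theorem~\ref{prop:13}; choosing $\pi(i)\in\NN_i$ gives injections in both directions between the finite sets of nontrivial-multiplication factors of $R$ and of $Q$, hence a bijection with $R_i\cong Q_{\pi(i)}$. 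Finally, a torsion-free finite-rank indecomposable ring of trivial multiplication is $\mathbb{Z}$ with zero product; since $\mathrm{rank}\,R=\mathrm{rank}\,Q$ and matched nontrivial factors have equal ranks, the numbers of trivial-multiplication factors of $R$ and $Q$ agree, so extending $\pi$ by matching these arbitrarily yields $n=m$ and the asserted permutation. I expect the main obstacle to be exactly the coordination step of the second paragraph — showing that one index $t_0$ simultaneously witnesses an automorphism over $\mathbb{Q}$ and over every $\mathbb{Z}_p$ — together with the separate bookkeeping for the trivial-multiplication factors; the rest is a transcription of the field-coefficient argument.
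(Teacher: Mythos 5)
Your proof is correct and follows essentially the same strategy as the paper's: apply the field-coefficient theorem after $\otimes\mathbb{Q}$ to single out the candidate isomorphisms, then use the $\otimes\mathbb{Z}_p$ reductions to kill the finite cokernel. The only organizational difference is that you work with the endomorphisms $g_t=f_te_t$ of $R_1$ and invoke nilpotence to synchronize the automorphism index across all residue fields, whereas the paper argues directly on the $e_{i,j}$ using rank and kernel considerations; both implement the same idea.
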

\begin{proof}
Since $R\cong Q$, then $R\otimes\mathbb{Q}\cong Q\otimes\mathbb{Q}$. According to Theorem \ref{prop:13}, $m=n$ and $R_i\otimes\mathbb{Q}\cong Q_i\otimes\mathbb{Q}$ up to a permutation of $[n]$. Let $e_{i,j}:R_i\to Q_j$ be as in the proof of Theorem \ref{prop:13}. Then by the arguments in the proof of Theorem \ref{prop:13},
if $R_i$ has nontrivial multiplication, then $e_{i,i}\otimes1_\mathbb{Q}$ is an isomorphism and $e_{i,j}\otimes1_\mathbb{Q}$ is not an isomorphism for $i\neq j$.
Thus if $R_i$ has nontrivial multiplication, then
\begin{enumerate}
\item $\mathrm{Ker}\,e_{i,i}=0$ and $\mathrm{rank}\,e_{i,i}(R_i)=\mathrm{rank}\,Q_i$, which implies that $\mathrm{coker}\,e_{i,i}$ is finite;
\item $\mathrm{Ker}\,e_{i,j}\neq0$ or $\mathrm{rank}\,e_{i,j}(R_i)<\mathrm{rank}\,Q_j$ for $i\neq j$, which implies that $e_{i,j}\otimes1_{\mathbb{Z}_p}$ is not an isomorphism for $i\neq j$ and any prime $p$.
\end{enumerate}
On the other hand, from the proof of Theorem \ref{prop:13} we know that there must be a $j\in [m]$ such that $e_{i,j}\otimes1_{\mathbb{Z}_p}$ is an isomorphism. So $e_{i,i}\otimes1_{\mathbb{Z}_p}$ is actually an isomorphism. However if $\mathrm{coker}\,e_{i,i}\neq0$, say $\mathbb{Z}_p$, then $\mathrm{Ker}\,e_{i,i}\otimes1_{\mathbb{Z}_p}\neq0$, a contradiction. Hence $\mathrm{coker}\,e_{i,i}=0$ and $e_{i,i}$ is an isomorphism.

Therefore there is an $1$-to-$1$ correspondence between the indecomposable
factors with nontrivial multiplication of $R$ and $Q$: $R_i\cong Q_{i}$. On the other hand, note that any torsion-free indecomposable ring with trivial multiplication is additively isomorphic to $\mathbb{Z}$. Thus the corollary follows.
\end{proof}

\bibliography{M-A}
\bibliographystyle{amsplain}


\end{document}